\numberwithin{equation}{section}
\pgfplotsset{compat=1.18}
\theoremstyle{plain}
\newtheorem{theorem}{Theorem}[section]
\newtheorem{remark}[theorem]{Remark}
\newtheorem{lemma}[theorem]{Lemma}
\newtheorem{proposition}[theorem]{Proposition}
\theoremstyle{definition}
\def\ed{\mathrm{d}}
\def\R{{\mathbb R}}
\def\exp{\operatorname{exp}}
\let\mc=\mathcal
\let\mr=\mathrm
\def\x{\mathbf{x}}
\def\y{\mathbf{y}}
\def\Pc{\mathcal{P}}
\def\Rd{\mathbb{R}^d}
\def\Id{\text{Id}}
\DeclareMathOperator*{\argmin}{argmin}
\def\ed{\mathrm{d}}
\newcolumntype{R}[1]{>{\raggedleft\arraybackslash }b{#1}}
\newcolumntype{L}[1]{>{\raggedright\arraybackslash }b{#1}}
\newcolumntype{C}[1]{>{\centering\arraybackslash }b{#1}}
\def\conv{\text{\normalfont conv}}
\numberwithin{equation}{section}
\title[Metric extrapolation in the Wasserstein space]{Metric extrapolation in the Wasserstein space}
\author[T.\ O. Gallou\"et]{Thomas O.\ Gallou\"et}
\address{Thomas O. Gallou\"et, Université Paris-Saclay, Inria, CNRS, UMR 8628 - Laboratoire de mathématiques d'Orsay, ParMA, 91405 Orsay, France} 
\email{thomas.gallouet@inria.fr}
\author[A. Natale]{Andrea Natale}
\address{Andrea Natale, Université de Lille, Inria,  CNRS, UMR 8524 - Laboratoire Paul Painlevé, 59000 Lille, France} 
\email{andrea.natale@inria.fr}
\author[G. Todeschi]{Gabriele Todeschi}
\address{Gabriele Todeschi, LIGM, Univ. Gustave Eiffel, CNRS, F-77454 Marne-la-Vallée, France
} 
\email{gabriele.todeschi@univ-eiffel.fr}
\date{}
\begin{document}

\maketitle
\begin{abstract} 
In this article we study a variational problem providing a way to extend for all times minimizing geodesics connecting two given probability measures, in the Wasserstein space. This is simply obtained by allowing for negative coefficients in the classical variational characterization of Wasserstein barycenters. 
We show that this problem admits two equivalent convex formulations: the first can be seen as a particular instance of Toland duality and the second is a barycentric optimal transport problem.
We propose an efficient numerical scheme to solve the latter formulation based on entropic regularization and a variant of Sinkhorn algorithm.
\end{abstract}

\section{Introduction}

Given a metric space $(X,d)$, a globally minimizing geodesic on $X$ defined on $[t_0,t_1]\subset \mathbb{R}$, with $t_1>t_0$, is a curve $x:s\in[t_0,t_1] \rightarrow x(s)\in X$ verifying
\begin{equation}\label{eq:geodesicd}
d(x(s_0),x(s_1)) = \frac{|s_1-s_0|}{|t_1-t_0|} d(x(t_0),x(t_1))\,, 
 \end{equation}  
 for all $s_0,s_1\in [t_0,t_1]$.
If such a curve exists, its points can be characterized as the minimizers of the following variational problem:
for any $s\in [t_0,t_1]$
 \begin{equation}\label{eq:diffd0}
 x(s)\in \argmin_{x\in X} \left\{ (s-t_0) \frac{d^2(x,x(t_1))}{2} +(t_1-s) \frac{d^2(x,x(t_0))}{2}\right\}.
 \end{equation}

Let us consider problem \eqref{eq:diffd0} for $s \notin [t_0,t_1]$. Specifically, after a rescaling, we consider the following problem: 
given two points $x_0,x_1 \in X$ and $t>1$,  find $x\in X$ that solves 
 \begin{equation}\label{eq:diffd}
 \inf_{x\in X} \left\{ \frac{d^2(x,x_1)}{2(t-1)} - \frac{d^2(x,x_0)}{2t}\right\}.
 \end{equation}
 By triangular and Young's inequality, we always have that such infimum is larger than $-d^2(x_0,x_1)/2$. Moreover, if there exists a globally minimizing geodesic  $x:s\in[0,t] \rightarrow x(s)\in X$ such that $x(0)=x_0$ and $x(1)=x_1$ then this lower bound is attained by $x(t)$, which is therefore a minimizer. In particular, in the case where $X =\mathbb{R}^d$, equipped with the Euclidean distance, one always has a unique minimizer
 \[
 x(t) = x_0 + t (x_1-x_0)\,.
 \]
 In general, problem \eqref{eq:diffd} gives a variational definition of geodesic extrapolation even if no geodesic, connecting $x_0$ to $x_1$ on the interval $[0,1]$, may be extended up to time $t>1$ while staying globally length minimizing.

In this work, we consider a specific instance of problem \eqref{eq:diffd} where $(X,d)$ is $\mathcal{P}_2(\mathbb{R}^d)$, the set of probability measures with finite second moments, equipped with the $L^2$-Wasserstein distance $W_2$. The latter is defined as follows: for any $\mu,\nu\in \mathcal{P}_2(\mathbb{R}^d)$
\begin{equation}\label{eq:w2}
W^2_2(\mu,\nu) \coloneqq {\min}\left\{ \int |x-y|^2 \mathrm{d} \gamma(x,y) ~;~ \gamma \in \Gamma(\mu,\nu) \right\}\,,
\end{equation}
where $\Gamma(\mu,\nu) \subset \mathcal{P}_2(\mathbb{R}^d\times \mathbb{R}^d)$ is the set of couplings having $\mu$ and $\nu$ as first and second marginal, respectively. The problem we consider is therefore given by:
\begin{equation}\label{eq:diffW2}\tag{$\mathcal{P}$}
{\min_{\mu\in \mathcal{P}_2(\mathbb{R}^d)}}  \left\{
 \frac{W^2_2(\mu,\nu_1)}{2(t-1)} - \frac{W^2_2(\mu,\nu_0)}{2t} \right\} \,.
\end{equation}
By the derivation above, problem \eqref{eq:diffW2} provides a natural notion for geodesic extrapolation in the Wasserstein space. This was named \emph{metric extrapolation} in \cite{gallouet2024geodesic} and used for the construction of higher order time discretization of Wasserstein gradient flows. 
In contrast to the classical Wasserstein version of problem \eqref{eq:diffd0} introduced in \cite{agueh2011barycenters}, it was shown in \cite{gallouet2024geodesic} (see also Section \ref{sec:existence}) that problem \eqref{eq:diffW2} always admits a unique minimizer even when there are multiple geodesics connecting $\nu_0$ to $\nu_1$. This may be surprising given the lack of convexity of the problem with respect to the linear interpolation of probability measures, but is due to the fact that \eqref{eq:diffW2} is strongly convex along particular curves known as generalized geodesics \cite{ambrosio2008gradient}. The goal of this paper is to provide a full characterization of the hidden convexity in problem \eqref{eq:diffd0} which is responsible for its well-posedness.
More precisely, we show that it admits two different convex formulations, which can be seen as an instance of Toland duality and of weak optimal transport, respectively, and which we describe in general terms in this section. We will use such formulations to give a precise characterization of the minimizers of problem \eqref{eq:diffd0} and to construct an efficient numerical scheme to compute them.

\subsection{Geodesics in the Wasserstein space}
Consider the optimal transport problem \eqref{eq:w2} from $\nu_0$ to $\nu_1$. If, for example, $\nu_0$ is absolutely continuous, then Brenier's theorem states that there exists a unique solution $\gamma^* \in \Gamma(\nu_0,\nu_1$) to this problem which is furthermore induced by a transport map $\nabla u:\mathbb{R}^d\rightarrow \mathbb{R}^d$, where $u:\mathbb{R}^d\rightarrow \mathbb{R}$ is a convex function usually called the Brenier potential for the transport from $\nu_0$ to $\nu_1$, i.e.
 \[
\gamma^* = (\mathrm{Id},\nabla u)_\# \nu_0\,.
\]
Furthermore, in this case, there exists a uniquely defined geodesic on the interval $[0,1]$ connecting $\nu_0$ to $\nu_1$, which is given by McCann's interpolant \cite{mccann1997convexity}
 \begin{equation}\label{eq:geod}
 \nu(s)  = ((1-s)\mathrm{Id} +s\nabla u )_\# \nu_0\,, \quad \forall \,s\in[0,1]\,.
 \end{equation}
This curve can be extended up to $s=t>1$ while staying a length-minimizing geodesic if and only if $((1-t)\mathrm{Id} +t\nabla u )$ is an optimal transport map, or equivalently 
\[
 x\mapsto u(x) - \frac{t-1}{t} \frac{|x|^2}{2} \quad \text{is convex}.
\]
However, this condition is not satisfied in general, since we may only expect $u$ to be convex (and not strongly convex), which corresponds to the fact that the trajectories of the particles, induced by \eqref{eq:geod}, may cross precisely at $t=1$.

Note that in the following, for brevity, we will occasionally omit to specify ``length-minimizing" when referring to Wasserstein geodesics, but we will always refer to curves that are globally length-minimizing.

\subsection{Toland's duality}
Toland's duality \cite{toland1978duality,toland1979duality} concerns the minimization of the difference of two convex, proper and lower semi-continuous functions $F, G:V \rightarrow (-\infty, \infty]$, where $V$ is a normed vector space. Specifically, we have 
the equivalence
\[
\inf_{x \in V} \left\{ F(x)-G(x) \right\} = \inf_{p\in V^*} \left\{G^*(p)-F^*(p)\right\}\,,
\]
where $V^*$ is the topological dual of $V$ and $F^*$ and $G^*$ are the Legendre transforms of $F$ and $G$ respectively. 

The idea of using Toland duality to deal with differences of Wasserstein distances stems from the work of Carlier \cite{carlier2008Toland}, who considered a variant of problem \eqref{eq:diffW2} where the coefficients multiplying the two distances are equal and $\mathbb{R}^d$ is replaced by a compact convex set. In our case, $F$ and $G$ are replaced by the maps
\[
\mu \in \mathcal{P}_2(\mathbb{R}^d) \mapsto \frac{W^2_2(\mu,\nu_1)}{2(t-1)} \quad \text{and} \quad
\mu \in \mathcal{P}_2(\mathbb{R}^d) \mapsto \frac{W^2_2(\mu,\nu_0)}{2t} \,.
\]
Then, at least formally, one can check that the resulting dual problem is given by 
\begin{equation}\label{eq:dualBrenier} \tag{$\mathcal{P}^*$}
\inf \left \{ \int u^* \mathrm{d} \nu_1  + \int u \mathrm{d} \nu_0 ~:~ u - \frac{t-1}{t} \frac{|\cdot|^2}{2} \text{ is convex {and l.s.c.}} \right\}.
\end{equation}
{A detailed proof of this result is provided in Section \ref{sec:toland}.
Remarkably, and differently from \eqref{eq:diffW2}, this is a convex optimization problem in the usual sense. 
Also, we observe that requiring $u$ to be only convex rather than strongly convex,  one recovers one of the possible dual formulations of the quadratic optimal transport problem from $\nu_0$ to $\nu_1$. In particular, in this case, any Brenier potential $u$ is a solution and, if $\nu_1$ is a.c., $\nabla u^*_\#\nu_1 = \nu_0$.  
On the other hand, if $u$ is a solution to \eqref{eq:dualBrenier} we show  that $\overline{\nu}_0 \coloneqq \nabla u^*_\# \nu_1$ may be different from $\nu_0$ but is dominated by it in convex order (see Lemma \ref{lem:optimalityu}), i.e.\
\begin{equation}\label{eq:convexorder}
\int \varphi\,  \mathrm{d} \overline{\nu}_0 \leq \int \varphi \,\mathrm{d} {\nu}_0
\end{equation}
for all convex functions $\varphi:\mathbb{R}^d\rightarrow \mathbb{R}$. Moreover, we show that the (unique) solution to problem \eqref{eq:diffW2} can be written as
\begin{equation}\label{eq:nut}
\nu_t \coloneqq ((1-t) \mathrm{Id} +t \nabla u)_\# \overline{\nu}_0\,,
\end{equation}
at least in the case where $\overline{\nu}_0$ is absolutely continuous (see Theorem \ref{th:dualityW2} for the precise statement in the general case). 
Since $u$ is strongly convex this means that the measure $\overline{\nu}_0$, defined via the solution of problem \eqref{eq:dualBrenier} is such that the geodesic from $\overline{\nu}_0$ to $\nu_1$ on the time interval $[0,1]$ can be extended up to time $t$ and the resulting extension is precisely the solution to \eqref{eq:diffW2}.

\subsection{Weak optimal transport formulation} The convex order relation between $\overline{\nu}_0 = \nabla u^*_\# \nu_1$ and $\nu_0$, with $u$ solving $\eqref{eq:dualBrenier}$, can be exploited to derive an equivalent formulation of problem \eqref{eq:diffW2} which fits in the framework of weak (and in particular barycentric) optimal transport, a generalization of optimal transport introduced in \cite{gozlan2017kantorovich}. This reads as follows: 
\begin{equation} \label{eq:barycentric}\tag{$\mathcal{B}$}
\inf_{\gamma\in \Gamma(\nu_0,\nu_1)} \int |t x_1 - (t-1) \mathrm{bary}(\gamma_{x_1})|^2 \mathrm{d} \nu_1(x_1)\,,\quad  \mathrm{bary}(\gamma_{x_1}) = \int x_0 \mathrm{d} \gamma_{x_1}(x_0)\,,
\end{equation}
where we denote by $\ed \gamma (x_0,x_1) = \ed\gamma_{x_1}(x_0) \ed \nu_1(x_1)$ the disintegration of $\gamma$ with respect to its second marginal $\nu_1$. 
On the other hand, by Strassen's theorem (see Lemma \ref{lem:strassen} and \cite{strassen1965existence}), {the convex order} condition \eqref{eq:convexorder} implies the existence of a coupling $\theta \in \Gamma(\overline {\nu}_0,\nu_0)$ which is the law of a martingale, i.e.\
$ \mathrm{d}\theta(x,y) =\mathrm{d} \theta_{x}(y) \mathrm{d} \overline{\nu}_0(x)$ and
\[
\int y \,\mathrm{d} \theta_{x}(y) = x\,,  \quad \text{ for $\overline{\nu}_0$-a.e. } x\in\mathbb{R}^d,
\]
where now $\theta_x$ is the probability kernel obtained by disintegrating  $\theta$ with respect to its first marginal $\overline{\nu}_0$.
Thus, formally, the link between problem \eqref{eq:dualBrenier} and \eqref{eq:barycentric} can be stated as follows: $u$ solves \eqref{eq:dualBrenier} if and only if the coupling $\pi \in \Gamma(\nu_0,\nu_1)$ defined by
\begin{equation}\label{eq:barymin}
\mathrm{d} \pi(x_0,x_1) =  \mathrm{d} \theta_{\nabla u^*(x_1)}(x_0) \mathrm{d} {\nu}_1(x_1)
\end{equation}
solves \eqref{eq:barycentric} (see Theorem \ref{th:barycentric}). Note that this gives a characterization of the minimizers of problem \eqref{eq:barycentric} as the composition of a martingale and a sufficiently smooth transport plan. Such a characterization can also be derived as a slight modification of a result {of Gozlan and Juillet} \cite{gozlan2020mixture}. Our proof shows that this can be alternatively obtained as a consequence of Strassen's theorem and Toland duality.

\subsection{Related works} 
The problem of defining Wasserstein geodesic extensions has appeared in the literature in different contexts.
Several approaches, different from the one discussed in this article, were proposed
in \cite{benamou2019second} as a byproduct of different definitions for cubic splines in the Wasserstein space, with the aim of interpolating (and extrapolating) multiple measures by a single curve.
A variant of problem \eqref{eq:diffW2}
was introduced in \cite{Matthes2019bdf2}
in order to construct higher order versions of the JKO scheme for the numerical computation of Wasserstein gradient flows. 
The idea of using geodesic extensions is also proposed in \cite{Legendre2017VIM} with the same purpose. 

Problem \eqref{eq:diffW2} has also a strong link with fluid dynamics. In one dimension,  the curve $t\mapsto \nu_t$ defined by equation \eqref{eq:nut} yields a ``sticky solution" of the pressureless Euler equation \cite{brenier1998one,natile2009wasserstein}; see also Section \ref{sec:1d}. In general, the idea of modifying the initial conditions to retrieve the solution of a fluid dynamic model  at the final time via convex optimization (as it occurs in our model; see equation \eqref{eq:nut}) was proposed by Brenier in \cite{brenier2018initial} for the incompressible Euler equations, but it applies to a large class of models \cite{brenier2020examples,vorotnikov2022partial}.

From a computational perspective, the use of strongly convex Brenier potentials has been advocated in \cite{paty2020regularity} to regularize standard optimal transport, but via a variational formulation that differs from ours and in particular is non-convex. On the other hand, problem \eqref{eq:dualBrenier}, with additional constraint of $L$-smoothness on the potential, has been recently considered in \cite{vacher2023semi} to provide an estimator of the true Brenier potential between sampled measures. In our case, however, the equivalence with the barycentric problem \eqref{eq:barycentric} enables us to propose an algorithm with similar performance as the well-known Sinkhorn method. Furthermore, the numerical scheme that we propose leverages Sinkhorn and entropic regularization in a different way than related methods introduced in \cite{paty2022algorithms} for general weak optimal transport problems, or more recently in \cite{kim2024statistical}. 

The variational formulation of the Wasserstein  barycenters between multiple measures with negative coefficients appears naturally also in the context of nonlinear reduced basis models in the Wasserstein space, as in \cite{dalery2023nonlinear} for example, and for statistical regression \cite{fan2021conditional}.
However, in these works the non-convexity of the problem is not addressed directly. In particular, in \cite{dalery2023nonlinear} the authors use a simpler proxy for the $W_2$ distance which allows for an explicit characterization of the barycenters. In \cite{fan2021conditional} instead, the problem is tackled numerically using a convex optimization approach, without convergence guarantees.
{We refer to Remark \ref{rem:mm} for some considerations on the extension of our work to the setting with multiple measures (see also the recent work \cite{tornabene2024generalized} for a detailed study of the one-dimensional case).} 
We also remark that a weak optimal transport version of Wasserstein barycenters (rather than extrapolation as in our case) has been recently introduced in \cite{cazelles2021novel}.

Finally, after this work first appeared, problem \eqref{eq:diffW2} was also studied in \cite{bertucci2024approximation} as a means of approximating $W^2_2(\nu_0,\nu_1)$, since, up to the multiplicative factor $-1/2$,  this is precisely the value of \eqref{eq:diffW2} when a geodesic extension exists. This approximation was then used for the study of Hamilton-Jacobi equations on the space of probability measures. The duality theory developed here, along with its geometric interpretation and its connection to weak optimal transport, offers a complementary perspective to that developed in \cite{bertucci2024approximation}, which is rather focused on probabilistic considerations.

\subsection{Structure of the paper} The rest of the article is structured as follows. In Section \ref{sec:main} we collect some well-posedness results for the metric extrapolation problem, and prove strong duality with problem \eqref{eq:dualBrenier}.  In Section \ref{sec:weak}, we prove the equivalence with the weak optimal transport formulation \eqref{eq:barycentric}. In Section \ref{sec:relation} we give a precise link of our problem with the $H^1$ projection on convex functions, via a $\Gamma$-convergence result. In Section \ref{sec:particular} we provide some exact solutions to the problem providing some intuition on the behavior of the metric extrapolation. Finally, in Section \ref{sec:numerical}, we propose and analyze a variant of Sinkhorn algorithm to solve the problem in the case of atomic measures, and provide some numerical results.

\section{Main properties and Toland duality}\label{sec:main}

In this section we collect some well-posedness results for problem \eqref{eq:diffW2}, i.e., existence and uniqueness of solutions and convexity of the problem along generalized geodesics, that were already proven in \cite{gallouet2024geodesic}, and also stability of the minimizers with respect to the data of the problem.
We further show that the problem admits a convex dual formulation given by problem \eqref{eq:dualBrenier}. This last result is inspired by the duality principle studied in \cite{carlier2008Toland} for the case where the transport costs are multiplied by the same coefficient with opposite signs, but it does not contain the latter since our proof  requires the coefficients to be different in absolute value. In this section, as in the rest of the paper, we always suppose $t>1$.

\subsection{Existence and uniqueness of solutions}\label{sec:existence} Existence and uniqueness for solutions to problem \eqref{eq:diffW2} follows from the fact that the functional 
\begin{equation}\label{eq:diffW2fun}
\mc{F}_t(\nu_0,\nu_1;\mu) \coloneqq \frac{W^2_2(\mu,\nu_1)}{2(t-1)} -\frac{W^2_2(\mu,\nu_0)}{2t}
\end{equation}
is bounded from below and is strongly convex along specific curves, known as generalized geodesics \cite[Definition 9.1.1]{ambrosio2008gradient}. As for the lower bound, this can be found directly using the notion of Wasserstein geodesic. In fact, if $\nu:s\in[0,t]\rightarrow \nu(s) \in \mc{P}_2(\mathbb{R}^d)$ is a geodesic from $\nu_0$ to $\mu$, we have
\begin{equation}\label{eq:boundbeta}
\begin{aligned}
\frac{W_2^2(\mu,\nu_0)}{2t} &= \frac{W^2_2(\mu,\nu(1))}{2(t-1)} + \frac{W^2_2(\nu(1),\nu_0)}{2}\\
 &\leq \frac{W^2_2(\mu,\nu_1)}{2(t-1)} + \frac{W^2_2(\nu_1,\nu_0)}{2}\,,
\end{aligned}
\end{equation}
where we have used the variational characterization of globally minimizing geodesics in equation \eqref{eq:diffd0}, and specifically the fact that
\[
\nu(1) \in \underset{\nu \in \mc{P}_2(\mathbb{R}^d)}{\mathrm{argmin}} \left\{ \frac{W^2_2(\mu,\nu)}{2} + (t-1) \frac{W^2_2(\nu,\nu_0)}{2}\right\}\,.
\]
Then, equation \eqref{eq:boundbeta} yields
\begin{equation}\label{eq:lowerbound}
\mc{F}_t(\nu_0,\nu_1;\mu) \geq - \frac{W_2^2(\nu_0,\nu_1)}{2}\,.
\end{equation}

A precise statement for the strong convexity of $\mc{F}_t(\nu_0,\nu_1; \cdot)$ along generalized geodesics is contained in the following lemma:
\begin{lemma}[Theorem 3.4 in \cite{Matthes2019bdf2}]\label{lem:abconvex}
	Let $\nu_0,\nu_1 \in \mc{P}_2(\mathbb{R}^d)$, and consider the functional $\mc{F}_t(\nu_0,\nu_1; \cdot):\mathcal{P}_2(\mathbb{R}^d) \rightarrow\mathbb{R}$ defined in \eqref{eq:diffW2fun}. For any $\mu_0, \mu_1 \in \mc{P}_2(\mathbb{R}^d)$, let  $\mu:[0,1]\rightarrow \Pc_2(\mathbb{R}^d), \mu(0)=\mu_0, \mu(1)=\mu_1$, be a generalized geodesic with base $\nu_1$. Then, for all $0\leq s\leq 1$ it holds
	\begin{equation}\label{eq:abconvex}
		\mc{F}_t(\nu_0,\nu_1; \mu(s)) \le (1-s) \mc{F}_t(\nu_0,\nu_1; \mu_0) + s \mc{F}_t(\nu_0,\nu_1; \mu_1)- \frac{s(1-s)}{2}\frac{W^2_2(\mu_0,\mu_1)}{t(t-1)}.
	\end{equation}
\end{lemma}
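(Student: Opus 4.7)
The plan is to exploit twice the elementary quadratic identity
\[
(1-s)|a|^2 + s|b|^2 = |(1-s)a+sb|^2 + s(1-s)|a-b|^2,
\]
which underlies both the behavior of $W_2^2$ along generalized geodesics and the cost decomposition along displacement interpolations. First I would fix a generalized geodesic plan with base $\nu_1$: a measure $\gamma\in\mc{P}((\mathbb{R}^d)^3)$ on triples $(x,y,z)$ whose projection onto $(x,y)$ is an optimal transport plan between $\nu_1$ and $\mu_0$, whose projection onto $(x,z)$ is an optimal transport plan between $\nu_1$ and $\mu_1$, and such that $\mu(s)$ is the push-forward of $\gamma$ through $(x,y,z)\mapsto(1-s)y+sz$. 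I set $C_\gamma:=\int|y-z|^2\,\mathrm{d}\gamma$, so in particular $C_\gamma\ge W_2^2(\mu_0,\mu_1)$.

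For the first Wasserstein term in $\mc{F}_t$, the push-forward of $\gamma$ through $(x,y,z)\mapsto(x,(1-s)y+sz)$ is an admissible coupling between $\nu_1$ and $\mu(s)$. Applying the quadratic identity pointwise inside the resulting integral and using that the $(x,y)$- and $(x,z)$-marginals of $\gamma$ are optimal, I would obtain
\[
W_2^2(\mu(s),\nu_1)\le (1-s)W_2^2(\mu_0,\nu_1)+sW_2^2(\mu_1,\nu_1)-s(1-s)\,C_\gamma.
\]

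For the second Wasserstein term, I would invoke the gluing lemma to combine $\gamma$ with an optimal transport plan between $\mu(s)$ and $\nu_0$, producing a four-variable coupling $\tilde\gamma$ on $(x,y,z,w)$ whose projection onto $((1-s)y+sz,w)$ is optimal and whose projection onto $(x,y,z)$ coincides with $\gamma$. The projections of $\tilde\gamma$ onto $(y,w)$ and $(z,w)$ are (generally sub-optimal) couplings between $\mu_0$ and $\nu_0$ and between $\mu_1$ and $\nu_0$, so $W_2^2(\mu_0,\nu_0)\le\int|y-w|^2\,\mathrm{d}\tilde\gamma$ and $W_2^2(\mu_1,\nu_0)\le\int|z-w|^2\,\mathrm{d}\tilde\gamma$. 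Taking the convex combination with weights $1-s$ and $s$ and applying the quadratic identity once more yields
\[
(1-s)W_2^2(\mu_0,\nu_0)+sW_2^2(\mu_1,\nu_0)\le W_2^2(\mu(s),\nu_0)+s(1-s)\,C_\gamma.
\]

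Substituting the two bounds into the definition of $\mc{F}_t$, with signs $+\tfrac{1}{2(t-1)}$ and $-\tfrac{1}{2t}$ respectively, the two $C_\gamma$ contributions combine with net coefficient $s(1-s)\bigl[\tfrac{1}{2t}-\tfrac{1}{2(t-1)}\bigr]=-\tfrac{s(1-s)}{2t(t-1)}$, and then $C_\gamma\ge W_2^2(\mu_0,\mu_1)$ delivers the stated inequality. The main conceptual obstacle is the sign asymmetry: because of the minus sign on the $\nu_0$ term in $\mc{F}_t$, the inequality one needs for $W_2^2(\mu(s),\nu_0)$ points in the opposite direction from standard convexity of the squared Wasserstein distance along generalized geodesics. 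The gluing step circumvents this precisely by deriving couplings between each $\mu_i$ and $\nu_0$ \emph{from} the optimal plan between $\mu(s)$ and $\nu_0$, so that $W_2^2(\mu(s),\nu_0)$ appears on the right-hand side with the correct sign; the resulting $+s(1-s)C_\gamma$ then combines with the $-s(1-s)C_\gamma$ from the $\nu_1$ bound to produce the sharp modulus $1/[t(t-1)]$.
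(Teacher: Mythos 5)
Your argument is correct, and it matches the paper's approach in spirit: the paper cites \cite{Matthes2019bdf2} for the statement and then explains that it follows by decomposing $\mc{F}_t$ into $W^2_2(\cdot,\nu_1)/(2(t-1))$, which is $1/(t-1)$-convex along generalized geodesics with base $\nu_1$ (AGS Lemma 9.2.1), and $-W^2_2(\cdot,\nu_0)/(2t)$, which is $-1/t$-convex along any generalized geodesic (AGS Proposition 9.3.12). What you do differently is re-derive both of these ingredients from scratch using the quadratic identity and, for the $\nu_0$ term, the gluing lemma --- your coupling $\tilde\gamma$ and the observation that one must derive couplings between each $\mu_i$ and $\nu_0$ \emph{from} the optimal plan between $\mu(s)$ and $\nu_0$ (rather than the other way around) is exactly the content of AGS 9.3.12. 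This buys you a self-contained proof at the cost of length; the paper opts to cite the two results and observe that the moduli add. One small bookkeeping remark: your two intermediate bounds carry $C_\gamma=\int|y-z|^2\,\mathrm d\gamma$, and the crucial point that lets the final $C_\gamma\ge W_2^2(\mu_0,\mu_1)$ step go through in the right direction is that after combining with weights $\tfrac1{2(t-1)}$ and $-\tfrac1{2t}$ the net coefficient of $C_\gamma$ is negative (since $t>1$); you state this correctly, but it is worth flagging explicitly since it is the step where the hypothesis $t>1$ enters.
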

Note that this Lemma can be viewed as a simple consequence of the fact that $\mc{F}_t(\nu_0,\nu_1; \cdot)$ is the sum of $W^2_2(\cdot,\nu_1)/(2t-2)$, which is $1/(t-1)$-convex along generalized geodesics with base $\nu_1$ \cite[Lemma 9.2.1]{ambrosio2008gradient} (in the sense of equation \eqref{eq:abconvex}), and $- W^2_2(\cdot,\nu_1)/(2t)$, which is $-1/t$-convex along \emph{any} generalized geodesic \cite[Proposition 9.3.12]{ambrosio2008gradient}. Therefore, the sum must be $(1/(t-1)-1/t)$-convex along generalized geodesics with base $\nu_1$.

Using the fact that the space $\mc{P}_2(\mathbb{R}^d)$ equipped with the Wasserstein distance is complete, one can show that Lemma \ref{lem:abconvex} together with the lower bound \eqref{eq:lowerbound} implies that problem \eqref{eq:diffW2} admits a unique solution. We refer to \cite[Proposition 4.10]{gallouet2024geodesic} for the proof.

\begin{remark}[Link with extrapolation] \label{rem:extrap} Suppose that there exists a length-minimizing geodesic $s\in [0,t]  \rightarrow \nu(s) \in \mc{P}_2(\mathbb{R}^d)$ such that $\nu(0) = \nu_0$ and $\nu(1)= \nu_1$, then $\nu(t)$ is the unique minimizer to problem \eqref{eq:diffW2}, since
\[
\frac{W^2_2(\nu(t),\nu_1)}{2(t-1)} - \frac{W^2_2(\nu(t),\nu_0)}{2t} = - \frac{ W_2^2(\nu_0,\nu_1)}{2}\,.
\]
The converse is also true: if the unique minimizer $\mu$ realizes the lower bound, i.e.\ $\mc{F}_t(\nu_0,\nu_1;\mu) = -  W_2^2(\nu_0,\nu_1)/2$, then the geodesic $\nu$ is well-defined and $\mu = \nu(t)$. In fact, equation \eqref{eq:boundbeta} holds in this case with an equality instead of an inequality, which implies that $\nu_1$ belongs to a globally minimizing geodesic connecting $\nu_0$ and $\mu$.
\end{remark} 

As a direct consequence of Lemma \ref{lem:abconvex}, if $\nu_t$ minimizes the functional $\mc{F}_t(\nu_0, \nu_1; \cdot)$, then we have that
\begin{equation}\label{eq:strongc}
\frac{ W^2_2(\mu,\nu_t)}{2 t(t-1)} + \mc{F}_t(\nu_0, \nu_1;\nu_t) \leq  \mc{F}_t(\nu_0, \nu_1;\mu) 
\end{equation}
for any $\mu \in \mc{P}_2(\mathbb{R}^d)$, and moreover, 
\[
W_2(\nu_0,\nu_t) \leq t W_2(\nu_0,\nu_1)\,, \quad W_2(\nu_1,\nu_t) \leq (t-1) W_2(\nu_0,\nu_1)\,,
\]
in analogy to Wasserstein geodesics (for which equality holds);
see \cite[Proposition 4.10]{gallouet2024geodesic} for details. In turn, these bounds imply that the map $(t,\nu_0,\nu_1)\mapsto \nu_t$ is at least locally Lipschitz in $t$ and locally 1/2-H\"older continuous in $\nu_0$ and $\nu_1$ (with respect to the $W_2$ metric). We sketch below a simple argument to establish this, although a more refined approach is necessary to obtain a sharper result, which we leave for future work.
\begin{lemma}[Stability of the extrapolation]\label{lem:stability}
Let $\nu_t$ and $\tilde{\nu}_s$ be the unique minimizers of  $\mc{F}_t(\nu_0,\nu_1; \cdot)$ and $\mc{F}_s(\tilde{\nu}_0,\tilde{\nu}_1; \cdot)$, respectively, where $\nu_i, \tilde{\nu}_i \in \mc{P}_2(\mathbb{R}^d)$ for $i=0,1$, and for all $t,s>1$. Then,
\begin{equation}\label{eq:dissipation}
 W^2_2(\nu_t,\tilde{\nu}_s) \leq C  \left( |t-s|^2 + W_2(\nu_0,\tilde{\nu}_0)+ W_2(\nu_1,\tilde{\nu}_1) \right)\,,
\end{equation}
where $C>0$ depends polynomially only on $\max(t,s)$,  $W_2(\nu_0,\tilde{\nu}_0)$, $W_2(\nu_1,\tilde{\nu}_1)$, and $W_2(\nu_0,\nu_1)$.
\end{lemma}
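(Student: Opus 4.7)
The strategy is to exploit the strong convexity estimate \eqref{eq:strongc} at each of the two minimizers $\nu_t$ and $\tilde{\nu}_s$, testing it against the other, and then carefully split the resulting error into terms that can be absorbed into the left-hand side by Young's inequality (giving the $|t-s|^2$) and terms that carry $W_2(\nu_i, \tilde{\nu}_i)$ factors as they are. First, I would apply \eqref{eq:strongc} twice to obtain
\begin{equation*}
\frac{W_2^2(\tilde{\nu}_s, \nu_t)}{2 t(t-1)} \leq \mc{F}_t(\nu_0,\nu_1;\tilde{\nu}_s) - \mc{F}_t(\nu_0,\nu_1;\nu_t), \quad \frac{W_2^2(\nu_t, \tilde{\nu}_s)}{2 s(s-1)} \leq \mc{F}_s(\tilde{\nu}_0,\tilde{\nu}_1;\nu_t) - \mc{F}_s(\tilde{\nu}_0,\tilde{\nu}_1;\tilde{\nu}_s),
\end{equation*}
and then sum them to get
\begin{equation*}
\left[\frac{1}{2t(t-1)}+\frac{1}{2s(s-1)}\right] W_2^2(\nu_t,\tilde{\nu}_s) \leq G(\tilde{\nu}_s) - G(\nu_t), \qquad G(\mu) := \mc{F}_t(\nu_0,\nu_1;\mu) - \mc{F}_s(\tilde{\nu}_0,\tilde{\nu}_1;\mu).
\end{equation*}

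Next, I would regroup $G$ by pairing the terms with the same measure and extracting the difference of the prefactors, e.g.
\begin{equation*}
\frac{W_2^2(\mu,\nu_1)}{2(t-1)} - \frac{W_2^2(\mu,\tilde{\nu}_1)}{2(s-1)} = \frac{W_2^2(\mu,\nu_1) - W_2^2(\mu,\tilde{\nu}_1)}{2(t-1)} + \frac{s-t}{2(t-1)(s-1)} W_2^2(\mu,\tilde{\nu}_1),
\end{equation*}
and similarly for the $\nu_0$/$\tilde\nu_0$ pair. Using the elementary estimate $|W_2^2(\mu,\alpha)-W_2^2(\mu,\beta)| \leq (W_2(\mu,\alpha)+W_2(\mu,\beta)) W_2(\alpha,\beta)$, and the a priori bounds $W_2(\nu_t,\nu_i) \leq t\, W_2(\nu_0,\nu_1)$ and $W_2(\tilde{\nu}_s,\tilde{\nu}_i) \leq s\, W_2(\tilde{\nu}_0,\tilde{\nu}_1)$ recalled just before the statement (together with the triangle inequality to control $W_2(\tilde{\nu}_0,\tilde{\nu}_1)$ by the data), the first type of term in $G(\tilde{\nu}_s)-G(\nu_t)$ is bounded by $C\,[W_2(\nu_0,\tilde{\nu}_0)+W_2(\nu_1,\tilde{\nu}_1)]$, while the second type (those carrying the factor $s-t$) is bounded by $C\, |t-s|\, W_2(\nu_t,\tilde{\nu}_s)$, with $C$ polynomial in $\max(t,s)$ and in the input $W_2$-distances.

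Putting these estimates together and applying Young's inequality to the mixed term, $C\,|t-s|\,W_2(\nu_t,\tilde{\nu}_s) \leq \varepsilon\, W_2^2(\nu_t,\tilde{\nu}_s) + C_\varepsilon |t-s|^2$, I would choose $\varepsilon$ small enough to absorb the $W_2^2(\nu_t,\tilde{\nu}_s)$ contribution into the left-hand side. Using the lower bound $\tfrac{1}{2t(t-1)}+\tfrac{1}{2s(s-1)} \geq \tfrac{1}{2\max(t,s)\,(\max(t,s)-1)}$ to divide through then yields \eqref{eq:dissipation} with a constant polynomial in $\max(t,s)$ and in the three $W_2$-distances listed in the statement.

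The main obstacle is precisely the asymmetry between the $|t-s|^2$ term and the (unsquared) $W_2(\nu_i,\tilde{\nu}_i)$ terms: it is essential to organise the expansion of $G(\tilde{\nu}_s)-G(\nu_t)$ so that the terms proportional to $(s-t)$ appear with a factor $W_2(\nu_t,\tilde{\nu}_s)$ (allowing Young's inequality to upgrade $|t-s|$ to $|t-s|^2$), whereas the terms proportional to $W_2(\nu_i,\tilde{\nu}_i)$ come without such a coupling and therefore stay at first order. A sharper result — e.g.\ obtaining the $W_2(\nu_i,\tilde{\nu}_i)$ terms with a quadratic exponent as well — would presumably require a finer argument exploiting the dual or weak optimal transport formulations, and is left for future work as noted in the text.
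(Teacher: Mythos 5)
Your proposal follows the same backbone as the paper's proof: apply the strong convexity estimate \eqref{eq:strongc} at each minimizer against the other, sum, and then control the right-hand side via the elementary estimate $|W_2^2(\mu,\alpha)-W_2^2(\mu,\beta)| \le (W_2(\mu,\alpha)+W_2(\mu,\beta))W_2(\alpha,\beta)$ together with the a priori bounds on $W_2(\nu_t,\nu_i)$. The organization is genuinely different, though: the paper perturbs \emph{one variable at a time} (first $t\to s$ with $\tilde\nu_i=\nu_i$, then $\nu_i\to\tilde\nu_i$ with $s=t$) and glues the two cases with a triangle inequality, whereas you perform a single unified splitting of $G(\tilde\nu_s)-G(\nu_t)$ and absorb the $|t-s|W_2(\nu_t,\tilde\nu_s)$ cross-term by Young's inequality. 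This one-shot version is, in spirit, cleaner than the paper's two-step argument.

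However, as written there is a genuine gap in the constant tracking, precisely because you keep the prefactors $\tfrac{1}{2t(t-1)}$ and $\tfrac{1}{2s(s-1)}$ and only divide through at the very end. Concretely, the second-type contribution involves
\begin{equation*}
\frac{s-t}{2(t-1)(s-1)}\bigl[W_2^2(\tilde\nu_s,\tilde\nu_1)-W_2^2(\nu_t,\tilde\nu_1)\bigr]\,,
\end{equation*}
and when you estimate $W_2(\nu_t,\tilde\nu_1)\le (t-1)W_2(\nu_0,\nu_1)+W_2(\nu_1,\tilde\nu_1)$, the term $W_2(\nu_1,\tilde\nu_1)$ carries no factor of $(t-1)$ or $(s-1)$; after Young's inequality with $\varepsilon$ of order $\tfrac{1}{2t(t-1)}+\tfrac{1}{2s(s-1)}$ and dividing through, you are left with a contribution of size $\tfrac{ts}{(t-1)(s-1)}|t-s|^2\,W_2^2(\nu_1,\tilde\nu_1)$, which is not polynomial in $\max(t,s)$ as $t,s\to 1^+$ (take, e.g., $t=1+\delta$, $s=1+\delta^2$). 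The paper sidesteps this by multiplying \eqref{eq:strongc} through by $t(t-1)$ \emph{before} summing (that is the role of the rescaled functional $\mc G_t = t(t-1)\mc F_t$): the objective then has coefficients $\tfrac{t}{2}$ and $\tfrac{t-1}{2}$ with no reciprocals, so the second-type prefactor becomes $\tfrac{t-s}{2}$ and all constants stay polynomial. If you perform this rescaling first, your unified Young's-inequality argument goes through without any case split and gives a proof at least as clean as the paper's; without it, the constant claim in your penultimate paragraph is not substantiated.

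Two minor remarks: the a priori bound you quote should read $W_2(\nu_t,\nu_1)\le (t-1)W_2(\nu_0,\nu_1)$ and $W_2(\nu_t,\nu_0)\le t\,W_2(\nu_0,\nu_1)$ (you wrote a uniform factor $t$); and in the rescaled version the Young's step is not even strictly necessary for the pure $t$-perturbation, since one can simply divide by $W_2(\nu_t,\tilde\nu_s)$ to get a Lipschitz estimate, which is what the paper does.
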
\begin{proof} For simplicity, let us use $\mc{G}_t$ and $\tilde{\mc{G}}_s$ to denote $t(t-1)\mc{F}_t(\nu_0,\nu_1;\cdot)$ and $s(s-1)\mc{F}_s(\tilde{\nu}_0,\tilde{\nu}_1;\cdot)$, respectively.
    Setting $\mu = \tilde{\nu}_s$ in \eqref{eq:strongc} and $\mu = \nu_t$ in the same inequality written for $\mc{F}_s(\tilde{\nu}_0,\tilde{\nu}_1;\cdot)$, we obtain
   \begin{equation}\label{eq:stab_est}
    \begin{aligned}
    W^2_2(\tilde{\nu}_s,\nu_t)&\leq  \mc{G}_t(\tilde{\nu}_s) - \tilde{\mc{G}}_s(\tilde{\nu}_s)+\tilde{\mc{G}}_s(\nu_t)-\mc{G}_t(\nu_t)\,.
    \end{aligned}
    \end{equation}
    If $\tilde{\nu}_i = \nu_i$ for $i=0,1$, then $\tilde{\nu}_s = \nu_s$ and the right-hand side is equal to 
    \[
    \frac{1}{2}\left(t- s\right)(W^2_2({\nu}_s,\nu_1) - W^2_2({\nu}_t,\nu_1)) + \frac{1}{2}\left(t-s\right)( W^2_2({\nu}_t,\nu_0)-W^2_2({\nu}_s,\nu_0))\,,
    \]
    and we have, for example,
    \begin{equation}\label{eq:diffbound2}
    W^2_2({\nu}_s,\nu_1) - W^2_2({\nu}_t,\nu_1)\leq W_2({\nu}_s,\nu_t) (s+t-2)W_2(\nu_1,\nu_0)\,,
    \end{equation}
    where we used the triangular inequality and equation \eqref{eq:dissipation}. This gives
    \[
    W_2({\nu}_s,\nu_t) \leq (s+t-1)W_2(\nu_1,\nu_0) |t-s|\,.
    \]
    Similarly, if $s=t$, the right-hand side of \eqref{eq:stab_est} is equal to
    \begin{multline*}
    \frac{t}{2}(W_2^2(\tilde{\nu}_t,\nu_1) - W^2_2(\tilde{\nu_t},\tilde{\nu_1}))+\frac{t}{2}(W^2_2(\nu_t,\tilde{\nu_1}) -W^2_2(\nu_t,\nu_1))\\
    - \frac{t-1}{2}(W_2^2(\tilde{\nu}_t,\nu_0) - W^2_2(\tilde{\nu_t},\tilde{\nu_0})) -\frac{t-1}{2}(W^2_2(\nu_t,\tilde{\nu_0}) -W^2_2(\nu_t,\nu_0))\,.
    \end{multline*}
    and we can control each of the four terms as in equation \eqref{eq:diffbound2}, leading to
    \[
    W_2^2(\nu_t,\tilde{\nu}_t) \leq C (W_2(\nu_1,\tilde{\nu}_1) + W_2(\nu_0,\tilde{\nu}_0))
    \]
    where $C>0$ is as in the statement of the lemma. We conclude by a further triangular inequality $W_2(\nu_t,\tilde{\nu}_s) \leq W_2(\nu_t,{\nu}_s)+W_2(\nu_s,\tilde{\nu}_s)$.
    \end{proof}

\subsection{Dual formulation}\label{sec:toland} Let us recall the classical dual Kantorovich formulation of the $L^2$-optimal transport problem: for any $\mu,\nu \in \mc{P}_2(\mathbb{R}^d)$ and $s>0$
\begin{equation}\label{eq:dualKantorovich}
\frac{W^2_2(\mu,\nu)}{2s}= \sup\left\{ \int \phi\, \ed \mu +\int \phi^{c}_s \, \ed \nu  \right\},
\end{equation}
where the supremum is taken over any continuous bounded function $\phi \in C_b(\mathbb{R}^d)$ and $\phi^{c}_s\in C_b(\mathbb{R}^d)$ denotes the $c$-transform of $\phi$ associated with the  $l^2$ cost rescaled by the factor $2s$, i.e.
\[
\phi^{c}_s(y) \coloneqq \inf_x \frac{|x-y|^2}{2s} - \phi(x)\,.
\]
We will denote the $c$-transform with respect to the cost $c(x,y)=|x-y|^2/2$ by $\phi^c \coloneqq \phi^c_1$. In the following we will also use an alternative form of the dual formulation \eqref{eq:dualKantorovich} written in terms of the Brenier potential. More precisely
\begin{equation}\label{eq:dualKantorovichb}
\frac{W^2_2(\mu,\nu)}{2}= -\inf \left\{ \int u\, \ed \mu +\int u^* \, \ed \nu  \right\} + \int \frac{|x|^2}{2} \ed \mu(x) + \int \frac{|x|^2}{2} \ed \nu(x) \,,
\end{equation}
where the infimum is computed over all  convex functions $u:\mathbb{R}^d\rightarrow \mathbb{R}$ (or equivalently over all lower semi-continuous convex function $u:\mathbb{R}^d\rightarrow (-\infty,\infty]$). Note that this formulation can be obtained 
from \eqref{eq:dualKantorovich} via the change of variable $u = |\cdot|^2/2 - \phi$.
{The problem on the right-hand side of \eqref{eq:dualKantorovichb} always admits a lower semi-continuous solution (which is  not necessarily continuous). We refer to any such solution as an optimal Brenier potential for the transport from $\mu$ to $\nu$.} 

By the dual formulation in equation \eqref{eq:dualKantorovich}, we have 
\begin{equation}\label{eq:boundtoland}
\begin{aligned}
\eqref{eq:diffW2} &= \inf_{\mu, \varphi} \sup_{\psi} 
\left\{
\int (\psi - \varphi)\, \ed \mu +\int \psi^{c}_{t-1} \, \ed \nu_1 - \int \varphi^c_{t} \ed \nu_0 \right\}\\
& \geq \inf_{\varphi} \left\{\int \varphi^{c}_{t-1} \, \ed \nu_1 - \int \varphi^c_{t} \ed \nu_0 \right\},
\end{aligned}
\end{equation}
where the first equality is obtained by using the dual formulation for both Wasserstein distances, whereas the inequality follows by  selecting $\psi = \varphi$ in the $\sup$ over $\psi$. 
Now, let us  {observe that }
\begin{equation}\label{eq:phicc}
\begin{aligned}
-[\varphi^c_{t}]^c(x) & = -\inf_y \left\{\frac{|x-y|^2}{2} - \inf_{z} \left\{ \frac{|y-z|^2}{2t} - \varphi(z) \right\} \right\}\\
& \leq \inf_{z} \sup_y \left\{-\frac{|x-y|^2}{2} +\frac{|y-z|^2}{2t} - \varphi(z) \right\} = \varphi^c_{t-1}(x)
\end{aligned}
\end{equation}
and therefore we have 
\[
\eqref{eq:diffW2} \geq 
- \sup \left\{\int \phi^{c} \, \ed \nu_1 + \int \phi\, \ed \nu_0 \right\},
\]
where now $\phi \in C_b(\mathbb{R}^d)$,
and with the constraint that there must exist a function $\varphi \in C_b(\mathbb{R}^d)$ such that
\begin{equation}\label{eq:constraintc}
\phi(x) = \varphi_t^c(x)= \inf_y\left\{ \frac{|x-y|^2}{2t} - \varphi(y)\right\} .
\end{equation}
Note that without this last constraint, we have precisely the dual formulation of the $L^2$-optimal transport problem from $\nu_0$ to $\nu_1$ in equation \eqref{eq:dualKantorovich}, with $s=1$. 
Making the change of variable $u = |\cdot|^2/2 - \phi$, which would correspond to the Brenier potential from $\nu_0$ to $\nu_1$, the constraint \eqref{eq:constraintc} implies the existence of a function $\xi\in C(\mathbb{R}^d)$ given by $\xi = |\cdot|^2/2 - t\varphi$ such that
\[
u(x) = \frac{t-1}{t} \frac{|x|^2}{2} +\frac{\xi^*(x)}{t} \,,
\]
where $\xi^*$  is the convex function given by the Legendre transform of $\xi$. Hence, if we consider the following problem
\begin{equation}\label{eq:dualBrenier} \tag{$\mc{P}^*$}
\inf \left \{ \int u^* \ed \nu_1  + \int u \ed \nu_0 ~:~ u - \frac{t-1}{t} \frac{|\cdot|^2}{2} \text{ is convex {and l.s.c.}}\right\},
\end{equation}
where again $u^*$ is the Legendre transform of $u$, we obtain
\begin{equation}\label{eq:ineq}
\eqref{eq:diffW2} \geq  \eqref{eq:dualBrenier} - \int \frac{|x|^2}{2} \ed \nu_0(x) - \int \frac{|x|^2}{2} \ed \nu_1(x)\,.
\end{equation}

{
Problem \eqref{eq:dualBrenier} is a convex optimization problem. It appeared already in \cite{gozlan2020mixture}, up to slight variations, as the dual of a barycentric optimal transport problem.  For completeness, we provide a proof of existence of minimizers, which follows essentially the same lines as the one used in \cite{paty2020regularity} to show existence of minimizers for the $W_2$ projection on measures obtained via push-forward by smooth and strongly convex Brenier potentials. The key property we will use is the fact that a lower semi-continuous proper convex function $u:\mathbb{R}^d \rightarrow (-\infty,\infty]$ is $1/L$-strongly convex if and only if $\nabla u^*$ is $L$-Lipschitz; see, e.g., Theorem 18.15 in \cite{bauschke2017convex}.
}

\begin{lemma}\label{lem:existence} There exists a {lower semi-continuous} strongly convex function $u$ solving \eqref{eq:dualBrenier}.
\end{lemma}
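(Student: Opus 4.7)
The plan is to apply the direct method of the calculus of variations, working primarily with the conjugate $u^{*}$ rather than with $u$ itself. The constraint that $u - \frac{t-1}{t}\frac{|\cdot|^2}{2}$ be convex and l.s.c.\ is equivalent to $u$ being $(t-1)/t$-strongly convex and l.s.c., and by the equivalence recalled from \cite{bauschke2017convex} this is in turn equivalent to $u^{*}$ being everywhere finite, convex, with a $t/(t-1)$-Lipschitz gradient. I first take a minimizing sequence $(u_n)$ for \eqref{eq:dualBrenier}; since replacing $u_n$ by $u_n + c$ subtracts $c$ from $u_n^{*}$ and leaves the objective $J(u_n) \coloneqq \int u_n\,\mathrm{d}\nu_0 + \int u_n^{*}\,\mathrm{d}\nu_1$ invariant, I normalize so that $\min u_n = 0$. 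Letting $z_n$ denote the unique minimizer of $u_n$, this yields $u_n^{*}(0) = 0$ and $\nabla u_n^{*}(0) = z_n$.

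The next step is an a priori bound on $(z_n)$. Strong convexity at $z_n$ gives $u_n(x) \geq \frac{t-1}{2t}|x - z_n|^2$, while Fenchel--Young gives $u_n^{*}(y) \geq y\cdot z_n$; integrating these against $\nu_0$ and $\nu_1$ respectively and using the upper bound on $J(u_n)$ along the minimizing sequence produces
\[
\frac{t-1}{2t}|z_n|^2 + z_n \cdot \Big(\textstyle\int y\,\mathrm{d}\nu_1 - \frac{t-1}{t}\int x\,\mathrm{d}\nu_0\Big) + \frac{t-1}{2t}\int |x|^2\,\mathrm{d}\nu_0 \leq J(u_n) \leq C,
\]
a coercive quadratic in $z_n$ that forces $(|z_n|)$ to be bounded. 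Combined with the $t/(t-1)$-Lipschitz bound on $\nabla u_n^{*}$, this delivers a uniform quadratic control $|u_n^{*}(y)| \leq C(1+|y|^2)$ and uniform Lipschitz continuity on compact sets. Arzel\`a--Ascoli then produces, up to subsequence, $u_n^{*} \to v$ locally uniformly on $\mathbb{R}^d$, with $v$ convex and $t/(t-1)$-smooth (the equivalent formulation ``$\tfrac{L}{2}|\cdot|^2 - u_n^{*}$ convex'' is stable under local uniform convergence). I then set $u \coloneqq v^{*}$, which is l.s.c.\ and $(t-1)/t$-strongly convex, hence admissible for \eqref{eq:dualBrenier}.

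It remains to show $J(u) \leq \liminf J(u_n)$. For $\int u_n^{*}\,\mathrm{d}\nu_1$, the uniform bound $|u_n^{*}| \leq C(1+|y|^2) \in L^1(\nu_1)$ together with pointwise convergence yields $\int u_n^{*}\,\mathrm{d}\nu_1 \to \int v\,\mathrm{d}\nu_1 = \int u^{*}\,\mathrm{d}\nu_1$ by dominated convergence. For $\int u_n\,\mathrm{d}\nu_0$, I use $u_n(x) = \sup_y\{x\cdot y - u_n^{*}(y)\} \geq x\cdot y - u_n^{*}(y)$ for each fixed $y$; passing to the $\liminf$ in $n$ and then taking the supremum over $y$ gives the pointwise bound $\liminf_n u_n(x) \geq \sup_y\{x\cdot y - v(y)\} = u(x)$, and Fatou's lemma applied to $u_n \geq 0$ then yields $\liminf \int u_n\,\mathrm{d}\nu_0 \geq \int u\,\mathrm{d}\nu_0$. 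Summing the two contributions shows that $u$ is a minimizer. The main subtlety in this program is the a priori bound on $|z_n|$, which crucially exploits both halves of $J$ — the strong-convexity term contributing quadratic coercivity and the Fenchel--Young term a balancing linear term — the rest being a fairly standard Arzel\`a--Ascoli/Fatou/dominated convergence argument, in the spirit of \cite{paty2020regularity}.
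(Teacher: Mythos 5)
Your proof is correct and follows essentially the same route as the paper: pass to the conjugate, normalize, bound the minimizer/gradient at the normalization point via strong convexity plus Fenchel--Young (the paper phrases this through Jensen's inequality and the lower quadratic bound \eqref{eq:quadbound}, but after normalizing $v_n^*(y_n)=0$ these computations coincide), extract a locally uniformly convergent subsequence by Arzel\`a--Ascoli, and pass to the limit via Fatou. The only cosmetic differences are that you apply Arzel\`a--Ascoli to $u_n^*$ directly rather than to $\nabla v_n$, and you replace one of the paper's two Fatou arguments by dominated convergence with the quadratic envelope $C(1+|y|^2)$.
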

\begin{proof} {Since $u^{**} = u$}, setting $v=u^*$ we obtain the equivalent problem
\begin{equation}\label{eq:dualBrenier2}
\inf \left \{ \int v \ed \nu_1  + \int v^* \ed \nu_0 ~:~ v\in C^1(\mathbb{R}^d) \text{ is convex} \,, ~ \nabla v \text{ is $L$-Lipschitz}  \right\}\,,
\end{equation}
where $L = t/(t-1)$. {Note that for any admissible $v$, the functional minimized in \eqref{eq:dualBrenier2} cannot take the value $-\infty$. In fact, on one hand $v^*$ is bounded from below being strongly convex, and on the other $v(y) \geq \langle y,x_0\rangle - v^*(x_0)$ for some $x_0\in \mathbb{R}^d$ for which $v^*(x_0)$ is finite and $\nu_0$ has finite first moment.} We prove the existence of a minimizer for problem \eqref{eq:dualBrenier2}. Let $(v_n)_n$ be a minimizing sequence. We start by showing that up to a subsequence extraction this converges pointwise to a function $\bar{v}$ which is admissible for problem \eqref{eq:dualBrenier2}. Since the problem is invariant under the addition of a constant to $v$, without loss of generality we can assume $v_n(0)=0$. Let $y_n = \nabla v_n(0)$, for any $p_n \in \partial v^*(y_n)$, the sub-differential of $v^*$ at $y_n$, and for all $y\in \mathbb{R}^d$, we have 
\begin{equation}\label{eq:quadbound}
v^*_n(y) \geq  v^*_n(y_n) + \langle p_n , y-y_n\rangle +\frac{1}{2L}|y-y_n|^2
\end{equation}
and we can pick $p_n=0$ since $y_n = \nabla v_n(0)$. Therefore we obtain, by Jensen's inequality,
\[
\begin{aligned}
\int v_n \ed \nu_1  + \int v^*_n \ed \nu_0 &\geq 
v_n(\mathrm{bary}(\nu_1)) + v_n^*(y_n) + \frac{1}{2L} \int |y-y_n|^2 \ed \nu_0\\
& \geq \langle \mathrm{bary}(\nu_1),y_n\rangle + \frac{1}{2L}\int |y-y_n|^2 \ed \nu_0 \,,
\end{aligned}
\]
where the second inequality follows from Fenchel–Young inequality, and where
\[
 \mathrm{bary}(\nu_1) \coloneqq \int x \ed \nu_1(x)\,.
\]
This implies that $y_n=\nabla v_n(0)$ is uniformly bounded, moreover for all $x\in\mathbb{R}^d$
\[
|\nabla v_n(x) | \leq |\nabla v_n(x) - \nabla v_n(0)| + |\nabla v_n(0)| \leq L|x| + |\nabla v_n(0)|\,.
\]
Therefore, by the Arzelà-Ascoli theorem $(\nabla v_n)_n$ converges uniformly on compact subsets (up to a subsequence extraction) to a function $g$ which is $L$-Lipschitz. Since $v_n(0)=0$, one can show that $v_n$ converges pointwise (again, up to a subsequence extraction) to a convex function $\bar{v}$ such that $\bar{v}(0) =0$  and that $g= \nabla \bar{v}$. Indeed, we have that for all $n,m\geq0$,
\[
|v_n(x) - v_m(x)| = \left|\int_0^1 \langle \nabla v_n(sx)-\nabla v_m(sx),x\rangle \ed s \right|  \leq  |x|\int_0^1 |\nabla v_n(sx)-\nabla v_m(sx)|\ed s
\]
and so the uniform convergence of $\nabla v_n$ on compact sets implies that $(v_n(x))_n$ is Cauchy for all $x\in\mathbb{R}^d$. Furthermore, since $\bar{v}(0)=0$, for all $x,y\in\mathbb{R}^d$
\begin{multline}
v_n(x) -v_n(y)= \int_0^1 \langle \nabla v_n(sy+(1-s)x),x-y\rangle \ed s \\ \implies \quad  \bar{v}(y) -\bar{v}(x)= \int_0^1 \langle  g(sy+(1-s)x),y-x\rangle \ed s\,,
\end{multline}
which in turn implies that $\bar{v}$ is differentiable and $g= \nabla \bar{v}$.

Now, observe that by the definition of Legendre transform and the fact that $v_n(0)=0$, we have $v^*_n\geq 0$. Therefore by Fatou's lemma, 
\[
\liminf_{n\rightarrow \infty } \int v^*_n \ed \nu_0 \geq \int \liminf_{n\rightarrow \infty} v^*_n \ed \nu_0 \geq \int  {\bar{v}}^* \ed \nu_0 \,.
\] 
On the other hand $v_n(x) - \langle y_n, x\rangle \geq 0$ for all $x\in\mathbb{R}^d$. Applying again Fatou's lemma with respect to the sequence of functions $x\mapsto v_n(x) - \langle y_n, x\rangle$, and using the fact that $y_n$ converges to $\nabla \bar{v}(0)$, we obtain
\[
\liminf_{n\rightarrow \infty } \int v_n \ed \nu_1 \geq  \int  \bar{v} \ed \nu_1.
\]
Therefore $\bar{v}$ solves \eqref{eq:dualBrenier2} and $\bar{v}^*$ solves \eqref{eq:dualBrenier}.
\end{proof}

Additionally, we have the following  characterization of the minimizers, which will be useuful to establish the strong duality between \eqref{eq:diffW2} and \eqref{eq:dualBrenier}.
{This is also contained in \cite{gozlan2020mixture}, but we include a sketch of the proof here for completeness}. 
Before stating the result, we remark that for any strongly convex function $u$, $\nabla u^*$ is Lipschitz continuous so $(\nabla u^*)_\# \nu_1$ is well-defined also when $\nu_1$ is not absolutely continuous. 

\begin{lemma}\label{lem:optimalityu} Define $\eta\coloneqq (\nabla u^*)_\# \nu_1 - \nu_0$. Then,  a $(t-1)/t$-convex {lower semi-continuous} function $u$ solves \eqref{eq:dualBrenier} if and only if
\begin{equation}\label{eq:optimalityu}
\int \left(u(x) - \frac{t-1}{t}\frac{|x|^2}{2}\right) \ed \eta(x) =0 \quad \text{and} \quad \int  \varphi(x)\, \ed \eta(x) \leq 0
\end{equation}
for all {lower semi-continuous convex functions $\varphi:\mathbb{R}^d\rightarrow (-\infty,\infty]$.}
\end{lemma}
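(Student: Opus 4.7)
I will reformulate \eqref{eq:dualBrenier} to make the convex structure transparent: substituting $w \coloneqq u - \frac{t-1}{t}\frac{|\cdot|^2}{2}$, the constraint becomes simply ``$w$ convex l.s.c.'' (a convex cone), and the objective $F(u)\coloneqq\int u^*\ed\nu_1 + \int u\ed\nu_0$ is convex in $u$ (the Legendre transform is convex as a pointwise supremum of affine functions, while $u\mapsto\int u\ed\nu_0$ is linear). Consequently, a minimizer is characterized by its one-sided directional derivative being nonnegative in every direction that points into the admissible set, and the lemma should follow by computing this derivative.

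The central computation is therefore the directional derivative of $F$ at an admissible $u$. Since $u$ is $(t-1)/t$-strongly convex, so is $u + \epsilon\varphi$ whenever $\varphi$ is convex l.s.c.\ and $\epsilon\geq 0$, or more generally whenever $u+\epsilon\varphi$ equals a convex combination of two admissible functions. The supremum defining $(u+\epsilon\varphi)^*(y)$ is then attained at a unique point $x_\epsilon(y)$, and writing out the defining inequalities of the suprema on each side yields the sandwich
\[
-\varphi(x_\epsilon(y)) \;\geq\; \frac{(u+\epsilon\varphi)^*(y) - u^*(y)}{\epsilon} \;\geq\; -\varphi(\nabla u^*(y))\,.
\]
A direct strong-convexity estimate, $|x_\epsilon(y) - \nabla u^*(y)|^2 \leq L\epsilon[\varphi(\nabla u^*(y)) - \varphi(x_\epsilon(y))]$ with $L=t/(t-1)$, shows $x_\epsilon(y)\to \nabla u^*(y)$ as $\epsilon\to 0^+$; combined with lower semi-continuity of $\varphi$, this forces the sandwich to collapse to $-\varphi(\nabla u^*(y))$. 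Integrating against $\nu_1$ and noting that the second term of $F$ is linear in $u$, I obtain
\[
\left.\frac{\ed}{\ed\epsilon}\right|_{\epsilon=0^+}\! F(u+\epsilon\varphi) \;=\; -\int\varphi\circ\nabla u^*\,\ed\nu_1 + \int\varphi\,\ed\nu_0 \;=\; -\int\varphi\,\ed\eta\,.
\]

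Both directions of the lemma now follow. For necessity: if $u$ is a minimizer, testing with an arbitrary convex l.s.c.\ $\varphi$ forces $-\int\varphi\,\ed\eta \geq 0$, which is the inequality in \eqref{eq:optimalityu}. Testing with $\varphi = w$ is admissible for every $\epsilon>-1$ since $(1+\epsilon)w$ remains convex; the derivative then exists two-sidedly and must vanish, yielding $\int w\,\ed\eta = 0$, the equality in \eqref{eq:optimalityu}. For sufficiency: by convexity of $F$, for any admissible $\tilde u = \tilde w + \frac{t-1}{t}\frac{|\cdot|^2}{2}$,
\[
F(\tilde u) - F(u) \;\geq\; \left.\frac{\ed}{\ed\epsilon}\right|_{\epsilon=0^+}\! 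F\bigl(u + \epsilon(\tilde u - u)\bigr) \;=\; -\int(\tilde w - w)\,\ed\eta \;=\; -\int\tilde w\,\ed\eta \;\geq\; 0\,,
\]
where the derivative formula is applicable because $u+\epsilon(\tilde u-u)$ is admissible on $[0,1]$ as a convex combination of admissibles, the second equality uses the equality hypothesis, and the last inequality uses the inequality hypothesis with $\varphi=\tilde w$. The main technical delicacy I foresee is justifying the interchange of limit and integral when $\varphi$ may take the value $+\infty$; I would handle this by first proving the statement for bounded Lipschitz convex $\varphi$ (for which dominated convergence applies directly, using the $L$-Lipschitz bound on $\nabla u^*$ together with $\nu_0,\nu_1\in\mathcal{P}_2(\mathbb{R}^d)$) and then extending to the general case by monotone approximation from below (for instance via Moreau--Yosida regularization).
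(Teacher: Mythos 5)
Your overall strategy is essentially the same as the paper's: both hinge on computing the one-sided directional derivative of the convex functional $F(u)=\int u^*\ed\nu_1+\int u\ed\nu_0$ and obtaining the formula $-\int\varphi\ed\eta$ via the sandwich $-\varphi(\nabla u^*(y))\leq\frac{(u+\epsilon\varphi)^*(y)-u^*(y)}{\epsilon}\leq-\varphi(x_\epsilon(y))$, the strong-convexity estimate on $x_\epsilon$, and lower semi-continuity of $\varphi$ to collapse it (the paper invokes \cite[Lemma 6.1]{gozlan2020mixture}, which generalizes \cite{gangbo1994elementary}, precisely for this step). Your integrability handling via dominated convergence plus Moreau--Yosida is a perfectly acceptable substitute for the paper's Fatou-plus-Lipschitz-approximation route. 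The sufficiency direction, phrased via convexity of $F$ and the subgradient inequality, is a slightly different packaging than the paper's (which instead invokes Kantorovich duality for the transport from $\nabla u^*_\#\nu_1$ to $\nu_1$), but your version is sound because only the lower half of the sandwich, $\frac{(u+\epsilon\varphi)^*-u^*}{\epsilon}\geq-\varphi\circ\nabla u^*$, is needed there, and that half requires no semicontinuity at all.

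There is, however, one genuine gap, and it is precisely the point the paper flags as delicate. To obtain the \emph{equality} $\int w\ed\eta=0$ you write ``testing with $\varphi=w$ is admissible for every $\epsilon>-1\ldots$ the derivative then exists two-sidedly and must vanish.'' This is not justified by your argument. The convex one-variable function $\epsilon\mapsto F(u+\epsilon w)$ need not be differentiable at its minimum; you only computed the \emph{right} derivative, which gives $\int w\ed\eta\leq 0$. To get the reverse inequality you must compute the left derivative, i.e.\ close the sandwich as $\epsilon\to 0^-$. After dividing by $\epsilon<0$ the inequalities flip, and closing the sandwich now requires $\limsup_{\epsilon\to 0^-}w(x_\epsilon(y))\leq w(\nabla u^*(y))$, which is \emph{upper} semi-continuity of $w$ along the curve $x_\epsilon$, not the lower semi-continuity you have for free. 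Equivalently, your step is the paper's attempt to use the non-convex, possibly discontinuous test direction $-w$. The upper semi-continuity is automatic when $\nabla u^*(y)$ lies in the interior of $\mathrm{dom}(u)$ (where the convex $w$ is continuous), but it can fail on the boundary, e.g.\ if $u$ equals $+\infty$ outside a convex set. The paper acknowledges this explicitly and defers the needed regularization to point (b) of the proof of \cite[Theorem 1.2]{gozlan2020mixture}; your proof needs the same additional argument here.
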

\begin{proof}
Equation \eqref{eq:optimalityu} is equivalent to the optimality conditions of problem \eqref{eq:dualBrenier}. In fact, {if $u$ is a lower semi-continuous function solving \eqref{eq:dualBrenier}, given any lower semi-continuous function $\varphi$ such that $u+\varphi$ is $(t-1)/t$-convex, then 
$u + \varepsilon \varphi$ is $(t-1)/t$-convex and lower semi-continuous for all $0<\varepsilon \leq 1$ and it is therefore an admissible competitor for problem \eqref{eq:dualBrenier}. Hence we find that
\[
\int (u +\varepsilon \varphi )^* \ed \nu_1 + \int (u + \varepsilon \varphi ) \ed \nu_0 \geq \int u^* \ed \nu_1 + \int u \ed \nu_0 \,,
\]
or equivalently
\[
\int \frac{(u + \varepsilon \varphi )^*  - u^*}{\varepsilon} \ed \nu_1  \geq  -\int \varphi  \ed \nu_0 \,.
\]
By Lemma 6.1 in \cite{gozlan2020mixture}, generalizing a result of Gangbo \cite{gangbo1994elementary}, if there exist $a,b\geq 0$ such that $\varphi(x) \geq -a|x| -b$ for all $x\in\mathbb{R}^d$ and $\varphi$ is continuous, then the integrand on the left-hand side tends pointwise to $-\varphi \circ \nabla u^*$ as  $\varepsilon \rightarrow 0^+$. Therefore for such functions $\varphi$, by Fatou's lemma,
\[
\int \varphi  \ed (\nabla u^*)_\# \nu_1 \leq     \int \varphi  \ed \nu_0\,.
\]
In particular, the inequality holds for any lower semi-continuous convex function $\varphi$, since these may be approximated by Lipschitz functions, e.g., by setting $\varphi^n(x) =  \inf_y \varphi(y) + n|x-y|$. Moreover, formally, taking $\varphi = \pm (u -|\cdot|^2(t-1)/(2t) )$ 
we get the first condition in \eqref{eq:optimalityu}. Note however that even if $\varphi = -(u -|\cdot|^2(t-1)/(2t))$ is such that $u+ \varphi$ is convex, it may itself be neither convex nor continuous so one needs an additional regularization argument to conclude the proof; see point (b) in the proof of Theorem 1.2 in \cite{gozlan2020mixture}.
}

On the other hand, given a {lower semi-continuous} $(t-1)/t$-convex function $u$ that satisfies \eqref{eq:optimalityu}, for any lower semi-continuous $(t-1)/t$-convex function $\tilde{u}$,
\[
    \int (\tilde{u} - u) \ed \eta = \int \left(\tilde{u}(x) - \frac{t-1}{t}\frac{|x|^2}{2} \right) \ed \eta(x) - \int \left( u(x)-\frac{t-1}{t}\frac{|x|^2}{2}\right) \ed  \eta(x)\leq 0\,.
\]
This implies that
\[
\begin{aligned}
\int u \ed \nu_0 + \int u^* \ed  \nu_1&  =\int u \ed   (\nabla u^*)_\# \nu_1  +\int u^* \ed \nu_1 - \int u \ed \eta \\
&\leq \int \tilde{u} \ed   (\nabla u^*)_\# \nu_1  +\int \tilde{u}^* \ed \nu_1 - \int \tilde{u} \ed \eta \\
& = \int \tilde{u}^* \ed \nu_1 + \int \tilde{u} \ed \nu_0\,,\\
\end{aligned}
\]
where we also used the fact that $u$ is an optimal Brenier potential for the transport from $\nabla u^*_\# \nu_1$ to $\nu_1$.
\end{proof}

The following theorem establishes that strong duality holds between problems \eqref{eq:diffW2} and \eqref{eq:dualBrenier}.

\begin{theorem}\label{th:dualityW2} We have
\begin{equation}\label{eq:equalitypp}
\eqref{eq:diffW2} =  \eqref{eq:dualBrenier} - \int \frac{|x|^2}{2} \ed \nu_0(x) - \int \frac{|x|^2}{2} \ed \nu_1(x)\,.
\end{equation}
Moreover, 
\begin{enumerate}
    \item $\mu$ solves \eqref{eq:diffW2} if and only if $\mu = T(t)_\# \nu_1$, where 
 \[
 T(t)  \coloneqq  \nabla u^* + t(\mathrm{Id} - \nabla u^*)
 \]   
and $u$ solves \eqref{eq:dualBrenier};
\item conversely, $u$ solves \eqref{eq:dualBrenier} if and only if
$-(t-1)u^* + t|\cdot|^2/2$ and $tu -(t-1)|\cdot|^2/2$ are optimal Brenier potentials from $\nu_1$ to $\mu$ and from $\nu_0$ to $\mu$, respectively,
and $\mu$ solves  \eqref{eq:diffW2}.
\end{enumerate}
\end{theorem}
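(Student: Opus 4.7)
The plan is to complement the lower bound \eqref{eq:ineq} by exhibiting an explicit competitor for \eqref{eq:diffW2} whose value matches that bound. By Lemma \ref{lem:existence} there is a lower semi-continuous $(t-1)/t$-strongly convex $u$ realizing the infimum in \eqref{eq:dualBrenier}; write
\[
L := \int u^*\,\ed\nu_1 + \int u\,\ed\nu_0 - \tfrac12\int|x|^2\,\ed\nu_0 - \tfrac12\int|x|^2\,\ed\nu_1,
\]
so that the claimed identity \eqref{eq:equalitypp} reads $\eqref{eq:diffW2} = L$. Strong convexity of $u$ is equivalent to $\nabla u^*$ being $\tfrac{t}{t-1}$-Lipschitz, which in turn is equivalent to $\Phi_1 := \tfrac{t}{2}|\cdot|^2 - (t-1)u^*$ being convex with $\nabla\Phi_1 = T(t)$. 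I then define the competitor $\mu := T(t)_\#\nu_1 \in \Pc_2(\R^d)$ and aim to prove $\mc F_t(\nu_0,\nu_1;\mu)\le L$.

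Two computations drive the proof. First, since $\Phi_1$ is a convex potential pushing $\nu_1$ to $\mu$, Brenier's theorem gives the exact value $W_2^2(\mu,\nu_1)=(t-1)^2\int|x-\nabla u^*(x)|^2\,\ed\nu_1$. Second, for the other distance I invoke the Brenier dual inequality coming from \eqref{eq:dualKantorovichb} with the convex test function $v := tu - (t-1)|\cdot|^2/2$, convex by the same strong convexity of $u$. The key algebraic step is to evaluate $v^*\circ T(t)$: the Fenchel--Young identity $u(\nabla u^*(x_1)) + u^*(x_1) = \langle x_1,\nabla u^*(x_1)\rangle$ holds automatically because $u^*$ is differentiable everywhere, and this saturates Fenchel--Young for $v$ at the pair $(\nabla u^*(x_1),T(t)(x_1))$, identifying $\nabla u^*(x_1)$ as the maximizer and yielding after simplification
\[
v^*(T(t)(x_1)) = t u^*(x_1) - \tfrac{t-1}{2}|\nabla u^*(x_1)|^2.
\]

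Substituting these two expressions into $\mc F_t(\nu_0,\nu_1;\mu)$, all the cross-terms involving $\int\langle x,\nabla u^*(x)\rangle\,\ed\nu_1$, $\int|\nabla u^*(x)|^2\,\ed\nu_1$ and the moments of $\nu_0,\nu_1$ cancel and the estimate collapses exactly to $\mc F_t(\nu_0,\nu_1;\mu)\le L$. Combined with \eqref{eq:ineq}, this proves \eqref{eq:equalitypp} and identifies $\mu$ as a minimizer; uniqueness of the minimizer recalled in Section \ref{sec:existence} then yields (1). For the forward direction of (2), equality in the Brenier dual inequality applied with test $v$ is equivalent to $v$ being an optimal Brenier potential from $\nu_0$ to $\mu$, and the equality $\mc F_t(\nu_0,\nu_1;\mu)=L$ forces it; meanwhile $\Phi_1$ is a Brenier potential from $\nu_1$ to $\mu$ by construction. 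The converse in (2) runs the same algebra in reverse: if the two stated functions are optimal Brenier potentials and $\mu$ solves \eqref{eq:diffW2}, direct computation using the Brenier equalities yields $\mc F_t(\nu_0,\nu_1;\mu) = \int u^*\,\ed\nu_1 + \int u\,\ed\nu_0 - \tfrac12\int|x|^2\,\ed\nu_0 - \tfrac12\int|x|^2\,\ed\nu_1$, and $\inf\mc F_t = L$ (already proven) forces $u$ to attain the dual infimum.

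The main technical obstacle is the careful bookkeeping of the several quadratic-moment terms, which conspire to cancel precisely to $L$; the cancellation is non-accidental and relies on the specific choice $v = tu - (t-1)|\cdot|^2/2$ together with the algebraic structure of $T(t)$. A secondary subtlety is the possible non-differentiability of $u$, which is bypassed throughout by working with $u^*$ (smooth under strong convexity) whenever the Fenchel relation is invoked.
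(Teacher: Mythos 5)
Your proof is correct, and it follows the paper's basic strategy — build the competitor $\mu = T(t)_\#\nu_1$ and close the bound via Fenchel--Young with the auxiliary convex potential $v = tu - (t-1)|\cdot|^2/2$ — but with a genuinely shorter critical path. The key difference is that you evaluate the competitor directly against $\nu_0$ and let the quadratic moment terms cancel; the paper instead first rewrites the value of \eqref{eq:dualBrenier} via the optimality conditions of Lemma \ref{lem:optimalityu} (equation \eqref{eq:pstaroptimal}), and then routes the Brenier dual bound for $W_2^2(\mu,\nu_0)$ through $\overline{\nu}_0 = \nabla u^*_\#\nu_1$, using $\int w\,\ed\overline{\nu}_0 = \int w\,\ed\nu_0$ (a consequence of the same optimality conditions, since $\int (u - \tfrac{t-1}{2t}|\cdot|^2)\,\ed\eta = 0$ implies $\int w\,\ed\eta = 0$). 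Your version therefore bypasses Lemma \ref{lem:optimalityu} altogether for the proof of strong duality, needing only the existence result (Lemma \ref{lem:existence}). A second difference: you compute $W_2^2(\mu,\nu_1)$ exactly from the outset by noting $T(t) = \nabla\Phi_1$ with $\Phi_1$ convex, whereas the paper first uses $(\mathrm{Id},T(t))_\#\nu_1$ only as an admissible (not a priori optimal) coupling, and recognizes its optimality only in the ``only if'' half of point (2). For the converse of (2) the paper uses a $c$-transform argument ($\varphi^c_t = -u + |\cdot|^2/2$, then $\varphi^c_{t-1} \geq u^* - |\cdot|^2/2$); your alternative — observing that the hypothesis already forces $\mu = \nabla\Phi_1{}_\#\nu_1 = T(t)_\#\nu_1$ and re-running the forward algebra with equalities — is also valid and more symmetric with the rest of your argument. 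Two tiny clean-ups worth flagging: (i) what you invoke as ``Brenier's theorem'' for the exact value of $W_2^2(\mu,\nu_1)$ is really the cyclical-monotonicity criterion (the graph coupling over $\partial\Phi_1$ is optimal for arbitrary $\nu_1$, no absolute continuity assumed); (ii) in the converse of (2) you should make explicit that the hypothesis on $v$ implies $u$ is $(t-1)/t$-strongly convex, which is what guarantees $u^*\in C^{1,1}$ and hence that $\mu = T(t)_\#\nu_1$.
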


\begin{proof}
    
Let $u$ solve \eqref{eq:dualBrenier}. By the optimality conditions in Lemma \ref{lem:optimalityu}, and specifically the equality in \eqref{eq:optimalityu}, we have
\begin{equation}\label{eq:pstaroptimal}
\begin{aligned}
\eqref{eq:dualBrenier}  &= \int \left(u (\nabla u^*(x)) + u^*(x)\right) \ed \nu_1(x) - \frac{t-1}{2t} \int |x|^2 \ed \eta(x)\\
&=\int \left(\langle \nabla u^*(x),x\rangle - \frac{t-1}{2t} |\nabla u^*(x)|^2\right) \ed \nu_1(x) + \frac{t-1}{2t} \int |x|^2 \ed \nu_0(x)\,,\\
\end{aligned}
\end{equation}
where $\eta$ is defined in Lemma \ref{lem:optimalityu}.
Define 
\[ \mu \coloneqq  (\nabla u^* + t(\mathrm{Id} - \nabla u^*))_\# \nu_1 \quad \text{and } \quad \overline{\nu}_0 \coloneqq (\nabla u^*)_\# \nu_1.
\]

By the optimality conditions for $u$, and denoting $w\coloneqq tu - (t-1)|\cdot|^2/2$ (which is convex due to the strong convexity of $u$),
we deduce that
\begin{equation}\label{eq:w20bound}
\begin{aligned}
-\frac{W^2_2(\mu,{\nu}_0)}{2t} &= \frac{1}{t} \inf_{v \text{ convex}} \left\{ \int v\ed \nu_0 + \int v^* \ed \mu \right\}- \int \frac{|x|^2}{2t} \ed \nu_0(x) - \int \frac{|x|^2}{2t} \ed \mu(x) \\
& \leq \frac{1}{t} \left[ \int w\ed \overline{\nu}_0 + \int w^* \ed \mu\right] - \int \frac{|x|^2}{2t} \ed \nu_0(x) - \int \frac{|x|^2}{2t} \ed \mu(x)  \\
& = \frac{1}{t} \int \left[  w(\nabla u^*(x))  + w^*( \nabla u^*(x) + t(x - \nabla u^*(x))) \right] \ed \nu_1(x) \\
&\quad -\int \frac{|x|^2}{2t} \ed \nu_0(x) - \int \frac{|x|^2}{2t} \ed \mu(x)\,,
\end{aligned}
\end{equation}
where for the first equality we used the dual formulation in \eqref{eq:dualKantorovichb}. 
Since $x \in \partial u({\nabla u^*(x)})$, the subdifferential of $u$ at $\nabla u^*(x)$, and given that $w$ is convex, we have that $\nabla u^*(x) + t(x - \nabla u^*(x)) \in \partial w({\nabla u^*(x)})$, the subdifferential of $w$ at $\nabla u^*(x)$. Therefore, we obtain
\begin{multline}\label{eq:bound0}
-\frac{W^2_2(\mu,{\nu}_0)}{2t} \leq \frac{1}{t} \int \langle \nabla u^*(x),  \nabla u^*(x) + t(x - \nabla u^*(x))\rangle \ed \nu_1(x) \\ -\int \frac{|x|^2}{2t} \ed \nu_0(x) - \int \frac{|\nabla u^*(x) + t(x - \nabla u^*(x))|^2}{2t} \ed \nu_1(x)\,.
\end{multline}

Consider now the coupling:
\[
\gamma \coloneqq (\mathrm{Id}, \nabla u^* + t(\mathrm{Id} - \nabla u^*))_\# \nu_1  \in \Gamma(\nu_1,\mu)\,.
\] 
Since $\gamma$ is an admissible coupling for the transport from $\nu_1$ to $\mu$,
\begin{equation}\label{eq:boundtP}\begin{aligned}
\eqref{eq:diffW2} & \leq  
\int  \frac{|x-x_1|^2}{2(t-1)}\ed \gamma(x_1,x)  -   \frac{W^2_2(\mu,\nu_0)}{2t} \\
& \leq  -\int \frac{|tx -(t-1) \nabla u^*(x)|^2}{2t(t-1)} \ed \nu_1(x)  +\int \frac{|x|^2}{2(t-1)}\ed\nu_1(x) - \int \frac{|x|^2}{2t} \ed \nu_0(x)\,,
\end{aligned}
\end{equation}
where the second inequality can be obtained by using \eqref{eq:bound0} and the definition of $\gamma$, and rearranging terms. 
Developing the square and comparing the result with equation \eqref{eq:pstaroptimal} we get 
\[
\eqref{eq:diffW2} \leq  \eqref{eq:dualBrenier} - \int \frac{|x|^2}
{2}\ed \nu_0(x) - \int \frac{|x|^2}{2} \ed \nu_1(x)\,.
\]
Recalling \eqref{eq:ineq}, this implies the equality. 

The  reasoning above implies a posteriori that given $u$ optimal for \eqref{eq:dualBrenier}, $\mu = (\nabla u^* + t(\mathrm{Id} - \nabla u^*))_\# \nu_1$ is necessarily the unique minimizer of problem \eqref{eq:diffW2}. This implies point (1) in the theorem.

For the only if part of point (2), in view of \eqref{eq:equalitypp} and the inequalities \eqref{eq:bound0} and \eqref{eq:boundtP}, we just need to check that
$(1-t)u^* +t |\cdot|^2/2$ is convex if $u$ is $(t-1)/t$-strongly convex. 
Observe that if $\nabla u^*$ is $t/(t-1)$-Lipschitz, for all $x,y\in\mathbb{R}^d$,
\begin{equation}\label{eq:lsmooth}
u^*(y) \leq u^*(x) + \langle \nabla u^*(x), y-x\rangle + \frac{t}{t-1}\frac{|y-x|^2}{2}\,.
\end{equation}
Then, since $t>1$, this is equivalent to
\[
\begin{aligned}
(1-t) u^*(y) + t\frac{|y|^2}{2}&\geq (1-t)u^*(x) +(1-t) \langle \nabla u^*(x), y-x\rangle - t \frac{|y-x|^2}{2}+ t\frac{|y|^2}{2}\\
& = (1-t)u^*(x) + t\frac{|x|^2}{2} +\langle (1-t)  \nabla u^*(x) +t x, y-x\rangle\,,
\end{aligned}
\]
which in turn is equivalent to the convexity of $(1-t)u^* +t |\cdot|^2/2$.

Finally, to show the if part in point (2) of the theorem we follow the same steps to show \eqref{eq:ineq}. Specifically, let $\mu$ solve \eqref{eq:diffW2} and $tu +(1-t)|\cdot|^2/2$ be an optimal Brenier potential from  from $\nu_0$ to $\mu$. Then $u$ is admissible for problem \eqref{eq:dualBrenier}. Moreover, proceeding as in equation \eqref{eq:boundtoland},
\begin{equation}\label{eq:boundtoland2}
\eqref{eq:diffW2} \geq  \int \varphi^{c}_{t-1} \, \ed \nu_1 - \int \varphi^c_{t} \ed \nu_0 \,,
\end{equation}
where $\varphi$ is an optimal Kantorovich potential for the transport from $\mu$ to $\nu_0$ (i.e., an optimal potential for the dual formulation in equation \eqref{eq:dualKantorovich} with $s=t$). By direct calculation, $\varphi^c_t =- u + |\cdot|^2/2$ and $\varphi^c_{t-1} \geq - [\varphi^c_t]^c = u^* -|\cdot|^2/2$, by equation \eqref{eq:phicc}. Reinserting this in \eqref{eq:boundtoland2} and using the equality \eqref{eq:equalitypp}, we deduce that $u$ solves \eqref{eq:dualBrenier}.

\end{proof}

From the proof of Theorem \ref{th:dualityW2}, one can also deduce that given $u$ solving \eqref{eq:dualBrenier} then $t u + (1-t)|\cdot|^2/2$  is
an optimal Brenier potential for the transport not only from $\nu_0$ to $\mu$ but also from $\overline{\nu}_0$ to $\mu$. In particular, if $\nu_0$ and $\overline{\nu}_0$ are absolutely continuous, one has that the solution $\mu$ to \eqref{eq:diffW2} satisfies
\begin{equation}\label{eq:muexpression}
\mu = (t\nabla u +(1-t) \mathrm{Id})_\# \overline{\nu}_0=(t\nabla u +(1-t) \mathrm{Id})_\# {\nu}_0\,.
\end{equation}

Theorem \ref{th:dualityW2} confirms the observation in Remark \ref{rem:extrap}. In particular, the optimal Brenier potential $u$ from $\nu_0$ to $\nu_1$ is $(t-1)/t$-convex if and only if there exists a length-minimizing geodesic $s\in [0,t]  \mapsto \nu(s) \in \mc{P}_2(\mathbb{R}^d)$ such that $\nu(0) = \nu_0$ and $\nu(1)= \nu_1$. In this case $u$ solves \eqref{eq:dualBrenier}, and the solution to \eqref{eq:diffW2} is $\nu(t)$.
{If on the contrary the optimal Brenier potential from $\nu_0$ to $\nu_1$ is not sufficiently convex, problem \eqref{eq:dualBrenier} still selects a potential $u$ which is sufficiently convex to trace a geodesic connecting $\nu_0$ at time $0$ to the extrapolation $\mu$ at time $t$. However, by the optimality conditions \eqref{eq:optimalityu}, $\nabla u^*$ pushes $\nu_1$ to $\overline{\nu}_0\preceq_C \nu_0$: the mass at the initial time is reorganized to guarantee the existence of a length-minimising geodesic up to time $t$ passing through $\nu_1$ at time 1.}

\begin{remark}[Toland's duality] 
The strong duality established in Theorem \ref{th:dualityW2} can be partly seen as an instance of Toland's duality \cite{toland1978duality,toland1979duality}, just as for the case studied in \cite{carlier2008Toland}. Given two convex, proper and lower semi-continuous functions $F, G:V \rightarrow (-\infty, \infty]$, where $V$ is a normed vector space, Toland's duality yields the equivalence
\[
\inf_{x \in V} \left\{ F(x)-G(x) \right\} = \inf_{p\in V^*} \left\{G^*(p)-F^*(p)\right\}\,,
\]
where $V^*$ is the topological dual of $V$ and $F^*$ and $G^*$ are the Legendre transforms of $F$ and $G$ respectively. In our case, $F$ and $G$ are replaced by the maps
\[
\mu \in \mc{P}_2(\mathbb{R}^d) \mapsto \frac{W^2_2(\mu,\nu_1)}{2(t-1)} \quad \text{and} \quad
\mu \in \mc{P}_2(\mathbb{R}^d) \mapsto \frac{W^2_2(\mu,\nu_0)}{2t} \,,
\]
respectively, whereas $F^*$ and $G^*$ are replaced by
\[
\varphi \in C_b(\mathbb{R}^d) \mapsto -\int \varphi^c_{t-1}\ed \nu_1\quad \text{and} \quad \varphi \in C_b(\mathbb{R}^d) \mapsto -\int \varphi^c_{t} \ed \nu_0\,,
\]
respectively. Then, at least formally, the Toland dual is the problem appearing on the right-hand side in \eqref{eq:boundtoland}. Note, however, that this is not a convex problem and one needs a further change of variable to recover \eqref{eq:dualBrenier}.
\end{remark}

\section{Barycentric Optimal Transport formulation}\label{sec:weak}

In this section we provide yet another equivalent formulation of problem \eqref{eq:diffW2}, which turns out to coincide with a specific type of barycentric optimal transport given by 
\begin{equation} \label{eq:barycentric2}\tag{$\mc{B}$}
\inf_{\pi\in \Gamma(\nu_0,\nu_1)} \int |t x_1 - (t-1) \mathrm{bary}(\pi_{x_1})|^2 \ed \nu_1(x_1)\,,
\end{equation}
where
\[
\mathrm{bary}(\pi_{x_1})\coloneqq \int x_0 \,\ed \pi_{x_1}(x_0)\,.
\]
This is a specific instance of a weak optimal transport problem as studied in \cite{gozlan2017kantorovich,gozlan2020mixture}. This equivalence is not surprising given that the dual formulation \eqref{eq:dualBrenier} coincides up to minor modifications to that derived for the barycentric optimal transport problem in \cite{gozlan2017kantorovich}. We give here a direct proof of the equivalence between \eqref{eq:diffW2} and \eqref{eq:barycentric}: this can also be seen as an alternative way to prove the dual formulation \eqref{eq:dualBrenier} for problem \eqref{eq:barycentric} via Toland's duality. As a byproduct, it also  yields the same type of characterization for the minimizers of problem \eqref{eq:barycentric} as the one proved in \cite{gozlan2020mixture} for a slight variant of our problem.

Our main tool will be Strassen's theorem \cite{strassen1965existence}. 
Before stating it, let us recall that given $\mu,\nu \in \mc{P}(\mathbb{R}^d)$  and a coupling $\theta \in \Gamma(\mu,\nu)$, we can always represent $\theta$ in the following disintegrated form (with respect to the projection on the first marginal):
\[
\ed \theta(x, y) = \ed \theta_x(y) \ed \mu(x)\,,
\]
where we can interpret $\theta_x$ as a probability measure on $\mathbb{R}^d$ for all $x$ (which is uniquely defined $\mu$-a.e.). We will use a similar notation for couplings with multiple marginals.
Furthermore, we say that $\mu,\nu \in \mc{P}_1(\mathbb{R}^d)$ are in convex order $\mu \preceq_C \nu$ if and only if
\[
\int \varphi \ed \mu \leq \int \varphi \ed \nu 
\]
for all (continuous) convex  functions $\varphi:\mathbb{R}^d\rightarrow \mathbb{R}$.

\begin{lemma}[Strassen's theorem] \label{lem:strassen} Let $\mu, \nu \in \mc{P}_1(\mathbb{R}^d)$ then $\mu \preceq_C \nu$ if and only if there exists a coupling $\theta \in \Gamma(\mu,\nu)$ such that $\ed \theta(x,y) = \ed \theta_x(y) \ed \mu(x)$ and
\begin{equation}\label{eq:martingale}
\int y \ed \theta_x(y) = x \quad \text{ for $\mu$-a.e. } x\in\mathbb{R}^d\,.
\end{equation}
\end{lemma}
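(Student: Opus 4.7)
The plan is to treat the two implications separately, handling the easy direction directly and invoking (or sketching) the classical Strassen argument for the harder converse.

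For the direction from martingale coupling to convex order, I would start with a coupling $\theta \in \Gamma(\mu,\nu)$ whose disintegration $\theta_x$ satisfies the martingale identity \eqref{eq:martingale}. Given any convex continuous $\varphi:\mathbb{R}^d\rightarrow\mathbb{R}$, Jensen's inequality applied to the probability measure $\theta_x$ yields
\[
\varphi(x)=\varphi\!\left(\int y\,\ed\theta_x(y)\right)\leq \int \varphi(y)\,\ed\theta_x(y),
\]
for $\mu$-almost every $x$. Integrating against $\mu$ and using the fact that the second marginal of $\theta$ is $\nu$ gives $\int\varphi\,\ed\mu\leq\int\varphi\,\ed\nu$, which is exactly $\mu\preceq_C\nu$.

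For the harder converse, I would follow a Hahn--Banach / separation strategy. Consider the convex set $M(\mu,\nu)\subset\Gamma(\mu,\nu)$ of couplings whose disintegration satisfies the barycentric identity $\int y\,\ed\theta_x(y)=x$ for $\mu$-a.e.\ $x$; equivalently, $M(\mu,\nu)$ is the set of $\theta\in\mc{P}_1(\mathbb{R}^d\times\mathbb{R}^d)$ with first marginal $\mu$, second marginal $\nu$, and $\int (y-x)\cdot \phi(x)\,\ed\theta(x,y)=0$ for every bounded continuous $\phi:\mathbb{R}^d\to\mathbb{R}^d$. This set is convex and closed in the weak topology. To show that it is nonempty under the convex order hypothesis, I would argue by contradiction: if $M(\mu,\nu)=\emptyset$, a Hahn--Banach separation theorem in the space of continuous functions of at most linear growth would produce a continuous function of the form $a(x)+b(y)+(y-x)\cdot\phi(x)$ that is strictly positive on every admissible $\theta$. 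Optimizing out $\phi$ reduces the problem to showing that $\varphi(x)\coloneqq a(x)+\inf_y\{b(y)+(y-x)\cdot\phi(x)\}$ is convex, and the separation statement then contradicts $\int(\varphi-\psi)\,\ed\mu\leq 0$ coming from $\mu\preceq_C\nu$.

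The main obstacle will be the non-compactness of $\mathbb{R}^d$, which makes the separation argument delicate: one must work in a space of continuous functions with controlled growth (at most linear, to match $\mc{P}_1$) and verify tightness of approximating sequences. A cleaner alternative, and the route I would actually take in the write-up, is to reduce to finitely supported measures by approximation, invoke the finite-dimensional version of Strassen (provable by the Hahn--Banach argument above in a finite-dimensional setting where no growth issues arise), and then pass to the limit using weak compactness of $\Gamma(\mu,\nu)$ in $\mc{P}_1$ together with the stability of the martingale condition under weak limits that preserve first moments. Since this is a classical result, I would most likely state the easy direction in detail and refer to \cite{strassen1965existence} for the converse, keeping the exposition in the paper brief.
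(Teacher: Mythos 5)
The paper does not prove this lemma: it is stated as a classical result and simply cited to \cite{strassen1965existence}. Your final plan --- spell out the easy implication and refer to the reference for the converse --- therefore matches the paper's treatment, except that the paper omits even the easy direction.

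Your Jensen argument for the implication from martingale coupling to convex order is correct and complete. The Hahn--Banach sketch for the converse, however, contains a small misstatement: the function $x\mapsto a(x)+\inf_y\{b(y)+(y-x)\cdot\phi(x)\}$ is not convex in general (the infimand depends nonlinearly on $x$ through $\phi(x)$). The usual way to close the separation argument is different: from $a(x)+b(y)+(y-x)\cdot\phi(x)\geq 0$ for all $x,y$ one deduces, for each fixed $x$, that $b$ dominates the affine function $y\mapsto -a(x)-(y-x)\cdot\phi(x)$; hence the convex envelope $b^{**}$ of $b$ also dominates it, and evaluating at $y=x$ gives $b^{**}(x)\geq -a(x)$. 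Since $b^{**}$ is convex, $\mu\preceq_C\nu$ yields $\int b\,\ed\nu\geq\int b^{**}\,\ed\nu\geq\int b^{**}\,\ed\mu\geq -\int a\,\ed\mu$, which is the inequality needed to rule out separation. Since you defer to \cite{strassen1965existence} for this direction anyway, the imprecision does not affect the correctness of your intended write-up, but it is worth fixing if you ever decide to include the converse in detail.
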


\begin{remark} In probabilistic terms, the condition on $\theta$ in equation \eqref{eq:martingale} is equivalent to the existence of a martingale $(X,Y)$, where  $X$ and $Y$ are random variables with laws $\mu$ and $\nu$, respectively. Then, $\theta$ is the law of the martingale $(X,Y)$. 
\end{remark}

Let us go back to showing the equivalence of problems \eqref{eq:diffW2} and \eqref{eq:barycentric}. We start by rewriting problem \eqref{eq:diffW2} as follows, using the definition of the Wasserstein distance in \eqref{eq:w2},
\begin{equation}\label{eq:tricoupling}
\begin{aligned}
\eqref{eq:diffW2} & = \inf_\mu \left[\inf_{\gamma_1 \in \Gamma(\nu_1,\mu)} \int \frac{|x_1-x|^2}{2(t-1)}\ed \gamma_{1}(x_1,x)  - \inf_{\gamma_0 \in \Gamma(\nu_0,\mu)} \int \frac{|x_0-x|^2}{2t}\ed \gamma_{0}(x_0,x)\right]\\
& = \inf_\mu \left[\inf_{\gamma_1 \in \Gamma(\nu_1,\mu)} \int \frac{|x_1-x|^2}{2(t-1)}\ed \gamma_{1}(x_1,x)  - \inf_{\eta \in \Gamma(\nu_0,\nu_1,\mu)} \int \frac{|x_0-x|^2}{2t}\ed \eta(x_0,x_1, x)\right].\\
\end{aligned}
\end{equation}
Given a $\pi\in \Gamma(\nu_0,\nu_1)$, which can be disintegrated as follows \[
\ed \pi(x_0,x_1) = \ed \pi_{x_1}(x_0)\ed \nu_1(x_1)\,,
\]
one can construct a coupling $\eta$ by setting 
\[
\ed \eta (x_0,x_1,x) = \ed \pi_{x_1}(x_0) \ed \gamma_1(x_1,x)\,.
\]
It is easy to check that $\eta \in \Gamma(\nu_0,\nu_1,\mu)$.
If we restrict $\eta$ in \eqref{eq:tricoupling} to belong to this specific class of couplings we get the following lower bound:
\[
\begin{aligned}
\eqref{eq:diffW2} & \geq  \inf_\mu \inf_{\gamma_1 \in \Gamma(\nu_1,\mu)} \sup_{\pi \in \Gamma(\nu_0,\nu_1)} \left[ \int \frac{|x_1-x|^2}{2(t-1)}\ed \gamma_{1}(x_1,x)  - \int \int \frac{|x_0-x|^2}{2t}\ed \pi_{x_1}(x_0) \ed \gamma_{1}(x_1,x)\right]\\
& \geq  \sup_{\pi \in \Gamma(\nu_0,\nu_1)}  \inf_{\substack{\gamma_1 \in \Gamma(\nu_1,\mu)\\ \mu} } \left[ \int \frac{|x_1-x|^2}{2(t-1)}\ed \gamma_{1}(x_1,x)  - \int \int \frac{|x_0-x|^2}{2t}\ed \pi_{x_1}(x_0) \ed \gamma_{1}(x_1,x)\right]\,,
\end{aligned}
\]
where the second inequality is due to the exchange of $\inf$ and $\sup$.
We can obtain a further lower bound by minimising the whole integrand with respect to $x$, i.e.
\[
\begin{aligned}
\eqref{eq:diffW2} &\geq   \sup_{\pi \in \Gamma(\nu_0,\nu_1)}  \inf_{\substack{\gamma_1 \in \Gamma(\nu_1,\mu)\\ \mu} } \int \inf_{x}\left[ \frac{|x_1-x|^2}{2(t-1)}  - \int \frac{|x_0-x|^2}{2t}\ed \pi_{x_1}(x_0) \right]  \ed \gamma_1(x_1,x)\\
&=  \sup_{\pi \in \Gamma(\nu_0,\nu_1)}  \int \inf_{x}\left[ \frac{|x_1-x|^2}{2(t-1)}  - \int \frac{|x_0-x|^2}{2t}\ed \pi_{x_1}(x_0) \right]  \ed \nu_1(x_1)\,. 
\end{aligned}
\]
Expanding the squares and using the fact that $\pi \in\Gamma(\nu_0,\nu_1)$ we obtain
\begin{multline*}
\eqref{eq:diffW2} \geq - \frac{1}{2t(t-1)} \inf_{\pi\in \Gamma(\nu_0,\nu_1)} \int |t x_1 - (t-1) \mathrm{bary}(\pi_{x_1})|^2 \ed \nu_1(x_1)\\
+\frac{1}{2(t-1)}\int|x|^2\ed\nu_1(x) -\frac{1}{2t} \int |x|^2 \ed \nu_0(x)\,.
\end{multline*}
Hence we have
\[
\eqref{eq:diffW2} \geq - \frac{1}{2t(t-1)} \eqref{eq:barycentric}
+\frac{1}{2(t-1)}\int|x|^2\ed\nu_1(x) -\frac{1}{2t} \int |x|^2 \ed \nu_0(x)\,.
\]

The fact that the inequality we obtained between \eqref{eq:diffW2} and \eqref{eq:barycentric} is in fact an equality is a direct consequence of Strassen's theorem.  Specifically, if $u$ solves \eqref{eq:dualBrenier}, then by Lemma \ref{lem:optimalityu}, we have that  $\overline{\nu}_0 \coloneqq \nabla u^*_\# \nu_1\preceq_C \nu_0$, and therefore by Strassen's theorem, there exists a coupling $\theta \in \Gamma(\overline{\nu}_0,\nu_0)$ with $ \ed\theta(x,y) =\ed \theta_{x}(y) \ed \overline{\nu}_0(x)$ and
\begin{equation}\label{eq:martingalebar}
\int y \ed \theta_{x}(y) = x\,,  \quad \text{ for $\overline{\nu}_0$-a.e. } x\in\mathbb{R}^d.
\end{equation}
This can be used to show the following result:

\begin{theorem}\label{th:barycentric}
    The following equality holds:
    \begin{equation}\label{eq:eqBP}
    \eqref{eq:diffW2} = - \frac{1}{2t(t-1)} \eqref{eq:barycentric}
+\frac{1}{2(t-1)}\int|x|^2\ed\nu_1(x) -\frac{1}{2t} \int |x|^2 \ed \nu_0(x)\,.
    \end{equation}
    Moreover, 
\begin{enumerate}
    \item $\pi$ solves \eqref{eq:barycentric} if and only if
\begin{equation}\label{eq:baryustar}
\nabla u^*(x_1) = \mathrm{bary}(\pi_{x_1}) \,,\quad \text{ for } \nu_1\text{-a.e. } x_1\in \mathbb{R}^d\,,
\end{equation}
and $u$ solves \eqref{eq:dualBrenier};
    \item conversely, $u$ solves \eqref{eq:dualBrenier} if and only if \eqref{eq:baryustar} holds
and $\pi$ solves \eqref{eq:barycentric};
\end{enumerate}
\end{theorem}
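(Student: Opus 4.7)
The plan is to complete the proof of \eqref{eq:eqBP} by complementing the ``$\geq$'' direction, already derived before the theorem, with a matching upper bound, and then to extract the characterizations of the minimizers. The strategy is to construct, starting from any $u$ solving \eqref{eq:dualBrenier} (which exists by Lemma \ref{lem:existence}), an explicit admissible coupling for \eqref{eq:barycentric2} whose value matches that of \eqref{eq:diffW2} via \eqref{eq:eqBP}. The key tool is Strassen's theorem: since $u$ is optimal, Lemma \ref{lem:optimalityu} yields the convex order $\overline{\nu}_0 \coloneqq \nabla u^*_\# \nu_1 \preceq_C \nu_0$, so Lemma \ref{lem:strassen} provides a martingale coupling $\theta \in \Gamma(\overline{\nu}_0,\nu_0)$ with $\int y\,\ed \theta_x(y)=x$ for $\overline{\nu}_0$-a.e.\ $x$. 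I then set
\[
\ed \pi^*(x_0,x_1) \coloneqq \ed\theta_{\nabla u^*(x_1)}(x_0)\,\ed \nu_1(x_1).
\]

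I would verify that $\pi^* \in \Gamma(\nu_0,\nu_1)$ by testing against bounded continuous functions: the second marginal is $\nu_1$ by construction, and the first marginal is recovered by the change of variable $y = \nabla u^*(x_1)$, which pushes $\nu_1$ to $\overline{\nu}_0$, followed by using that the second marginal of $\theta$ is $\nu_0$. The martingale property immediately yields $\mathrm{bary}(\pi^*_{x_1}) = \nabla u^*(x_1)$, so the value of the barycentric objective at $\pi^*$ reduces to $\int |t x_1 - (t-1) \nabla u^*(x_1)|^2 \, \ed \nu_1$. Expanding the square, substituting the explicit expression \eqref{eq:pstaroptimal} for $\eqref{eq:dualBrenier}$ at the optimum, and invoking the duality relation \eqref{eq:equalitypp} of Theorem \ref{th:dualityW2}, a direct algebraic verification shows that this value equals $-2t(t-1)\eqref{eq:diffW2} + t\int|x|^2\,\ed \nu_1 - (t-1)\int|x|^2\,\ed \nu_0$, which is exactly the rearrangement of \eqref{eq:eqBP}. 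Combined with the lower bound already established, this proves the identity and simultaneously certifies that $\pi^*$ is a minimizer of \eqref{eq:barycentric2}.

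For the characterizations, I observe that the objective of \eqref{eq:barycentric2} depends on $\pi$ only through the barycenter map $x_1 \mapsto \mathrm{bary}(\pi_{x_1}) \in L^2(\nu_1;\mathbb{R}^d)$, in which it is strictly convex. Hence the optimal barycenter map is $\nu_1$-a.e.\ unique. Since the constructed $\pi^*$ is optimal and has barycenter map $\nabla u^*$, any optimal $\pi$ satisfies \eqref{eq:baryustar}, yielding the ``only if'' parts of (1) and (2). Conversely, whenever \eqref{eq:baryustar} holds for some $u$ solving \eqref{eq:dualBrenier}, inserting $\pi$ into the barycentric objective gives the value $\int |tx_1-(t-1)\nabla u^*|^2\,\ed \nu_1$, which is precisely the optimum by the computation above, so $\pi$ is optimal---this is the ``if'' part of (1).

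The main obstacle is the ``if'' part of (2): given $\pi$ optimal for \eqref{eq:barycentric2} and a $(t-1)/t$-convex lower semi-continuous $u$ with $\mathrm{bary}(\pi_{x_1}) = \nabla u^*(x_1)$ $\nu_1$-a.e., one must prove that $u$ itself solves \eqref{eq:dualBrenier}. The approach is to rewrite $\int u\,\ed\nu_0 + \int u^*\,\ed\nu_1 = \int \langle x,\nabla u^*(x)\rangle\,\ed\nu_1 - \int u\,\ed\eta$ with $\eta = \overline{\nu}_0 - \nu_0$, via the Fenchel--Young equality $u(\nabla u^*(x)) + u^*(x) = \langle x, \nabla u^*(x)\rangle$, and to compare this with the explicit expression for $\eqref{eq:dualBrenier}_\mathrm{opt}$ obtained by tracing $\eqref{eq:barycentric2}_\mathrm{opt} = \int|tx_1-(t-1)\nabla u^*|^2\,\ed\nu_1$ back through \eqref{eq:eqBP} and \eqref{eq:equalitypp}. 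The convex order $\overline{\nu}_0 \preceq_C \nu_0$ applied to the convex function $u - \tfrac{t-1}{2t}|\cdot|^2$ provides the one-sided inequality needed, and combining this with the explicit matching pins down the optimality condition $\int (u - \tfrac{t-1}{2t}|x|^2)\,\ed\eta = 0$ of Lemma \ref{lem:optimalityu}, concluding that $u$ is optimal.
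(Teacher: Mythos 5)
Your construction of $\pi^*$ via Strassen's theorem, the verification of its marginals, and the ensuing computation establishing \eqref{eq:eqBP} mirror the paper's argument exactly; similarly, your use of strict convexity of the barycentric objective in the barycenter map to deduce the ``only if'' direction of (1) and the ``if'' direction of (1) is precisely the paper's reasoning. So for \eqref{eq:eqBP}, item (1), and the ``only if'' direction of (2), your proposal is correct and follows the same route.

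For what you call the ``if'' part of (2), however, there is a genuine gap. Your plan is to write $\int u\,\ed\nu_0+\int u^*\,\ed\nu_1 - \eqref{eq:dualBrenier} = -\int(u - \tfrac{t-1}{2t}|\cdot|^2)\,\ed\eta$ and to conclude from the convex order $\overline{\nu}_0\preceq_C\nu_0$ that the right-hand side must vanish. But the convex order applied to the convex function $u - \tfrac{t-1}{2t}|\cdot|^2$ yields only $\int(u - \tfrac{t-1}{2t}|\cdot|^2)\,\ed\eta \leq 0$, that is, $\int u\,\ed\nu_0+\int u^*\,\ed\nu_1 \geq \eqref{eq:dualBrenier}$, which is already the trivial inequality coming from the admissibility of $u$. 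There is no source of a reverse inequality in your argument, so the optimality condition $\int(u - \tfrac{t-1}{2t}|\cdot|^2)\,\ed\eta = 0$ of Lemma \ref{lem:optimalityu} is never verified.

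In fact the implication you are trying to prove is false as literally stated. Take $d=1$, $t=2$, $\nu_0 = \tfrac12\delta_{-1}+\tfrac12\delta_{1}$, $\nu_1 = \delta_0$, and $u(x) = \tfrac14 x^2 + |x|$. Then $u - \tfrac14|\cdot|^2 = |\cdot|$ is convex and lower semi-continuous, so $u$ is admissible for \eqref{eq:dualBrenier}; since $u\geq 0 = u(0)$ and $u$ is strictly convex, $\nabla u^*(0) = 0$. The only coupling in $\Gamma(\nu_0,\nu_1)$ is $\pi = \nu_0\otimes\nu_1$, which is therefore optimal for \eqref{eq:barycentric}, and $\mathrm{bary}(\pi_{0}) = 0 = \nabla u^*(0)$, so \eqref{eq:baryustar} holds. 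Yet $\int u\,\ed\nu_0 + u^*(0) = \tfrac54$, whereas the optimum of \eqref{eq:dualBrenier} is $\tfrac14$, attained by $\bar u(x) = \tfrac14 x^2$. The point is that \eqref{eq:baryustar} only constrains $\nabla u^*$ on $\mathrm{supp}(\nu_1)$ and leaves $\int u\,\ed\nu_0$ entirely free. Note that the paper's own proof never addresses this direction: it establishes \eqref{eq:eqBP}, the ``only if'' of (1) via strict convexity, the ``if'' of (1) because the objective depends on $\pi$ only through its barycenter map, and the ``only if'' of (2) via the Strassen construction, but not the converse implication that you identify as the ``main obstacle.''
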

\begin{proof}

Let $u$ solve \eqref{eq:dualBrenier}. Reasoning as in the the proof of Theorem \ref{th:dualityW2} we get the bound in equation \eqref{eq:boundtP}, i.e.
\[
\eqref{eq:diffW2} \leq  -\int \frac{|tx -(t-1) \nabla u^*(x)|^2}{2t(t-1)} \ed \nu_1(x) +\frac{1}{2(t-1)}\int|x|^2\ed\nu_1(x) -\frac{1}{2t} \int |x|^2 \ed \nu_0(x)\,.
\]
By Strassen's theorem, there exists a coupling $\theta \in \Gamma(\overline{\nu}_0,\nu_0)$ verifying \eqref{eq:martingalebar}. Therefore, we can define a coupling $\pi \in \Gamma(\nu_0,\nu_1)$ as follows
\begin{equation}\label{eq:pidef}
\ed \pi(x_0,x_1) =  \ed \theta_{\nabla u^*(x_1)}(x_0) \ed {\nu}_1(x_1)\,,
\end{equation}
which is equivalent to \eqref{eq:baryustar}. Then the inequality \eqref{eq:boundtP} is equivalent to
\begin{multline*}
\eqref{eq:diffW2} \leq  -\frac{1}{2t(t-1)} \int \left|t x_1 - (t-1)\int x_0 \ed \pi_{x_1}(x_0)\right|^2 \ed \nu_1(x_1) \\+\frac{1}{2(t-1)}\int|x|^2\ed\nu_1(x) -\frac{1}{2t} \int |x|^2 \ed \nu_0(x)\,.
\end{multline*}
This shows that $\pi$ is optimal for \eqref{eq:barycentric} and directly implies equation \eqref{eq:eqBP}.

In order to conclude the proof, we just need to observe that the function minimized in problem \eqref{eq:barycentric} is strongly convex when viewed as a function of  $\mathrm{bary}(\pi_{x_1})$. This means that any minimizer of problem \eqref{eq:barycentric} must satisfy \eqref{eq:baryustar} with $u$ solving \eqref{eq:dualBrenier}.
\end{proof}

\begin{remark}[Generalizations] \label{rem:mm}
It is natural to ask to what extent the study presented in this and the previous sections applies to other variants of problem \eqref{eq:diffW2}. The most natural scenarios are the case where the $l^2$ cost is replaced by a general cost function, and the case with multiple measures. For the first one, when using a general cost it seems reasonable to at least require convexity along generalized geodesics, which is guaranteed upon some conditions on the cost, and namely the so-called non-negative cross curvature condition (a proof of this fact can be found in \cite{leger2024nonnegative}, which extends previous studies on this condition 
\cite{villani2009optimal,kim2010continuity,figalli2011multidimensional}). 
The case with multiple measures would consist in considering the problem:
\[
\inf_\mu \left\{ \sum_{i=1}^{K^+} \lambda_i^+ W^2_2(\mu,\nu_i^+) -\sum_{j=1}^{K^-}\lambda^-_j W^2_2(\mu,\nu_j^-) \right\},
\]
where $K^-,K^+ \geq 1$, $\nu_i^+,\nu_j^-\in \mc{P}_2(\mathbb{R}^d)$ and $\lambda_i^+,\lambda_j^->0$ for all $i,j$, $\sum_i \lambda_i^+ > \sum_j \lambda_j^+$. 
In the particular case where $K^+=1$, the functional is still strongly convex along generalized geodesics with base $\nu_1^+$, and we expect that one can derive an equivalent version of problem \eqref{eq:dualBrenier} and \eqref{eq:barycentric}. In the general case, however, the picture is much less clear, and although one can still apply Toland duality, it is not trivial to check that the resulting problem is convex in appropriate variables and to derive an associated weak optimal transport problem. We reserve to study these issues in future works.
\end{remark}

\section{Relation with the $H^1$ projection on convex functions}\label{sec:relation}
Given an optimal Brenier potential \( u \) for the transport from \( \nu_0 \) to \( \nu_1 \), if \( \nu_0 \) is absolutely continuous, an alternative definition of extrapolation could simply consist in computing the push-forward of \( \nu_0 \) by the map \( (1-t) \mathrm{Id} + t\nabla u \). As discussed in Section \ref{sec:existence}, this provides a solution to \eqref{eq:diffW2} as long as the potential \( u - (t-1)|\cdot|^2/(2t) \) is convex. In this section, we establish a precise link between the $\nu_0$-weighted \( \dot{H}^1 \) projection of this potential onto the set of convex functions and problem \eqref{eq:diffW2}. Specifically, we show that, while the two problems are equivalent in one dimension, in general the \( \dot{H}^1 \) projection problem can still be recovered from \eqref{eq:diffW2} through a specific limit and a \( \Gamma \)-convergence argument.

\subsection{One-dimensional setting}\label{sec:1d}
Let us consider the specific case where $d=1$. Given $\mu \in \mc{P}_2(\mathbb{R})$, let us denote by $F_\mu:\mathbb{R}\rightarrow [0,1]$ the right-continuous non-decreasing function given by the cumulative distribution of $\mu$, i.e.\ $F_\mu(x)\coloneqq \mu((-\infty,x])$, and denote by $F^{[-1]}_\mu:[0,1]\rightarrow \overline{\mathbb{R}}$ its (left-continuous) pseudo-inverse defined by
\[
F^{[-1]}_\mu (a) = \inf \{ x \in\mathbb{R}
~:~ F_\mu(x)\geq a\} \]
with the usual convention $\inf \emptyset =+\infty$, which is also known as the quantile function associated with $\mu$. 

The optimal transport plan between two measures in $\mc{P}_2(\mathbb{R})$ for the $l^2$ cost is given by the monotone rearrangement plan \cite[Theorem 2.9]{santambrogio2015optimal}. As a consequence we can reformulate the minimization problem \eqref{eq:diffW2} as follows 
\[
\inf \left\{ \frac{1}{2(t-1)} \int_0^1  |F^{[-1]}_\mu(x)-F^{[-1]}_{\nu_1}(x)|^2 \ed x- \frac{1}{2t} \int_0^1 |F^{[-1]}_\mu(x)-F^{[-1]}_{\nu_0}(x)|^2 \ed x\right\},
\]
where the infimum is taken over any quantile function $F_\mu^{[-1]}$. 
Rearranging the squares, the minimizers of this problem are the solutions to
\begin{equation}\label{eq:diff1dquantile}
\inf \int_0^1 \left| F^{[-1]}_\mu(x)- (t F^{[-1]}_{\nu_1}(x) - (t-1) F^{[-1]}_{\nu_0}(x)) \right|^2 \ed x\,.
\end{equation}
Suppose that $\nu_0$ is absolutely continuous, then the optimal transport map from $\nu_0$ to $\nu_1$ is $T_{0}\coloneqq F_{\nu_1}^{[-1]} \circ F_{\nu_0}$ and  the problem above is equivalent to
\begin{equation}\label{eq:diff1dT}
\inf_{T} \int_0^1 \left|T - (t T_{0}(x) - (t-1) x) \right|^2 \ed \nu_0(x)\,,
\end{equation}
where $T \in L^2_{\nu_0}(\mathbb{R})$ is required to be non-decreasing, and given a minimizer $T$ we can retrieve the minimizer $\mu$ of the original problem by
\[
  \mu = T_\# \nu_0\,.
\]
This is just the $L^2$ projection on the space of monotone functions of the map
\[
x \mapsto  x+ t (T_{0}(x) - x)\,.
\]

\begin{remark}[Equivalence with pressureless fluids]\label{rem:pressureless} 
Suppose still that $\nu_0$ is absolutely continuous and let $T_0(x) = x+ \partial_x \phi_0$, where $x\mapsto |x|^2/2 + \phi_0(x)$ is the Brenier  potential for the transport from $\nu_0$ and $\nu_1$. The solutions to the minimization problem \eqref{eq:diff1dT} can be related to the particular solutions of the pressureless Euler system, $\mu:[0,\infty)\rightarrow \mc{P}_2(\mathbb{R})$, $v:[0,\infty)\rightarrow L^2(\mu(t);\mathbb{R})$, solving
	\begin{equation}\label{eq:pressureless}\left\{
		\begin{aligned}
		&\partial_t \mu + \partial_x(\mu v) = 0\,,\\
		&\displaystyle \partial_t (\mu  v) + \partial_x\left( \mu v^2 \right) =  0\,,\\
		\end{aligned}\right.
	\end{equation}
	with initial conditions given by
	\[
	\mu (0) = \nu_0\,, \quad v(0,\cdot) = \nabla \phi_0(\cdot)\,,
	\]
	and satisfying the \emph{sticky collision} condition, which consists in requiring particles to share the same position after collision. 
	Brenier and Grenier \cite{brenier1998one} constructed sticky solutions to problem \eqref{eq:pressureless} for which the evolution of $\mu$ is expressed explicitly by the curve
	\[ 
	\mu(t) = \tilde{X}(t,\cdot)_\# \nu_0,
	\] 
	with 
	\begin{equation}\label{eq:pushenvelope}
		\tilde{X}(t,x) \coloneqq  (\partial_x \mathrm{co}\, \psi(t,\cdot)) \circ F_0(x)\,, \quad \psi(t,s) \coloneqq \int_{0}^s X(t, F_{\nu_0}^{[-1]}(s')) \, \ed s'\,, 
	\end{equation}
	where $\mr{co}\,\psi(t,\cdot)$ denotes the convex envelope of $\psi(t,\cdot)$, and $X(t,x) = x + t \nabla \phi_0(x)$. Note that as long as the geodesic can be extended $\psi(t,\cdot)$ stays convex (as it is the integral of a monotone function) and therefore $\tilde{X} = X$. It is easy to check that $\tilde{X}(t,\cdot)$ and $\mu(t)$ are precisely the solutions to \eqref{eq:diff1dT} and \eqref{eq:diffW2}, respectively.
  \end{remark}

\subsection{A $\Gamma$-convergence result} In this section we elucidate the link between problem \eqref{eq:diffW2} and a generalization of the projection onto monotone maps \eqref{eq:diff1dT} in the multi-dimensional setting. For the sake of clarity, from now on we focus on the case where $\nu_0$ is absolutely continuous. 
Consider a function $\phi_0:\mathbb{R}^d\rightarrow \mathbb{R}$ such that $x\mapsto |x|^2 + 2 \bar{\varepsilon} \phi_0(x)$ is convex for some $\bar{\varepsilon}\geq 1$, and such that $\nabla \phi_0$ is $L$-Lipschitz. For $0<\varepsilon\leq \bar{\varepsilon}$, we consider the functionals defined by
\[
\mc{G}_\varepsilon(\mu) \coloneqq \frac{t(t-\varepsilon)}{\varepsilon} \left[ \frac{W^2_2(\mu,\nu_\varepsilon)}{t-\varepsilon} -  \frac{W^2_2(\mu,\nu_0)}{t}  \right]
\]
where 
\[
\nu_\varepsilon \coloneqq ( \mathrm{Id} + \varepsilon \nabla\phi_0)_\# \nu_0 \,.
\]
We investigate here the limit $\varepsilon\rightarrow 0^+ $ which, loosely speaking and up to multiplicative factors, corresponds to the case where $\nu_1$ tends to $\nu_0$ in \eqref{eq:diffW2} along the geodesic connecting the two.

\begin{proposition}
    The functionals $\mc{G}_\varepsilon$, as $\varepsilon \rightarrow 0^+$, $\Gamma$-converge on $(\mc{P}_2(\mathbb{R}^d),W_2)$ towards the functional $\mc{G}:\mc{P}_2(\mathbb{R}^d) \rightarrow \mathbb{R}$ defined by
    \begin{equation}\label{eq:gammalimit}
    \mc{G}(\mu) \coloneqq  \int  | \nabla u(x) - (x + t\nabla \phi_0(x) )|^2 \ed \nu_0(x) - t^2 W^2_2(\nu_0,\nu_1)
    \end{equation}
    where $u:\mathbb{R}^d\rightarrow \mathbb{R}$ is the unique  convex function such that $(\nabla u)_\# \nu_0 = \mu$.
\end{proposition}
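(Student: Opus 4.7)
The plan is to establish the $\Gamma$-convergence by separately proving the $\Gamma$-limsup and $\Gamma$-liminf inequalities. As a preliminary, I would identify the constant term: since the assumption $\bar{\varepsilon}\geq 1$ implies that $|\cdot|^2/2+\phi_0$ is convex, the map $\mathrm{Id}+\nabla\phi_0$ is an optimal transport map, and $W_2^2(\nu_0,\nu_1)=\int |\nabla\phi_0|^2\,\ed\nu_0$ with $\nu_1\coloneqq (\mathrm{Id}+\nabla\phi_0)_\#\nu_0$. Expanding the square, the target functional rewrites as $\mc{G}(\mu)=\int|\nabla u-y|^2\,\ed\nu_0-2t\int\langle \nabla u-y,\nabla\phi_0\rangle\,\ed\nu_0$.

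For the $\Gamma$-limsup I would take the constant recovery sequence $\mu_\varepsilon=\mu$. The transport plan $(\nabla u,\mathrm{Id}+\varepsilon\nabla\phi_0)_\#\nu_0\in\Gamma(\mu,\nu_\varepsilon)$ gives $W_2^2(\mu,\nu_\varepsilon)\leq\int|\nabla u-y-\varepsilon\nabla\phi_0|^2\,\ed\nu_0$, while the Brenier identity $W_2^2(\mu,\nu_0)=\int|\nabla u-y|^2\,\ed\nu_0$ is exact. Expanding the square in the upper bound, the singular $1/\varepsilon$ contributions cancel and I obtain
\[
\mc{G}_\varepsilon(\mu)\leq \int|\nabla u-y|^2\,\ed\nu_0 -2t\int\langle \nabla u-y,\nabla\phi_0\rangle\,\ed\nu_0+t\varepsilon\int|\nabla\phi_0|^2\,\ed\nu_0,
\]
whose limit as $\varepsilon\to 0^+$ is exactly $\mc{G}(\mu)$.

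For the $\Gamma$-liminf I consider any sequence $\mu_\varepsilon\to\mu$ in $W_2$ and denote by $u_\varepsilon$ the Brenier potential associated to $(\nabla u_\varepsilon)_\#\nu_0=\mu_\varepsilon$. Using $u_\varepsilon^*$ as a suboptimal competitor in the dual formulation \eqref{eq:dualKantorovichb} for $W_2^2(\mu_\varepsilon,\nu_\varepsilon)$ and subtracting the Brenier identity for $W_2^2(\mu_\varepsilon,\nu_0)$ yields
\[
W_2^2(\mu_\varepsilon,\nu_\varepsilon)-W_2^2(\mu_\varepsilon,\nu_0)\geq \int(|y+\varepsilon\nabla\phi_0|^2-|y|^2)\,\ed\nu_0-2\int\bigl(u_\varepsilon(y+\varepsilon\nabla\phi_0)-u_\varepsilon(y)\bigr)\,\ed\nu_0.
\]
The convexity of $u_\varepsilon$ (with Lebesgue-a.e.\ differentiability, hence $\nu_0$-a.e.\ through the Lipschitz bijection $\mathrm{Id}+\varepsilon\nabla\phi_0$) gives the pointwise bound $u_\varepsilon(y+\varepsilon\nabla\phi_0)-u_\varepsilon(y)\leq \varepsilon\langle \nabla u_\varepsilon(y+\varepsilon\nabla\phi_0),\nabla\phi_0(y)\rangle$. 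Substituting and rewriting $\mc{G}_\varepsilon(\mu_\varepsilon)=\frac{t}{\varepsilon}[W_2^2(\mu_\varepsilon,\nu_\varepsilon)-W_2^2(\mu_\varepsilon,\nu_0)]+W_2^2(\mu_\varepsilon,\nu_0)$ gives the key lower bound
\[
\mc{G}_\varepsilon(\mu_\varepsilon)\geq W_2^2(\mu_\varepsilon,\nu_0)+2t\int\langle y-\nabla u_\varepsilon(y+\varepsilon\nabla\phi_0),\nabla\phi_0\rangle\,\ed\nu_0+O(\varepsilon).
\]
The classical $L^2$-stability of Brenier maps (using absolute continuity of $\nu_0$ and $W_2$-convergence of $\mu_\varepsilon$) and a change of variables to absorb the shifted argument allow me to pass to the liminf; completing the square recovers exactly $\mc{G}(\mu)$.

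The main obstacle is precisely this passage to the limit in $\int\langle\nabla u_\varepsilon(y+\varepsilon\nabla\phi_0(y)),\nabla\phi_0(y)\rangle\,\ed\nu_0$, which requires controlling the evaluation of the Brenier maps at perturbed points. While $\nabla u_\varepsilon\to\nabla u$ in $L^2(\nu_0)$ is a standard consequence of the $W_2$-convergence $\mu_\varepsilon\to\mu$, here $\nabla u_\varepsilon$ is evaluated at $y+\varepsilon\nabla\phi_0(y)$ whose pushforward law is $\nu_\varepsilon$ rather than $\nu_0$. The plan is to transfer the integral to one against $\nu_\varepsilon$ via the smooth bijection $\mathrm{Id}+\varepsilon\nabla\phi_0$, whose Jacobian is uniformly close to the identity thanks to the Lipschitz regularity of $\nabla\phi_0$, and then combine the uniform density bounds of $\nu_\varepsilon$ with respect to $\nu_0$ with the stability of $\nabla u_\varepsilon$ to conclude.
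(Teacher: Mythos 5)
Your $\Gamma$-$\limsup$ argument (constant recovery sequence, test coupling $(\nabla u,\mathrm{Id}+\varepsilon\nabla\phi_0)_\#\nu_0$, exact Brenier identity for $W_2^2(\mu,\nu_0)$) is correct and essentially identical to the paper's. Your preliminary reduction of $\mc{G}_\varepsilon$ to the form $t\,\mc{H}_\varepsilon(\mu)+W_2^2(\mu,\nu_0)$ with $\mc{H}_\varepsilon$ the difference quotient is also the decomposition the paper uses.

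The $\Gamma$-$\liminf$, however, has a genuine gap exactly at the point you flag. After invoking the subgradient inequality you are left needing to pass to the limit in $\int\langle \nabla u_\varepsilon(y+\varepsilon\nabla\phi_0(y)),\nabla\phi_0(y)\rangle\,\ed\nu_0(y)$, where $\nabla u_\varepsilon$ is the (non-explicit, merely measurable) Brenier map from $\nu_0$ to $\mu_\varepsilon$ evaluated at a perturbed point. Your proposed repair---change variables to $\nu_\varepsilon$ and invoke ``uniform density bounds of $\nu_\varepsilon$ with respect to $\nu_0$''---does not work in the generality of the proposition: take $\nu_0$ uniform on the unit ball and $\phi_0(x)=|x|^2/2$; then $\nu_\varepsilon$ is uniform on the ball of radius $1+\varepsilon$ and is not even absolutely continuous with respect to $\nu_0$, so $\ed\nu_\varepsilon/\ed\nu_0$ is unbounded (in fact undefined) near the boundary. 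Consequently one has no control on $\|\nabla u_\varepsilon\|_{L^2(\nu_\varepsilon)}$ from $\|\nabla u_\varepsilon\|_{L^2(\nu_0)}$, and the known $L^2(\nu_0)$-stability of $\nabla u_\varepsilon$ does not transfer. Brenier maps are not continuous, so one also cannot trade the perturbed argument for a modulus-of-continuity estimate.

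The paper sidesteps this by putting the ``unknown'' object on the other side. Let $\gamma_\varepsilon\in\Gamma(\mu_\varepsilon,\nu_\varepsilon)$ be an \emph{optimal coupling} and let $u_\varepsilon(x)=|x|^2/2+\varepsilon\phi_0(x)$ (the explicit convex perturbation, not your Brenier potential). Since $\nabla u_\varepsilon^*$ is the exact Lipschitz inverse of $\mathrm{Id}+\varepsilon\nabla\phi_0$ and pushes $\nu_\varepsilon$ onto $\nu_0$, the plan $(\nabla u_\varepsilon^*\circ \mathrm{pr}_2,\mathrm{pr}_1)_\#\gamma_\varepsilon$ is admissible in $\Gamma(\nu_0,\mu_\varepsilon)$, giving
\[
\mc{H}_\varepsilon(\mu_\varepsilon)\ \geq\ \frac{1}{\varepsilon}\int\bigl(|z-y|^2-|\nabla u_\varepsilon^*(y)-z|^2\bigr)\,\ed\gamma_\varepsilon(z,y)
\ =\ -\varepsilon\!\int|\nabla\phi_0|^2\ed\nu_0\ -\ 2\!\int\langle\nabla\phi_0(\nabla u_\varepsilon^*(y)),\,z-y\rangle\,\ed\gamma_\varepsilon,
\]
using $\nabla u_\varepsilon^*(y)-y=-\varepsilon\nabla\phi_0(\nabla u_\varepsilon^*(y))$. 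Now the composed map is explicit and Lipschitz, so $\nabla\phi_0(\nabla u_\varepsilon^*(y))-\nabla\phi_0(y)=O(\varepsilon)$ uniformly in $L^2(\nu_\varepsilon)$, and the only limit to pass is in the bilinear term $\int\langle\nabla\phi_0(y),z-y\rangle\ed\gamma_\varepsilon$, which follows from the $W_2$-stability of optimal couplings $\gamma_\varepsilon\to\gamma\in\Gamma(\mu,\nu_0)$. This is the idea your sketch is missing: test against the optimal coupling $\mu_\varepsilon\leftrightarrow\nu_\varepsilon$ and transfer to $\nu_0$ with the \emph{explicit} inverse map $\nabla u_\varepsilon^*$, rather than transferring the irregular Brenier map $\nu_0\to\mu_\varepsilon$ to a perturbed domain.
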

\begin{proof}
    First of all, we rewrite $\mc{G}_\varepsilon$ as follows
    \[
    \mc{G}_\varepsilon(\mu) =  t \mc{H}_\varepsilon(\mu)  + W^2_2(\mu,\nu_0)\,,\quad \mc{H}_\varepsilon(\mu) \coloneqq \frac{W^2_2(\mu,\nu_\varepsilon)-W^2_2(\mu,\nu_0)}{\varepsilon}\,.
    \]
    We prove that $\mc{H}_\varepsilon$ $\Gamma$-converge towards
    \begin{equation}\label{eq:Heps}
    \mc{H}(\mu) = -2\int \langle \nabla u(x)-x, \nabla\phi_0(x)\rangle \ed \nu_0(x)\,,
    \end{equation}
    which implies the result directly. For this, observe that
    \[
    \mc{H}_\varepsilon(\mu) \leq \frac{1}{\varepsilon} \left[ \int |\nabla u - (x+\varepsilon \nabla \phi_0(x))|^2\ed \nu_0 - \int |\nabla u -x|^2\ed \nu_0 \right]= \mc{H}(\mu) + \varepsilon W^2_2(\nu_0,\nu_1)
    \]
    where we used the fact that $(\nabla u, \mathrm{Id}+\varepsilon\nabla \phi_0)_\# \nu_0$ is an admissible coupling for the transport from $\mu$ to $\nu_\varepsilon$. This implies directly the $\Gamma-\limsup$. For the $\Gamma-\liminf$, let $(\mu_\varepsilon)_\varepsilon \subset \mc{P}_2(\mathbb{R}^d)$ be such that $\mu_\varepsilon \rightarrow \mu$ in $W_2$ as $\varepsilon \rightarrow 0^+$. Let $\gamma_\varepsilon \in \Gamma(\mu_\varepsilon,\nu_\varepsilon)$ be an optimal coupling and denote
    \[
    u_{\varepsilon}(x) \coloneqq \frac{|x|^2}{2} + \varepsilon \phi_0(x)\,,
    \]
    which is convex by hypothesis.
    Note that  $(\nabla u_{\varepsilon}^*)_\# \nu_\varepsilon=\nu_0$. We have
    \[
    \begin{aligned}
    \mc{H}_\varepsilon(\mu_\varepsilon) & \geq \frac{1}{\varepsilon} \left[ \int |z-y|^2 \ed \gamma_\varepsilon (z,y)- \int |\nabla u_{\varepsilon}^*(y)-z|^2 \ed \gamma_\varepsilon(z,y)\right] \\
    &= -\frac{1}{\varepsilon} \int |\nabla u^*_\varepsilon (y) - y|^2 \ed \nu_\varepsilon(y) + \frac{2}{\varepsilon} \int \langle \nabla u^*_\varepsilon (y) - y, z -y\rangle \ed \gamma_\varepsilon(z,y)\\
    &= -\varepsilon \int |\nabla \phi_0(x)|^2 \ed \nu_0(x) -2 \int \langle \nabla \phi_0(\nabla u^*_\varepsilon(y)), z-y \rangle \ed \gamma_\varepsilon(z,y)\,. 
    \end{aligned}
    \]
    Recalling that $L$ is the Lipschitz constant of $\nabla \phi_0$, we obtain
    \[
    \begin{aligned}
    \mc{H}_\varepsilon(\mu_\varepsilon) 
      & \geq -\varepsilon W^2_2(\nu_0,\nu_1) -2 \int \langle \nabla \phi_0(\nabla u^*(y))-\nabla \phi_0(y)+ \nabla\phi_0(y), z-y \rangle \ed \gamma_\varepsilon(z,y) \\
      & \geq -\varepsilon W^2_2(\nu_0,\nu_1) - 2 L \varepsilon W_2(\nu_0,\nu_1) W_2(\mu_\varepsilon,\nu_\varepsilon)-2 \int \langle \nabla \phi_0(y), z-y \rangle \ed \gamma_\varepsilon(z,y)\,.
    \end{aligned}
    \]
     By the stability of optimal transport maps, as $\varepsilon \rightarrow 0^+$, $\gamma_\varepsilon$ converges in $W_2$ towards the optimal map $\gamma$ from $\nu_0$ to $\mu$ (since it converges weakly and its 2-moments converge towards those of $\gamma$), therefore we get
    \[
    \liminf_{\varepsilon \rightarrow 0^+} \mc{H}_\varepsilon(\mu_\varepsilon) \geq \mc{H}(\mu)\,.
    \]
    \end{proof}

\begin{remark} The functional $\mc{G}$ defined in \eqref{eq:gammalimit} is 2-convex along generalized geodesics with base $\nu_0$, which is expected since it is the $\Gamma$-limit of the functionals $\mc{G}_\varepsilon$ which are 2-convex along generalized geodesics with base $\nu_\varepsilon$. 
\end{remark}

\section{Particular solutions} \label{sec:particular}  When there is no geodesic from $\nu_0$ to $\nu_1$ that stays length-minimizing when extended up to time $t$, the solution to  problem \eqref{eq:diffW2} is not trivial in general. Nonetheless, in this section, we construct some explicit solutions corresponding to specific choices for the measures $\nu_0$ and $\nu_1$.

\subsection{Extrapolation when $\nu_1$ is a Dirac mass} We suppose $\nu_1 = \delta_{x_1}$ with $x_1\in\mathbb{R}^d$ fixed. As before, for any $\mu\in\mc{P}_2(\mathbb{R}^d)$, we denote by $\mathrm{bary}(\mu)\in\mathbb{R}^d$ its barycenter
\[
\mathrm{bary}(\mu) \coloneqq \int x \ed \mu(x)\,.
\]
By Jensen's inequality
\[
\begin{aligned}
W^2_2(\mu,\nu_0) &= \int |x|^2 \ed \mu(x) + \int |x|^2 \ed \nu_0(x) - 2\inf\left\{ \int u \ed \nu_0 +\int u^* \ed \mu ; \, u\text{ convex}\right\} \\
&\leq  \int |x|^2 \ed \mu(x) + \int |x|^2 \ed \nu_0(x) - 2\inf\left\{ \int u \ed \nu_0 + u^*(\mathrm{bary}(\mu)) ; \, u\text{ convex}\right\} \\
&\leq  \int |x|^2 \ed \mu(x) - |\mathrm{bary}(\mu)|^2 +W^2_2(\delta_{\mathrm{bary}(\mu)},\nu_0)\,.
\end{aligned}
\]
{On the other hand,
\[
\begin{aligned}
    W_2^2(\mu,\nu_1)
    &=\int |x-x_1|^2\ed \mu(x)=\int |x|^2\ed \mu(x)+|x_1|^2-2\langle\mathrm{bary}(\mu), x_1\rangle \\
    &=\int |x|^2 \ed \mu(x) - |\mathrm{bary}(\mu)|^2 + W_2^2(\delta_{\mathrm{bary}(\mu)},\nu_1)\,.
\end{aligned}
\]}
This implies that
\begin{multline}\label{eq:boundbary}
\frac{W_2^2(\mu,\nu_1)}{t-1} - \frac{W_2^2(\mu,\nu_0)}{t} \geq 
 \frac{1}{t(t-1)} \left(\int |x|^2 \ed \mu(x) - |\mathrm{bary}(\mu)|^2\right)\\
 + \frac{W_2^2(\delta_{\mathrm{bary}(\mu)},\nu_1)}{t-1} - \frac{W_2^2(\delta_{\mathrm{bary}(\mu)},\nu_0)}{t} \,.
\end{multline}
Applying again Jensen's inequality, the first term on the right-hand side of \eqref{eq:boundbary} is non-negative. Hence $\mu$ must be a Dirac mass, and in particular
\[
\mu = \delta_{x_t} \, \quad \text{where} \quad x_t = \mathrm{bary}(\nu_0) + t( x_1 -\mathrm{bary}(\nu_0))\,. 
\]

\begin{remark} In this specific setting, the extrapolation is consistent with a sticky particle interaction. In particular after collapsing at a single point all particles move together while the total momentum is preserved.
\end{remark}

\subsection{Difference with locally minimizing geodesics} We detail here an example highlighting the difference between the metric extrapolation and locally minimizing geodesics. 

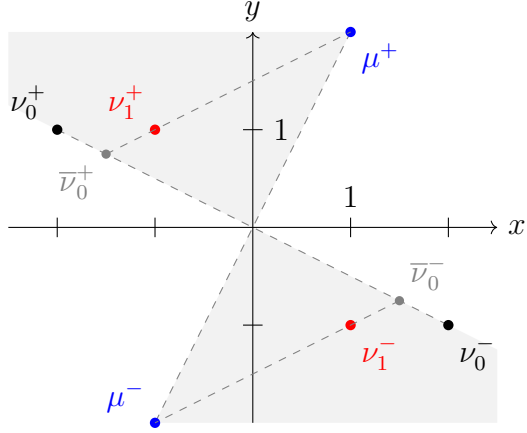
\begin{figure}
\begin{center}
\begin{tikzpicture}[scale =1.3]
         
    \fill[gray!10] plot[smooth, samples=100] (0,0) -- (1,2) -- (-2.5,2) -- (-2.5,1.25) ;
    \fill[gray!10] plot[smooth, samples=100] (0,0) -- (-1,-2) -- (2.5,-2) -- (2.5,-1.25) ;
    \draw[->] (-2.5,0) -- (2.5,0) node[right] {$x$} ;
    \draw[->] (0,-2) -- (0,2) node[above] {$y$};

    \foreach \x in {-2,-1,1,2}
        \draw (\x,-0.1) -- (\x,0.1);
    \foreach \y in {-1,1}
        \draw (-0.1,\y) -- (0.1,\y);

    \draw[gray,dashed] (-2,1) -- (2,-1);
    \draw[gray,dashed] (1,2) -- (-1,-2);

    \node[right] at (0.1,1) {1};
    \node[above] at (1,0.1) {1};

     \fill[black] (-2,1) coordinate(A) circle (1.5pt) node[above left] {$\nu_0^+$};
     \fill[black] (2,-1) coordinate(B) circle (1.5pt) node[below right] {$\nu_0^-$};
     \fill[red] (-1,1) coordinate(E) circle (1.5pt) node[above left] {$\nu_1^+$};
     \fill[red] (1,-1) coordinate(D) circle (1.5pt) node[below right] {$\nu_1^-$};
    \fill[blue] (-1,-2)  coordinate(C) circle (1.5pt) node[above left] {$\mu^-$};
    \fill[blue] (1,2) coordinate(F) circle (1.5pt) node[below right] {$\mu^+$};

    \node[gray,scale=3]  at (1.5,-.75){.};
    \node[gray,scale=3] at (-1.5,.75){.};
    \node[gray,below left] at (-1.5,.75) {$\overline{\nu}_0^+$};
    \node[gray,above right] at (1.5,-.75) {$\overline{\nu}_0^-$};

    \draw[gray,dashed] (-1.5,.75) -- (F);
    \draw[gray,dashed] (1.5,-.75) -- (C);

\end{tikzpicture}
\end{center}
\caption{An example in which a locally length-minimizing geodesic passing by $\nu_0 = \nu_0^++\nu_0^-$ and $\nu_1 = \nu_1^+ +\nu_1^-$ exists for all times but the metric extrapolation does not coincide with it.}
\label{fig:diracs}
\end{figure}

The setting we consider is represented in Figure \ref{fig:diracs}. In this example, $d=2$ and $\nu_0= \nu_0^+ + \nu_0^- $ and $\nu_1 = \nu_1^+ + \nu_1^- $ are each the sum of two Dirac masses:
\[
\nu_0^+ = \frac{1}{2} \delta_{(-2,1)}\,, \quad \nu_0^- = \frac{1}{2} \delta_{(2,-1)}\,, \quad 
\nu_1^+ = \frac{1}{2} \delta_{(-1,1)}\,, \quad \nu_1^- = \frac{1}{2} \delta_{(1,-1)}\,.
\]
Note that the curve \[
s\in\mathbb{R} \mapsto \nu(s) = \frac{1}{2} \delta_{(-2+s,1)} +\frac{1}{2} \delta_{(2-s,-1)}  
\]
is a locally length-minimizing geodesic such that $\nu(0)=\nu_0$ and $\nu(1) = \nu_1$. However we now show that for $t$ sufficiently large the minimizer of \eqref{eq:diffW2} is not $\nu(t)$.

First of all we observe that in this case any minimizer $\mu$ of \eqref{eq:diffW2} must be symmetric with respect to the origin. Furthermore it must be contained in the set $\{ y \geq \max(-x/2,2x)\}\cup \{ y\leq \min(-x/2,2x)\}$ (shaded in the figure). In fact, if this was not the case, we could always reflect the part of $\mu$ which is not contained in that area with respect to the axis $y=2x$ or $y=-x/2$ to get a measure $\overline{\mu}$ satisfying $W_2(\mu,\nu_0) = W_2(\overline{\mu},\nu_0)$ and $W_2(\mu,\nu_1) \geq W_2(\overline{\mu},\nu_1)$, leading to a contradiction.

Hence, we can write $\mu = \mu^+ + \mu^-$ where $\mu^+$ and $\mu^-$ have equal mass and solve
\[
\begin{aligned}
    \mu^+ = \argmin_\mu \left\{\frac{W^2(\mu,\nu_1^+)}{2(t-1)} - \frac{W^2(\mu,\nu_0^+)}{2t}\right\}\,, \\
    \mu^- = \argmin_\mu \left\{\frac{W^2(\mu,\nu_1^-)}{2(t-1)} - \frac{W^2(\mu,\nu_0^-)}{2t}\right\}\,,
\end{aligned}
\]
subject to the constraint that the support of $\mu^+$ is contained in $\{ y \geq \max(-x/2,2x)\}$, whereas the support of $\mu^-$ is contained in $\{ y\leq \min(-x/2,2x)\}$.
Proceeding like in the previous example, we deduce that $\mu^+=\delta_{(x^+,y^+)}/2$ and $\mu^-=\delta_{(x^-,y^-)}/2$ are Dirac masses. Therefore, the minimization becomes explicit and we find 
\[
(x^+,y^+)=
\begin{cases}
    (-2+t,1) & \text{if $t\le \frac{5}{2}$}\,,\\
    (\frac12,1)+(t-\frac{5}{2})(1,2) & \text{if $t> \frac{5}{2}$\,,}
\end{cases}
\]
and $(x^-,y^-) = -(x^+,y^+)$.
}

Note that we can obtain the same result in an alternative way which highlights the link with the barycentric formulation. Specifically, using the symmetry of the solution with respect to the axis $y=-x/2$ we have
\[
W^2_2(\mu, \nu_0) = \min_{s\in[0,1]} W^2_2(\mu^+,(1-s) \nu_0^+ + s \nu_0^-) +  W^2_2(\mu^-, s \nu_0^+ + (1-s) \nu_0^-)\,.
\]
Replacing this in the original problem we obtain
\begin{equation}\label{eq:muplusminus0}
\mu^+ = \argmin_\mu \max_{s\in[0,1]} \left\{\frac{W^2(\mu,\nu_1^+)}{2(t-1)} - \frac{W^2(\mu,s\nu_0^+ + (1-s) \nu_0^-)}{2t}\right\}\,, \end{equation} \begin{equation}\label{eq:muplusminus1} \mu^- = \argmin_\mu \max_{s\in[0,1]}\left\{ \frac{W^2(\mu,\nu_1^-)}{2(t-1)} - \frac{W^2(\mu,(1-s)\nu_0^+ + s \nu_0^-)}{2t}\right\}\,,
\end{equation}
where now the minimizations are unconstrained. 
For a fixed $s$ we are in the setting of the previous section, and in particular $\mu^+$ (respectively $\mu^-$) is the extrapolation up to time $t$ from $\overline{\nu}_0^+$ to ${\nu}_1^+$ (respectively, from $\overline{\nu}_0^-$ to ${\nu}_1^-$), where
\[
\overline{\nu}_0^+ \coloneqq \mathrm{bary}(s\nu_0^+ + (1-s) \nu_0^-)\,, \quad  \overline{\nu}_0^- \coloneqq \mathrm{bary}((1-s)\nu_0^+ + s \nu_0^-)\,,
\]
as shown in Figure \ref{fig:diracs}. The optimal $s$ can be determined by swapping the $\min$ and $\max$ in \eqref{eq:muplusminus0} and \eqref{eq:muplusminus1}, leading  to the same cost as in the barycentric problem \eqref{eq:barycentric}.

\section{A numerical scheme for atomic measures}\label{sec:numerical}

In this section we focus on the case where $\nu_0$ and $\nu_1$ are both atomic measures, i.e.
\[
\nu_0=\sum_{i=1}^M a_i \delta_{x_i} \,, \quad  \nu_1=\sum_{j=1}^N b_j \delta_{y_j}\,,
\]
for $(x_i)_{i=1}^M\eqqcolon X,(y_j)_{j=1}^N\eqqcolon Y \subset \Rd$ and $(a_i)_{i=1}^M,(b_j)_{j=1}^N\subset \R_+$, with $\sum_{i=1}^Ma_i=1$ and $\sum_{j=1}^Nb_j=1$. 
This is not restrictive, since our stability result in Lemma \ref{lem:stability} guarantees that the extrapolation of any measures $\nu_0$ to $\nu_1$ can be approximated in the $W_2$ sense by replacing $\nu_0$ and $\nu_1$ in problem \eqref{eq:diffW2} with an atomic approximation, which can always be done since atomic measures are dense in $\mathcal{P}_2(\Rd)$.
In this setting, any coupling $\pi \in \Gamma(\nu_0,\nu_1)$ is of the form
\[
\pi = \sum_{i=1}^N \sum_{j=1}^M P_{ij} \delta_{(x_i,y_j)}\,
\]
where
\[
P \in G(a,b) \coloneqq \Big\{ P \in \mathbb{R}^{M\times N}_+ \; :\; \sum_j P_{ij}=a_i \,, \; \sum_i P_{ij}=b_j\Big\},
\] 
and the barycentric problem \eqref{eq:barycentric} can be written as follows:
\begin{equation}\label{eq:bar_atomic}
\min_{P}\big\{ g(P) ~:~ P\in G(a,b) \big\} \,,
\end{equation}
where
\[
g(P)=\frac{1}{2t(t-1)}\sum_{j=1}^N \Big|ty_j-(t-1)\mathrm{bary}_j(P) \Big|^2 b_j\,,
\]
with $\mathrm{bary}_j(P)=\sum_i x_i \frac{P_{ij}}{b_j}$, for $1\le j\le N$.
As already pointed out, this is a quadratic but not strictly convex optimization problem, due to the possible kernel of the barycenter operator. Following the discussion in Section \ref{sec:weak}, from a solution $P$ one can recover the extrapolation from $\nu_0$ to $\nu_1$ at time $t$ as 
\[
\nu_t=\sum_j b_j\delta_{z_j}, \quad \text{where}\quad z_j=ty_j-(t-1)\sum_{i} x_i \frac{P_{ij}}{b_j}.
\]
From the marginal constraints on $P$, $z_j\in ty_j-(t-1)\conv(X)$ for every $1\le j\le N$, where $\conv$ denotes the convex hull.

Entropic regularization is a popular approach for solving optimal transport in this fully discrete setting \cite{peyre2020computational}. It consists in adding the entropy of the transport plan as a barrier function, in order to smoothen the problem and enforce at the same time the nonnegativity constraint. The approach is particularly attractive as one can explicitly write the dual problem as an unconstrained minimization problem and solve it via the Sinkhorn algorithm.
However, since problem \eqref{eq:bar_atomic} is quadratic and not linear as the standard optimal transport problem, one cannot rely on the Sinkhorn algorithm directly. Following \cite{carlier2023sista}, we consider a variant of Sinkhorn algorithm, alternating the Sinkhorn iterations with a gradient descent step on an appropriate auxiliary variable, with similar complexity and a linear convergence guarantee. We will provide a detailed proof of this last result following the same strategy as in \cite{carlier2023sista}, but with some adaptations to deal with the specific form of our cost, and to ensure that the gradient descent step-size can scale linearly with $\varepsilon$. This was not a concern in \cite{carlier2023sista} but it is crucial in our case since we are mainly interested in the limit $\varepsilon\rightarrow 0$. 

\subsection{Sinkhorn algorithm with an additional gradient descent step}

For $\varepsilon>0$, the entropic barycentric problem is
\begin{equation}\label{eq:bar_entropic}
\min_{P} \left\{ g(P)+\varepsilon \sum_{ij} P_{ij}\left(\log\Big(\frac{P_{ij}}{a_ib_j}\Big)-1\right)~:~ \sum_j P_{ij}=a_i, \sum_i P_{ij}=b_j \right\},
\end{equation}
where the regularization term is the relative entropy of $P$ with respect to the coupling $(a_i b_j)_{i=1,j=1}^{M,N}\in G(a,b)$. The entropy is by definition equal to $+\infty$ for any negative value, thereby automatically ensuring the non-negativity of any finite candidate coupling.
The following proposition collects useful facts for problem \eqref{eq:bar_entropic}. Importantly, this admits an unconstrained dual formulation, as the standard entropic optimal transport problem.

\begin{proposition}\label{prop:entropic}
    The following holds:
    \begin{enumerate}[i)]
    
        \item there exists a unique solution $P^\varepsilon$ to \eqref{eq:bar_entropic} and $P^\varepsilon\rightarrow P$ for $\varepsilon\rightarrow 0$, where $P$ is the maximal entropy solution to \eqref{eq:bar_atomic};
        
        \item \eqref{eq:bar_entropic} is equal, up to a constant term and a sign change, to 
        \begin{equation}\label{eq:bar_entropic_dual}
        \min_{\phi,\psi,Z} f(\phi,\psi,Z)\,,
        \end{equation}
        where $\phi=(\phi_i)_{i=1}^M,\psi=(\psi_j)_{j=1}^N\subset \R$, $Z=(z_j)_{j=1}^N\subset \Rd$ and
        \begin{multline}
        f(\phi,\psi,Z)=\varepsilon \sum_{ij} a_ib_j\left[\exp\left(\frac{-C^t_{ij}(Z)+\phi_i+\psi_j}\varepsilon\right) - \frac{\phi_i+\psi_j}{\varepsilon} \right]\\+\frac{1}{2(t-1)}\sum_j\left|z_j-y_j\right|^2b_j \,,
        \end{multline}
        for $C_{ij}^t(Z)=\frac{1}{2t} \left|z_j-x_i\right|^2$;
        \item $\nu_t^\varepsilon\coloneqq\sum_{j=1}^N b_j \delta_{z^\varepsilon_j} \rightharpoonup \sum_{j=1}^N b_j \delta_{z_j} =\nu_t$, for $\varepsilon\rightarrow 0$, where $Z^\varepsilon$ solves \eqref{eq:bar_entropic_dual}.
    \end{enumerate}
\end{proposition}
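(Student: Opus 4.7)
For (i), the functional $g+\varepsilon H$ is continuous (with the convention $0\log 0 = 0$) and strictly convex on the compact convex polytope $G(a,b)$, so $P^\varepsilon$ exists and is unique. To prove $P^\varepsilon\to P$ as $\varepsilon\to 0^+$, I would use a standard $\Gamma$-convergence argument: let $\mathcal M\subset G(a,b)$ denote the convex set of minimizers of $g$ and $P$ its unique element minimizing $H$ (unique by strict convexity of $H$ on the convex set $\mathcal M$); then for any $\tilde P\in\mathcal M$ the bounds $g(P^\varepsilon)\le g(\tilde P)+\varepsilon(H(\tilde P)-H(P^\varepsilon))$ and $H(P^\varepsilon)\le H(P)$, combined with compactness of $G(a,b)$, force every cluster point of $(P^\varepsilon)$ to lie in $\mathcal M$ with entropy at most $H(P)$, hence to equal $P$.

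For (ii), I would use a saddle-point argument based on the auxiliary functional
\[
\mathcal L(P,Z) := -\tfrac{1}{2t}\sum_{ij}|z_j-x_i|^2 P_{ij} + \tfrac{1}{2(t-1)}\sum_j b_j|z_j-y_j|^2 - \varepsilon H(P)
\]
defined on $G(a,b)\times (\mathbb R^d)^N$. A direct check shows that $\mathcal L$ is concave in $P$ and strictly convex and coercive in $Z$ (the Hessian block in $z_j$ is $\tfrac{b_j}{t(t-1)}\mathrm{Id}$, strictly positive since $t>1$), so Sion's minimax theorem applies and yields $\max_P\min_Z\mathcal L = \min_Z\max_P\mathcal L$. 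Solving the inner minimum in $Z$ gives the critical point $z_j^\star(P) = ty_j-(t-1)\mathrm{bary}_j(P)$; using the variance decomposition $\sum_i|z_j^\star-x_i|^2 P_{ij}= b_j|z_j^\star-\mathrm{bary}_j(P)|^2+\sum_i|x_i-\mathrm{bary}_j(P)|^2 P_{ij}$ and expanding squares, after cancellations one obtains $\min_Z\mathcal L(P,\cdot) = -g(P)-\varepsilon H(P)+K$ with explicit constant $K=\tfrac{1}{2(t-1)}\sum_j b_j|y_j|^2-\tfrac{1}{2t}\sum_i a_i|x_i|^2$. Conversely, the inner maximum in $P$ (recognized as the negative of entropic OT) gives $\max_P\mathcal L(\cdot,Z) = \tfrac{1}{2(t-1)}\sum_j b_j|z_j-y_j|^2-\mathrm{OT}_\varepsilon(a,b,C^t(Z))$, and the classical Fenchel dual $-\mathrm{OT}_\varepsilon(a,b,C^t(Z))=\min_{\phi,\psi}\varepsilon\sum_{ij}a_i b_j[\exp((\phi_i+\psi_j-C^t_{ij}(Z))/\varepsilon)-(\phi_i+\psi_j)/\varepsilon]$ identifies $\min_Z\max_P\mathcal L$ with the value of \eqref{eq:bar_entropic_dual}. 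Writing $V_P,V_D$ for the values of \eqref{eq:bar_entropic} and \eqref{eq:bar_entropic_dual}, the Sion equality then reads $-V_P+K=V_D$, i.e., $V_P+V_D=K$, which is precisely the claimed relation up to a constant and a sign change. The main obstacle is the careful algebraic verification of the identity giving $K$.

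For (iii), part (ii) guarantees existence of a minimizer of \eqref{eq:bar_entropic_dual}, and its first-order optimality condition $\partial_{z_j}f=0$ reads $z_j^\varepsilon=ty_j-(t-1)\mathrm{bary}_j(P^\varepsilon)$, where the saddle-point coupling $P^\varepsilon_{ij}=a_ib_j\exp((\phi^\varepsilon_i+\psi^\varepsilon_j-C^t_{ij}(Z^\varepsilon))/\varepsilon)$ is, by strict concavity of $\mathcal L$ in $P$ and the primal KKT, the unique minimizer of \eqref{eq:bar_entropic}. Combining with the convergence $P^\varepsilon\to P$ from (i), continuity of the barycenter map yields $z_j^\varepsilon\to ty_j-(t-1)\mathrm{bary}_j(P)=z_j$ for each $j$, and therefore $\nu_t^\varepsilon\rightharpoonup\nu_t$ since both measures are sums of Dirac masses with the same fixed weights $b_j$ and converging support points.
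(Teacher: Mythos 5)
Your proof is correct; parts (i) and (iii) follow essentially the same route as the paper, while part (ii) is organized differently, and it is worth noting how.

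For (i), the paper simply refers to an adaptation of Proposition 4.1 in Peyr\'e--Cuturi to the quadratic case; your explicit $\Gamma$-convergence argument (using $g(P^\varepsilon)\le g(\tilde P)+\varepsilon(H(\tilde P)-H(P^\varepsilon))$ for $\tilde P\in\mathcal M$, the comparison $H(P^\varepsilon)\le H(P)$, and compactness of $G(a,b)$) is a complete and correct unpacking of that claim. For (iii) your derivation of the optimality condition $z^\varepsilon_j = ty_j-(t-1)\mathrm{bary}_j(P^\varepsilon)$ and the passage to the limit via (i) is exactly what the paper does.

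For (ii), the underlying computations match the paper's, but the organization is genuinely different. The paper starts from the identity $g(P)=\max_Z \bigl[\sum_{ij}C^t_{ij}(Z)P_{ij}-\tfrac{1}{2(t-1)}\sum_j|z_j-y_j|^2b_j\bigr]+C$, appends Lagrange multipliers $\phi,\psi$ for the marginal constraints, and exchanges $\min_P$ with $\max_{\phi,\psi,Z}$ in a single step described only as ``a standard duality argument.'' You instead introduce the saddle functional $\mathcal L(P,Z)$ on $G(a,b)\times(\mathbb R^d)^N$, justify the $P\leftrightarrow Z$ exchange cleanly via Sion's theorem (concave--convex, with $G(a,b)$ compact), and then recover the potentials $\phi,\psi$ as the Fenchel dual of the entropic OT problem with cost $C^t(Z)$ in the $\min_Z\max_P$ direction. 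The variance decomposition identifies the constant $K=\tfrac{1}{2(t-1)}\sum_j b_j|y_j|^2-\tfrac{1}{2t}\sum_i a_i|x_i|^2$, which matches the paper's $-C$, and the Sion equality $-V_P+K=V_D$ is precisely the stated relation. This two-step decomposition (Sion, then entropic OT duality) is a bit cleaner than the paper's one-shot Lagrangian exchange, as each swap is justified by a named result; it buys rigor at no extra cost. The only point to state more explicitly would be attainment in \eqref{eq:bar_entropic_dual} (uniqueness of $Z^\varepsilon$ then follows from strict convexity of $f$ in $Z$), which the paper also assumes implicitly.
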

\begin{proof}
    $i)$ Existence and uniqueness is evident since the function to be minimized is strictly convex and coercive and the feasible set is convex and not empty. The convergence towards the unregularized problem can be easily obtained adapting the proof in \cite[Proposition 4.1]{peyre2020computational} to the quadratic case.
    
    $ii)$ Notice that, as done in Section \ref{sec:weak}, the function $g(P)$ can be rewritten as
    \begin{equation}\label{eq:bar_atomic_obj}
        g(P) = \max_{Z} \sum_{ij} C^t_{ij}(Z) P_{ij}-\frac{1}{2(t-1)}\sum_j\left|z_j-y_j\right|^2b_j + C\,,
    \end{equation}
    for any $P\in G(a,b)$, where the constant term is
    \[
    C=-\frac{1}{2t}\sum_{ij} |x_i|^2 P_{ij} +\frac{1}{2(t-1)}\left|y_j \right|^2 b_j = -\frac{1}{2t}\sum_{i} \left|x_i\right|^2 a_i +\frac{1}{2(t-1)} \sum_j \left|y_j \right|^2b_j\,.
    \]
     Enforcing the mass constraints with the Lagrange multiplier $\phi$ and $\psi$, and exchanging inf and sup by a standard duality argument, we obtain the dual problem
    \begin{multline*}
        \max_{\phi,\psi,Z} \min_P \sum_{ij} C^t_{ij}(Z) P_{ij}-\frac{1}{2(t-1)} \sum_j\left|z_j-y_j\right|^2b_j +\varepsilon \sum_{ij} P_{ij}\left(\log\Big(\frac{P_{ij}}{a_ib_j}\Big)-1\right)\\+\sum_i \phi_i a_i+\sum_j\psi_jb_j\,,
    \end{multline*}
    up to the constant term.
    The optimization in $P$ provides
    \[
    P_{ij}=a_ib_j\exp\left( \frac{-C^t_{ij}(Z)+\phi_i+\psi_j}\varepsilon\right)\,, \quad 1\le i\le M, \, 1\le j\le N.
    \]
    Plugging this in the dual problem and switching sign, we obtain \eqref{eq:bar_entropic_dual}.

    $iii)$ By the optimality, $P^\varepsilon\in G(a,b)$ and
    \[
    z^\varepsilon_j=ty_j-(t-1) \mathrm{bary}_j(P^\varepsilon)\,, \quad  1\le j\le N.
    \]
    The convergence of $\nu_t^\varepsilon\rightharpoonup \nu_t$ is therefore an immediate consequence of $i)$.
\end{proof}

A similar entropic optimal transport problem with non-fixed cost matrix has been considered in the recent paper \cite{carlier2023sista}. 
The authors introduced a novel iterative procedure, called SISTA, alternating between the phase of exact minimization over the potentials and a phase of proximal gradient descent over the parameters defining the transport cost.
We follow the same approach for \eqref{eq:bar_entropic_dual}, performing a gradient descent step with respect to $Z$.
Notice that the potentials are defined up to a common constant, as $(\phi+c,\psi-c)$ attain the same value in \eqref{eq:bar_entropic_dual} as $(\phi,\psi)$ for any $c\in \mathbb{R}$. We fix this constant by setting $\phi^n_M=0$ for any $n\ge0$. 
Then, starting from $\psi^0$ and $Z^0$, the $\phi$ and $\psi$ updates are
\begin{align}
    &
    \begin{aligned}
        &\phi^{n+1}_i=-\varepsilon \log \sum_{j=1}^N b_j\exp\left(\frac{-C^t_{ij}(Z^n)+\psi_j^n}\varepsilon\right), \qquad 1\le i\le M-1,\\
        & \phi^{n+1}_M=0,
    \end{aligned}
    \label{eq:phistep}\\
    &\psi^{n+1}_j=-\varepsilon \log \sum_{i=1}^M a_i\exp\left(\frac{-C^t_{ij}(Z^n)+\phi_i^{n+1}}\varepsilon\right), \quad 1\le j\le N, \label{eq:psistep}
\end{align}
whereas for $\tau>0$ the gradient descent step writes for $1\le j\le N$ as
\begin{equation}\label{eq:zstep}
z_j^{n+1}=z_j^n-\tau \left[ \frac{z^n_j-y_j}{t-1}b_j -  \sum_{i=1}^M \frac{z_j^n-x_i}{t} a_i b_j \exp\left(\frac{-C^t_{ij}(Z^n)+\phi_i^{n+1}+\psi_j^{n+1}}\varepsilon\right) \right].
\end{equation}
Note that the first two steps coincide with the classical Sinkhorn algorithm, but we perform a gradient step with respect to $Z$ instead of an exact minimization since this would lead to a nonlinear system to be solved at each iteration.

\subsection{Convergence analysis}
Let us denote  $x^n=(\phi^n,\psi^n,Z^n)\in \R^{M+(1+d)N}$ and
\[
x^{n+\frac{1}{3}}=(\phi^{n+1},\psi^n,Z^n)\,, \quad x^{n+\frac{2}{3}}=(\phi^{n+1},\psi^{n+1},Z^n)\,, \quad x^{n+1}=(\phi^{n+1},\psi^{n+1},Z^{n+1})\,.
\]
To perform the convergence analysis, we will consider the norm $|\cdot|_S$, and its dual, defined as
\[
|x|_S^2\coloneqq |\phi|_\infty^2+|\psi|_\infty^2+|Z|_2^2\,, \quad
|p|_{S^*}^2=|p_\phi|_1^2+|p_\psi|_1^2+|p_Z|_2^2\,,
\]
for $x=(\phi,\psi,Z)$ and $p=(p_\phi,p_\psi,p_Z)$. Note, in particular, that the $l^\infty$ norm for the discrete potentials is the natural counterpart for the supremum norm, and it will simplify some of the necessary estimates in the proof. We define:
\begin{align}
    &L_\phi \coloneqq \sup_{n\ge0}\max_{s\in[0,1]}\max_{\substack{|h_\phi|_\infty=1 \\ (h_\phi)_M=0}}\nabla^2_\phi f\big((1-s)x^{n+\frac13}+sx^{n+1}\big)h_\phi \cdot h_\phi \,,\label{eq:Lphi} \\
    &L_\psi \coloneqq \sup_{n\ge0}\max_{s\in[0,1]}\max_{|h_\psi|_\infty=1}\nabla^2_\psi f\big((1-s)x^{n+\frac23}+sx^{n+1}\big)h_\psi \cdot h_\psi \,, \label{eq:Lpsi} \\
    &L_Z \coloneqq \sup_{n\ge0}\max_{s\in[0,1]}\max_{|h_Z|_2=1}\nabla^2_Z f\big((1-s)x^{n+\frac23}+sx^{n+1}\big)h_Z \cdot h_Z\,. \label{eq:LZ}
\end{align}
These constants quantify the smoothness of $f$, with respect to the $|\cdot|_S$ norm, restricted along linear interpolations between consecutive iterations of the algorithm.

The following lemmas show that the iterates $Z^n$ stay bounded upon a step restriction independent of $\varepsilon$, and as a consequence the constants above scale linearly with $1/\varepsilon$. In the following we will denote
\[
D \coloneqq \max_{ij} |x_i-y_j|\,.
\]

\begin{lemma}\label{lem:bound_iter}
    Assume $z^0_j\in ty_j-(t-1)\conv(X)$, for all $j$, and $\tau\le \frac{t(t-1)}{|b|_\infty}$. Then,  for all $n\geq 0$ and all $j$, $z^n_j\in ty_j-(t-1) \conv(X)$, and in particular,
    \[
    |z_j-y_j|\leq (t-1) D \,,\quad |z_j-x_i|\leq t D.
    \]
\end{lemma}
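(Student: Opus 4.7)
The plan is to prove by induction on $n$ that for each $j$ there exists $c_j^n\in\conv(X)$ with $z_j^n = ty_j - (t-1)c_j^n$; the base case is exactly the hypothesis on $z_j^0$. For the inductive step, the crucial ingredient is the identity $\sum_i P_{ij}^n = b_j$ for every $j$, where $P_{ij}^n\coloneqq a_ib_j\exp((-C^t_{ij}(Z^n)+\phi^{n+1}_i+\psi^{n+1}_j)/\varepsilon)$, which is built into the $\psi$-update \eqref{eq:psistep}. Substituting this marginal identity into the bracket appearing in \eqref{eq:zstep} allows me to collect the $z_j^n$ coefficients and rewrite the update as the affine combination
\[
z_j^{n+1} = (1-\alpha_j)\,z_j^n + \alpha_j\bigl(ty_j - (t-1)\bar x_j^n\bigr), \qquad \alpha_j = \frac{\tau b_j}{t(t-1)},
\]
with $\bar x_j^n\coloneqq \sum_i x_i P_{ij}^n/b_j$, which lies in $\conv(X)$ since the weights $P_{ij}^n/b_j$ are nonnegative and sum to one in $i$.

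The hypothesis $\tau\le t(t-1)/|b|_\infty$ is exactly what is needed to ensure $\alpha_j\in[0,1]$ for every $j$, and this bound is independent of $\varepsilon$, which is what is required for the subsequent convergence analysis. Inserting the inductive expression $z_j^n = ty_j - (t-1)c_j^n$ into the displayed update then gives $z_j^{n+1} = ty_j - (t-1)c_j^{n+1}$ with $c_j^{n+1} = (1-\alpha_j)c_j^n + \alpha_j\bar x_j^n$, a convex combination of two elements of $\conv(X)$, which closes the induction.

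The two numerical bounds then follow in one line. Since $c_j^n$ is a convex combination of the $x_i$'s, one has $|z_j^n-y_j|=(t-1)|y_j-c_j^n|\le(t-1)\max_i|y_j-x_i|\le(t-1)D$, and then $|z_j^n-x_i|\le|z_j^n-y_j|+|y_j-x_i|\le tD$ by the triangle inequality and the definition of $D$. No genuine obstacle arises; the only content of the proof is the observation that the Sinkhorn marginal constraint recasts the gradient step as a convex combination of $z_j^n$ with a point of the form $ty_j-(t-1)\bar x_j^n$, and that the step-size restriction is calibrated exactly so that this combination preserves $\conv(X)$-membership of the $c_j^n$.
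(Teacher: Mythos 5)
Your proof is correct and takes essentially the same approach as the paper: both rewrite the gradient step \eqref{eq:zstep}, using the marginal identity $\sum_i P_{ij}^n = b_j$ enforced by \eqref{eq:psistep}, as a convex combination $z_j^{n+1} = (1-\alpha_j)z_j^n + \alpha_j(ty_j - (t-1)\bar x_j^n)$ with $\alpha_j = \tau b_j/(t(t-1)) \in [0,1]$, and conclude by induction. The derivation of the two numerical bounds from $\conv(X)$-membership is also the intended one.
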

\begin{proof}
    The gradient descent update on $Z^n$ can be written as
    \[
    z_j^{n+1}=\Big(1-\frac{b_j\tau}{t(t-1)}\Big)z_j^n+\frac{b_j\tau}{t(t-1)}\sum_i \big(ty_j-(t-1)x_i\big) \frac{P^n_{ij}}{b_j}\,, \quad 1\le j\le N\,.
    \]
    Since by assumption $\frac{b_j\tau}{t(t-1)}\le 1$, we deduce recursively that at each step $z_j^{n+1}\in ty_j-(t-1)\conv(X)$. 
\end{proof}
 
\begin{lemma}\label{lem:boundsL}
    Assume $z^0_j\in ty_j-(t-1)\conv(X)$, for all $j$, and $\tau\le \frac{t(t-1)}{|b|_\infty}$. Then,
    \[
    L_\phi\le \frac{C}{\varepsilon}\,, \quad L_\psi\le \frac{C}{\varepsilon}\,, \quad 
    L_Z\le |b|_\infty\Big( \frac{(t-1)C+t}{t(t-1)}+\frac{C D^2}{\varepsilon}\Big),
    \]
    where $C\coloneqq \exp(2\lambda D^2 |b|_\infty)$
     and $\lambda\coloneqq {\tau}/{\varepsilon}$.
\end{lemma}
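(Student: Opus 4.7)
The plan has three stages: first bound the per-step change of $Z$ and deduce a uniform bound on the cost variation along each interpolation; then compute the Hessians of $f$ in closed form; finally use the Sinkhorn marginal identities to control the resulting sums of $P_{ij}$. For the first stage, the gradient step \eqref{eq:zstep} can be rewritten as a convex combination involving $y_j$ and the $x_i$'s with weights $P^{n+2/3}_{ij}/b_j$, which sum to one since $\psi^{n+1}$ is the exact dual update. Combined with $|z_j^n-y_j|\le (t-1)D$ and $|z_j^n-x_i|\le tD$ from Lemma \ref{lem:bound_iter}, this yields $|z_j^{n+1}-z_j^n|\le 2\tau b_j D\le 2\tau|b|_\infty D$. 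Expanding $|z_j^n-x_i|^2-|z_j(s)-x_i|^2$ and using the same pointwise bound gives $|C^t_{ij}(Z^n)-C^t_{ij}(Z(s))|\le 2\tau D^2|b|_\infty$ for any $Z(s)$ on the segment from $Z^n$ to $Z^{n+1}$, and therefore, after exponentiation by $1/\varepsilon$, the universal multiplicative factor $C=\exp(2\lambda D^2|b|_\infty)$ on the ratio $P_{ij}(x(s))/P_{ij}(\cdot|_{Z=Z^n})$ whenever only $Z$ varies.

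The Hessians can be computed directly. For the potentials,
\[
\nabla^2_\phi f\cdot h\cdot h = \tfrac{1}{\varepsilon}\sum_i h_i^2\sum_j P_{ij}(x),\qquad \nabla^2_\psi f\cdot h\cdot h = \tfrac{1}{\varepsilon}\sum_j h_j^2\sum_i P_{ij}(x),
\]
while for $Z$ the Hessian is block-diagonal in $j$ with $d\times d$ blocks
\[
H_j = \tfrac{b_j}{t-1}I_d + \sum_i\tfrac{P_{ij}}{\varepsilon t^2}(z_j-x_i)(z_j-x_i)^T - \tfrac{1}{t}\Bigl(\sum_i P_{ij}\Bigr)I_d.
\]

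For $L_\psi$ and $L_Z$, both evaluated on the segment from $x^{n+2/3}$ to $x^{n+1}$ where only $Z$ moves, the ratio estimate of the first stage together with $\sum_i P^{n+2/3}_{ij}=b_j$ immediately yield $\sum_i P_{ij}(x(s))\le Cb_j$. Since $|h|_\infty\le 1$, this gives $L_\psi\le C/\varepsilon$ directly. For $L_Z$, using $|z_j-x_i|\le tD$ in the rank-one term and taking the maximum between the (positive) upper bound and (negative) lower bound on $h^T H_j h$ to estimate $|h^T H_j h|$, one recovers the claimed bound $|b|_\infty\bigl(\tfrac{1}{t-1}+\tfrac{C}{t}+\tfrac{CD^2}{\varepsilon}\bigr)$ after maximising over $|h_Z|_2=1$.

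The main obstacle is $L_\phi$: on the segment from $x^{n+1/3}$ to $x^{n+1}$ both $\psi$ and $Z$ move simultaneously, and the marginal identity $\sum_j P_{ij}=a_i$ only holds at $x^{n+1/3}$. The key trick is to factor out the $Z$-variation as before, and then exploit the convexity of $\exp$ applied to the affine interpolation $\psi(s)=(1-s)\psi^n+s\psi^{n+1}$ to obtain
\[
P_{ij}(x(s)) \le C\bigl[(1-s)P^{n+1/3}_{ij}+s\,P^{n+2/3}_{ij}\bigr].
\]
Summing over $j$ and invoking $\sum_j P^{n+1/3}_{ij}=a_i$ for $i<M$ together with $\sum_{i,j}P^{n+2/3}_{ij}=1$, and using the constraints $(h_\phi)_M=0$ and $|h_\phi|_\infty\le 1$, finally produces $\sum_i h_i^2\sum_j P_{ij}(x(s))\le C$, hence $L_\phi\le C/\varepsilon$.
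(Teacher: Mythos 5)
Your proof is correct and follows essentially the same route as the paper: bound $|z_j^{n+1}-z_j^n|\le 2\tau b_j D$ from the gradient step and Lemma \ref{lem:bound_iter}, deduce the multiplicative factor $C=\exp(2\lambda D^2|b|_\infty)$ on the Gibbs kernel as $Z$ moves along the segment, compute the (block-)diagonal Hessians explicitly, and exhaust the marginal identities \eqref{eq:proba_j}--\eqref{eq:proba_i} at $x^{n+1/3}$ and $x^{n+2/3}$. Your treatment of $L_\phi$ -- factoring the $Z$-perturbation through $C$ and then using convexity of the exponential to reduce to the two endpoint marginals $\sum_j P^{n+1/3}_{ij}=a_i$ and $\sum_{ij} P^{n+2/3}_{ij}=1$ -- is exactly the paper's argument, and your observation that only $Z$ varies on the $\psi$- and $Z$-segments so that a single exponential factor suffices there also matches. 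The only minor presentational difference is that for $L_Z$ you explicitly argue via the spectral norm (upper and lower bound on $h^\top H_j h$), which makes precise a step the paper leaves implicit when it adds $Cb_j/t$ with a positive sign despite the middle Hessian term being negative; this is the right reading, since the Lipschitz estimate on $\nabla_Z f$ used in Proposition \ref{prop:convergence} requires a two-sided bound.
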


\begin{proof}
    Notice first that from the updates on $\phi$ and $\psi$, we have
    \begin{alignat}{2}
        &\sum_{j=1}^N b_j \exp \left(\frac{-C_{ij}^t(Z^n)+\phi^{n+1}_i+\psi^n_j}\varepsilon\right)=1\,, \quad &&1\le i\le M-1,\label{eq:proba_j}\\
        &\sum_{i=1}^M a_i \exp\left(\frac{-C_{ij}^t(Z^n)+\phi^{n+1}_i+\psi^{n+1}_j}\varepsilon\right)=1\,, \quad &&1\le j\le N. \label{eq:proba_i}
    \end{alignat}
    Then, the second non-mixed derivatives of the objective function along the iterations of the algorithm are, for all $1\le i\le M-1$ and $1\le j\le N$,
    \[
    \begin{aligned}
        \partial^2_{\phi_i} f(x^{n+\frac13})&=\frac{a_i}{\varepsilon} \sum_{j=1}^N b_j \exp\left(\frac{-C^t_{ij}(Z^n)+\phi_i^{n+1}+\psi_j^n}\varepsilon\right)=\frac{a_i}{\varepsilon}\,, \\
        \partial^2_{\psi_j} f(x^{n+\frac23})&=\frac{b_j}{\varepsilon} \sum_{i=1}^M a_i \exp\left(\frac{-C^t_{ij}(Z^n)+\phi_i^{n+1}+\psi_j^{n+1}}\varepsilon\right)=\frac{b_j}{\varepsilon}\,, \\
        \nabla^2_{z_j} f(x^{n+\frac23})&=
        \begin{multlined}[t]
            \frac{b_j}{t-1}\Id_d-\frac{1}{t} \sum_{i=1}^M a_i b_j \exp\left(\frac{-C_{ij}^t(Z^n)+\phi^{n+1}_i+\psi^{n+1}_j}\varepsilon\right) \Id_d\\
            +\frac{1}{\varepsilon t^2}\sum_{i=1}^M (z_j^n-x_i)\otimes(z_j^n-x_i) a_ib_j\exp\left(\frac{-C_{ij}^t(Z^n)+\phi^{n+1}_i+\psi^{n+1}_j}\varepsilon\right)
        \end{multlined}\\
        &=\frac{b_j}{t(t-1)}\Id_d+\frac{1}{\varepsilon t^2}\sum_{i=1}^M (z_j^n-x_i)\otimes(z_j^n-x_i) a_ib_j\exp\left(\frac{-C_{ij}^t(Z^n)+\phi^{n+1}_i+\psi^{n+1}_j}\varepsilon\right),
    \end{aligned}
    \]
    where $\Id_d$ is the identity matrix on $\Rd$.
    Moreover, $\nabla^2_{\phi} f(x)$ and $\nabla^2_{\psi} f(x)$ are diagonal matrices with nonnegative components, whereas $\nabla_Z f(x)$ is block-diagonal for any $x$. It follows that
    \[
    \begin{gathered}
    \max_{\substack{|h_\phi|_\infty=1\\ (h_\phi)_M=0}}\nabla^2_\phi f(x)h_\phi \cdot h_\phi=\sum_{i=1}^{M-1} \partial^2_{\phi_i} f(x)\,,\\
    \max_{|h_\psi|_\infty=1}\nabla^2_\psi f(x)h_\psi \cdot h_\psi=\sum_{j=1}^N \partial^2_{\psi_j} f(x)\,,\\
    \max_{|h_{Z}|_2=1} \nabla^2_{Z} f(x)h_Z\cdot h_Z=\max_{j}\max_{|h_{z_j}|_2=1} \nabla^2_{z_j} f(x)h_{z_j}\cdot h_{z_j}\,.
    \end{gathered}
    \]

    By Lemma \ref{lem:bound_iter}, for all $n\ge 0$ and all $1\le j\le N$, $z^n_j\in ty_j-(t-1)\conv(X)$ and
    \[
    |\nabla_{z_j} f(x^{n+\frac23}) |\le b_j \Big(\frac{1}{t} \max_i(|z^n_j-x_i|)+\frac{1}{t-1} |z_j^n-y_j|\Big)\le 2 D b_j\,,
    \]
    Therefore, using the gradient descent update \eqref{eq:zstep},
    \[
    \begin{aligned}\label{eq:boundCZ}
    -C_{ij}^t(Z^n+s(Z^{n+1}-Z^n))&\le -C_{ij}^t(Z^{n})+s \tau D |\nabla_{z_j} f(\phi^{n+1},\psi^{n+1},Z^n) |\\
    &\le -C_{ij}^t(Z^{n})+s\tau 2 D^2 |b|_\infty\,.
    \end{aligned}
    \]
    Then, by convexity of the exponential function and using \eqref{eq:proba_j}-\eqref{eq:proba_i}, we can bound $L_\phi$ and $L_\psi$ as
    \[
    \begin{aligned}
        L_\phi&\le \sup_{n}\max_{s\in[0,1]} \exp(s\lambda 2 D^2 |b|_\infty)\left( (1-s) \sum_{i=1}^{M-1} \partial^2_{\phi_i} f(x^{n+\frac13})+s \sum_{i=1}^{M-1} \partial^2_{\phi_i} f(x^{n+\frac23}) \right)\\
        &\le \max_{s\in[0,1]} \frac{\exp(s\lambda 2 D^2 |b|_\infty)}{\varepsilon} \Big( (1-s)\sum_{i=1}^M a_i+s\sum_{j=1}^N b_j\Big) \le \frac{C}{\varepsilon}\,,\\
        L_\psi&\le \sup_{n}\max_{s\in[0,1]} \exp(s\lambda 2 D^2 |b|_\infty)\left( \sum_{j=1}^N \partial^2_{\psi_j} f(x^{n+\frac23}) \right)\\
        &= \max_{s\in[0,1]} \frac{\exp(s\lambda 2 D^2 |b|_\infty)}{\varepsilon} \sum_{j=1}^N b_j \le \frac{C}{\varepsilon}\,.
    \end{aligned}
    \]
 
    Finally, using again \eqref{eq:boundCZ} and \eqref{eq:proba_i},
    \begin{multline*}
        \max_{|h_{z_j}|_2=1} \nabla^2_{z_j} f((1-s)x^{n+\frac23}+sx^{n+1})h_{z_j} \cdot h_{z_j} 
        \le \\
        \begin{aligned}
            &\le\frac{b_j}{t-1}+\frac{Cb_j}{t}+\frac{C b_j}{\varepsilon t^2}\max_i \max_{|h_{z_j}|_2=1} \big[(z_j^n-x_i)\otimes(z_j^n-x_i) \big] h_{z_j} \cdot h_{z_j}\\
            &\le b_j\Big( \frac{(t-1)C+t}{t(t-1)}+\frac{CD^2}{\varepsilon} \Big)
        \end{aligned}
    \end{multline*}
    and the last estimate follows.
\end{proof}

Lemma \ref{lem:boundsL} suggests that if $\tau$ scales linearly with $\varepsilon$, $L_Z$ scales as $1/\varepsilon$ and therefore $L_Z\leq 1/\tau$ for $\tau$ sufficiently small. This is made precise in the following Lemma:

\begin{lemma}\label{lem:Ktau}
    Assuming $\varepsilon\le 1$, there exists $K>0$ only depending on $t$, $|b|_\infty$ and $D$, such that if $\tau\le K \varepsilon$ then  $\tau\le \min\big(\frac{t(t-1)}{|b|_\infty},\frac{1}{L_Z}\big)$. 
\end{lemma}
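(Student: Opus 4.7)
The plan is to choose $K$ small enough to simultaneously ensure the two inequalities $\tau \leq t(t-1)/|b|_\infty$ and $\tau L_Z \leq 1$, relying on the bound on $L_Z$ from Lemma \ref{lem:boundsL} (which itself requires the first inequality). The key observation is that the constant $C = \exp(2\lambda D^2 |b|_\infty)$ appearing in Lemma \ref{lem:boundsL} depends only on $\lambda = \tau/\varepsilon$, so imposing $\tau \leq K\varepsilon$ makes $C$ bounded by a quantity depending only on $K$, $D$, and $|b|_\infty$, independent of $\varepsilon$.

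First, I would require $K \leq t(t-1)/|b|_\infty$. Together with the standing assumption $\varepsilon \leq 1$, this yields $\tau = K\varepsilon \leq t(t-1)/|b|_\infty$, which is precisely what is needed to apply Lemmas \ref{lem:bound_iter} and \ref{lem:boundsL}. I would also restrict $K \leq 1$ at the outset so that $C \leq C_0 \coloneqq \exp(2 D^2 |b|_\infty)$, giving an $\varepsilon$-independent bound on $C$.

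Next, using Lemma \ref{lem:boundsL}, I would compute
\[
\tau L_Z \leq K\varepsilon \,|b|_\infty \frac{(t-1)C+t}{t(t-1)} + K \,|b|_\infty\, C D^2 \leq K \, M,
\]
where
\[
M \coloneqq |b|_\infty \frac{(t-1)C_0+t}{t(t-1)} + |b|_\infty C_0 D^2
\]
depends only on $t$, $|b|_\infty$, and $D$, and where I used $\varepsilon \leq 1$ to discard the factor of $\varepsilon$ from the first term. Setting
\[
K \coloneqq \min\!\left(1,\; \frac{t(t-1)}{|b|_\infty},\; \frac{1}{M}\right)
\]
then yields $\tau L_Z \leq 1$, and all the conditions are satisfied simultaneously.

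There is no real obstacle here; the only subtlety is the circularity that $L_Z$ is defined via iterates produced by the scheme with step $\tau$, and Lemma \ref{lem:boundsL} needs $\tau \leq t(t-1)/|b|_\infty$ to apply. This is handled by first securing the step-size bound $\tau \leq t(t-1)/|b|_\infty$ through the constraint on $K$, and then using the resulting bound on $L_Z$ to secure $\tau \leq 1/L_Z$.
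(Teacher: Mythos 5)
Your proof is correct and follows essentially the same strategy as the paper: observe that $\lambda = \tau/\varepsilon \le K$ controls the constant $C$ from Lemma~\ref{lem:boundsL}, then use $\varepsilon \le 1$ to turn the requirement $\tau L_Z \le 1$ into an $\varepsilon$-free condition on $K$. The one cosmetic difference is that you impose $K\le 1$ at the outset, which freezes $C\le C_0=\exp(2D^2|b|_\infty)$ and reduces the constraint on $K$ to the linear inequality $K\le 1/M$ with an explicit $M$; the paper instead keeps a $K\exp(K)$ term and defines $\overline{K}$ implicitly as the root of $\overline{K}\exp(\overline{K})+\overline{K}/(t-1)=1/((A+B)|b|_\infty)$. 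Your choice is slightly cleaner and sidesteps a small subtlety in the paper's bounding of $\exp(2KD^2|b|_\infty)$ by $\exp(K)\exp(2D^2|b|_\infty)$ (which requires $K\le 1$ or a smallness condition on $D^2|b|_\infty$), at the cost of a constant that may be mildly more conservative.
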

\begin{proof}
    One needs to find $K$ such that
    \[
    \left\{
    \begin{aligned}
        & K \varepsilon \le \frac{t(t-1)}{|b|_\infty}\,, \\
        & K \exp(K)(A\varepsilon+B)+\frac{K \varepsilon}{t-1} \le \frac{1}{|b|_\infty}\,,
    \end{aligned}
    \right.
    \]
    where $A=\exp(2D^2|b|_\infty)/t$ and $B=\exp(2D^2|b|_\infty)D^2$. In particular, one can take $K= \min\big(\frac{t(t-1)}{|b|_\infty},\overline{K}\big)$ where $\overline{K}$ solves
    \[
    \overline{K}\exp(\overline{K}) +\frac{\overline{K}}{t-1}= \frac{1}{(A+B)|b|_\infty}\,.
    \]
\end{proof}

The following proof of convergence of iterations \eqref{eq:phistep}-\eqref{eq:psistep}-\eqref{eq:zstep} is an adaptation from \cite{carlier2023sista}. 
Before stating it, let us recall that $f$ being $\mu$-convex and $L$-smooth on a set $S$ with respect to the norm $|\cdot|$ means that, $\forall x,y\in S$, 
\begin{gather}
f(x)\ge f(y)+\nabla f(y)\cdot (x-y)+\frac{\mu}{2}|x-y|^2\,,\label{eq:muconvex} \\
f(x)\le f(y)+\nabla f(y)\cdot (x-y)+\frac{L}{2}|x-y|^2\,.\label{eq:Lsmooth} 
\end{gather}
Moreover, the gradient of $f$ is $L$-Lipschitz, that is 
\[
|\nabla f(x)-\nabla f(y)|_*\le L |x-y|
\]
$\forall x,y\in S$, where $|\cdot|_*$ is the dual norm with respect to $|\cdot|$.
See for example \cite{nesterov2018lectures,bubeck2015convex} for more details on strong convexity and smoothness in convex optimization. 

\begin{proposition}\label{prop:convergence}
Suppose $\tau\le K \varepsilon$, with $K$ as in Lemma \ref{lem:Ktau}. Then, iterations \eqref{eq:phistep}-\eqref{eq:psistep}-\eqref{eq:zstep} converge with linear rate, i.e. there exists $0<\eta<1$ such that
\[
f(x^{n+1})-f(x^*)\le \eta \big(f(x^n)-f(x^*)\big)\,, \quad \forall\, n\ge0\,,
\]
where $x^*$ is the unique solution of problem \eqref{eq:bar_entropic_dual}.
\end{proposition}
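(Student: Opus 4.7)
The plan is to adapt the SISTA convergence analysis of \cite{carlier2023sista} to our setting, where the key ingredients are: (i) per-block descent inequalities based on smoothness constants $L_\phi, L_\psi, L_Z$; (ii) a Polyak--\L{}ojasiewicz type inequality coming from a form of local strong convexity of $f$ on the region swept by the iterates; (iii) the step-size scaling $\tau \leq K\varepsilon$ from Lemma \ref{lem:Ktau} which ensures $\tau \leq 1/L_Z$ uniformly in $n$. Throughout, boundedness of $(Z^n)_n$ from Lemma \ref{lem:bound_iter} and the explicit bounds on the smoothness constants in Lemma \ref{lem:boundsL} will be used to make all constants dimensional and independent of $n$.

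The first step is to derive the three descent inequalities
\[
f(x^n)-f(x^{n+\tfrac13})\ge \frac{1}{2L_\phi}|\nabla_\phi f(x^n)|_1^2, \quad
f(x^{n+\tfrac13})-f(x^{n+\tfrac23})\ge \frac{1}{2L_\psi}|\nabla_\psi f(x^{n+\tfrac13})|_1^2,
\]
\[
f(x^{n+\tfrac23})-f(x^{n+1})\ge \frac{\tau}{2}|\nabla_Z f(x^{n+\tfrac23})|_2^2.
\]
The first two use the fact that the $\phi$- and $\psi$-updates are exact minimizers of the respective block; combined with the block $L$-smoothness along the segment between consecutive iterates (which is exactly what the definitions \eqref{eq:Lphi}--\eqref{eq:Lpsi} encode), this yields the standard quadratic descent by evaluating the upper quadratic model at the minimizer. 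The third inequality is the usual descent lemma for a gradient step of size $\tau \leq 1/L_Z$, which holds thanks to Lemma \ref{lem:Ktau}. Summing the three gives a descent of $f(x^n) - f(x^{n+1})$ by a sum of squared partial gradients at (slightly) different points.

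The main obstacle is to convert this into a linear rate, i.e.\ to establish a PL-type inequality $f(x^n) - f(x^*) \leq C(|\nabla_\phi f(x^n)|_1^2 + |\nabla_\psi f(x^{n+\tfrac13})|_1^2 + |\nabla_Z f(x^{n+\tfrac23})|_2^2)$ for a constant $C$ independent of $n$. The strategy is as follows. First, observe that, thanks to Lemma \ref{lem:bound_iter}, $(Z^n)_n$ stays in a bounded set, and a short argument using the explicit form of the $\phi$- and $\psi$-updates together with the normalization $\phi^n_M = 0$ shows that $(\phi^n,\psi^n)_n$ is also bounded (uniformly in $n$, though with constants depending on $\varepsilon$). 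On this bounded region, $f$ is strongly convex: indeed, inspecting the Hessian, the $Z$-block contains the uniformly positive definite contribution $\frac{b_j}{t(t-1)}\Id_d$, and the exponential weights in the $\phi$- and $\psi$-blocks are bounded below by a positive constant; shift-invariance in the potentials is precisely broken by the constraint $\phi_M=0$. Strong convexity on the bounded region where iterates live then yields a quadratic growth bound $f(x^n) - f(x^*) \leq \tfrac{1}{2\mu}|\nabla f(x^n)|_{S^*}^2$. To transfer this bound to the three partial gradients evaluated at $x^n$, $x^{n+\tfrac13}$, $x^{n+\tfrac23}$, one uses Lipschitz continuity of each block-gradient (another consequence of Lemma \ref{lem:boundsL}) together with the triangle inequality, which introduces terms controlled by the step norms $|\phi^{n+1}-\phi^n|$, $|\psi^{n+1}-\psi^n|$, $|Z^{n+1}-Z^n|$, themselves bounded by the same squared partial gradients via the descent inequalities. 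Combining the resulting PL-type estimate with the block descent inequality yields $f(x^{n+1})-f(x^*) \leq \eta (f(x^n) - f(x^*))$ for an explicit $\eta \in (0,1)$ depending on $L_\phi, L_\psi, L_Z, \tau$ and the strong convexity modulus $\mu$. The main difficulty, as in \cite{carlier2023sista}, is to make the bounds on $\phi^n, \psi^n$ and the strong convexity modulus explicit enough to guarantee $\eta < 1$ uniformly; the careful coupling $\tau = O(\varepsilon)$ is what makes this possible.
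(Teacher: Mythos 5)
Your outline follows the same SISTA template as the paper (boundedness of the iterates, strong convexity on the swept compact region, block-smoothness, step-size $\tau = O(\varepsilon)$), but it organizes the sufficient-decrease and PL steps differently, and this difference matters technically. You derive per-block descent in terms of $|\nabla_\phi f(x^n)|_1^2$ and $|\nabla_\psi f(x^{n+\frac13})|_1^2$, which requires smoothness of $f$ along the segments $x^n \to x^{n+\frac13}$ and $x^{n+\frac13} \to x^{n+\frac23}$ (or along the gradient-step segments emanating from $x^n$ and $x^{n+\frac13}$). Those are not the segments along which the paper's constants $L_\phi$ and $L_\psi$ are defined. The paper's constants are taken along $x^{n+\frac13}\to x^{n+1}$ and $x^{n+\frac23}\to x^{n+1}$ precisely because the normalization identities \eqref{eq:proba_j}--\eqref{eq:proba_i} hold at the starting point of each of those segments, which is what lets Lemma~\ref{lem:boundsL} control them at scale $1/\varepsilon$. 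On your segments those normalizations fail at the left endpoint, so the Hessian bound would instead have to come from the global bound on the compact set $S$, which (as noted in the paper's final remark) is only exponentially large in $1/\varepsilon$, degrading the rate; in particular your inequalities cannot invoke $L_\phi, L_\psi$ as defined.

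The paper's route avoids the ``transfer'' step altogether. It uses $\mu$-strong convexity together with the exact optimality of the block updates ($\nabla_\phi f(x^{n+\frac13}) = 0$, $\nabla_\psi f(x^{n+\frac23}) = 0$) and the gradient-step descent to obtain $f(x^n) - f(x^{n+1}) \ge \frac{\mu}{2}|x^{n+1}-x^n|_S^2$, i.e.\ sufficient decrease in \emph{step norms}, not partial gradients. It then bounds $|\nabla f(x^{n+1})|_{S^*}$ by $|x^{n+1}-x^n|_S$ directly, because each block-gradient of $f$ vanishes at the intermediate point from which the corresponding segment starts, so a single Lipschitz bound with $L_\phi, L_\psi, L_Z$ suffices with no lower-order remainder terms. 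Your proposed transfer between gradients at $x^n$, $x^{n+\frac13}$, $x^{n+\frac23}$ can be made to close (the remainder terms $|\phi^{n+1}-\phi^n|$, etc.\ are in turn controlled by strong convexity and the descent inequalities, yielding an implicit inequality in $f(x^n)-f(x^{n+1})$ that one can solve), but this requires block-wise strong convexity in addition to smoothness and careful bookkeeping to avoid circularity. In short: your strategy is reasonable and could be carried out, but as written it relies on smoothness constants that the paper deliberately does not define, and the transfer argument is only sketched; the paper's sufficient-decrease-by-step-norm plus gradient-at-$x^{n+1}$ bound is the sharper and shorter path.
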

\begin{proof}
    First of all, the objective function is decreasing monotonically along iterations of the algorithm, i.e.
    \begin{equation}\label{eq:alg_decrease}
    f(x^{n+1})\le f(x^{n+\frac{2}{3}})\le f(x^{n+\frac{1}{3}})\le f(x^n)\,, \quad \forall\, n\ge0.
    \end{equation}
    The first two inequalities come directly by definition of alternate minimization.
    Concerning the third inequality, since $f$ is $L_Z$-smooth along $(1-s)x^{n+\frac23}+sx^{n+1}$, $\forall\, n\ge0$, and $ L_Z \leq 1/{\tau}$ by Lemma \ref{lem:Ktau},
    \begin{equation}\label{eq:grad_decrease}
    \begin{aligned}
    f(x^{n+1})&\le f(x^{n+\frac23})+\nabla f(x^{n+\frac23}) \cdot (x^{n+1}-x^{n+\frac23})+\frac{1}{2\tau} |x^{n+1}-x^{n+\frac23}|_S^2 \\
    &=f(x^{n+\frac23})-\frac{1}{\tau}|x^{n+1}-x^{n+\frac23}|_S^2 +\frac{1}{2\tau} |x^{n+1}-x^{n+\frac23}|_S^2 \\
    &= f(x^{n+\frac23})-\frac{1}{2\tau} |x^{n+1}-x^{n+\frac23}|_S^2\,,
    \end{aligned}
    \end{equation}
    where we used the definition of gradient descent step in the first equality.

    Consider 
    \[
    g(\phi,\psi)\coloneqq \varepsilon \sum_{ij} a_ib_j\left[\exp\left(\frac{\phi_i+\psi_j}\varepsilon-\frac{tD^2}{2\varepsilon} \right) - \left(\frac{\phi_i+\psi_j}{\varepsilon}-\frac{tD^2}{2\varepsilon}\right) \right] - \frac{tD^2}{2},
    \]
    and notice that $f(\phi,\psi,Z)\ge g(\phi,\psi)$ for any $Z$ such that $z_j\in ty_j-(t-1) \conv(X), \forall j$.
    Since $f(x^n)$ is decreasing,
    \[
    g(\phi^n,\psi^n)\le f(x^n)\le f(x^0)\,, \quad \forall\, n\ge 0\,.
    \]
    Using $\exp^x-x\ge |x|$, we obtain
    \[
    |\phi^n_i+\psi^n_j|\le f(x^0)+tD^2\eqqcolon \frac{C}{2}\,, \quad \forall \,(i,j),
    \]
    which in turn provides, using $\phi^n_M=0$ for any $n$, $|\phi^n|_\infty\le C$ and $|\psi^n|_\infty\le C$.
    Therefore, the iterates of the algorithm are uniformly bounded and belong to
    \[
    S\coloneqq \left\{ (\phi,\psi,Z)\in \R^{M+(1+d)N}~:~ |\phi|_\infty\le C\,, |\psi|_\infty\le C\,, z_j \in ty_j -(t-1) \mathrm{conv}(X) \right\}.
    \]
    
    The strictly convex function $f$ is furthermore $\mu$-convex on the compact set $S$, with
    \[
    \mu\coloneqq\min_{x\in S} \min_{|h|_S=1}\nabla^2 f(x)h \cdot h \,.
    \]
    Hence, at each step $n$, by definition of the alternate minimization, which provides $\nabla_\phi f(x^{n+\frac13})=0$ and $\nabla_\psi f(x^{n+\frac23})=0$, and using \eqref{eq:muconvex} with respect to the norm $|\cdot|_S$, it holds
    \[
    \begin{gathered}
    f(x^n)\ge f(x^{n+\frac13})+\frac{\mu}{2} |x^{n+\frac13}-x^n|_S^2 \,,\\
    f(x^{n+\frac13})\ge f(x^{n+\frac23})+\frac{\mu}{2} |x^{n+\frac23}-x^{n+\frac13}|_S^2\,. 
    \end{gathered}
    \]
    Combing these with \eqref{eq:grad_decrease} we obtain
    \begin{equation}\label{eq:fk1}
    \begin{aligned}
    (f(x^n)-f(x^*))-(f(x^{n+1})-f(x^*))&\ge \frac{\mu}{2} (|x^{n+\frac13}-x^n|_S^2+|x^{n+\frac23}-x^{n+\frac13}|_S^2)+\frac{1}{2\tau} |x^{n+1}-x^{n+\frac23}|_S^2 \\
    &\ge\frac{\mu}{2} |x^{n+1}-x^n|_S^2\,,
    \end{aligned}
    \end{equation}
    since $\mu\le L_Z\le\frac{1}{\tau}$.
    Again by \eqref{eq:muconvex},
    \[
    f(x^*)\ge f(x^{n+1})+\nabla f(x^{n+1})\cdot(x^*-x^{n+1})+\frac{\mu}{2} |x^*-x^{n+1}|_S^2
    \]
    and using further $x\cdot y+\frac{\mu}{2}|y|_S^2\ge-\frac{1}{2\mu} |x|_{S^*}^2$, we can write
    \[
    f(x^{n+1})-f(x^*)\le \frac{1}{2\mu} |\nabla f(x^{n+1})|_{S^*}^2\,.
    \]
    Since $f$ is $L_\phi$-smooth along $(1-s)x^{n+\frac13}+sx^{n+1}$, $L_\psi$-smooth along $(1-s)x^{n+\frac23}+sx^{n+1}$, and $L_Z$-smooth along $(1-s)x^{n+\frac23}+sx^{n+1}$, $\forall\, n\ge0$ and for $s\in[0,1]$, using again $\nabla_\phi f(x^{n+\frac13})=0$ and $\nabla_\psi f(x^{n+\frac23})=0$,
    \[
    \begin{aligned}
    |\nabla_\phi f(x^{n+1})|_1^2&=|\nabla_\phi f(x^{n+1})-\nabla_\phi f(x^{n+\frac13})|_1^2 \le L_\phi^2 |x^{n+1}-x^{n+\frac13}|_S^2\,, \\
    |\nabla_\psi f(x^{n+1})|_1^2&=|\nabla_\psi f(x^{n+1})-\nabla_\psi f(x^{n+\frac23})|_1^2 \le L_\psi^2 |x^{n+1}-x^{n+\frac23}|_S^2\,, \\
    |\nabla_Z f(x^{n+1})|_2^2&=
    |\nabla_Z f(x^{n+1})-\nabla_Z f(x^{n+\frac23}) + \frac{1}{\tau}(Z^n-Z^{n+1})|_2^2\\
    &=|\nabla_Z f(x^{n+1})-\nabla_Z f(x^{n+\frac23})|_2^2+  \frac{1}{\tau^2}|(Z^n-Z^{n+1})|_2^2\\
    &\quad +\frac{2}{\tau}(Z^n-Z^{n+1})\cdot (\nabla_Z f(x^{n+1})-\nabla_Z f(x^{n+\frac23})) \\
    &\le (L_Z^2+\frac{1}{\tau^2}) |x^{n+1}-x^{n+\frac23}|_S^2\,,
    \end{aligned}
    \]
    where in the last inequality we also used the gradient descent step and the convexity of $f$ with respect to $Z$. This provides
    \[
    f(x^{n+1})-f(x^*)\le \frac{\hat{L}^2+\tau^{-2}}{2\mu} |x^{n+1}-x^n|_S^2\,,
    \]
    where $\hat{L}^2=L_\phi^2+L_\psi^2+L_Z^2$,
    and combining this with \eqref{eq:fk1} we finally get
    \[
    f(x^{n+1})-f(x^*)\le \frac{\hat{L}^2\tau^2+1}{\hat{L}^2\tau^2+1+\mu^2\tau^2} (f(x^n)-f(x^*))\,.
    \]
\end{proof}

\begin{remark}
    From the proof of Proposition \ref{prop:convergence}, since the iterates are uniformly bounded with respect to $\varepsilon$, one can infer that $\mu\rightarrow 0$ exponentially fast for $\varepsilon\rightarrow 0$.
    Notice that on the compact set $S$ the function $f$ is furthermore $L$-smooth with
    \[
    L\coloneqq\max_{x\in S} \max_{|h|_S=1}\nabla^2 f(x)h \cdot h\,,
    \]
    and similarly, $L\rightarrow \infty$ exponentially fast for $\varepsilon\rightarrow 0$. Since the constant of smoothness is related to the maximal stepsize $\tau$, 
    this motivates our choice of working with the local smoothness constants \eqref{eq:Lphi}-\eqref{eq:Lpsi}-\eqref{eq:LZ}, which scale only linearly with respect to $1/\varepsilon$, contrary to the global constant $L$.
    In the end, for $\tau=K\varepsilon$, we have
    \[
    \eta\le\frac{1}{1+C_1 \exp(-C_2/\varepsilon)}
    \]
    for some positive constants $C_1,C_2$ independent of $\varepsilon$, so that we obtain the same exponential dependency on $\varepsilon$ of the linear convergence rate as in the classical Sinkhorn algorithm.
\end{remark}

\subsection{Numerical tests}

\subsubsection{Extrapolation of shapes} Figure \ref{fig:extrapolation} illustrates some examples of extrapolations obtained using the algorithm discussed in this section. The data $\nu_0$ and $\nu_1$ are given by a collection of Dirac masses with equal weights, whose location is chosen to provide an optimal quantization in the $W_2$ sense of two uniformly distributed densities. For all test cases in this figure $\varepsilon = 10^{-3}$.

\begin{figure}
    \centering
    \setlength{\tabcolsep}{0pt}
    \begin{tabularx}{.8\textwidth}{>{\centering\arraybackslash}X>{\centering\arraybackslash}X>{\centering\arraybackslash}X>{\centering\arraybackslash}X>{\centering\arraybackslash}X>{\centering\arraybackslash}X>{\centering\arraybackslash}X>{\centering\arraybackslash}X}
        {\includegraphics[trim={6cm 4cm 6cm 4cm},clip,width=0.1\textwidth]{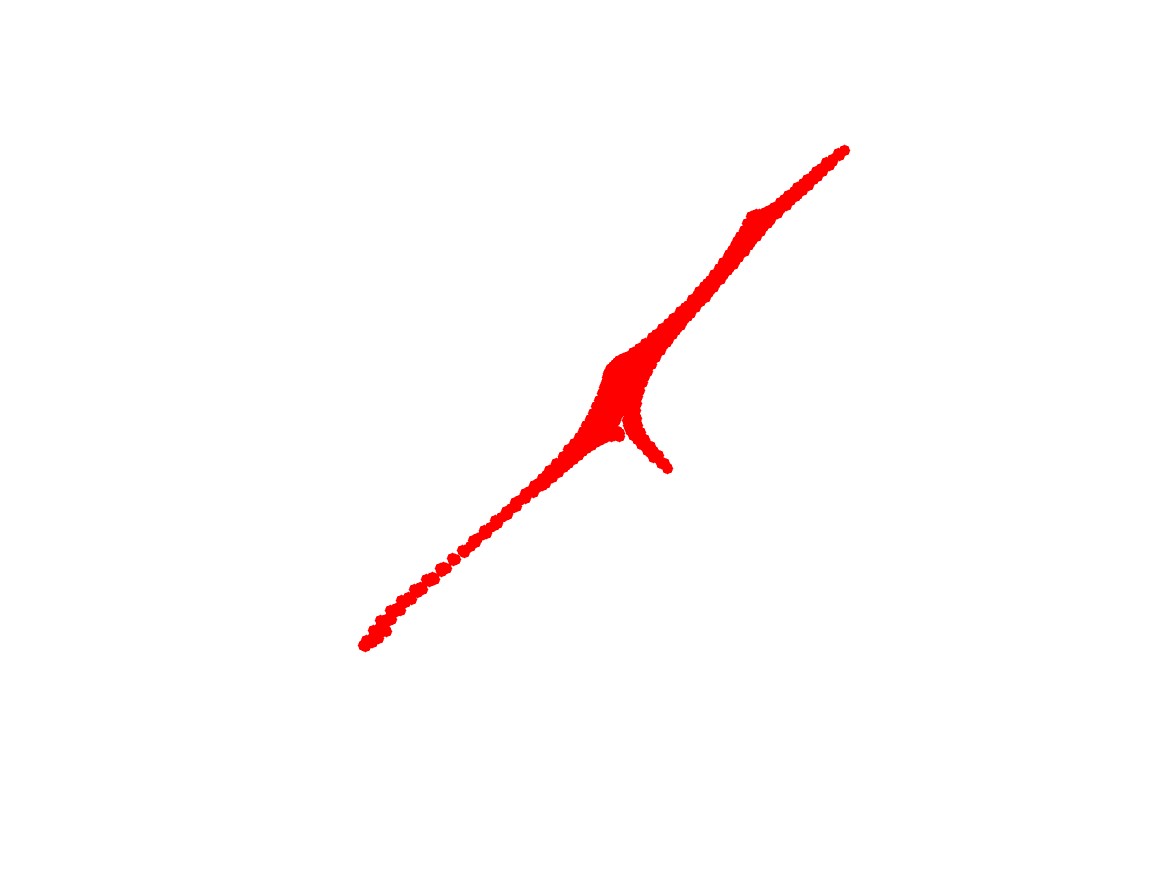}}& & & 
        {\includegraphics[trim={6cm 4cm 6cm 4cm},clip,width=0.1\textwidth]{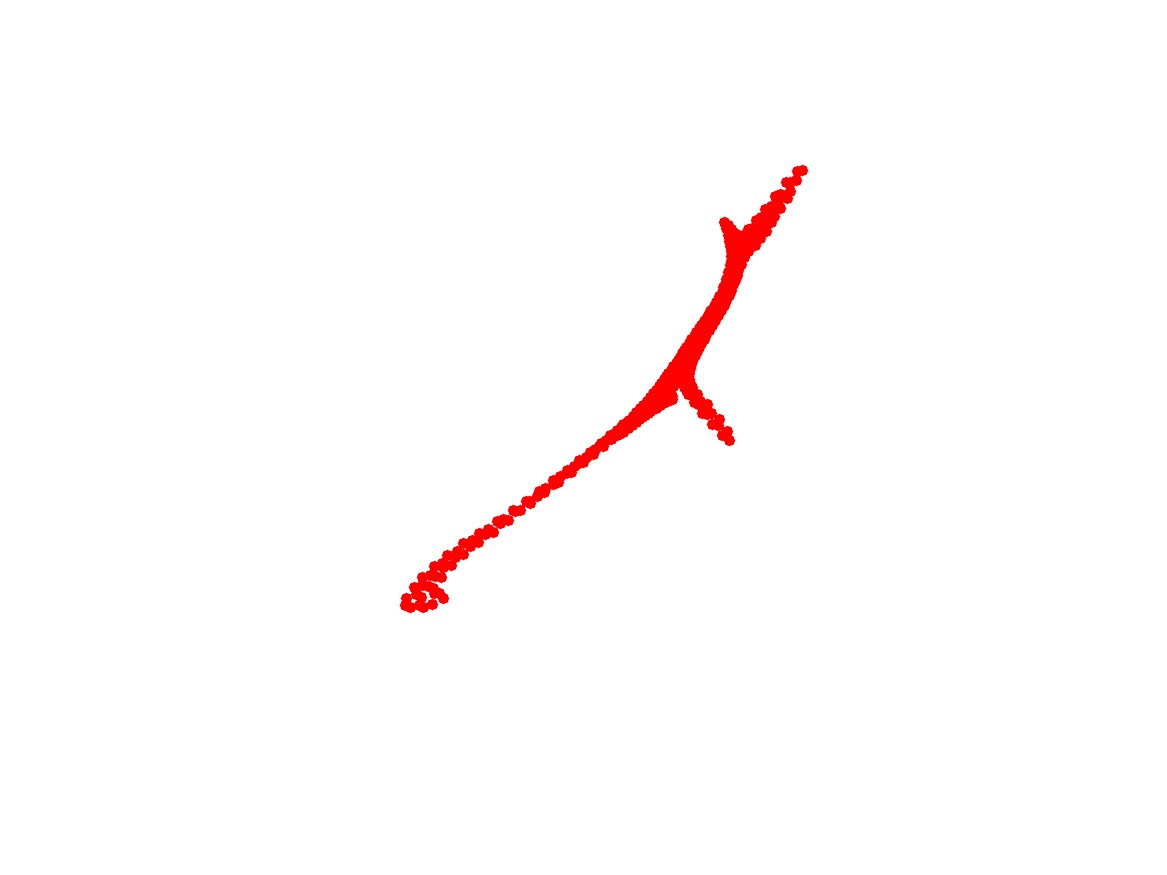}} & 
        {\includegraphics[trim={6cm 4cm 6cm 4cm},clip,width=0.1\textwidth]{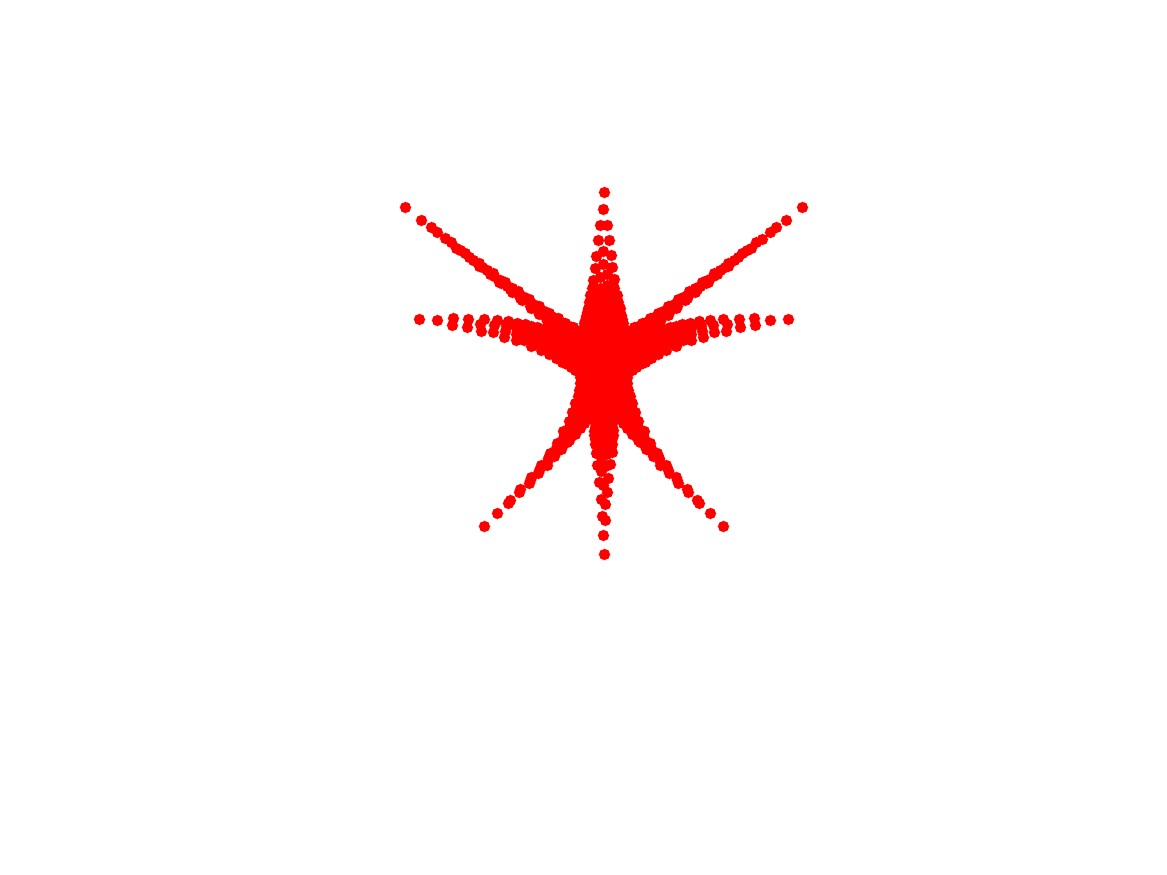}} & & & 
        {\includegraphics[trim={6cm 4cm 6cm 4cm},clip,width=0.1\textwidth]{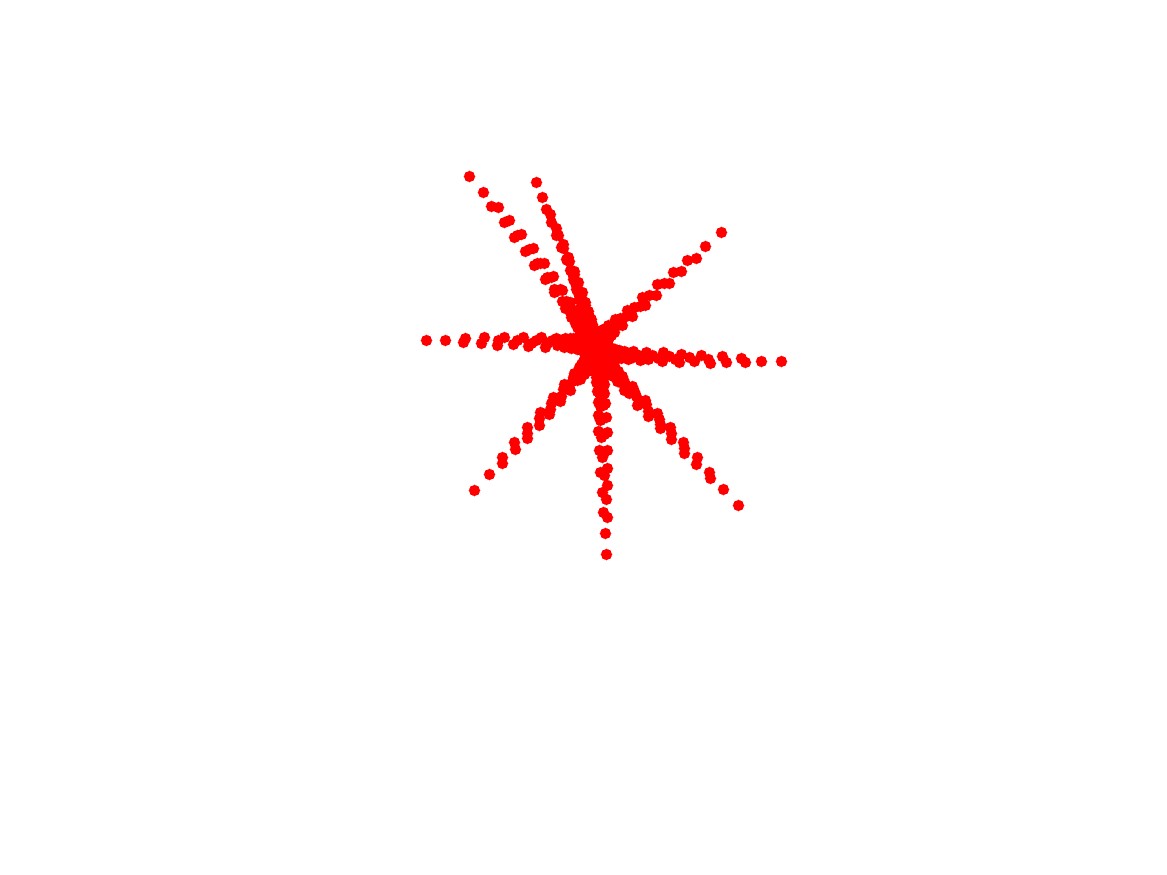}} \\
        & {\includegraphics[trim={6cm 4cm 6cm 4cm},clip,width=0.1\textwidth]{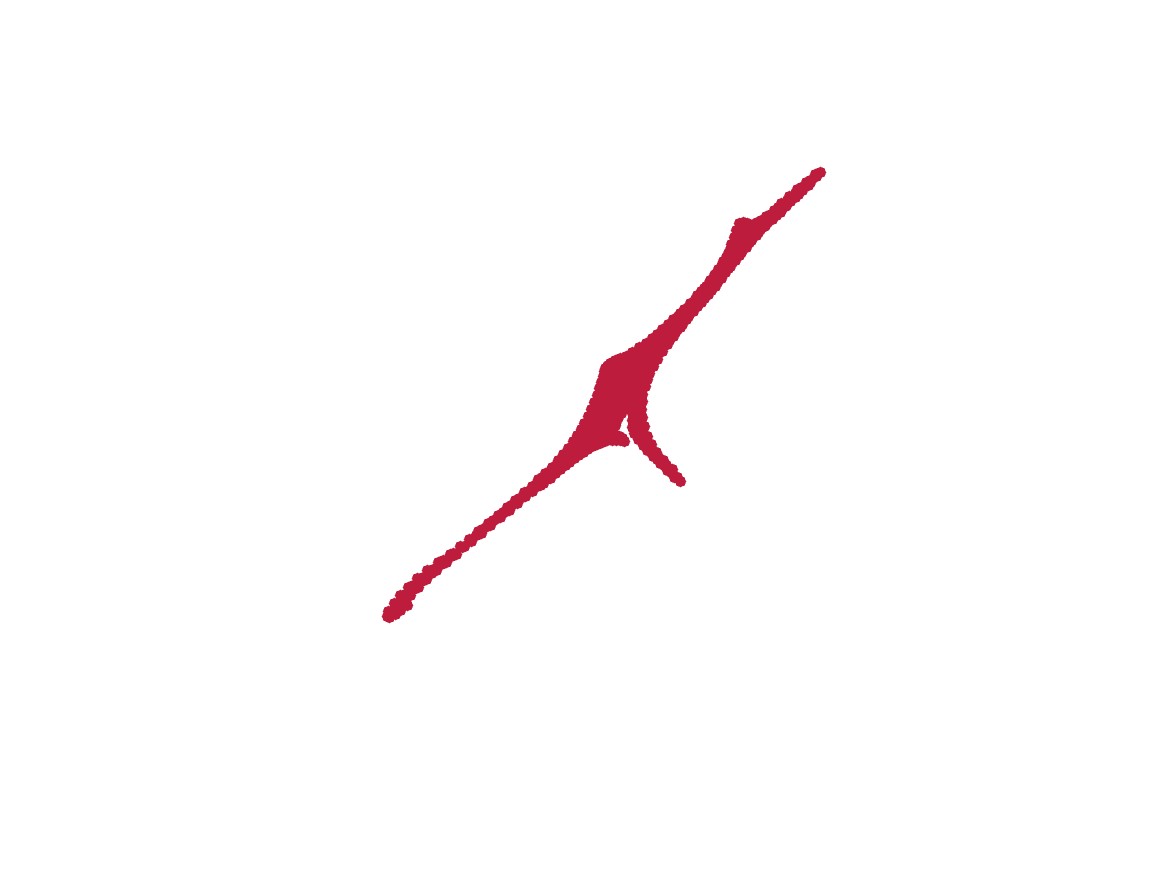}} & & 
        {\includegraphics[trim={6cm 4cm 6cm 4cm},clip,width=0.1\textwidth]{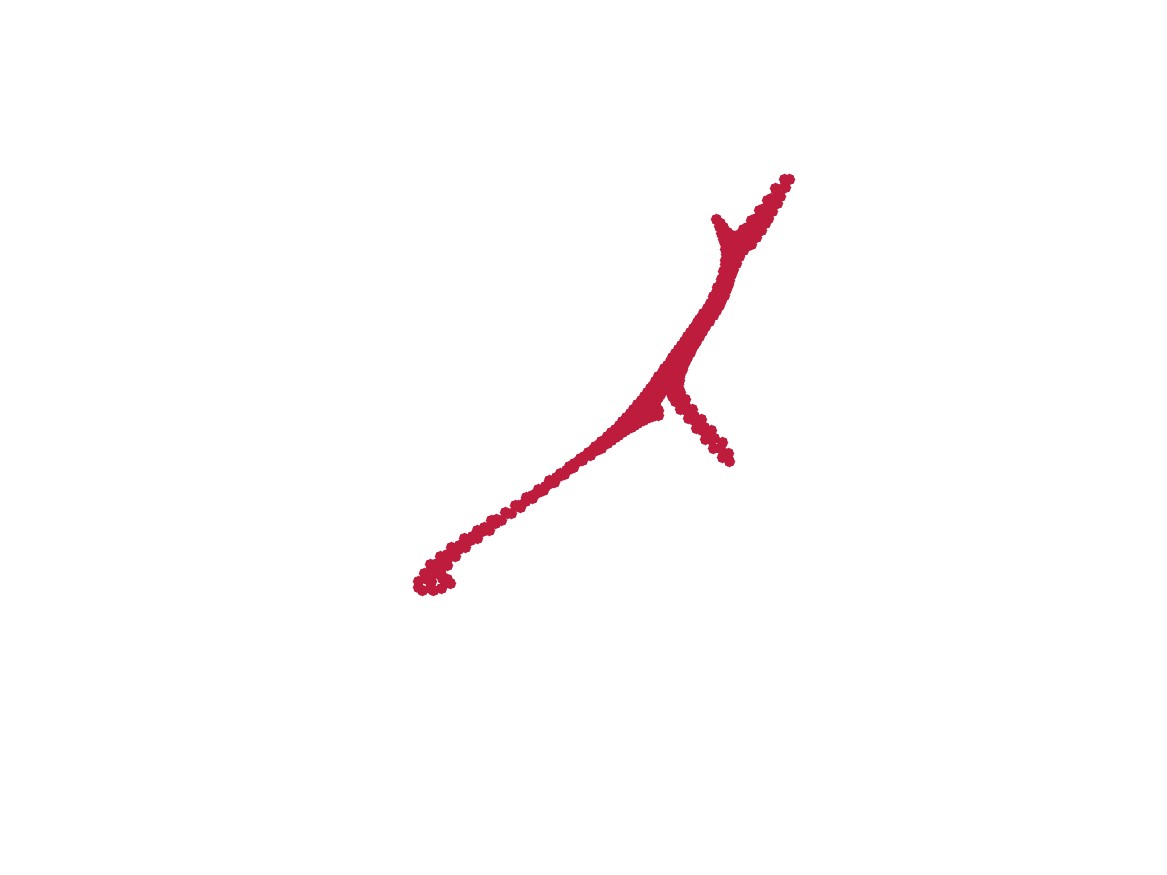}} & 
        {\includegraphics[trim={6cm 4cm 6cm 4cm},clip,width=0.1\textwidth]{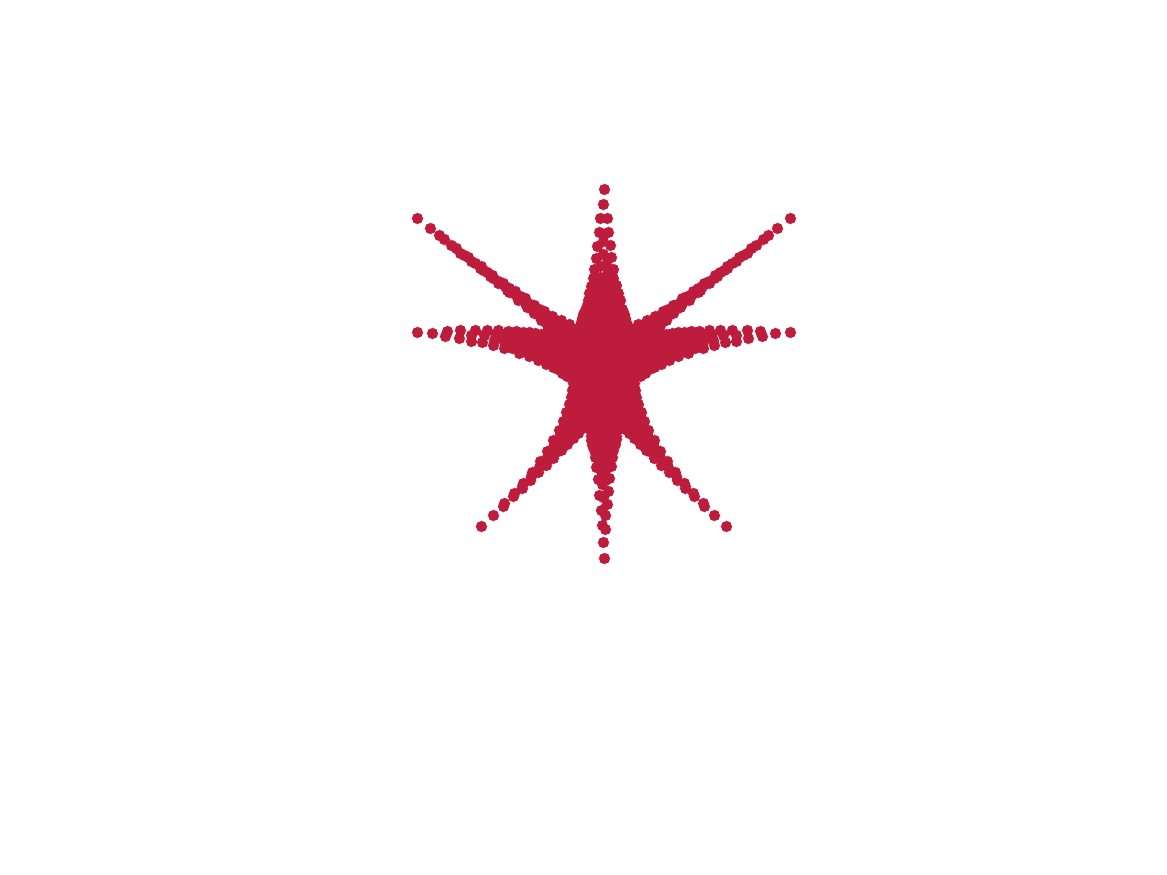}} & & 
        {\includegraphics[trim={6cm 4cm 6cm 4cm},clip,width=0.1\textwidth]{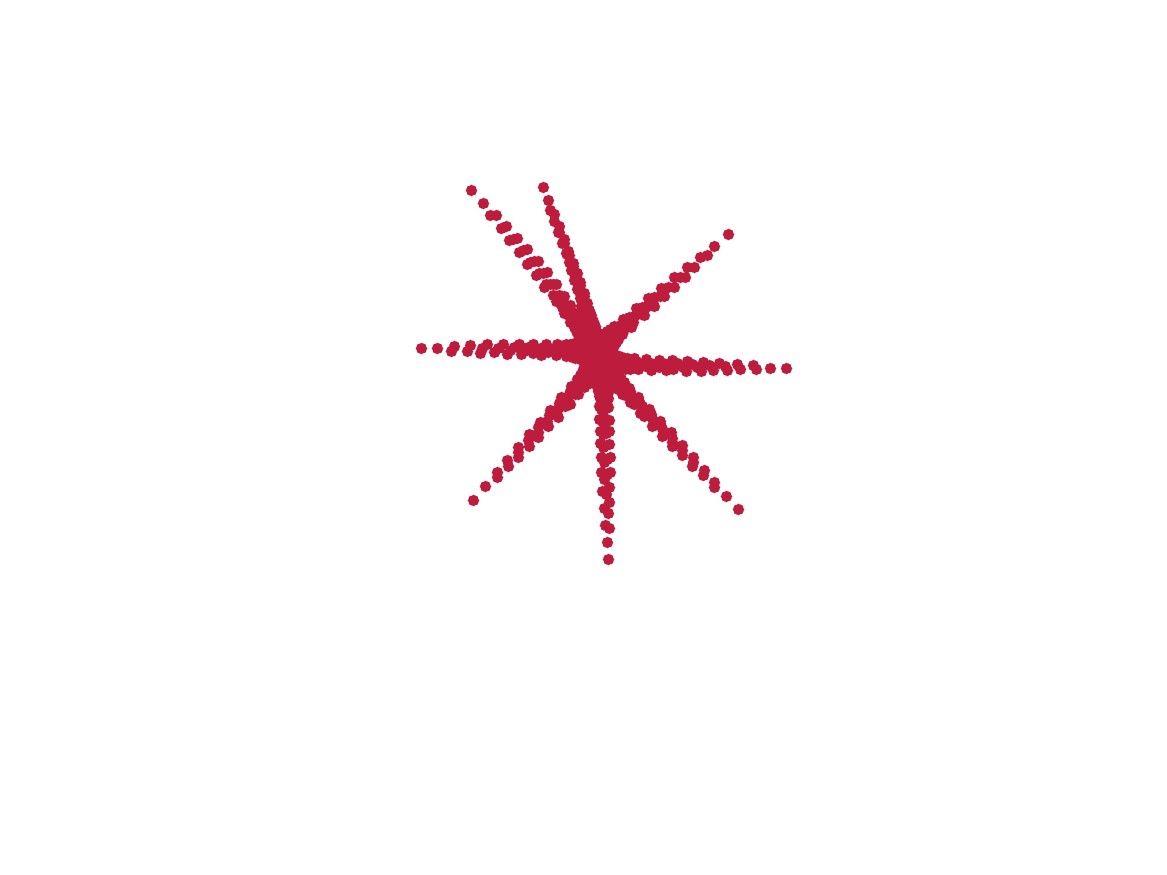}} & \\
        & &  {\includegraphics[trim={6cm 4cm 6cm 4cm},clip,width=0.1\textwidth]{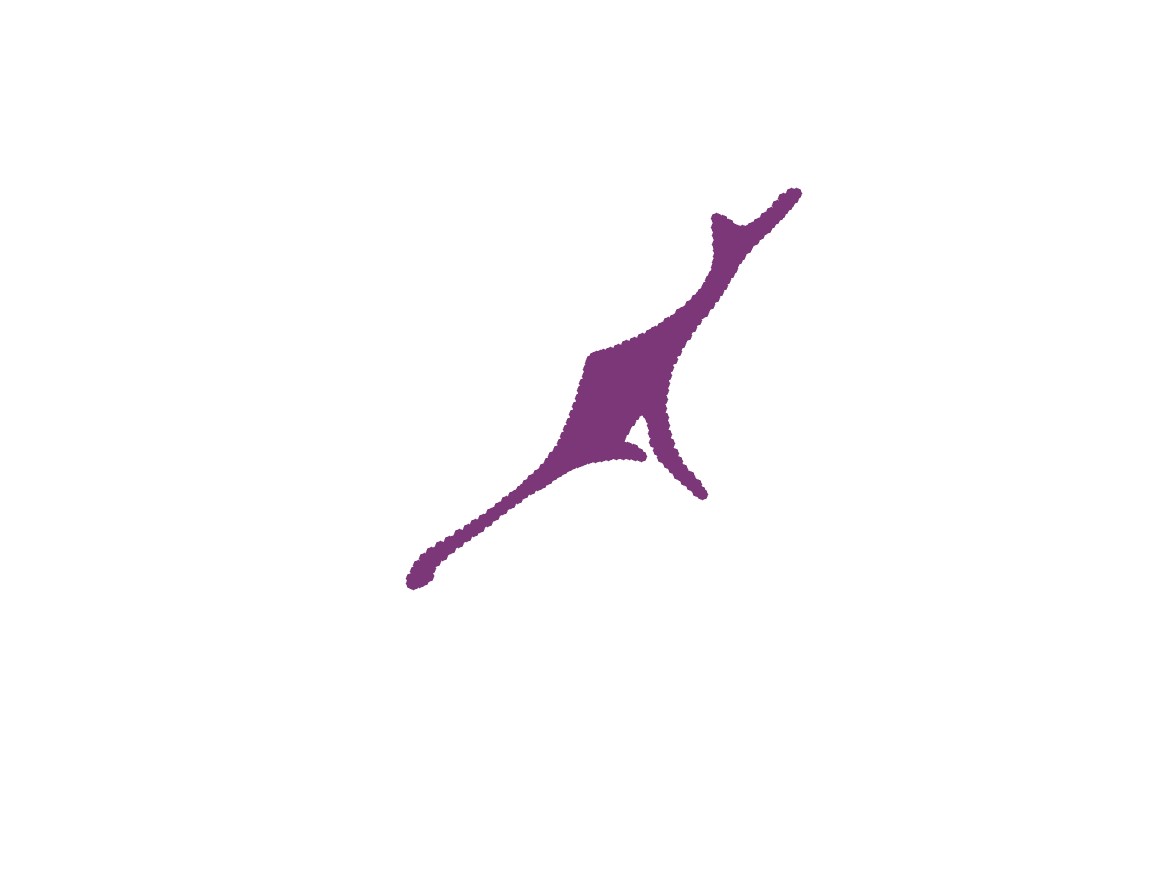}} & 
        {\includegraphics[trim={6cm 4cm 6cm 4cm},clip,width=0.1\textwidth]{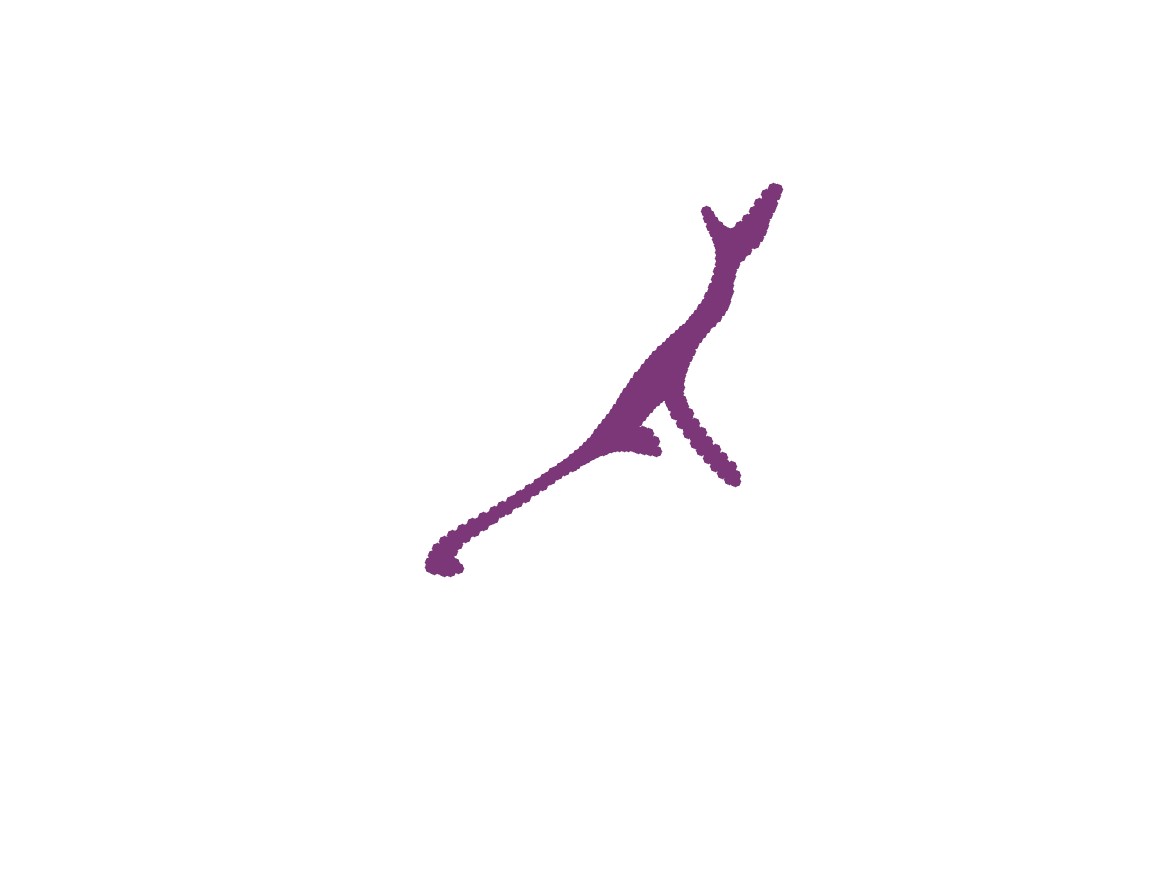}} & 
        {\includegraphics[trim={6cm 4cm 6cm 4cm},clip,width=0.1\textwidth]{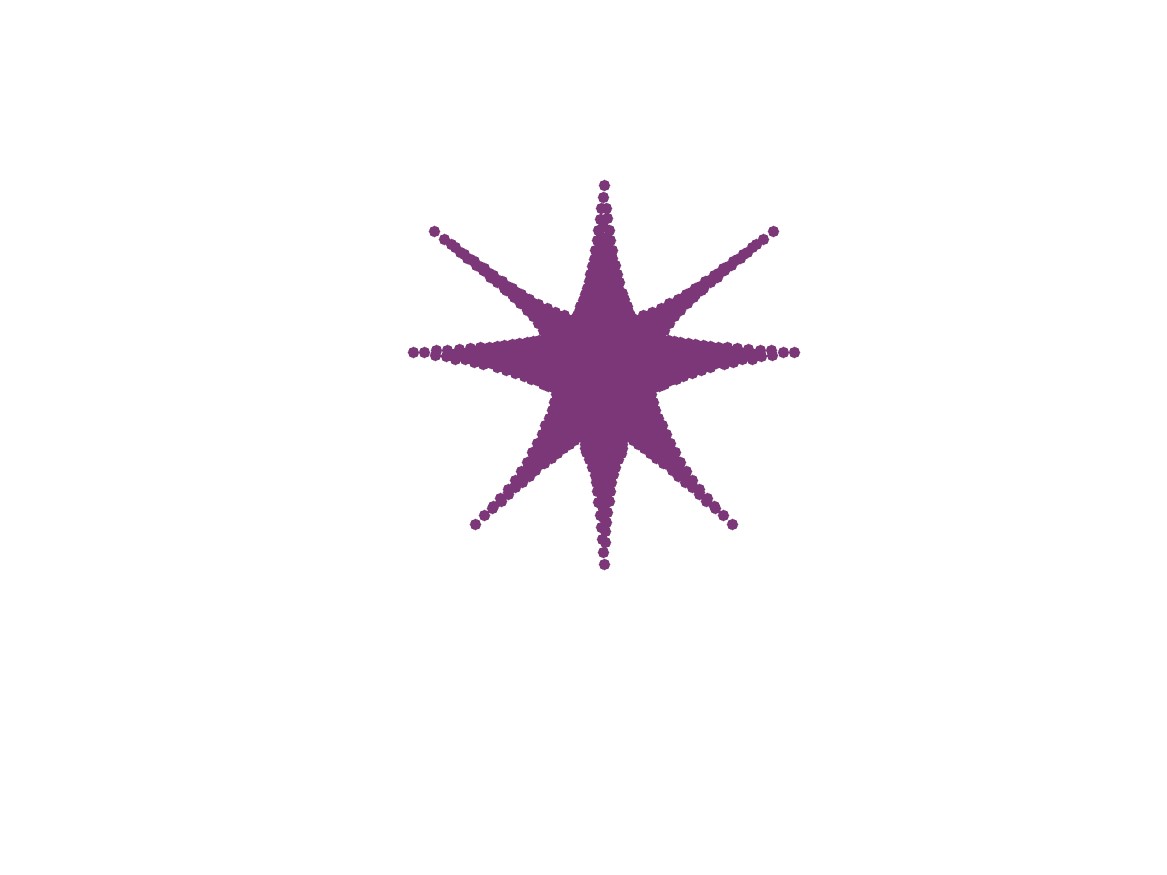}} & 
        {\includegraphics[trim={6cm 4cm 6cm 4cm},clip,width=0.1\textwidth]{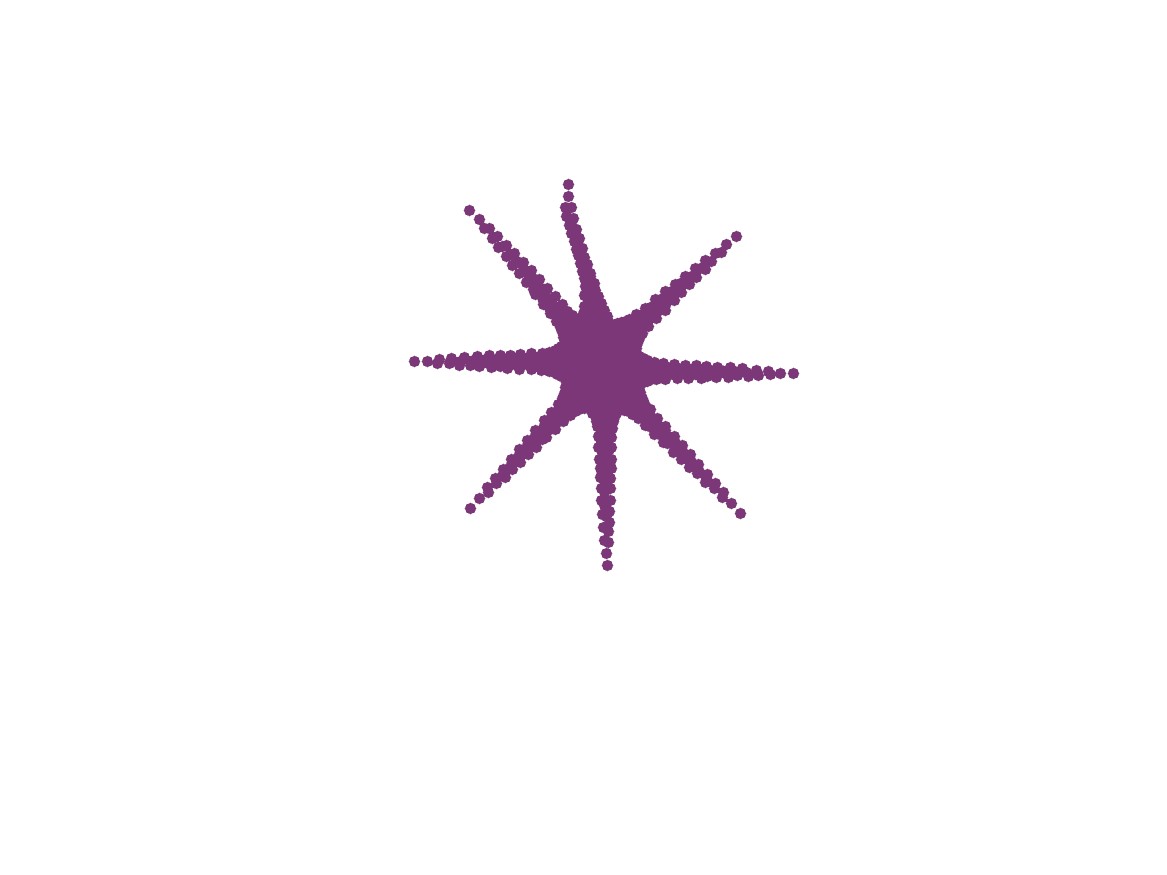}} & & \\
        {\includegraphics[trim={6cm 4cm 6cm 4cm},clip,width=0.1\textwidth]{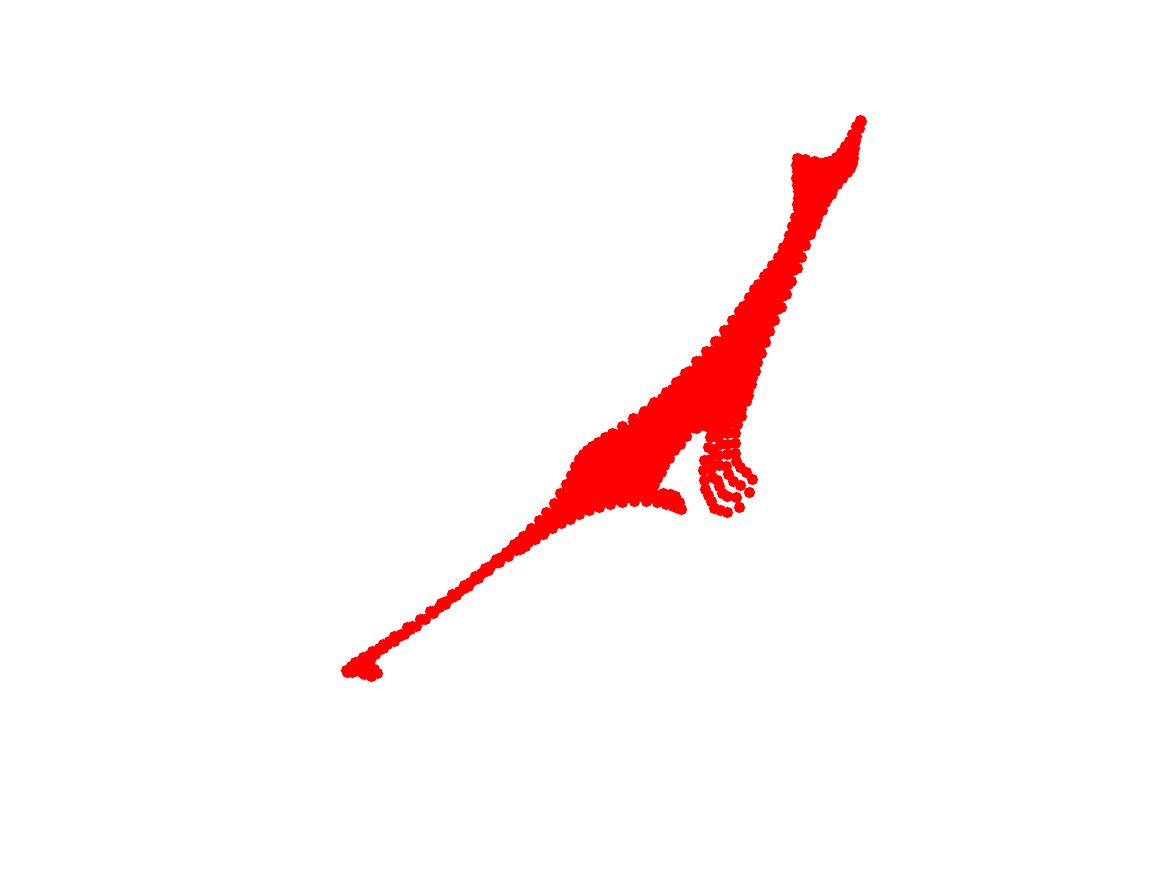}} & {\includegraphics[trim={6cm 4cm 6cm 4cm},clip,width=0.1\textwidth]{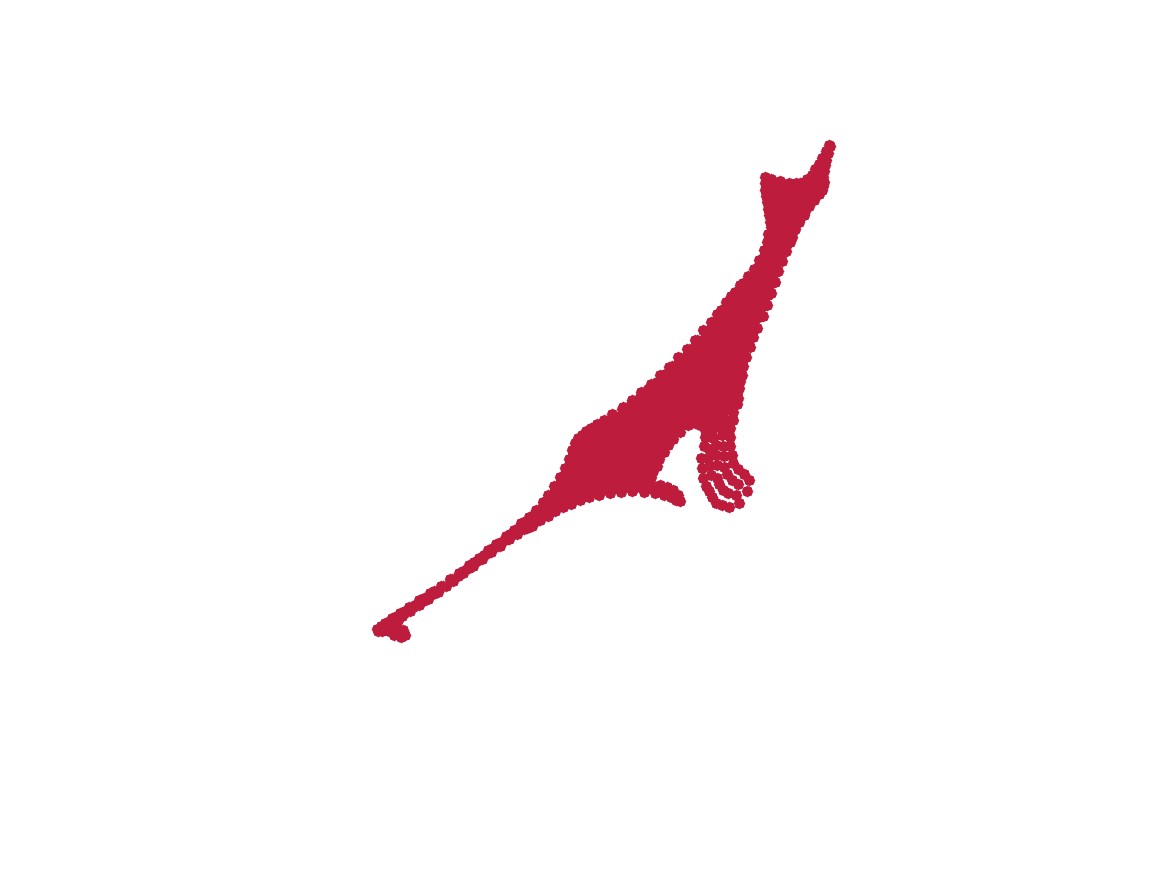}} & 
        {\includegraphics[trim={6cm 4cm 6cm 4cm},clip,width=0.1\textwidth]{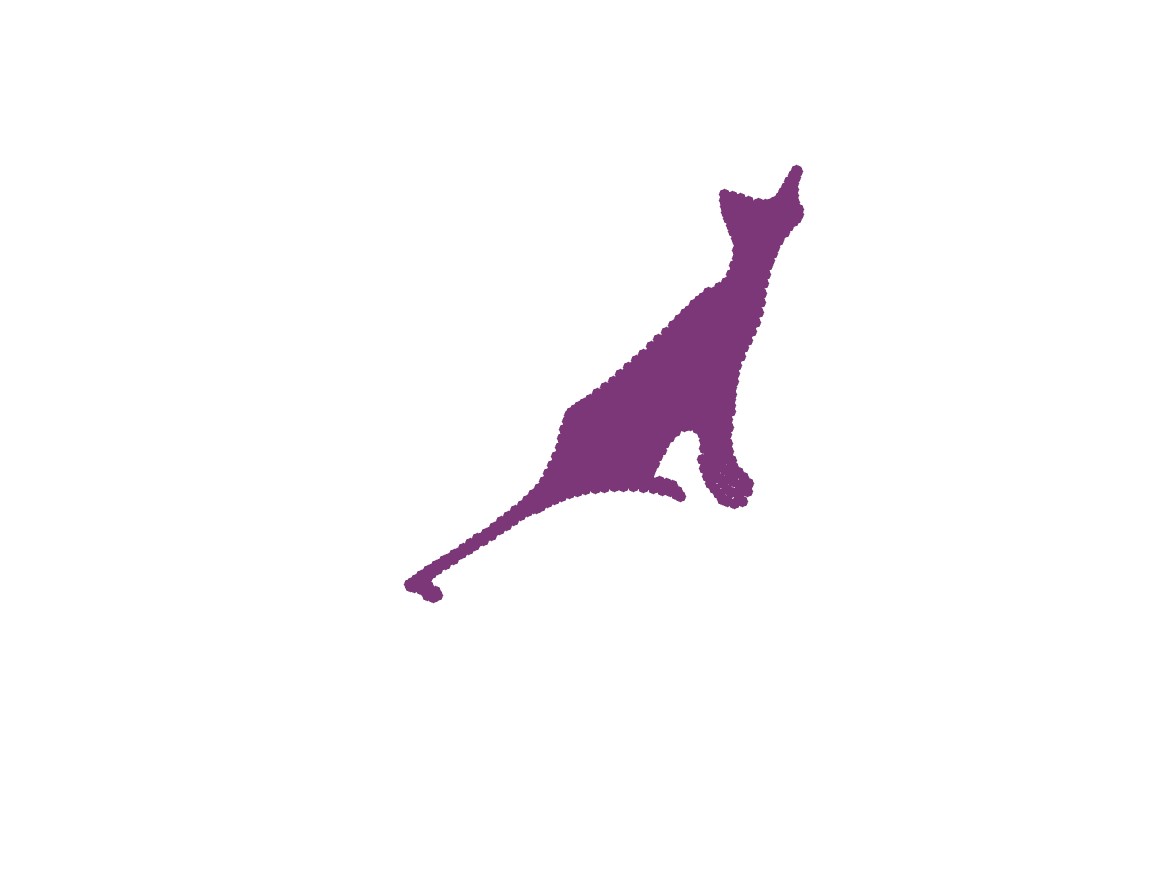}} & 
        {\includegraphics[trim={6cm 4cm 6cm 4cm},clip,width=0.1\textwidth]{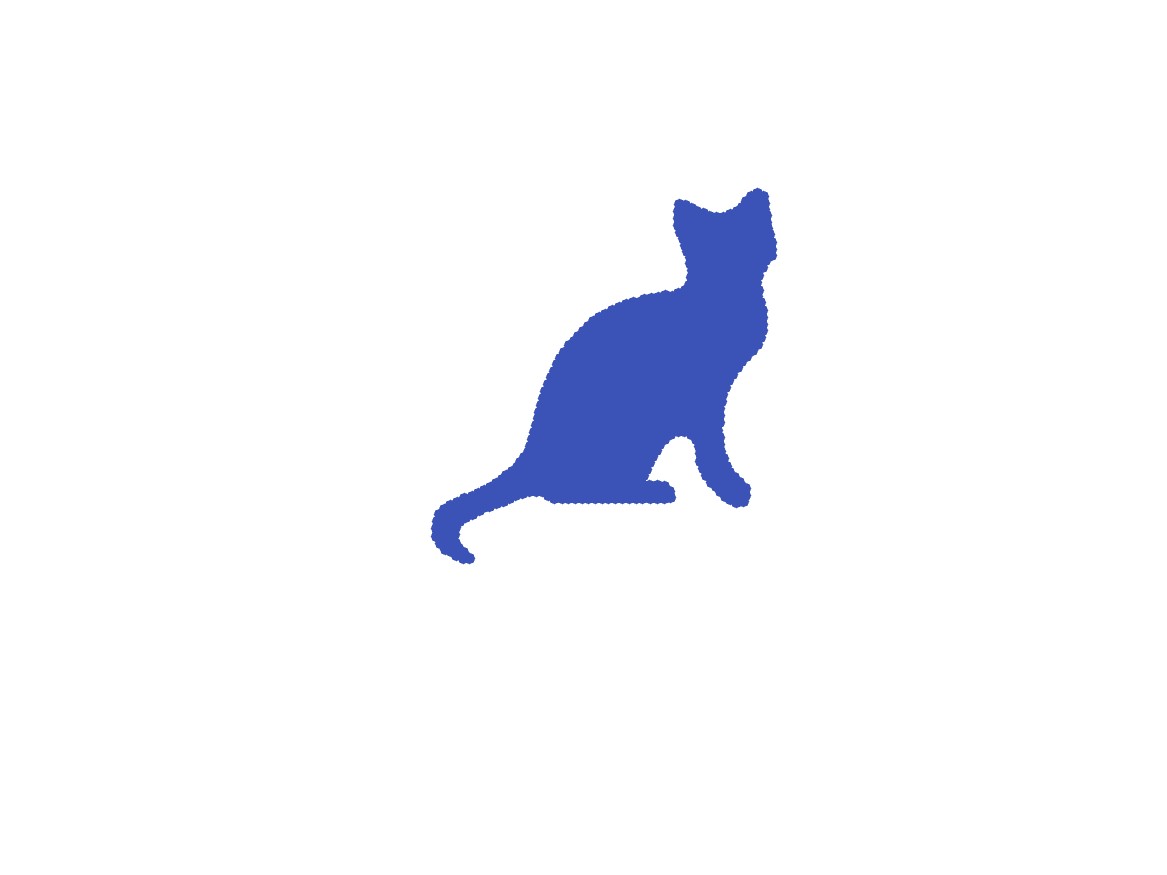}} & 
        {\includegraphics[trim={6cm 4cm 6cm 4cm},clip,width=0.1\textwidth]{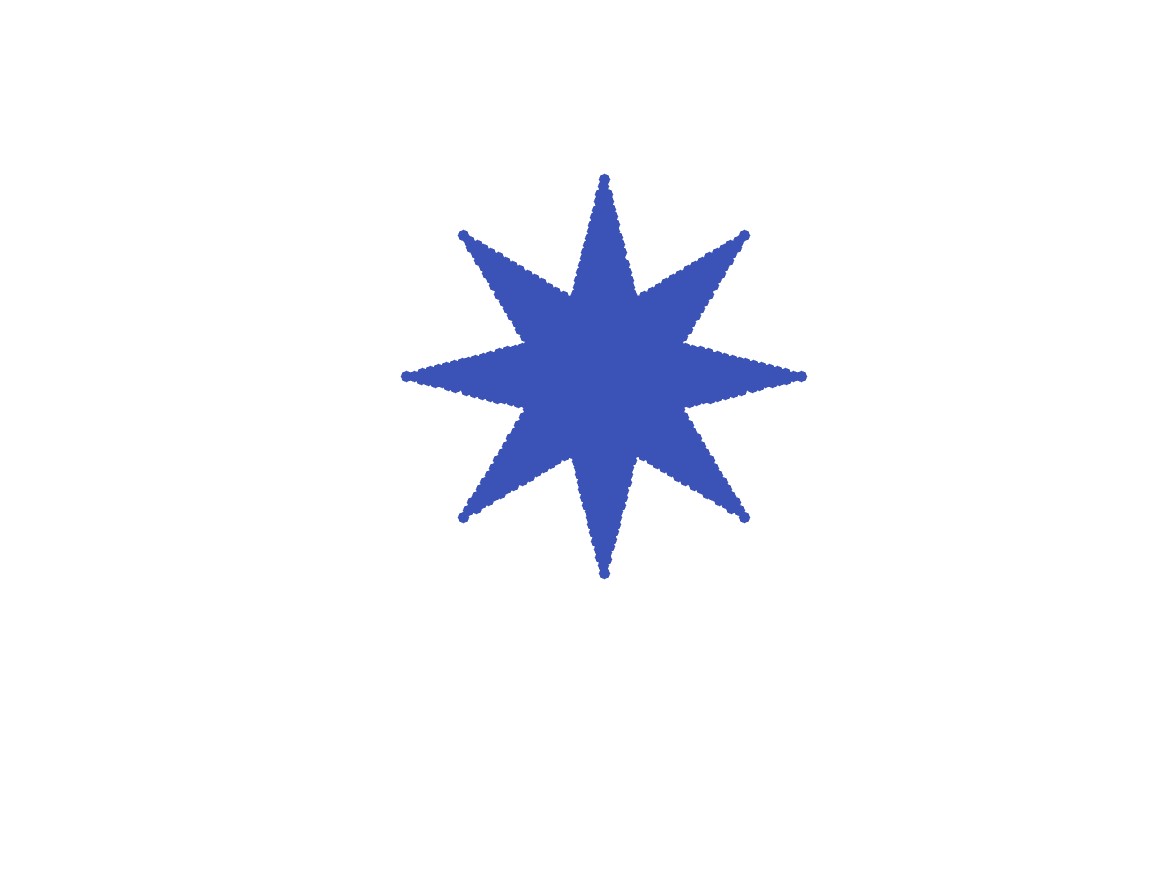}} & 
        {\includegraphics[trim={6cm 4cm 6cm 4cm},clip,width=0.1\textwidth]{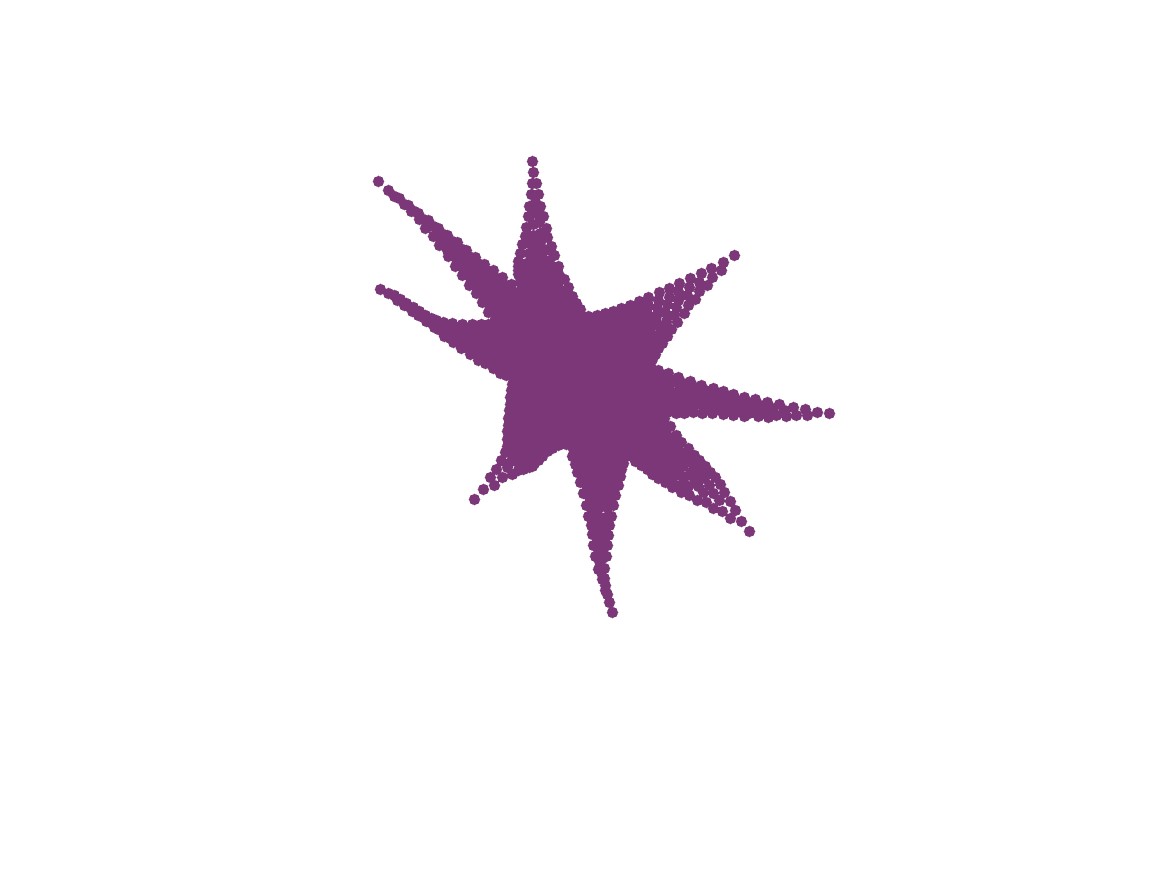}} & 
        {\includegraphics[trim={6cm 4cm 6cm 4cm},clip,width=0.1\textwidth]{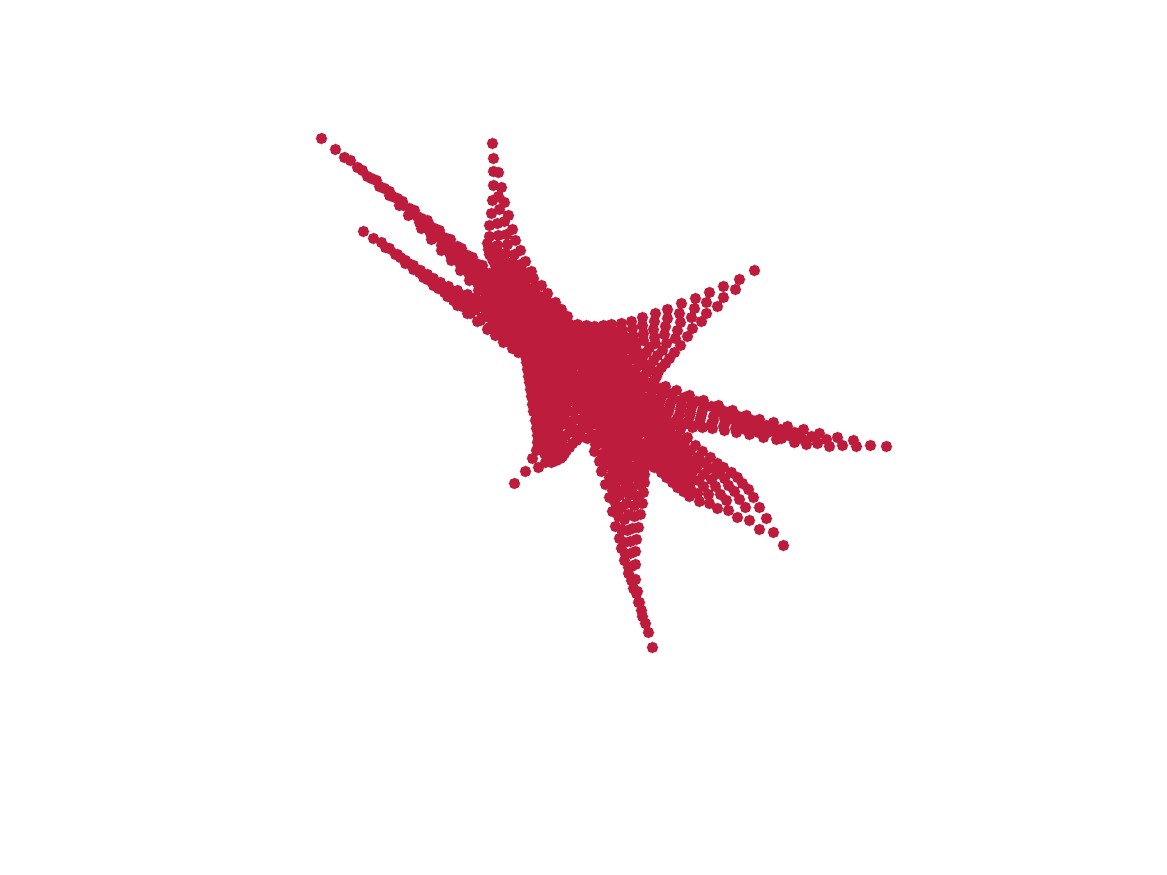}} & {\includegraphics[trim={6cm 4cm 6cm 4cm},clip,width=0.1\textwidth]{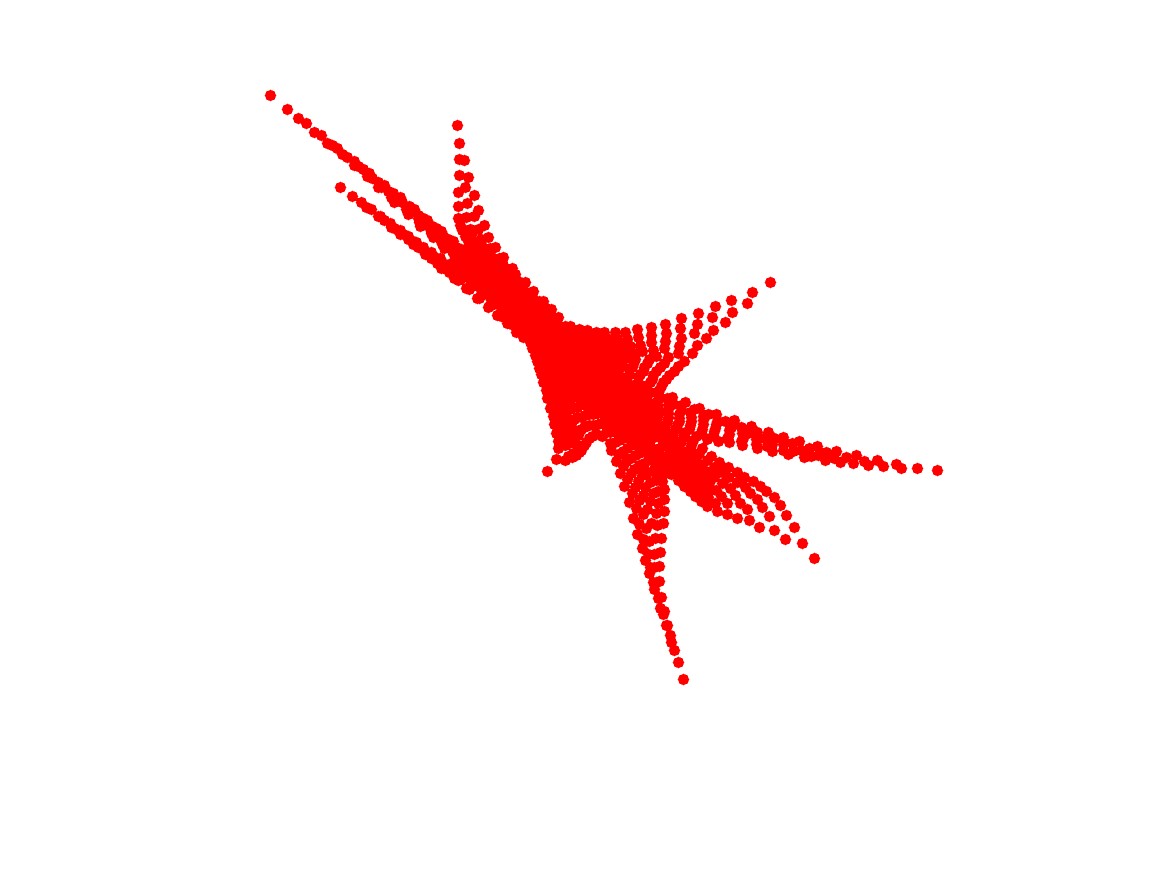}} \\
        {\includegraphics[trim={6cm 4cm 6cm 4cm},clip,width=0.1\textwidth]{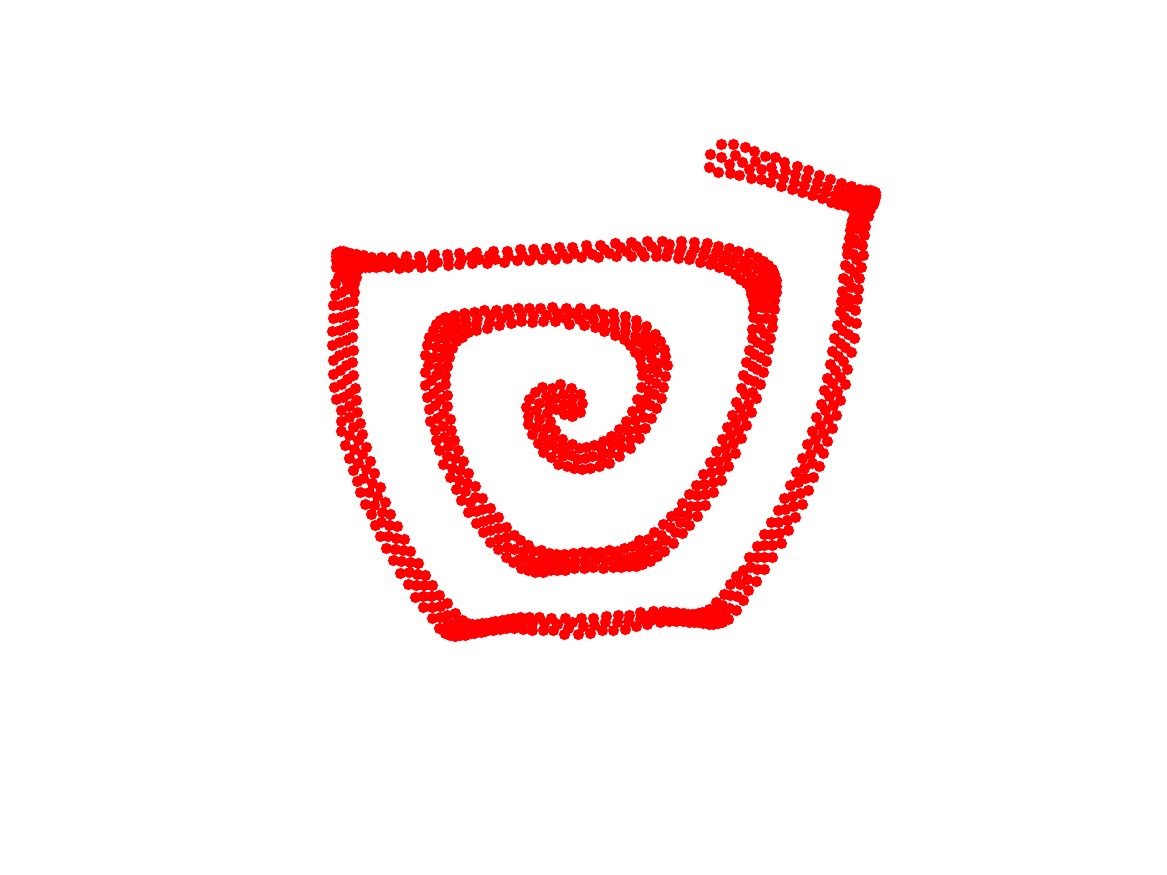}} & {\includegraphics[trim={6cm 4cm 6cm 4cm},clip,width=0.1\textwidth]{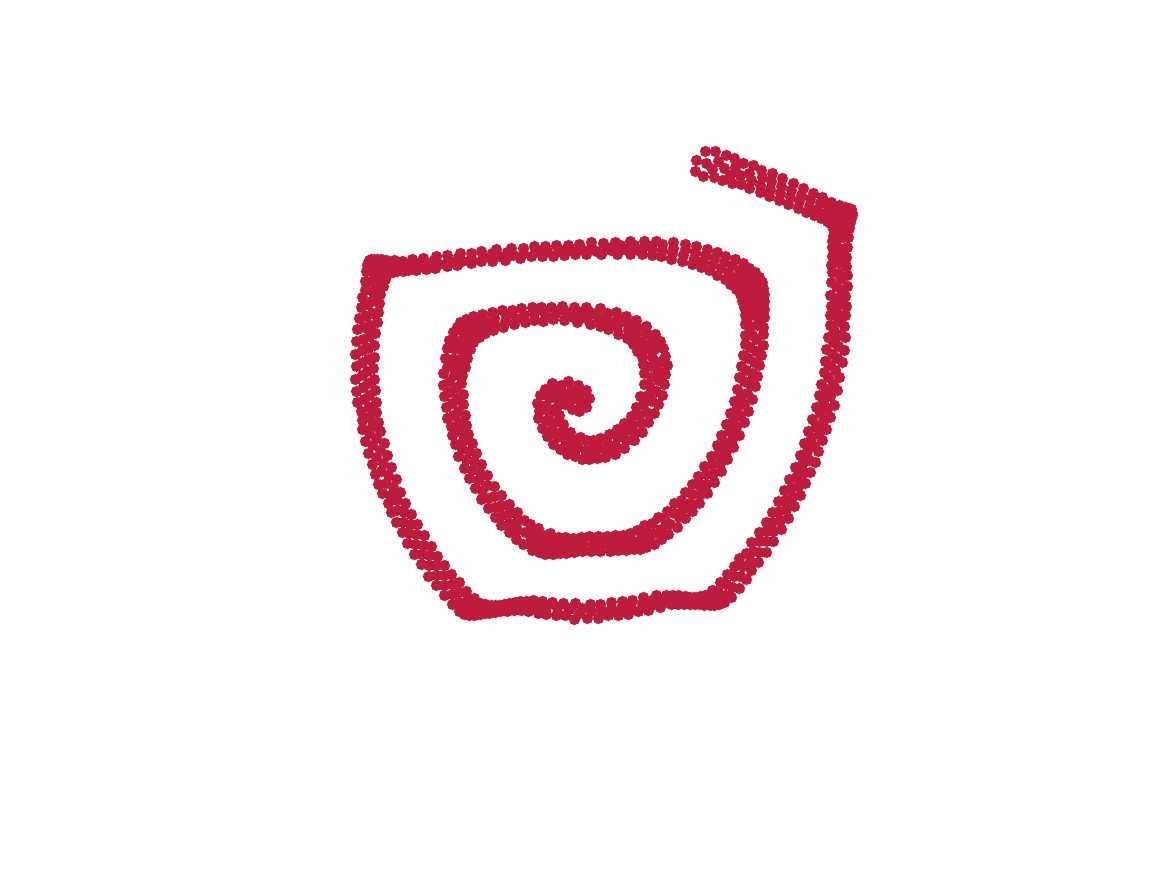}} & 
        {\includegraphics[trim={6cm 4cm 6cm 4cm},clip,width=0.1\textwidth]{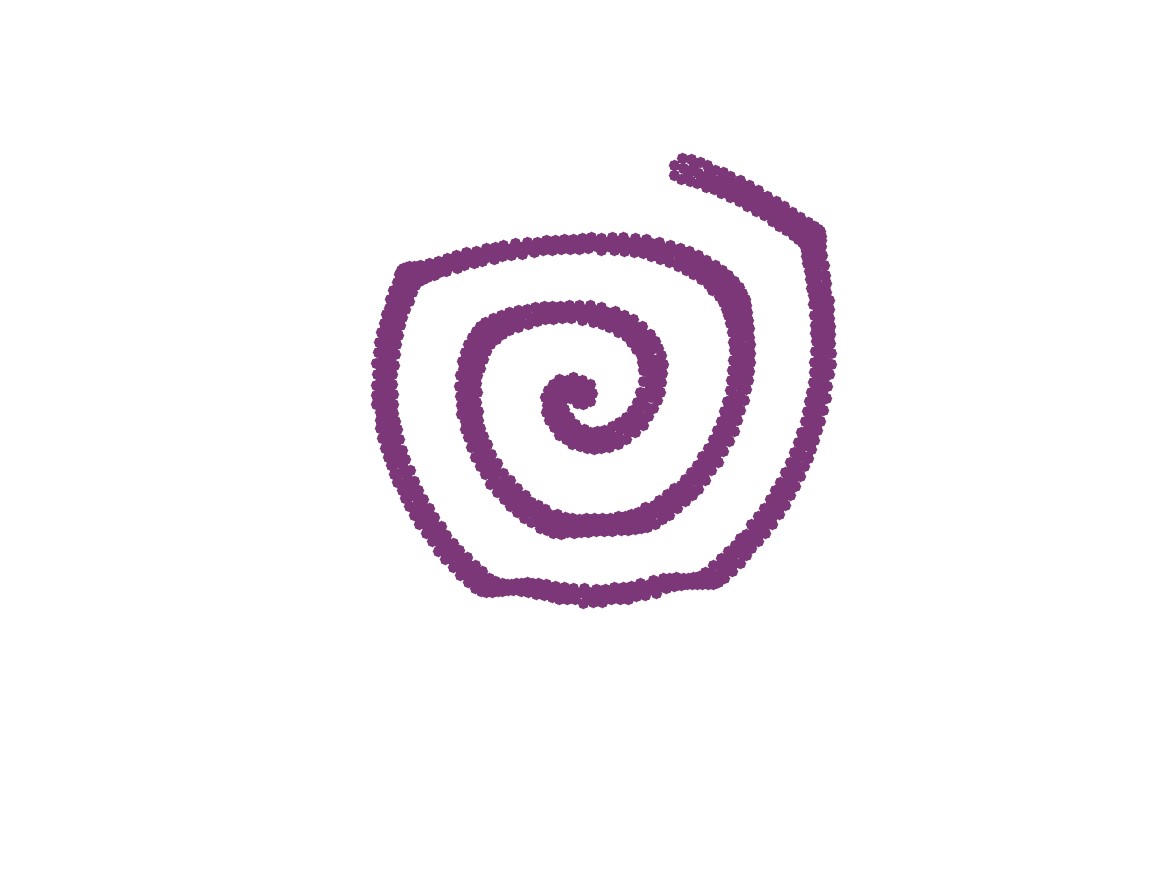}} & 
        {\includegraphics[trim={6cm 4cm 6cm 4cm},clip,width=0.1\textwidth]{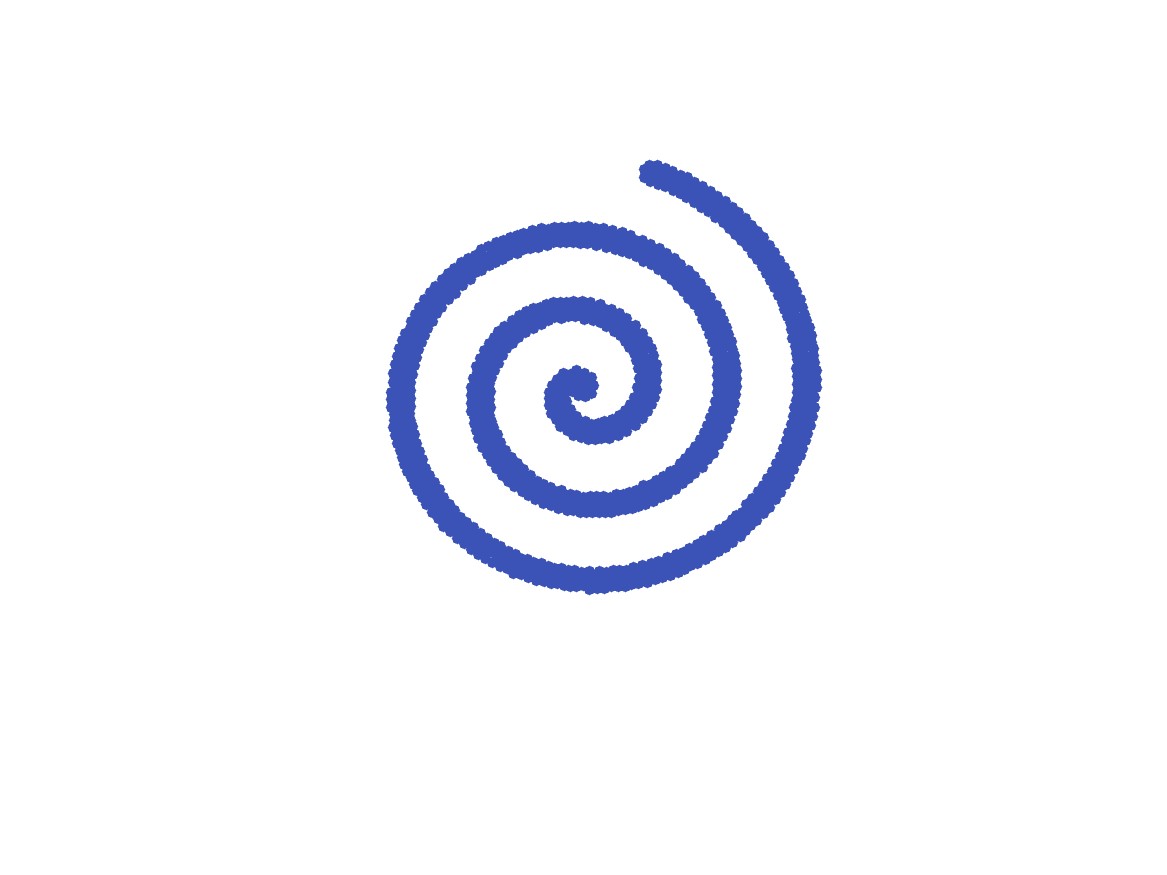}} & 
        {\includegraphics[trim={6cm 4cm 6cm 4cm},clip,width=0.1\textwidth]{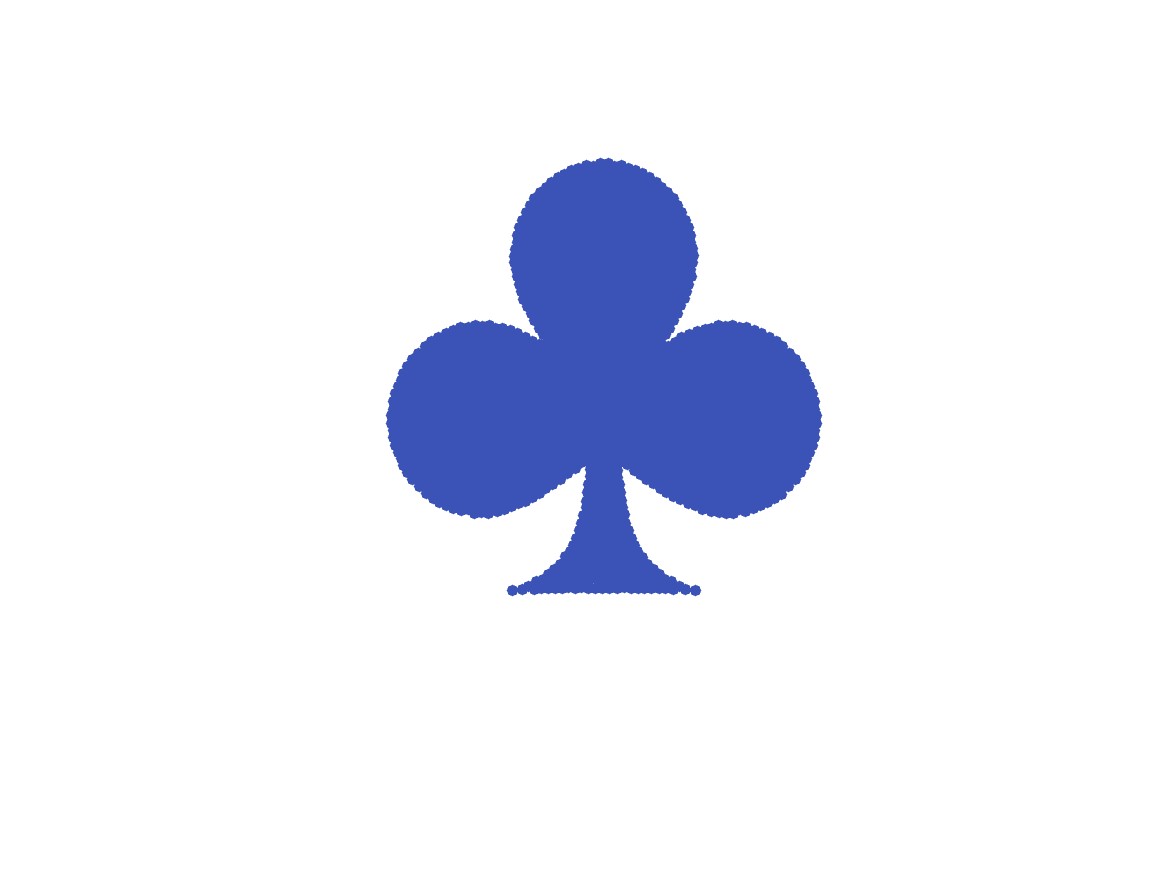}} & 
        {\includegraphics[trim={6cm 4cm 6cm 4cm},clip,width=0.1\textwidth]{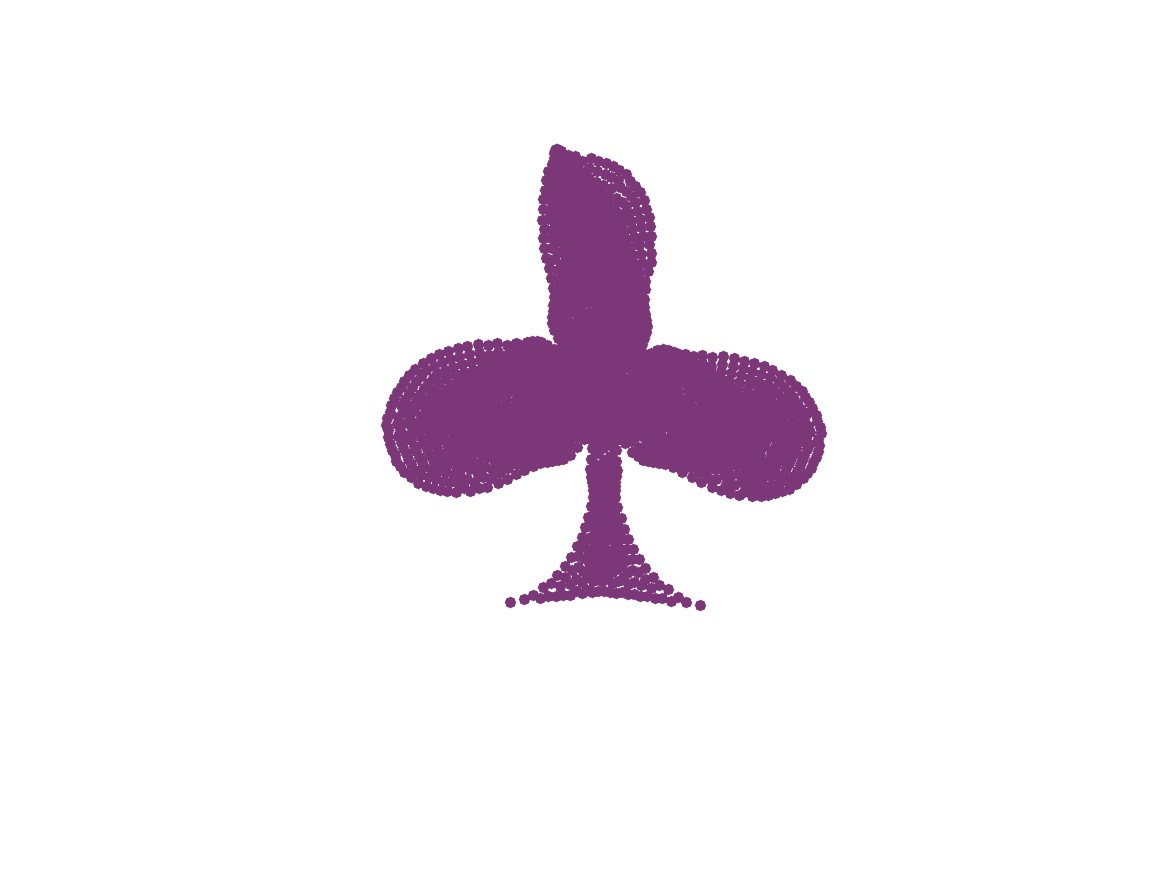}} & 
        {\includegraphics[trim={6cm 4cm 6cm 4cm},clip,width=0.1\textwidth]{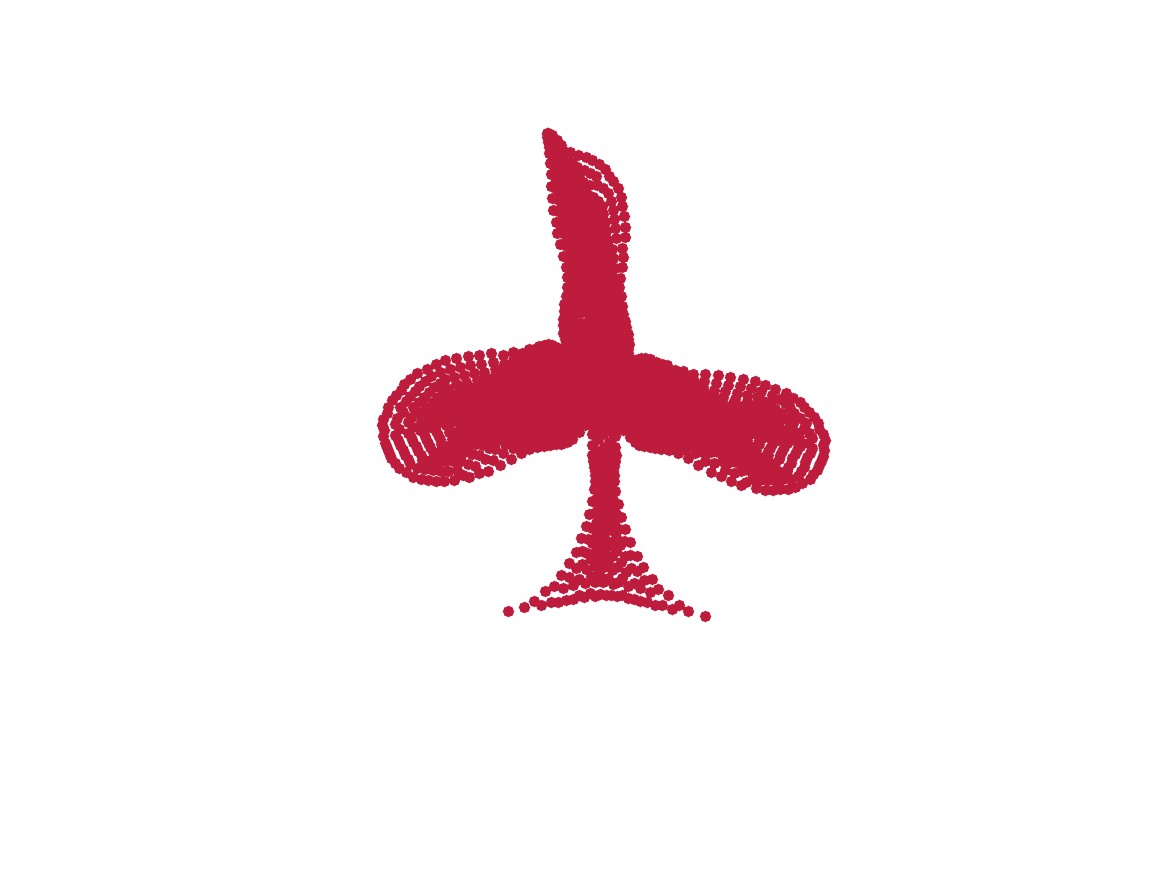}} & {\includegraphics[trim={6cm 4cm 6cm 4cm},clip,width=0.1\textwidth]{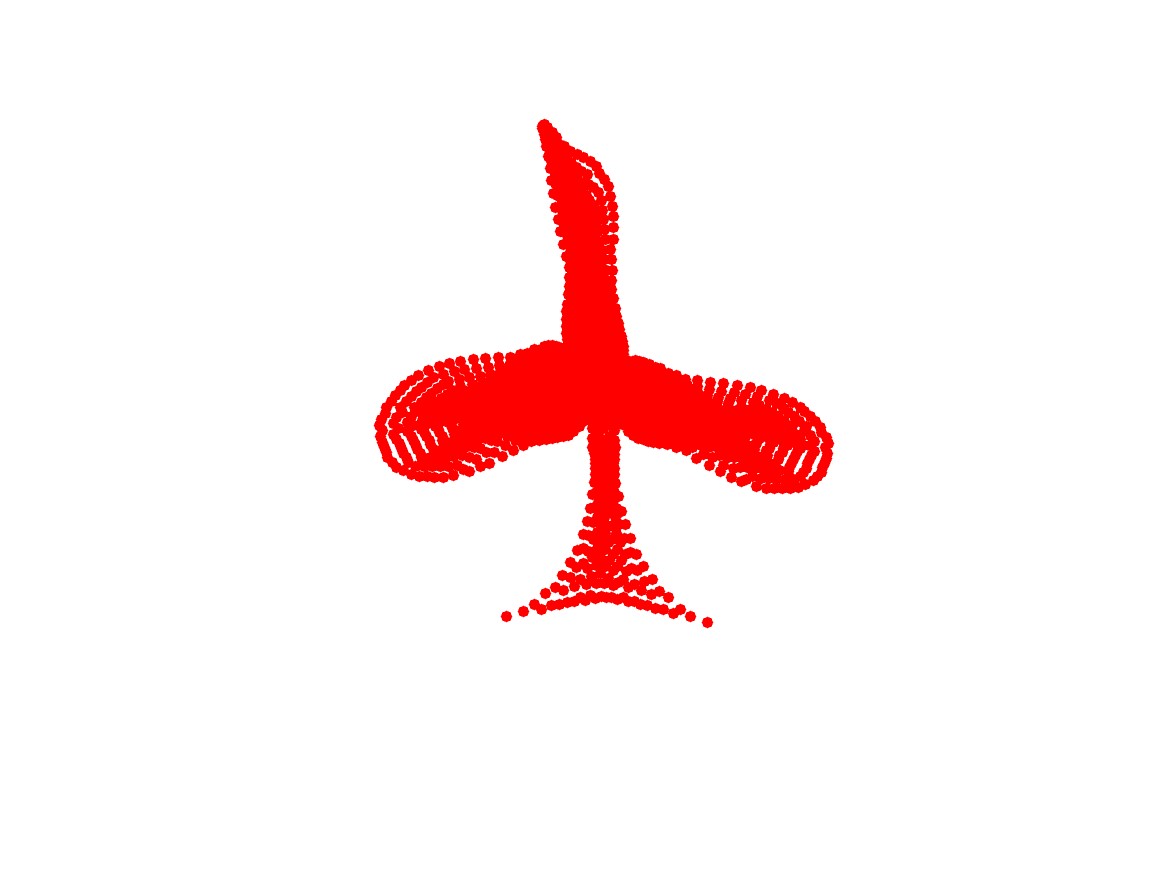}} \\
        & & {\includegraphics[trim={6cm 4cm 6cm 4cm},clip,width=0.1\textwidth]{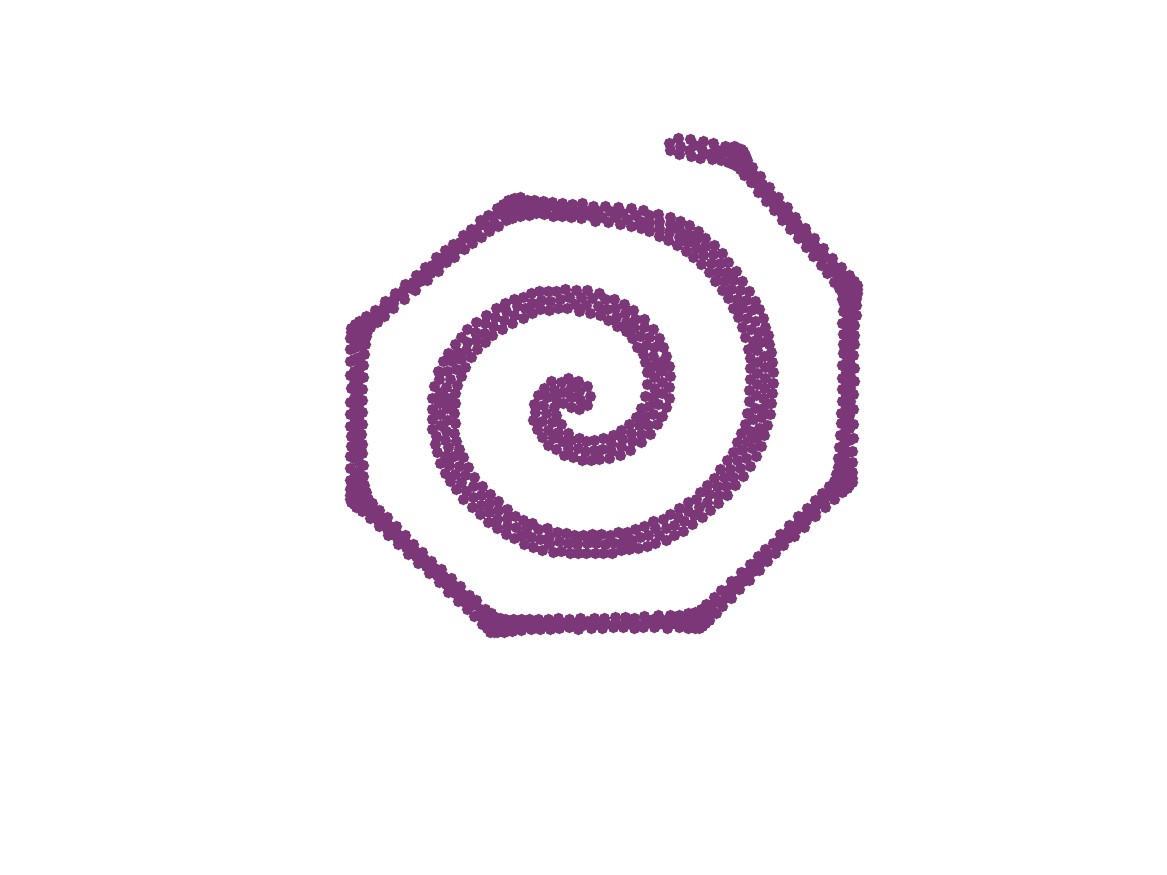}} & 
        {\includegraphics[trim={6cm 4cm 6cm 4cm},clip,width=0.1\textwidth]{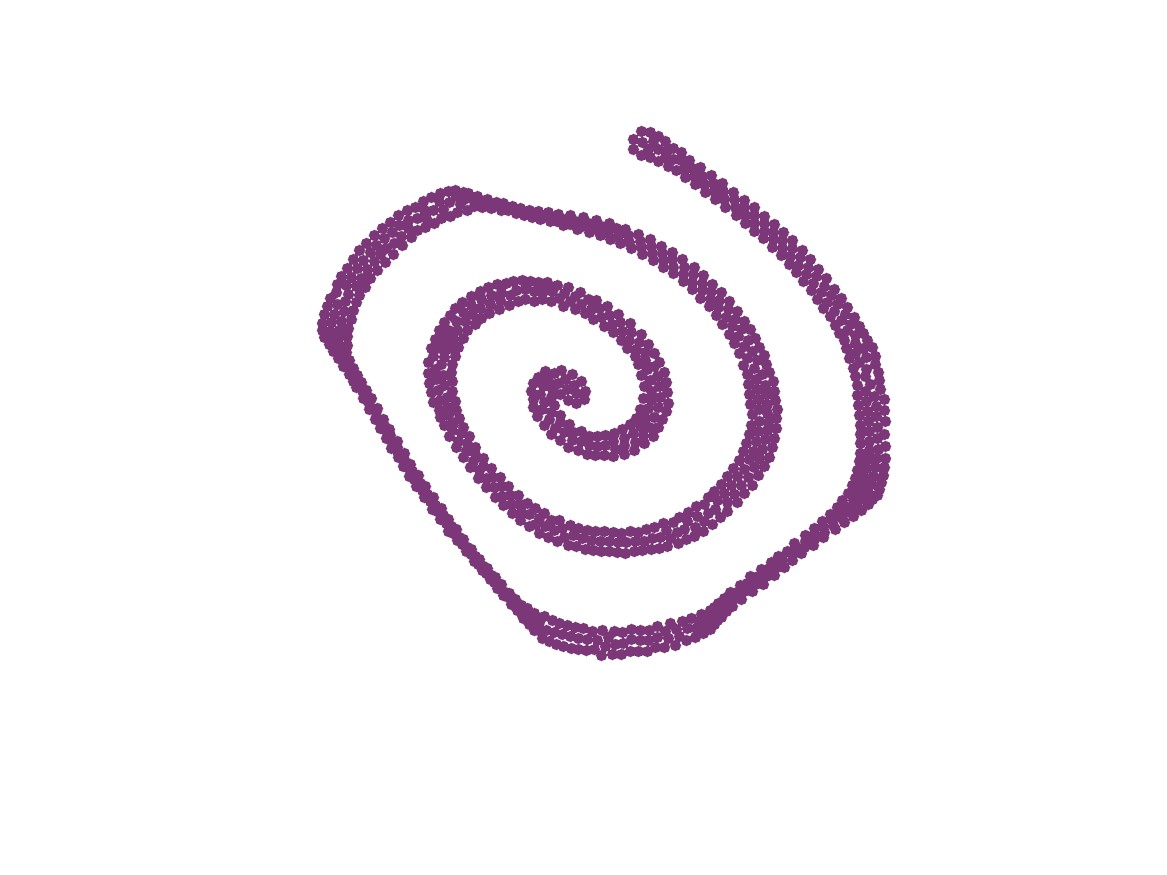}} & 
        {\includegraphics[trim={6cm 4cm 6cm 4cm},clip,width=0.1\textwidth]{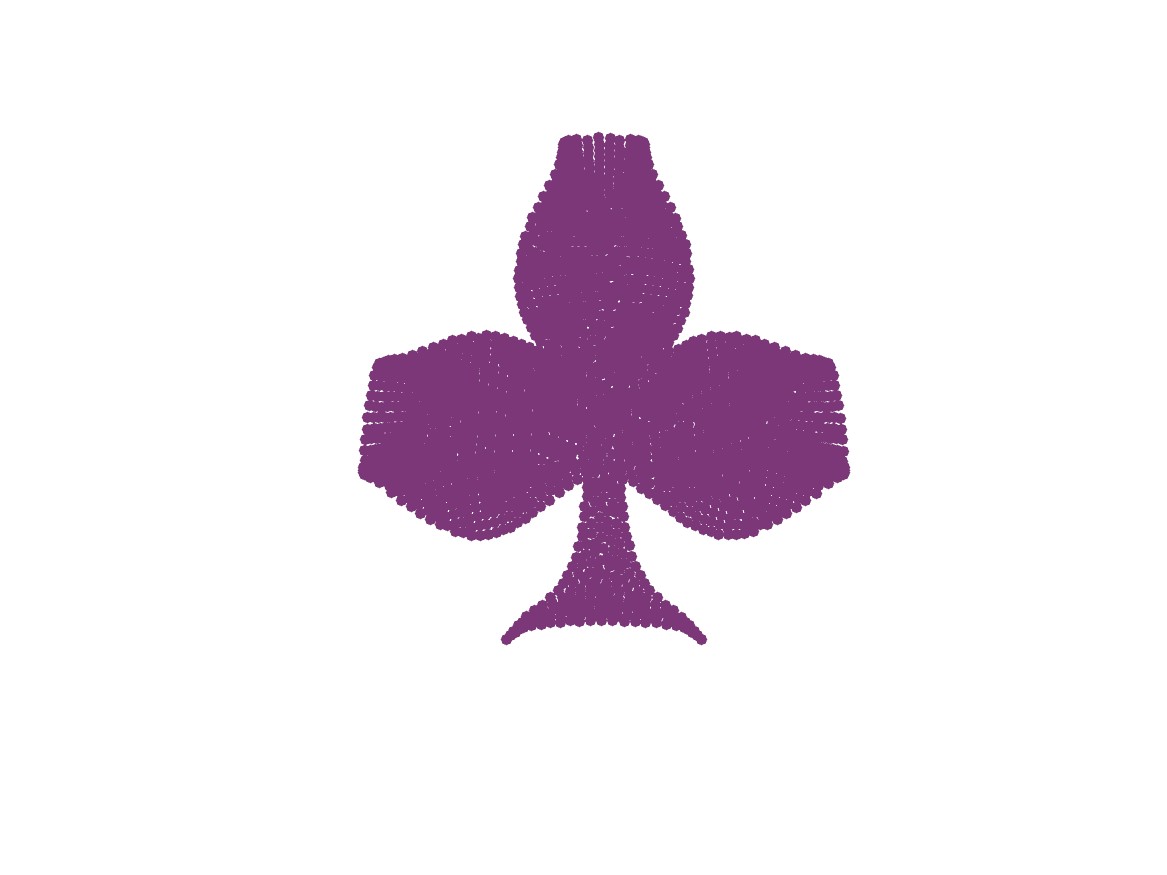}} & 
        {\includegraphics[trim={6cm 4cm 6cm 4cm},clip,width=0.1\textwidth]{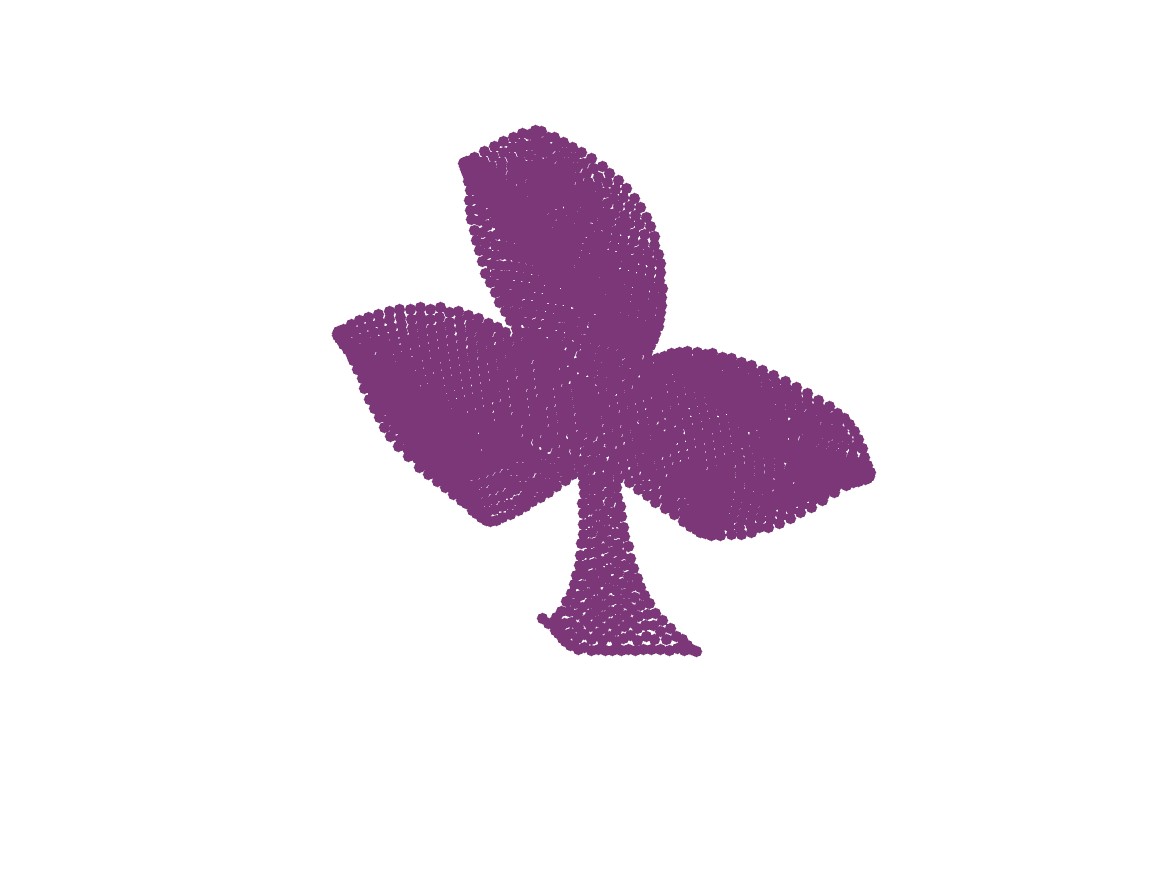}} & & \\
        & {\includegraphics[trim={6cm 4cm 6cm 4cm},clip,width=0.1\textwidth]{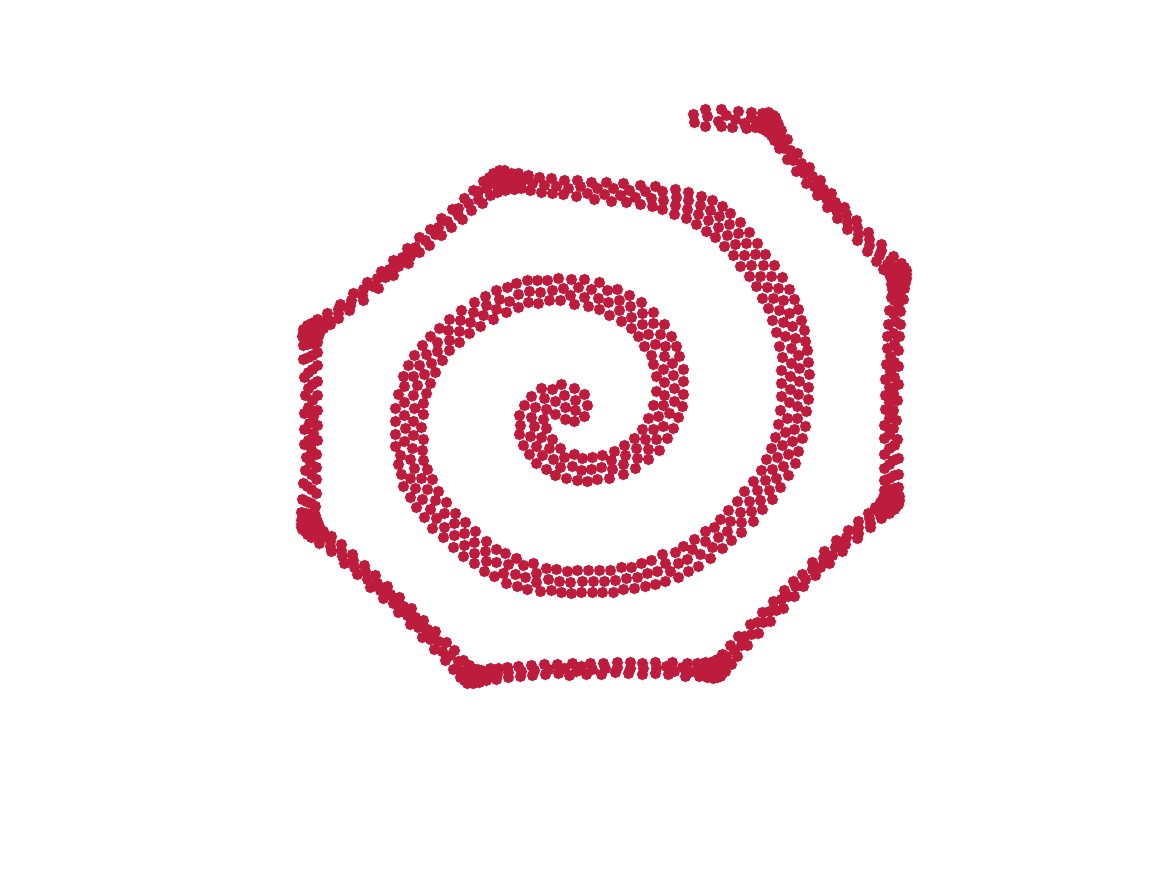}} & & 
        {\includegraphics[trim={6cm 4cm 6cm 4cm},clip,width=0.1\textwidth]{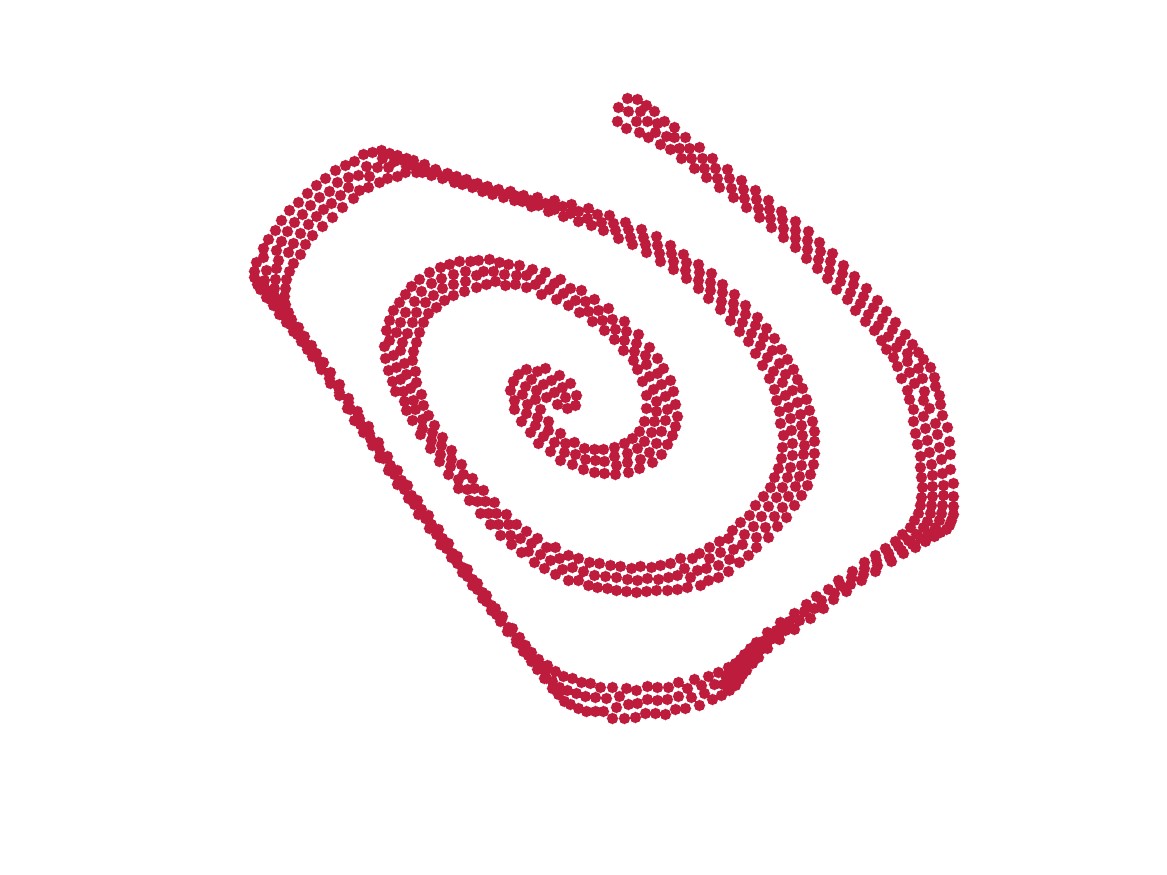}} & 
        {\includegraphics[trim={6cm 4cm 6cm 4cm},clip,width=0.1\textwidth]{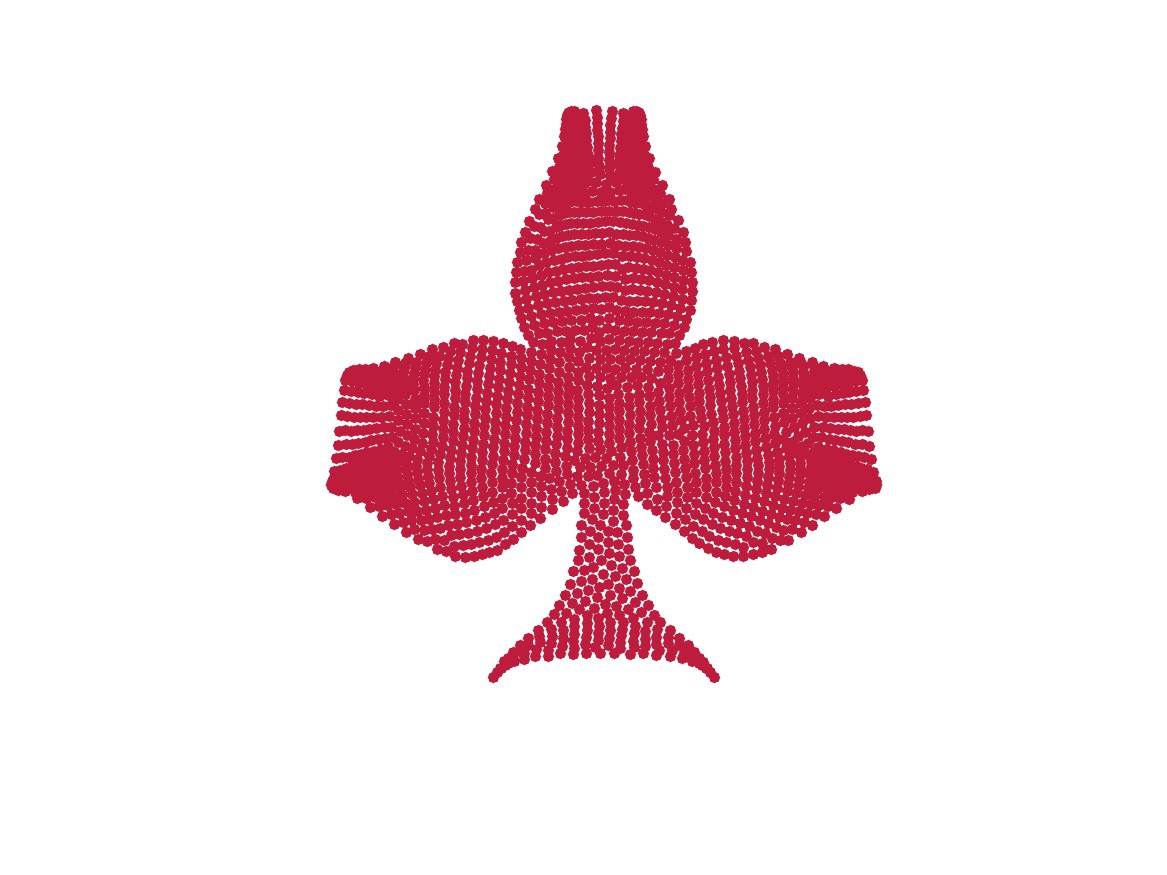}} & & 
        {\includegraphics[trim={6cm 4cm 6cm 4cm},clip,width=0.1\textwidth]{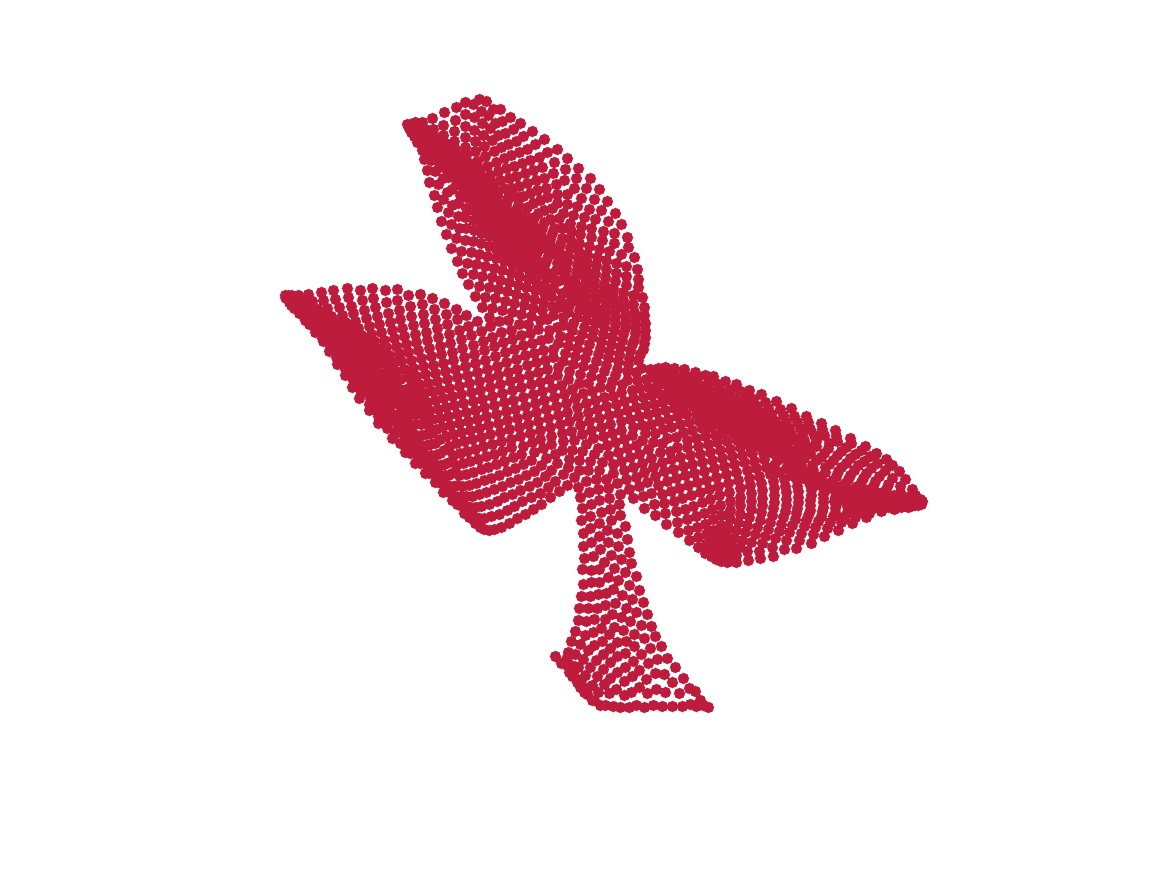}} & \\
        {\includegraphics[trim={6cm 4cm 6cm 4cm},clip,width=0.1\textwidth]{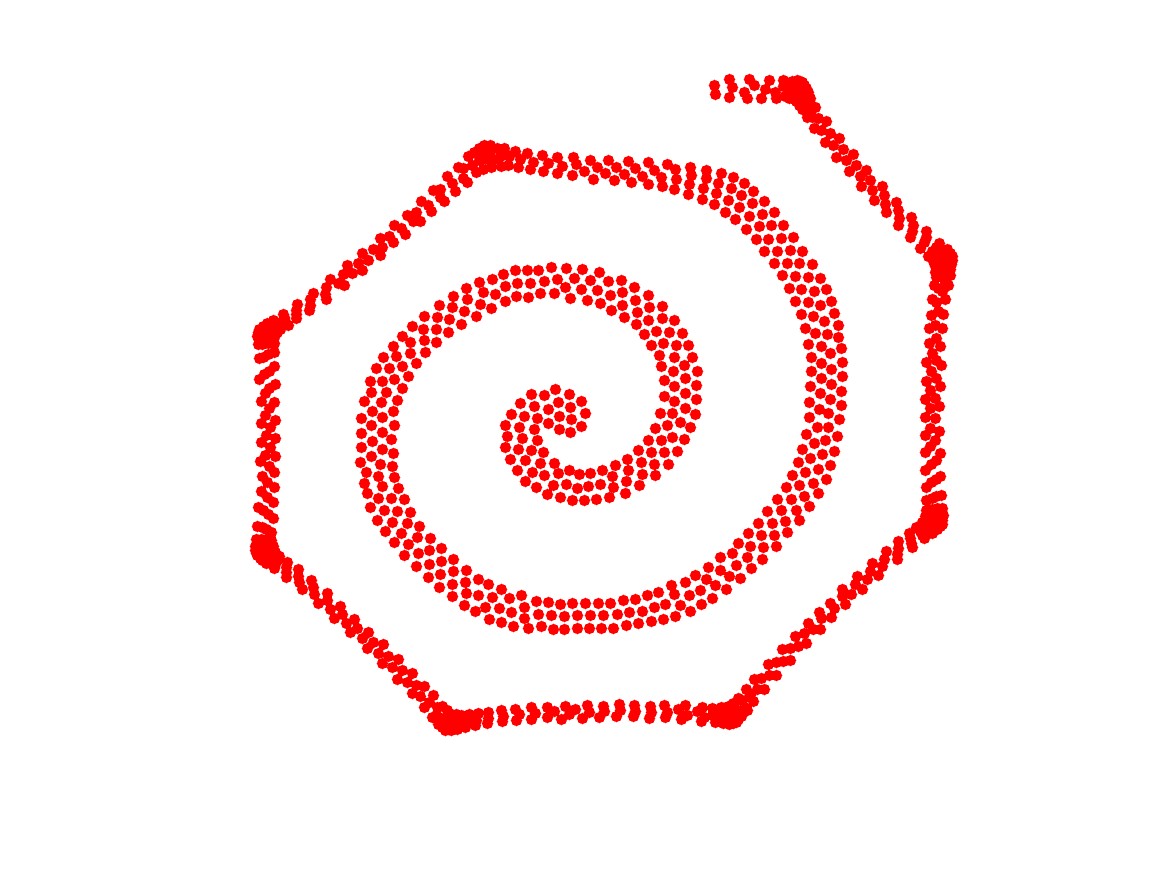}} & & & {\includegraphics[trim={6cm 4cm 6cm 4cm},clip,width=0.1\textwidth]{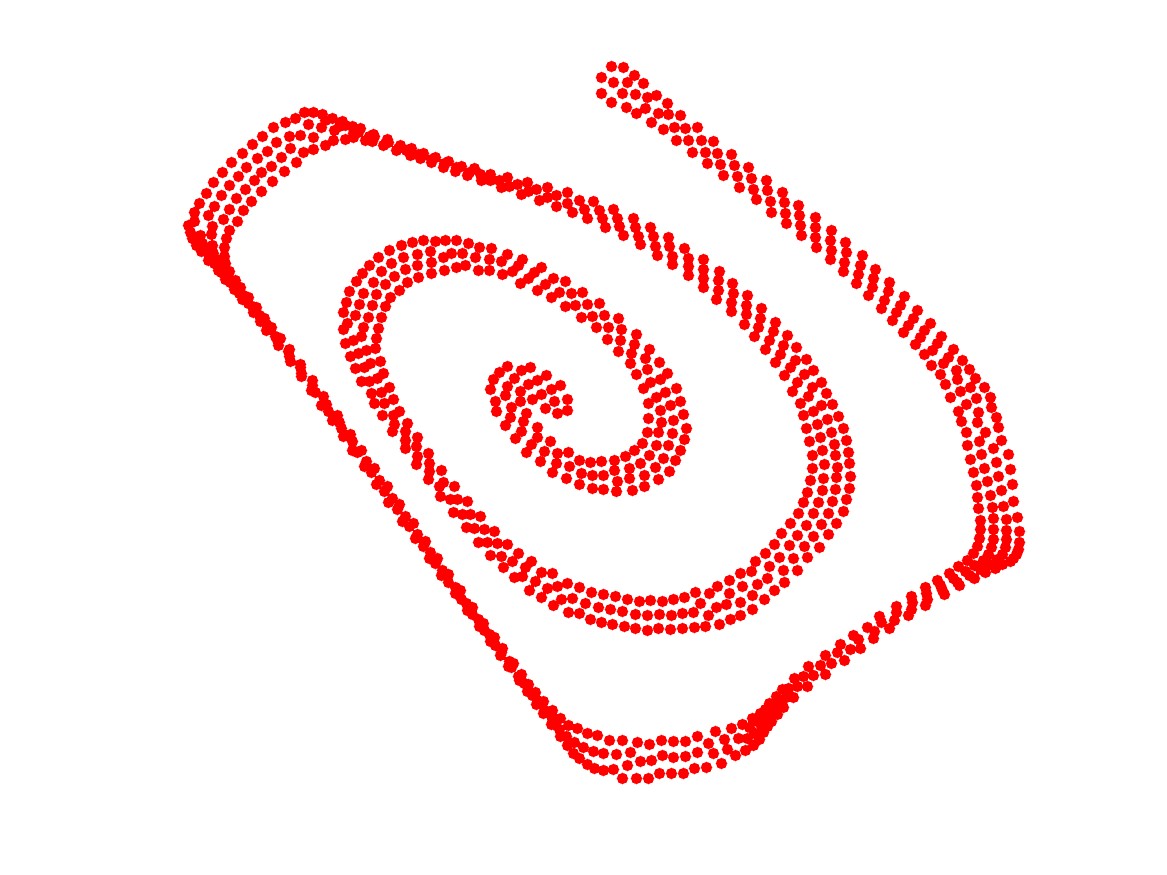}} & {\includegraphics[trim={6cm 4cm 6cm 4cm},clip,width=0.1\textwidth]{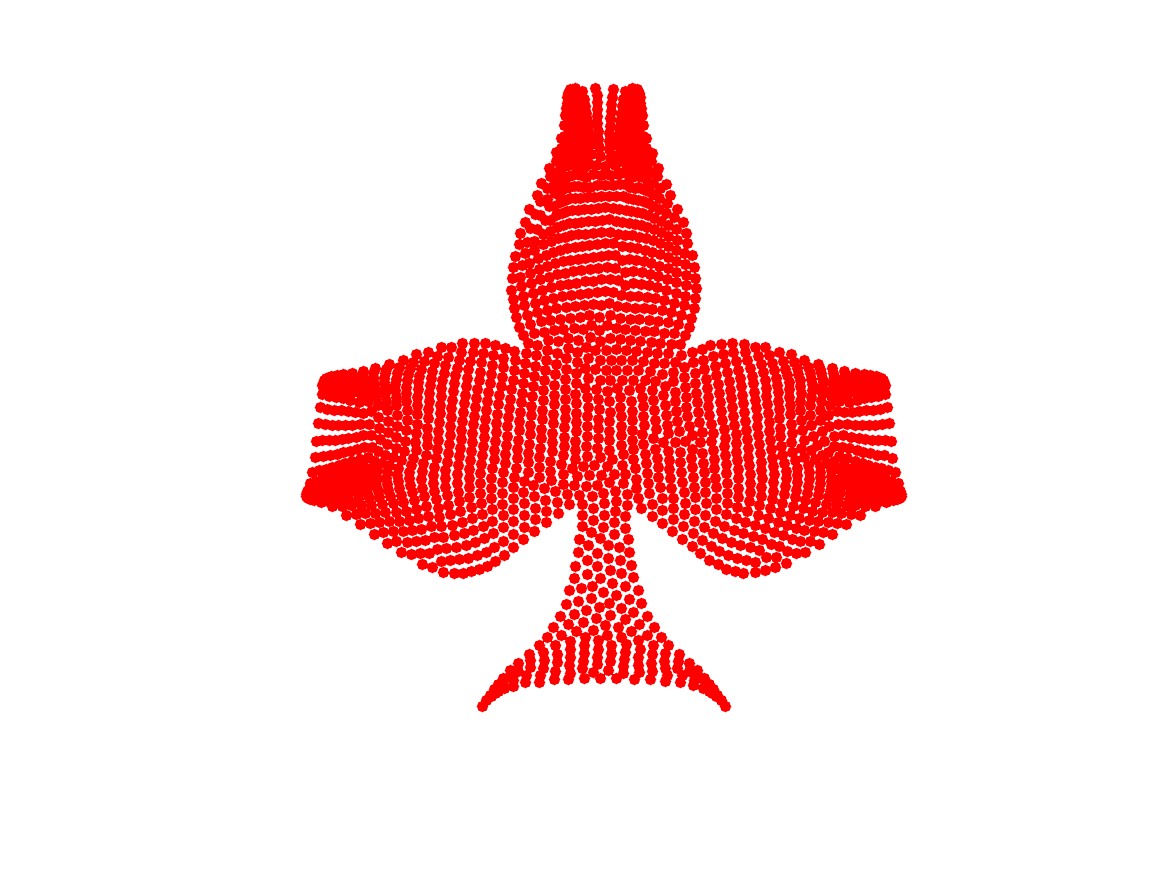}} & & & {\includegraphics[trim={6cm 4cm 6cm 4cm},clip,width=0.1\textwidth]{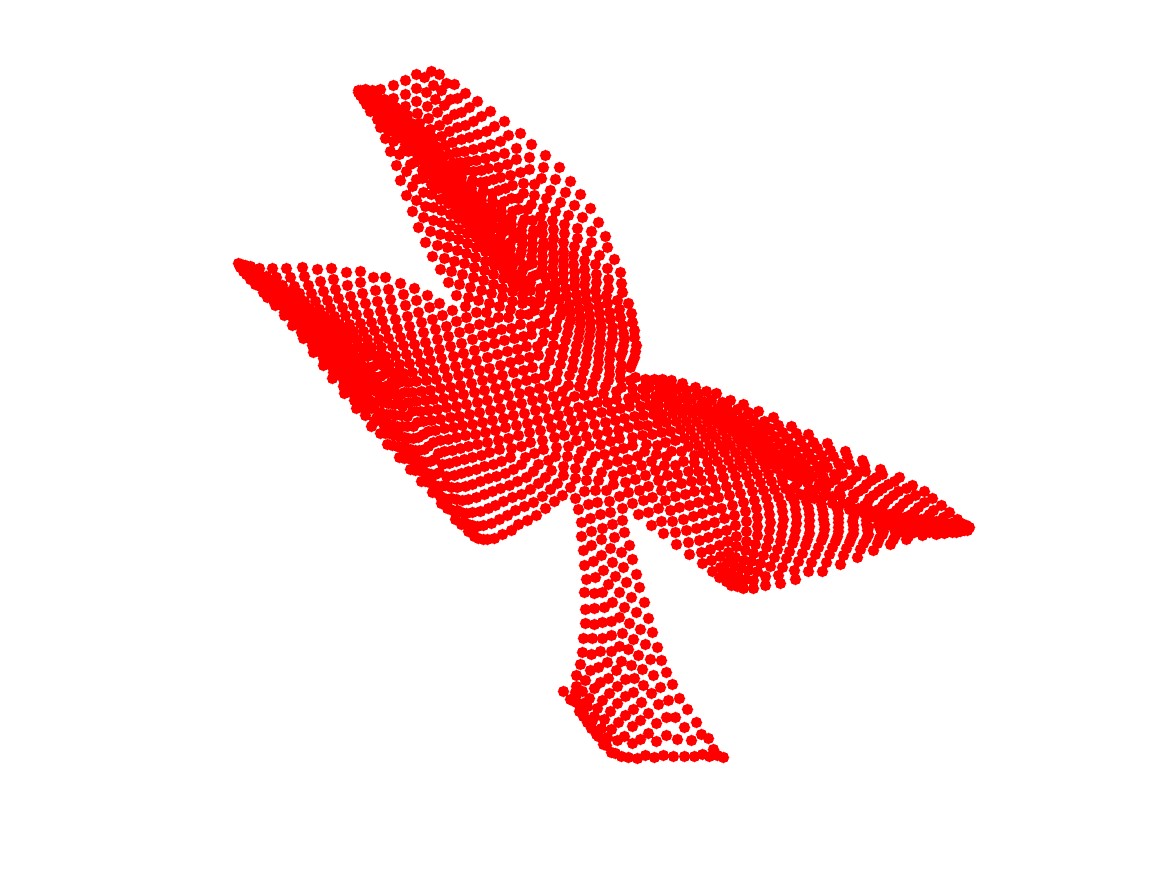}} 
    \end{tabularx}
   \caption{Example of extrapolation. The four images in the center are pairwise extrapolated in both positive and negative direction, from $t=1$ (blue) to $t=3$ (red).}
    \label{fig:extrapolation}
\end{figure}

In Figure \ref{fig:comparison} we compare the metric extrapolation with the Lagrangian extrapolation, where particles simply move on straight lines extending the geodesic trajectories from $\nu_0$ to $\nu_1$. Although for small times the two are qualitatively similar, for bigger times the structure of the initial data is completely lost in the Lagrangian extrapolation differently from the metric one.

\begin{figure}[t]
    \centering
    \setlength{\tabcolsep}{0pt}
    \begin{tabularx}{0.8\textwidth}{>{\centering\arraybackslash}X>{\centering\arraybackslash}X>{\centering\arraybackslash}X>{\centering\arraybackslash}X>{\centering\arraybackslash}X>{\centering\arraybackslash}X>{\centering\arraybackslash}X}
        \includegraphics[trim={6.5cm 3.1cm 5.3cm 2.15cm},clip,width=0.11\textwidth]{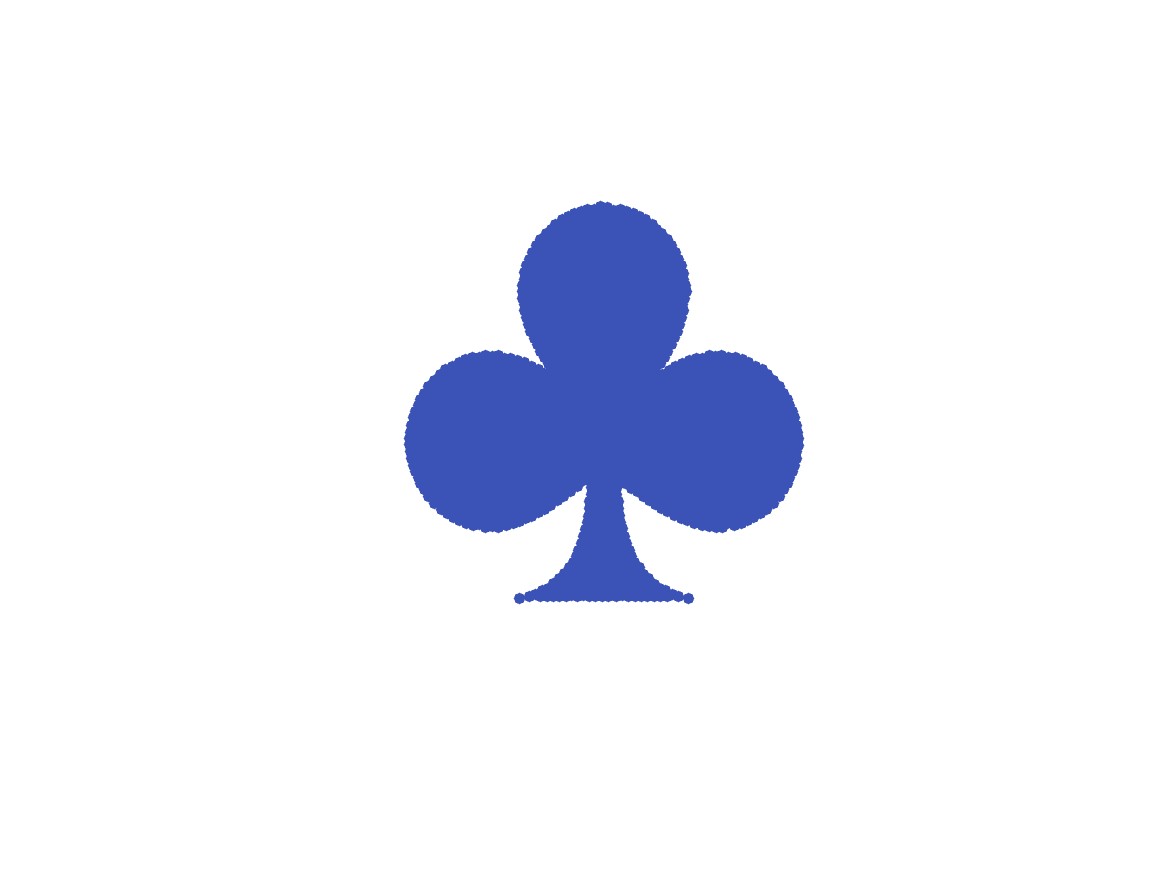} & \includegraphics[trim={6.5cm 3.1cm 5.3cm 2.15cm},clip,width=0.11\textwidth]{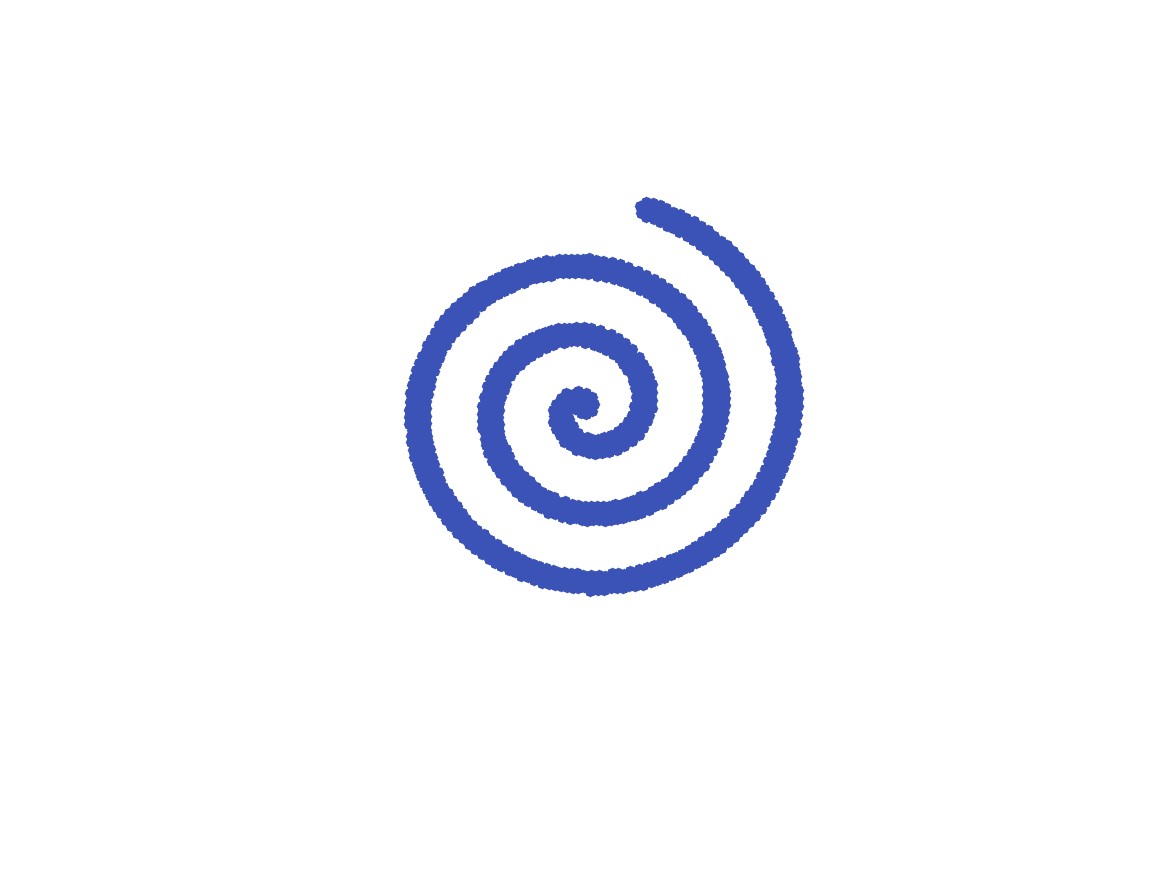} &
        \includegraphics[trim={6.5cm 3.1cm 5.3cm 2.15cm},clip,width=0.11\textwidth]{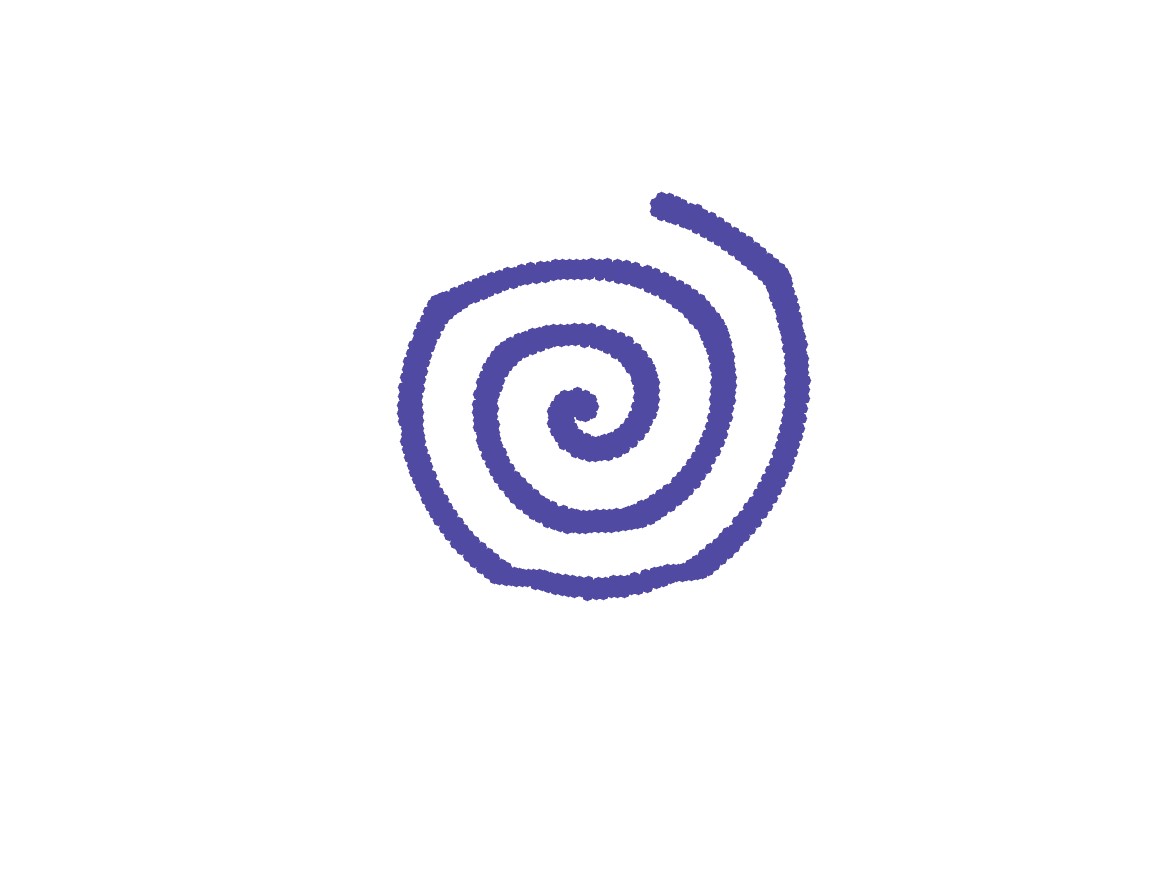} &
        \includegraphics[trim={6.5cm 3.1cm 5.3cm 2.15cm},clip,width=0.11\textwidth]{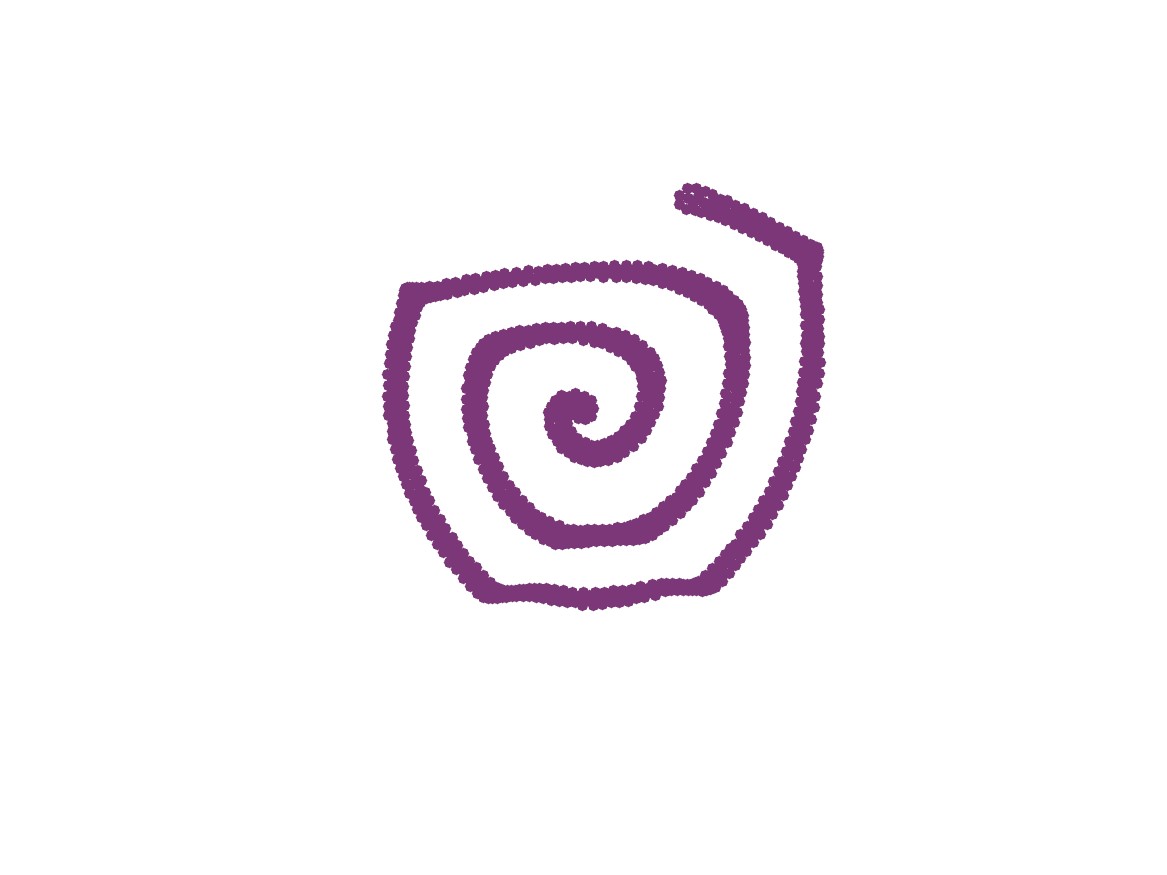} &
        \includegraphics[trim={6.5cm 3.1cm 5.3cm 2.15cm},clip,width=0.11\textwidth]{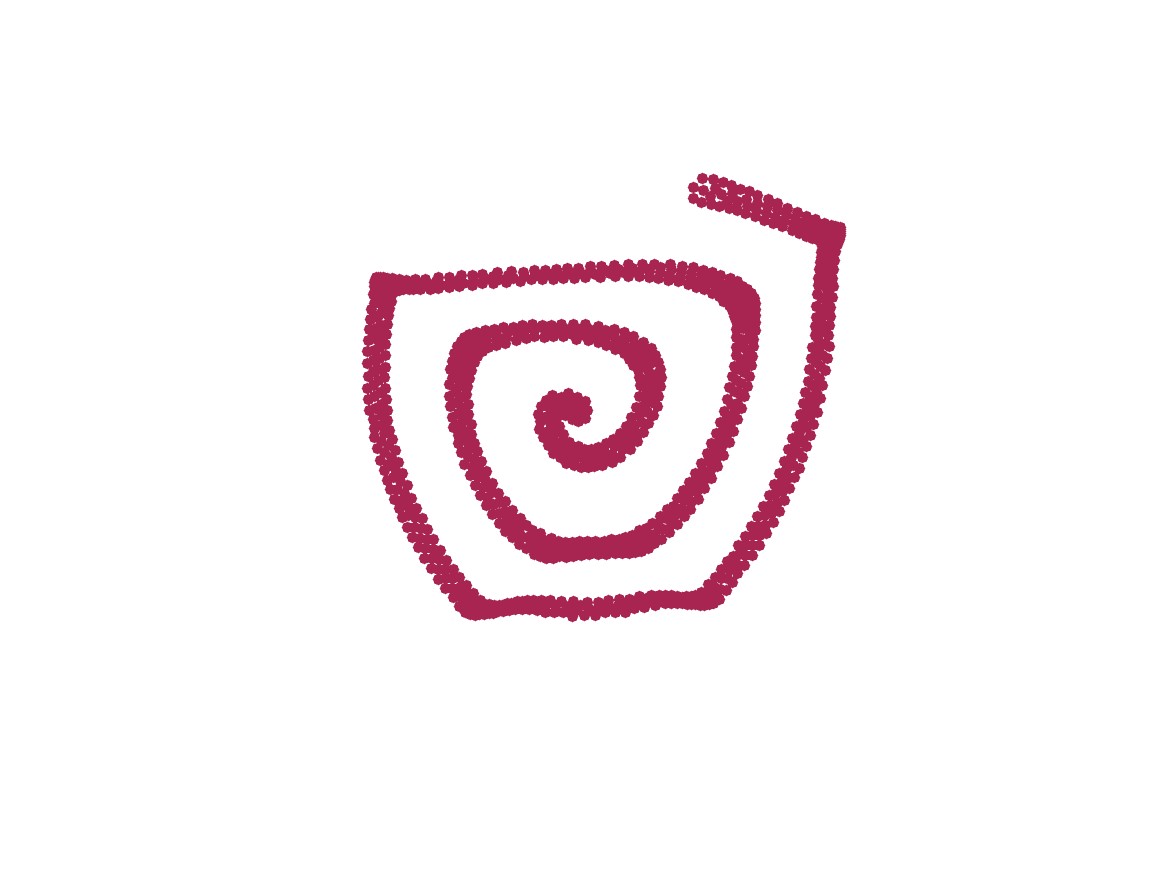} &
        \includegraphics[trim={6.5cm 3.1cm 5.3cm 2.15cm},clip,width=0.11\textwidth]{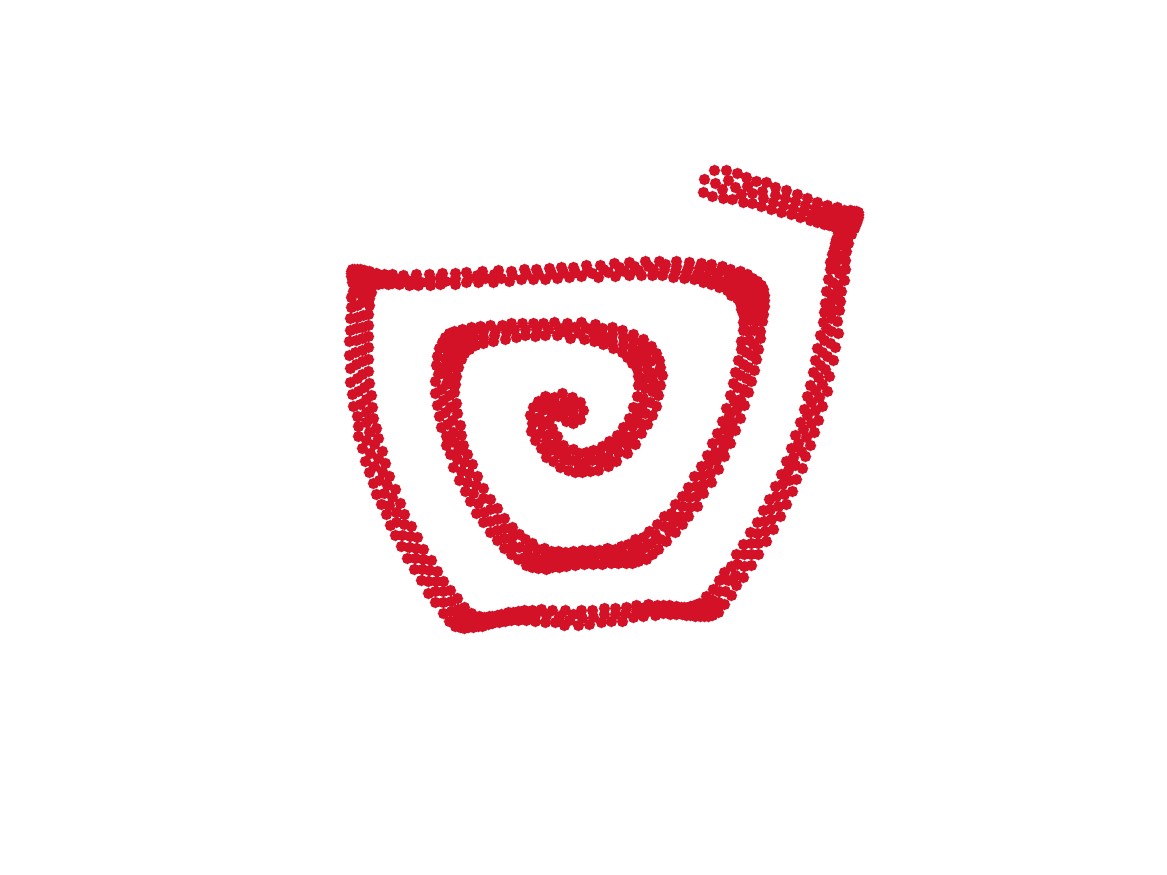} &
        \includegraphics[trim={6.5cm 3.1cm 5.3cm 2.15cm},clip,width=0.11\textwidth]{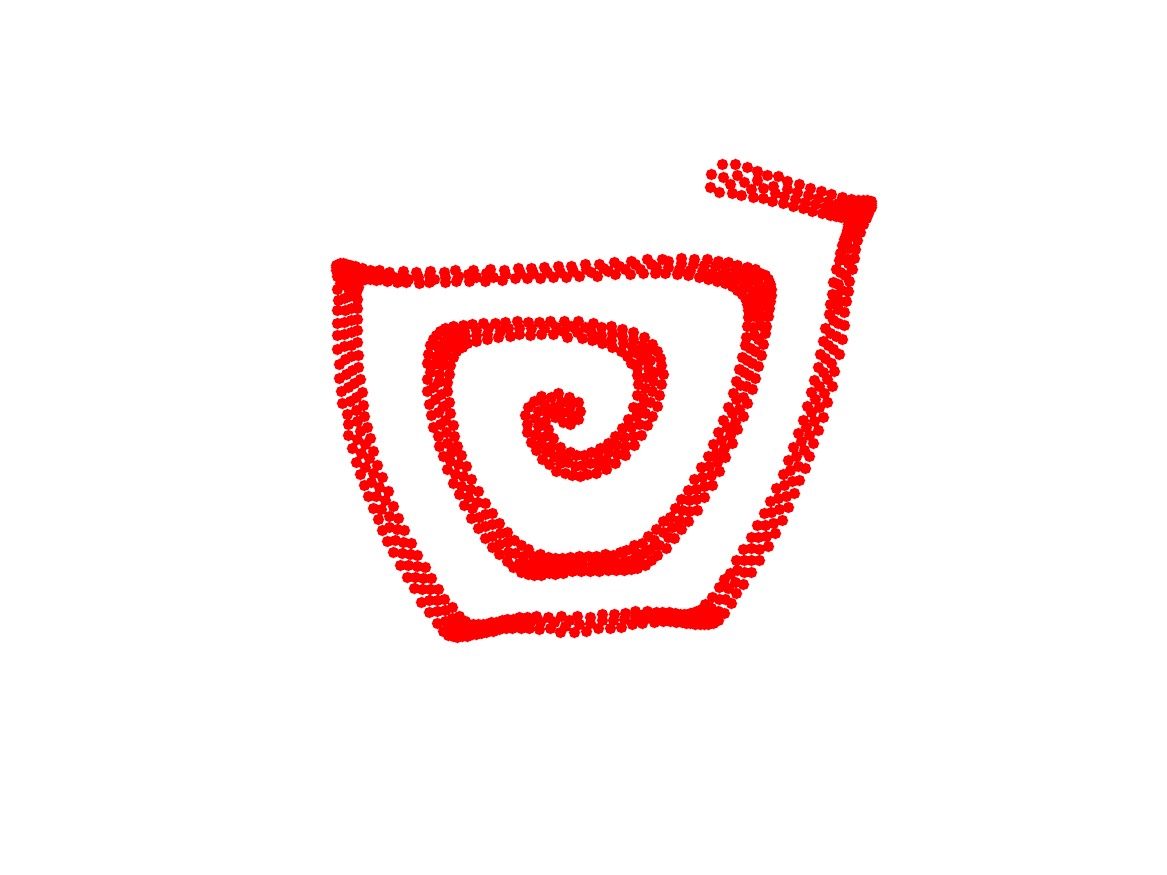} \\
        \includegraphics[trim={6.5cm 3.1cm 5.3cm 2.15cm},clip,width=0.11\textwidth]{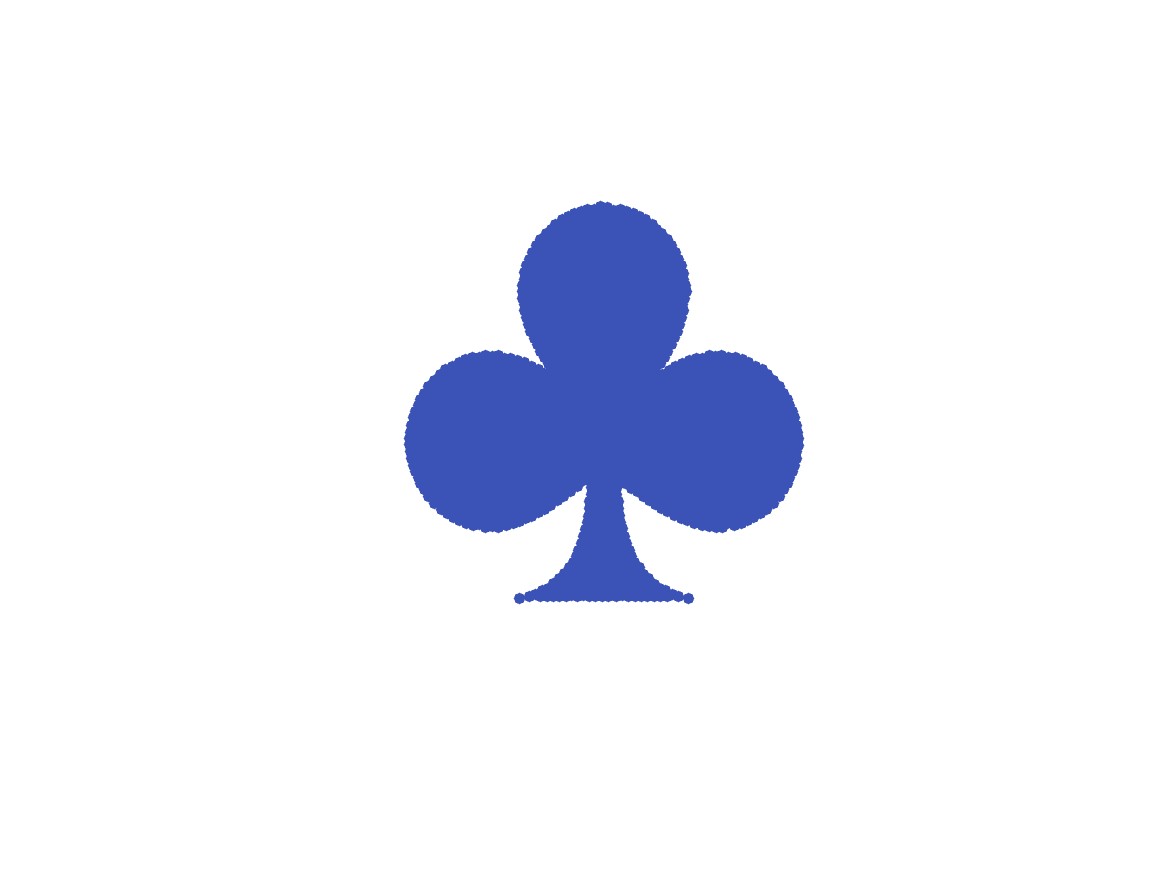} & \includegraphics[trim={6.5cm 3.1cm 5.3cm 2.15cm},clip,width=0.11\textwidth]{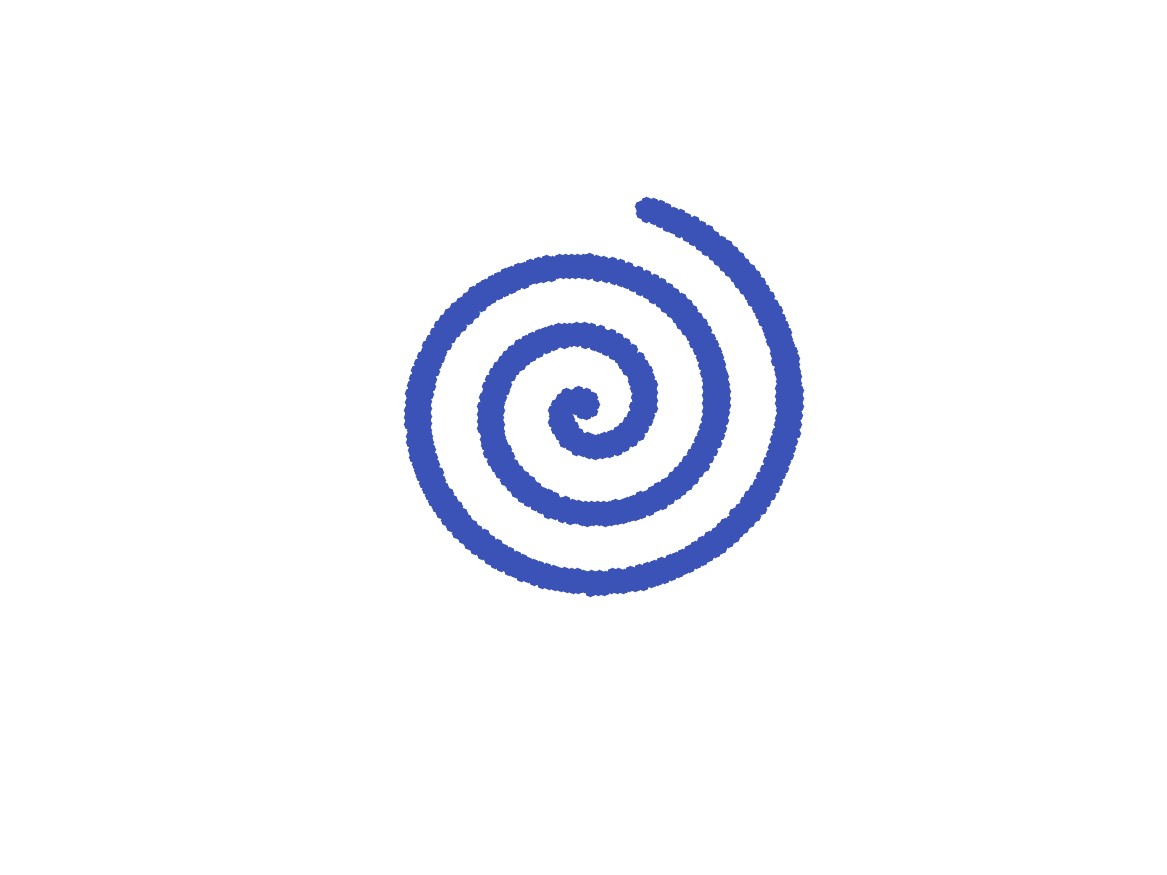} &
        \includegraphics[trim={6.5cm 3.1cm 5.3cm 2.15cm},clip,width=0.11\textwidth]{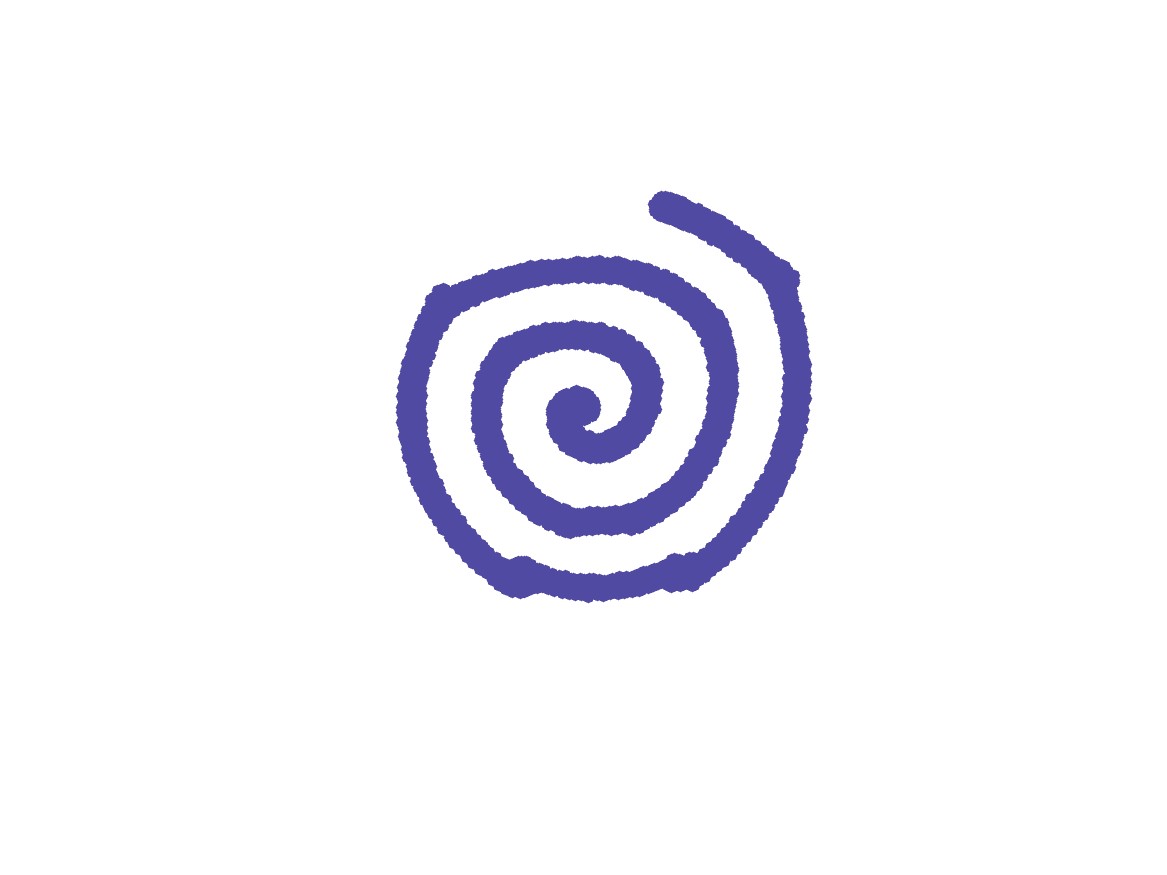} &
        \includegraphics[trim={6.5cm 3.1cm 5.3cm 2.15cm},clip,width=0.11\textwidth]{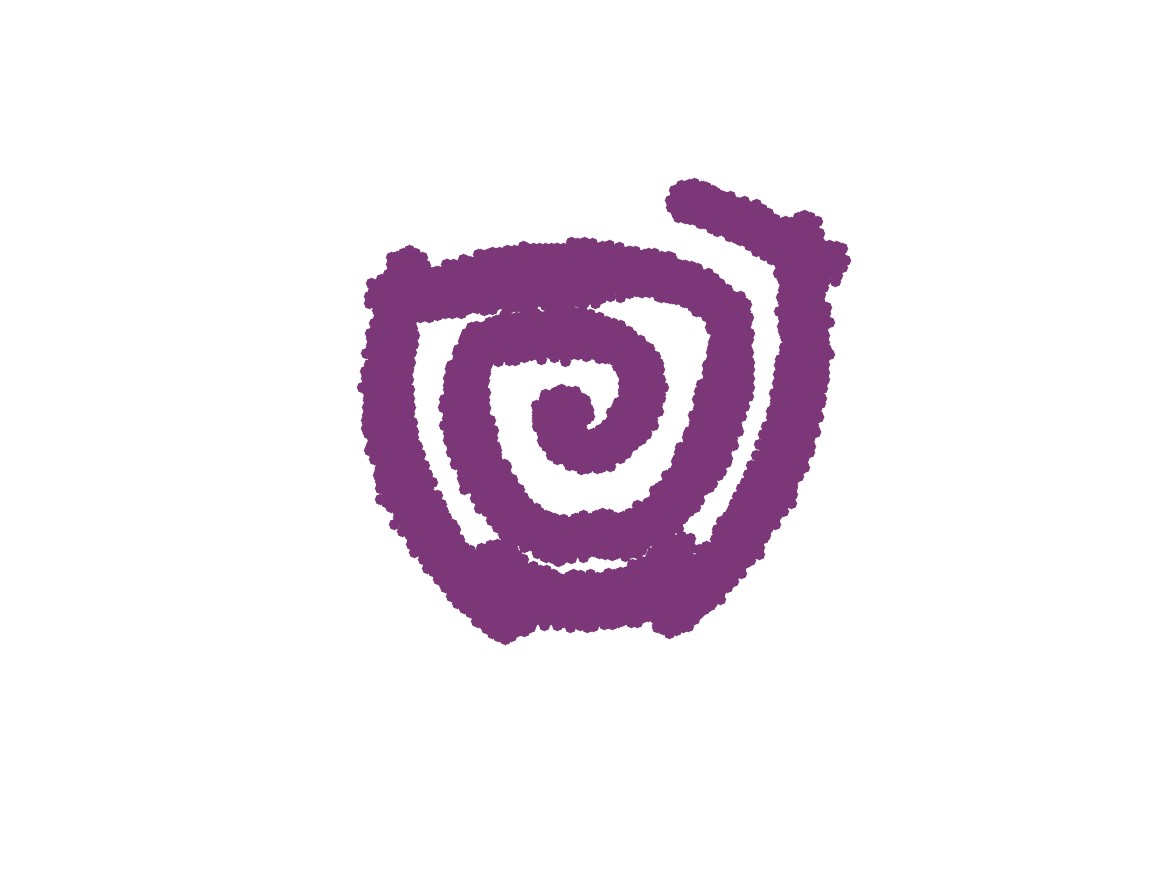} &
        \includegraphics[trim={6.5cm 3.1cm 5.3cm 2.15cm},clip,width=0.11\textwidth]{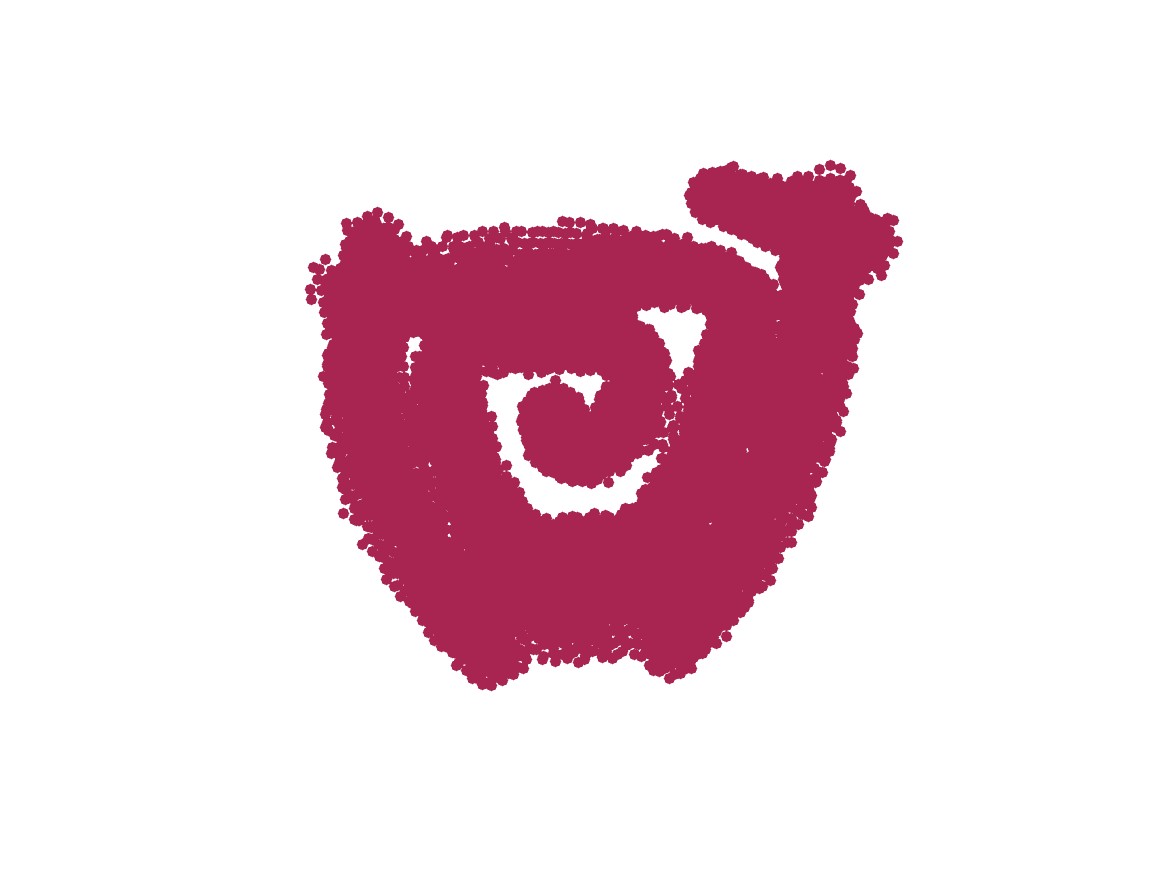} &
        \includegraphics[trim={6.5cm 3.1cm 5.3cm 2.15cm},clip,width=0.11\textwidth]{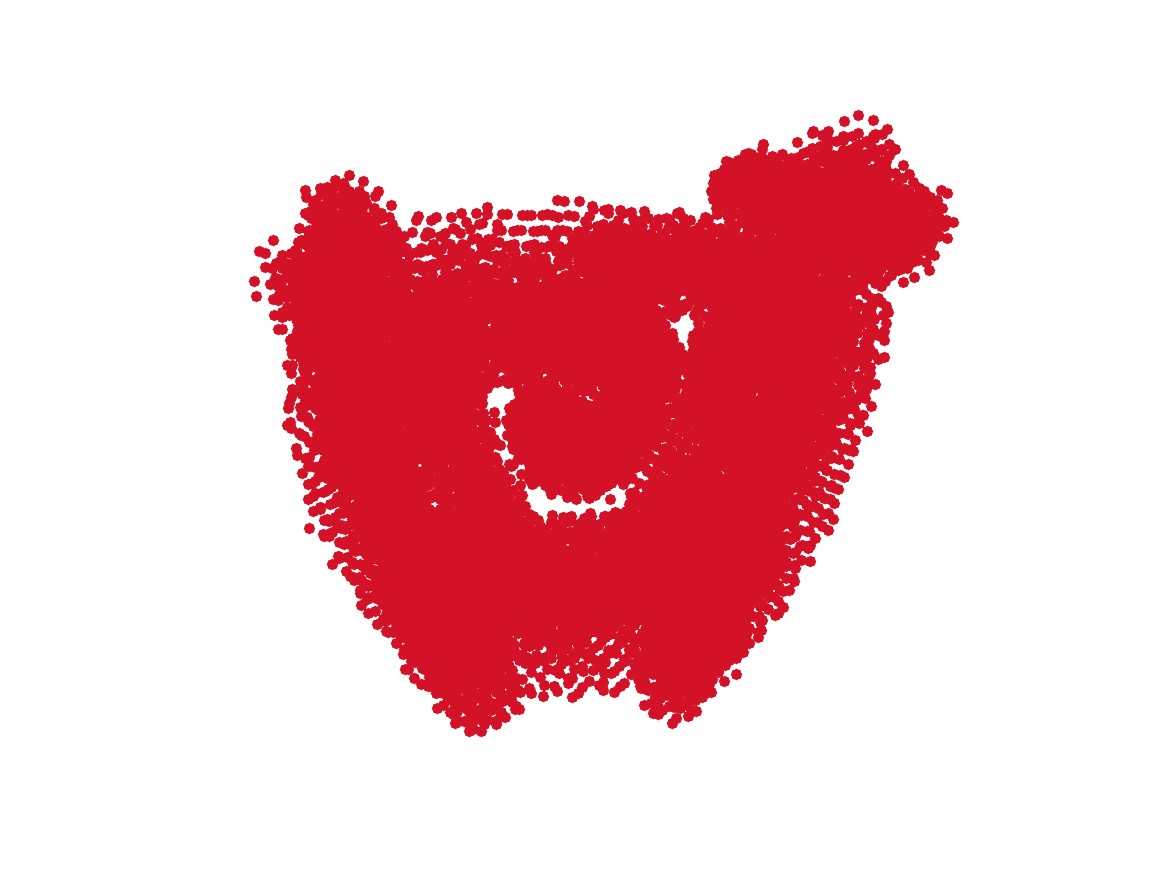} &
        \includegraphics[trim={6.5cm 3.1cm 5.3cm 2.15cm},clip,width=0.11\textwidth]{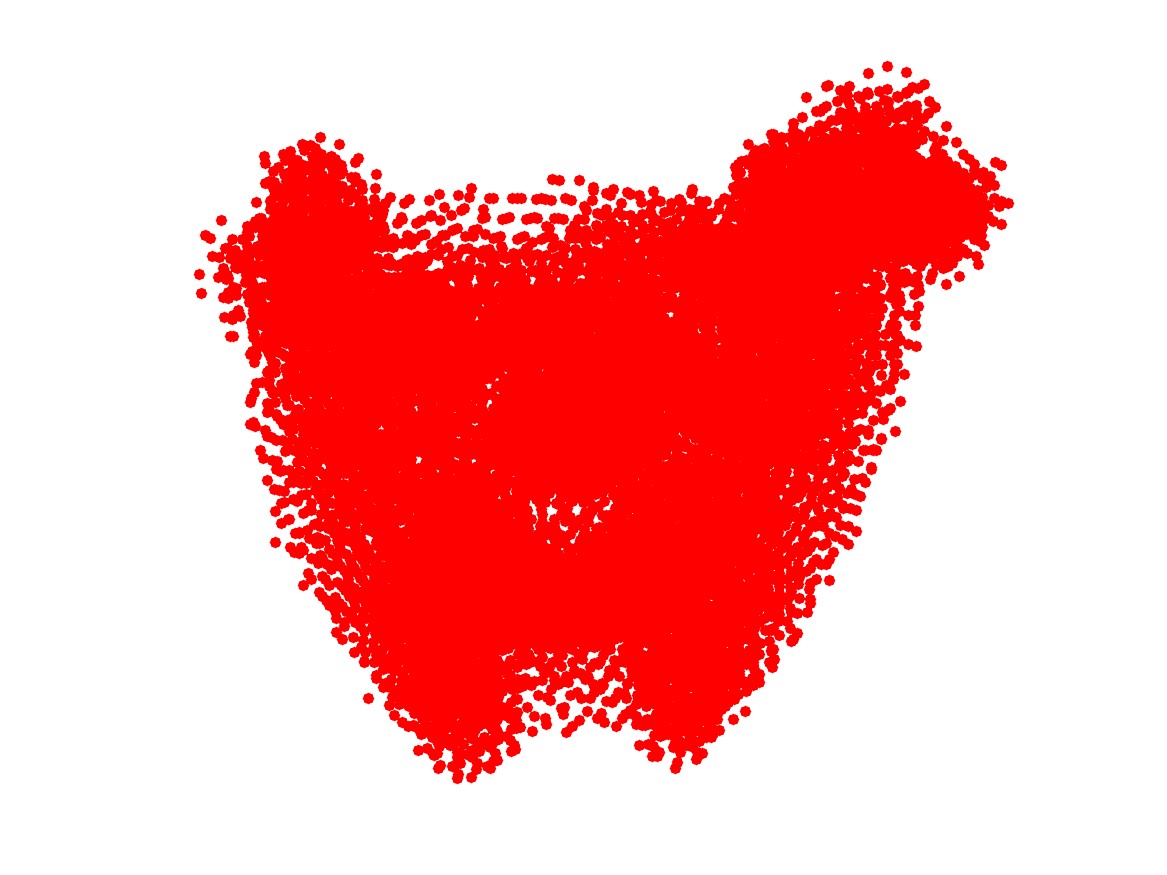} \\
    \end{tabularx}
    \caption{Comparison between the metric extrapolation (top row) and the Lagrangian extrapolation (bottom row), from $\nu_0$ (first column from the left) to $\nu_1$ (second column from the left). Time grows from left to right, from $t=1$ (blue) to $t=4$ (red).}
    \label{fig:comparison}
\end{figure}

\subsubsection{Gradient flow of the opposite Wasserstein distance} Let us consider the curve $\nu_{\mathrm{gf}}:[1,\infty) \rightarrow \mc{P}_2(\mathbb{R}^d)$ obtained as the $W_2$ gradient flow of the time dependent energy:
\begin{equation}\label{eq:energyneg}
\mc{E}(t,\mu) \coloneqq -\frac{W^2_2(\nu_0,\mu)}{2t}
\end{equation}
for $t\geq 1$ and initial conditions $\nu_{\mathrm{gf}}(1) = \nu_1$. This is a well-defined flow as discussed in \cite{ambrosio2008gradient}, and moreover if there exists a length-minimizing geodesic on $[0,t]$ passing through $\nu_0$ at time 0 and $\nu_1$ at time 1, $\nu$ coincides with it on the interval $[1,t]$ \cite[Theorem 11.2.10]{ambrosio2008gradient} (note that differently from \cite{ambrosio2008gradient} we use a time-dependent energy to avoid rescaling time when comparing the gradient flow with geodesics). Our framework provides a natural way to discretize this flow. As a matter of fact, the JKO scheme \cite{jordan1998variational} with time step $h>0$ yields the following minimization problem, for all $n\geq 0$,
\begin{equation}\label{eq:jko_extra}
\nu^{n+1}_{\mathrm{gf}} = \underset{\mu\in\mc{P}_2(\mathbb{R}^d)}{\mathrm{argmin}}  \frac{W_2^2(\nu^{n}_{\mathrm{gf}},\mu)}{2h} -\frac{W_2^2(\nu_0,\mu)}{2t^{n+1}}    
\end{equation}
where we set $\nu^0_{\mathrm{gf}} =\nu_1$ and $t^n = 1 +nh$, which is a metric extrapolation problem. Remarkably, in one dimension  $\nu_{\mathrm{gf}}(t)=\nu_t$, the metric extrapolation from $\nu_0$ to $\nu_1$ at time $t$, and it also yields a sticky solution to the pressureless Euler equations (see \cite{natile2009wasserstein} and Remark \ref{rem:pressureless}). 
In higher dimension, however, in general $\nu_{\mathrm{gf}}(t)\neq \nu_t$, as it could be expected since only in one dimension the $L^2$ transport cost is equivalent to an $L^2$ distance on quantile functions. Note also that the metric extrapolation coincides with one single JKO step for the energy \eqref{eq:energyneg} up to the final time. 
A simple scenario where one can observe the difference between the two flows is provided in Figure \ref{fig:flowdiff}, where $\nu_0$ and $\nu_1$ are a sum of Dirac measures with equal masses:
\[
\nu_0 = \frac{1}{3} \sum_{i=1}^3 \delta_{x_i} \,,\quad \nu_1 =\frac{1}{2} \sum_{j=1}^2 \delta_{y_j}
\]
where $x_1=(-2,1)$, $x_2=(2,1)$, $x_3=(0,0)$, and $y_1=(-1, 1)$, $y_2=(1,1)$ on the left, and $y_2=(1,1.2)$ on the right. The two flows are represented on the time interval $[1,5.5]$ and with time step $h = 4.5\cdot 10^{-3}$.
Note that when the mass is distributed symmetrically with respect to the $y$ axis the two flows coincide (the same happens for the explicit examples of Section \ref{sec:particular}).  However for a slightly perturbed $\nu_1$ we obtain two different curves for the metric extrapolation and the gradient flow. Note that these results were obtained using an interior point scheme to solve the quadratic program \eqref{eq:bar_atomic} with high precision, but one obtains the same picture using the entropic scheme of this section for sufficiently small $\varepsilon$. 
\begin{figure}[t]
\begin{overpic}[width=0.37\textwidth]{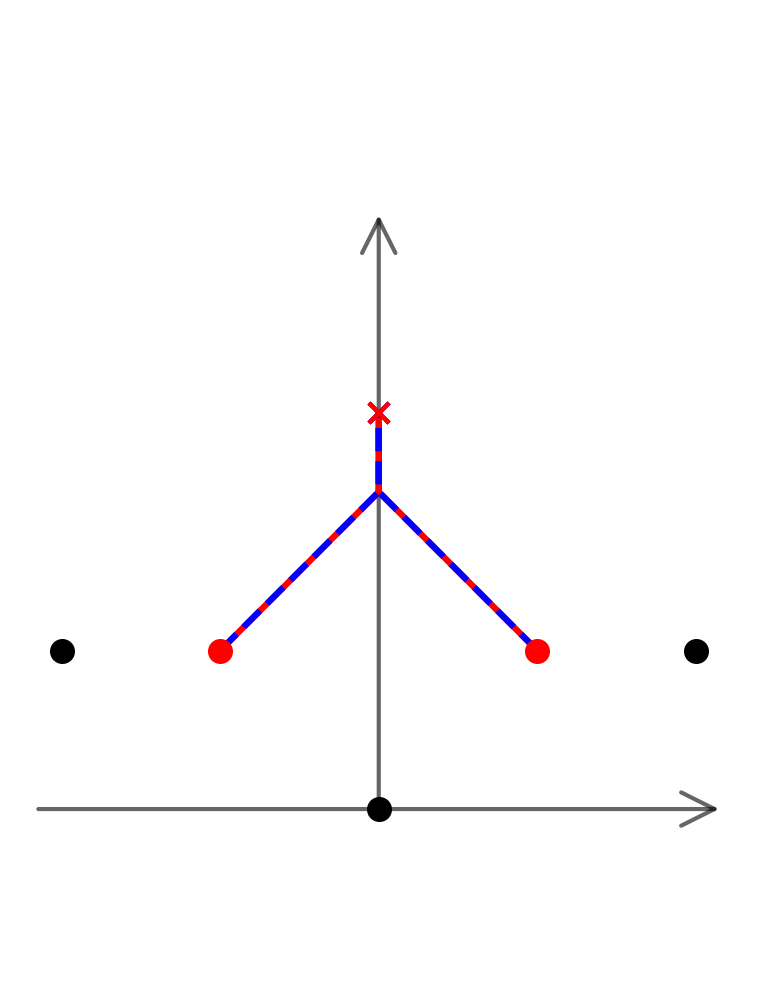}
 \put (60,70) {\color{red}$\displaystyle\nu_1$}
 \put (95,45) {$\displaystyle\nu_0$}
\end{overpic}
\hspace{1em}
\begin{overpic}[width=0.37\textwidth]{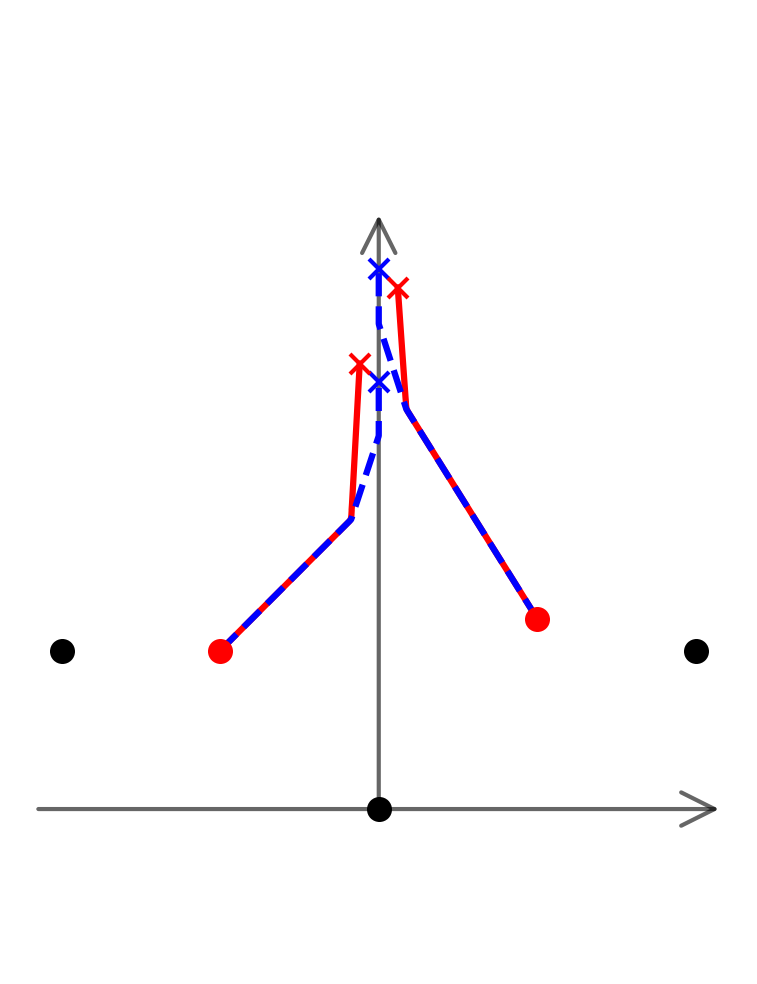}
 \put (60,70) {\color{red}$\displaystyle\nu_1$}
 \put (95,45) {$\displaystyle\nu_0$}
\end{overpic}
\caption{A comparison between the gradient flow $\nu_\mathrm{gf}$ (dashed blue line) and metric extrapolation $\nu_t$ (solid red line)  on the same time interval, for the same $\nu_0$ and two different initial conditions for $\nu_1$.}\label{fig:flowdiff}
\end{figure}

In Figure \ref{fig:flow_distance} we show another example of gradient flow of the opposite Wasserstein distance. As in the examples of Figure \ref{fig:extrapolation}, the measures $\nu_0$ and $\nu_{\mathrm{gf}}(1)=\nu_1$ are obtained as optimal quantization in the $W_2$ sense of two uniformly distributed densities. The flow is approximated on the time interval $[1,4]$ using the JKO scheme \eqref{eq:jko_extra} and the entropic scheme presented in this section, for $h=0.1$ and $\varepsilon=10^{-3}$. The evolution is compared with the metric extrapolation $\nu_t$.

\begin{figure}[t]
    \centering
    \setlength{\tabcolsep}{0pt}
    \hspace{-1em}
    \begin{tabularx}{0.9\textwidth}{>{\centering\arraybackslash}X>{\centering\arraybackslash}X>{\centering\arraybackslash}X>{\centering\arraybackslash}X>{\centering\arraybackslash}X>{\centering\arraybackslash}X>{\centering\arraybackslash}X>{\centering\arraybackslash}X}
        \includegraphics[trim={3cm 1cm 3cm 1cm},clip,width=0.1\textwidth]{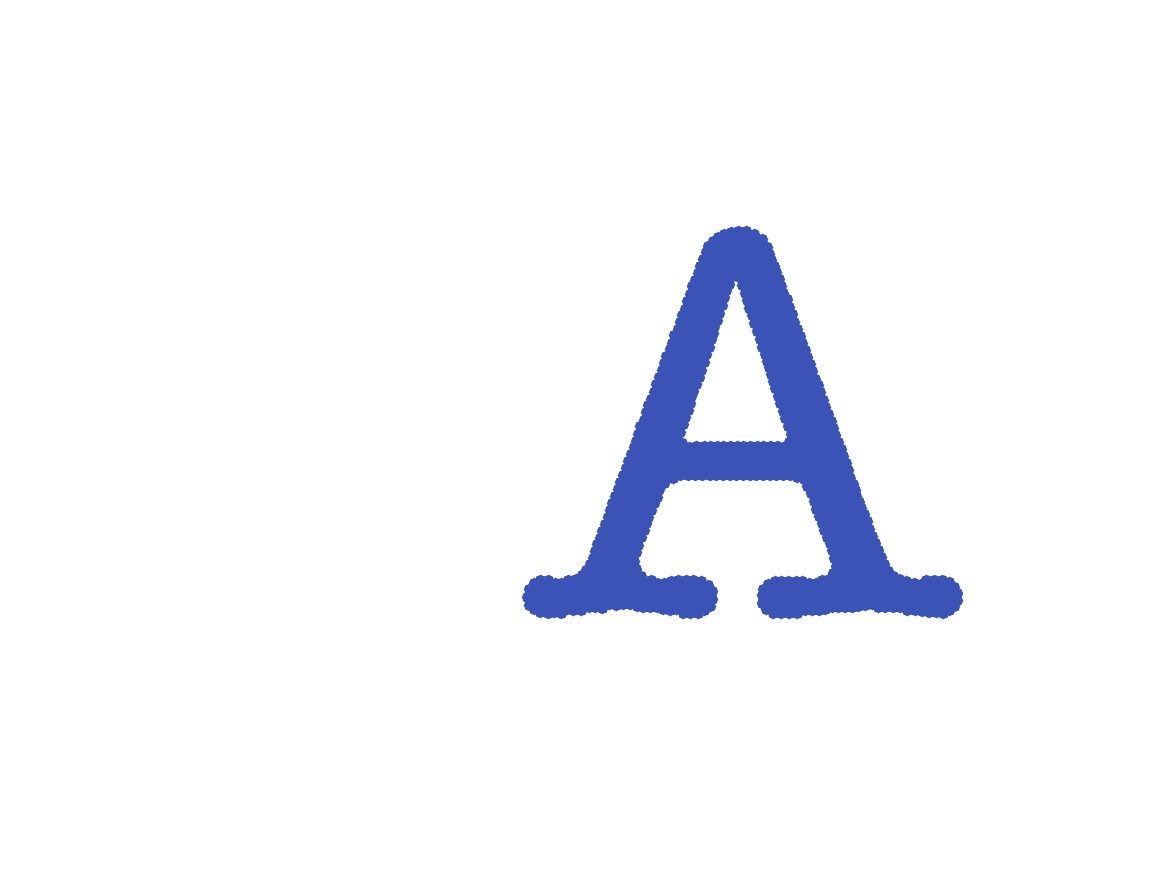} & \includegraphics[trim={3cm 1cm 3cm 1cm},clip,width=0.1\textwidth]{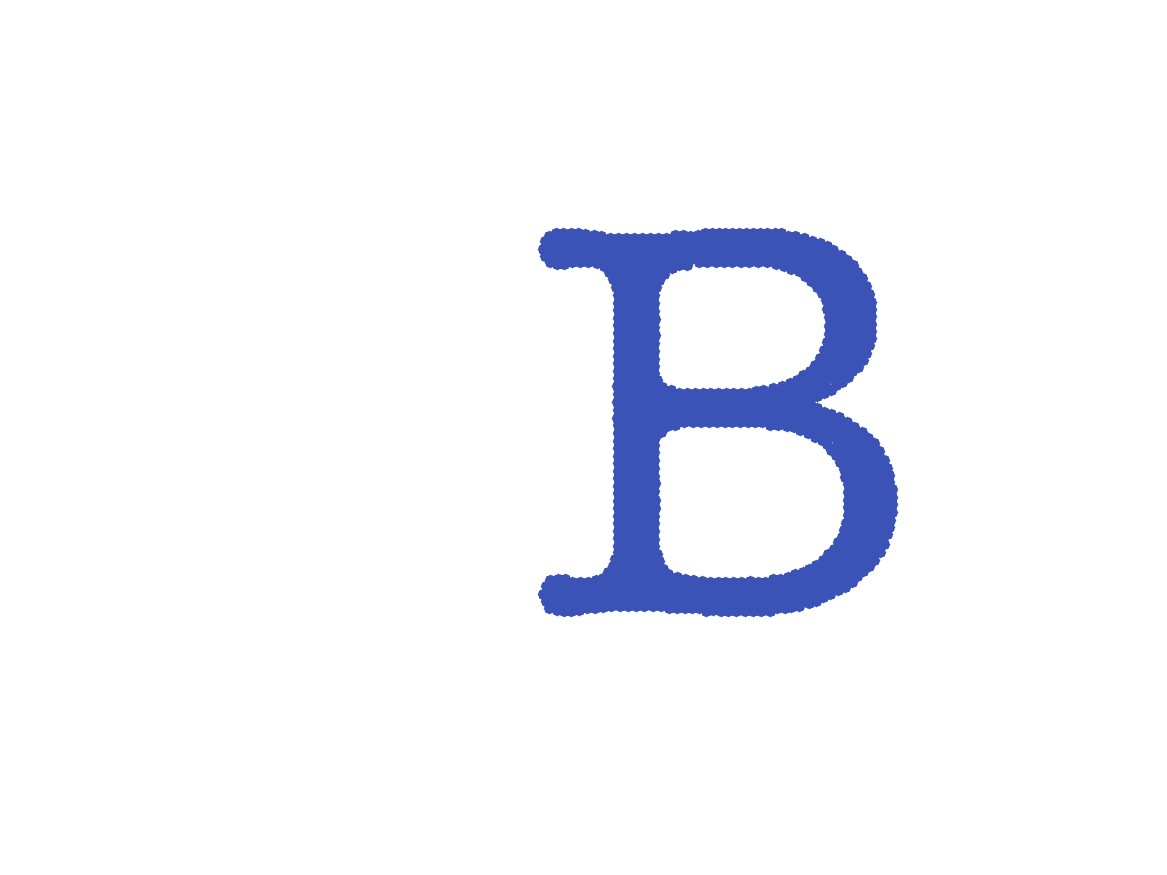} &
        \includegraphics[trim={3cm 1cm 3cm 1cm},clip,width=0.1\textwidth]{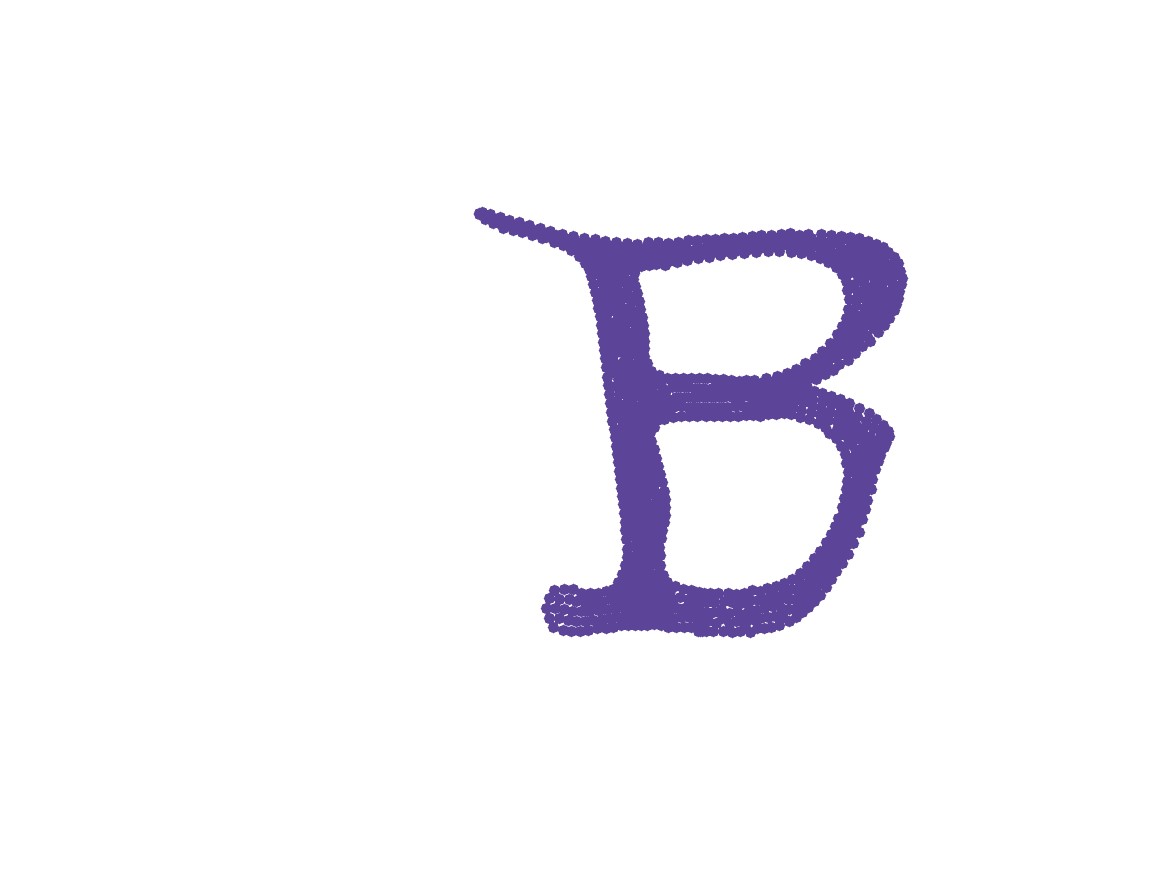} &
        \includegraphics[trim={3cm 1cm 3cm 1cm},clip,width=0.1\textwidth]{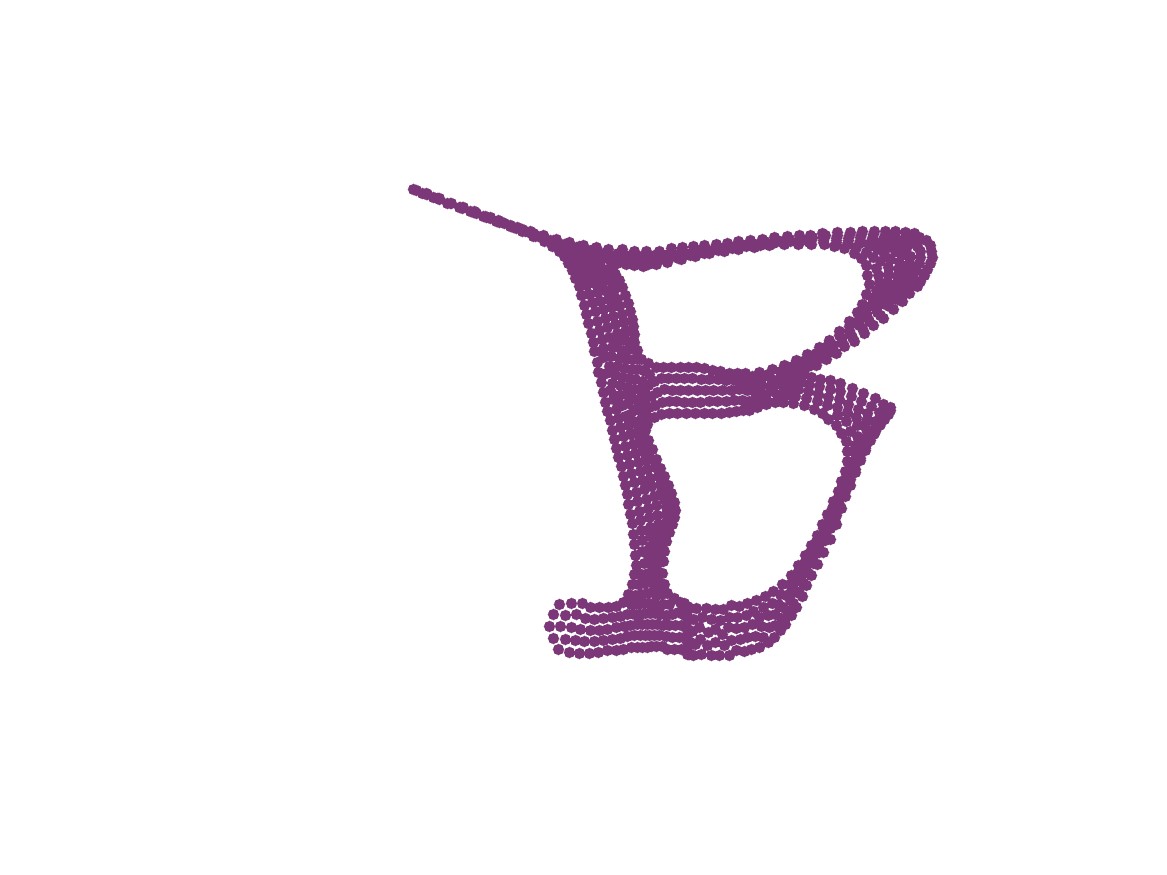} &
        \includegraphics[trim={3cm 1cm 3cm 1cm},clip,width=0.1\textwidth]{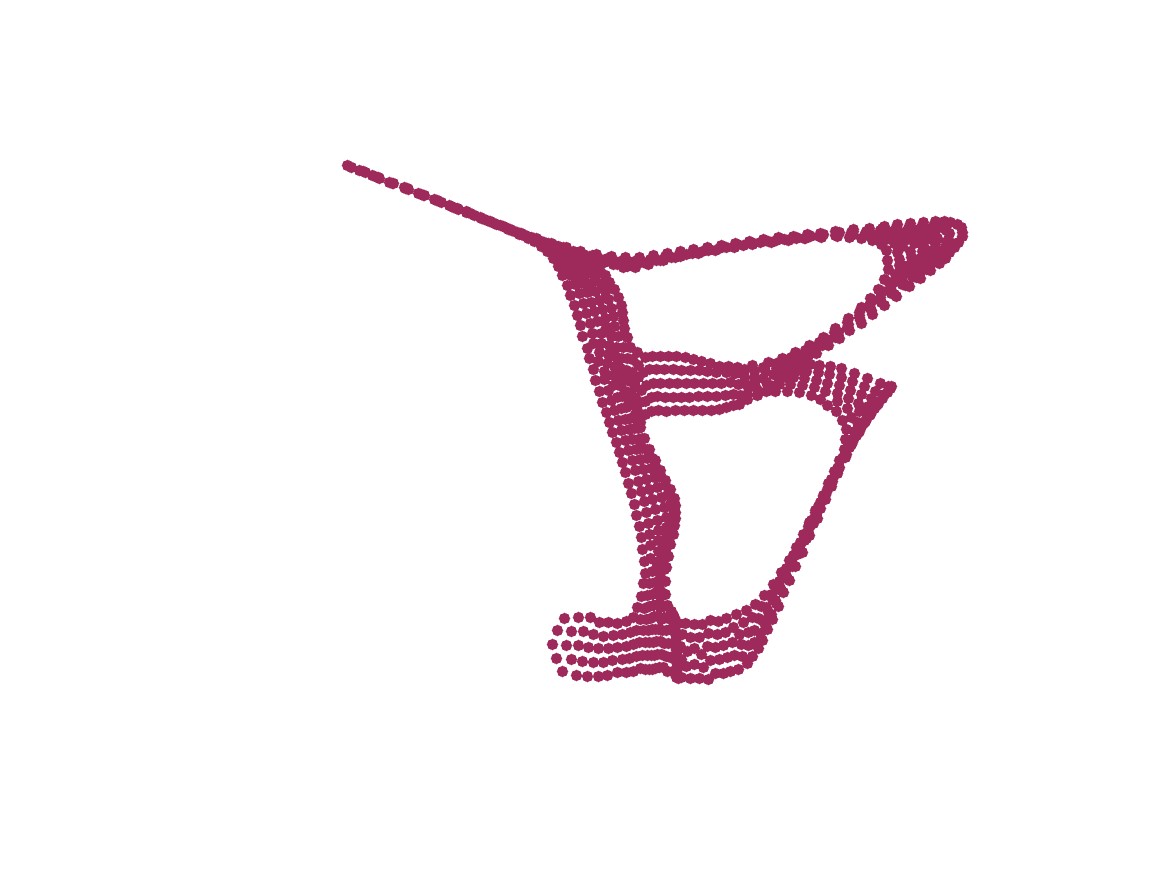} &
        \includegraphics[trim={3cm 1cm 3cm 1cm},clip,width=0.1\textwidth]{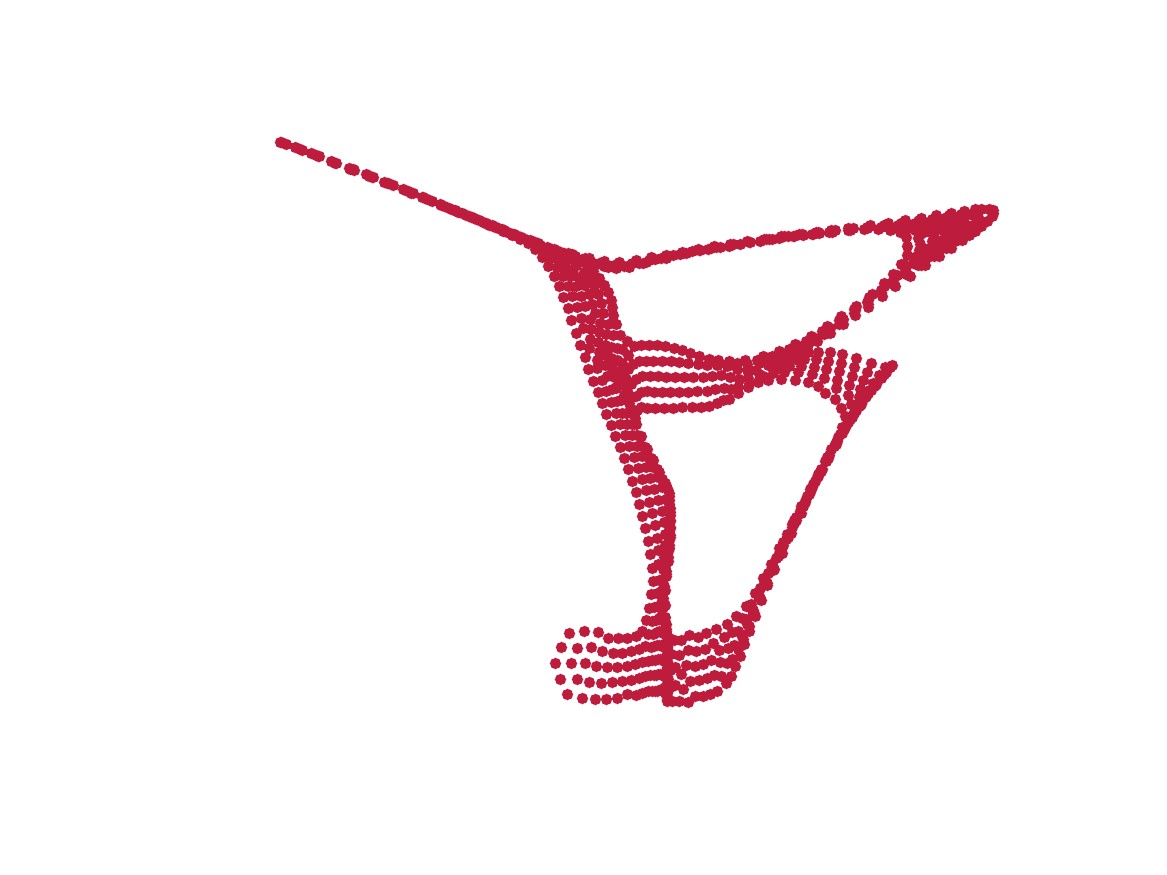} &
        \includegraphics[trim={3cm 1cm 3cm 1cm},clip,width=0.1\textwidth]{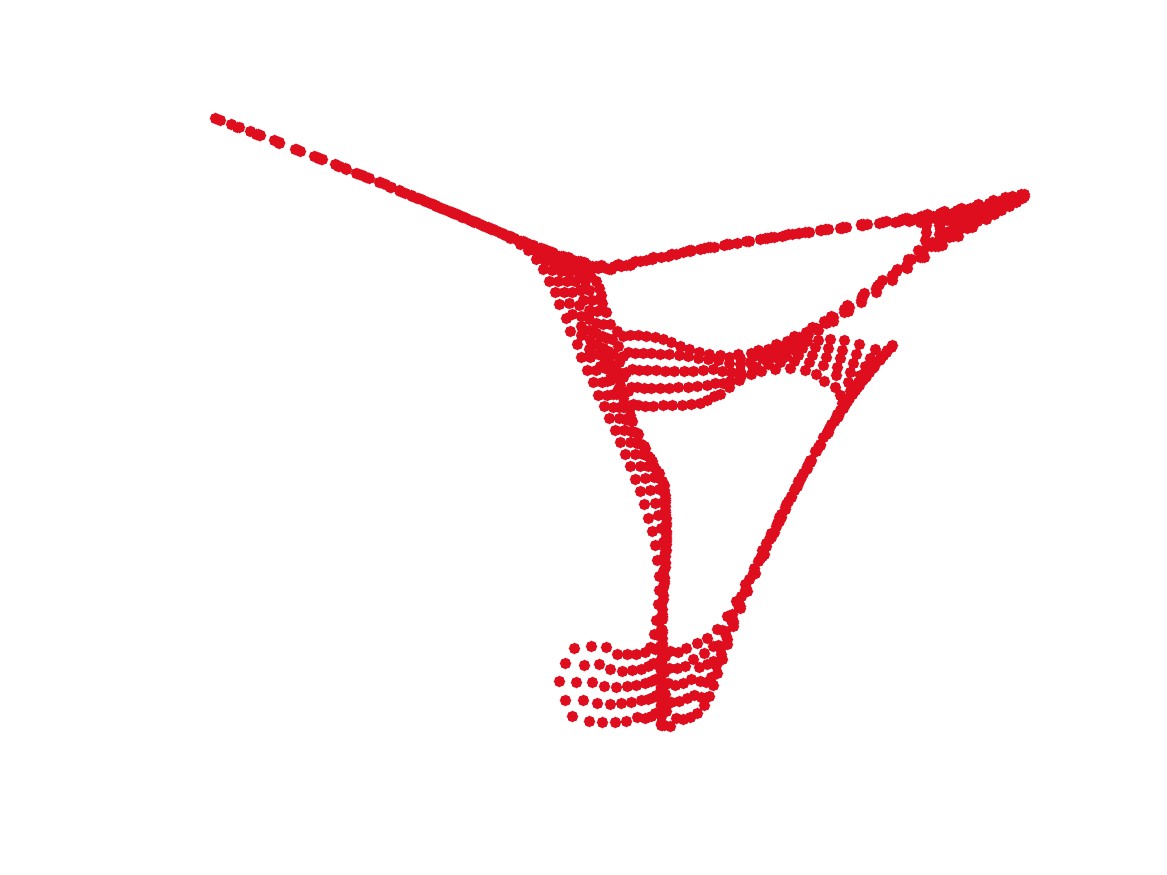} &
        \includegraphics[trim={3cm 1cm 3cm 1cm},clip,width=0.1\textwidth]{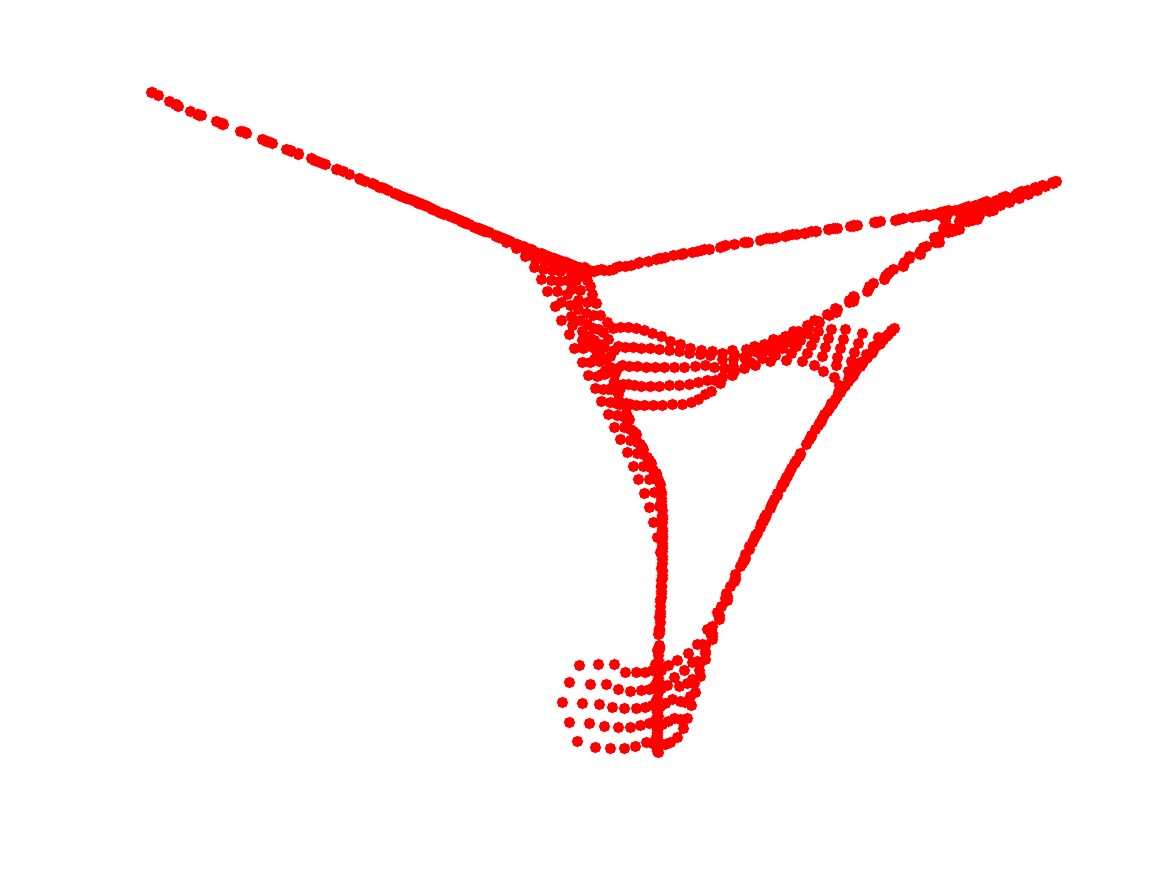} \\
        \includegraphics[trim={3cm 1cm 3cm 1cm},clip,width=0.1\textwidth]{lettersAB_initial_1.jpg} & \includegraphics[trim={3cm 1cm 3cm 1cm},clip,width=0.1\textwidth]{lettersAB_initial_2.jpg} &
        \includegraphics[trim={3cm 1cm 3cm 1cm},clip,width=0.1\textwidth]{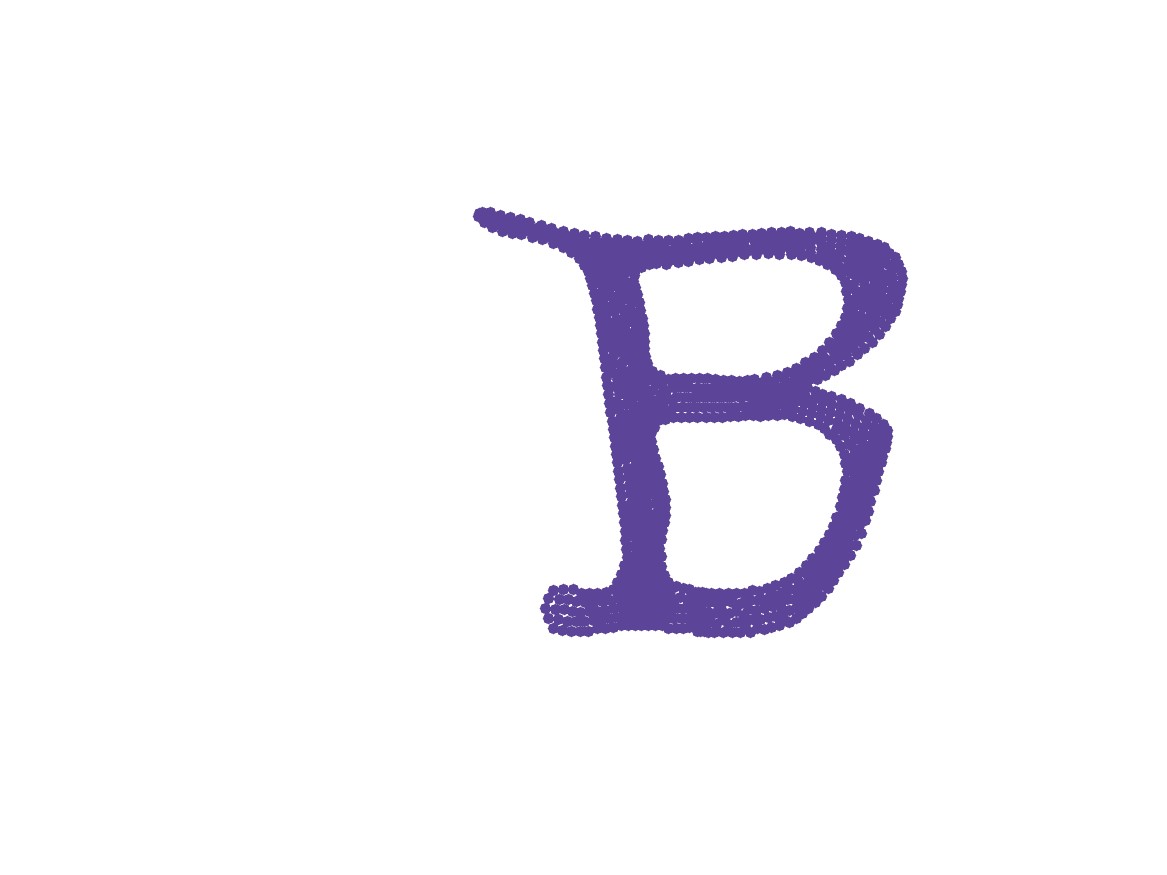} &
        \includegraphics[trim={3cm 1cm 3cm 1cm},clip,width=0.1\textwidth]{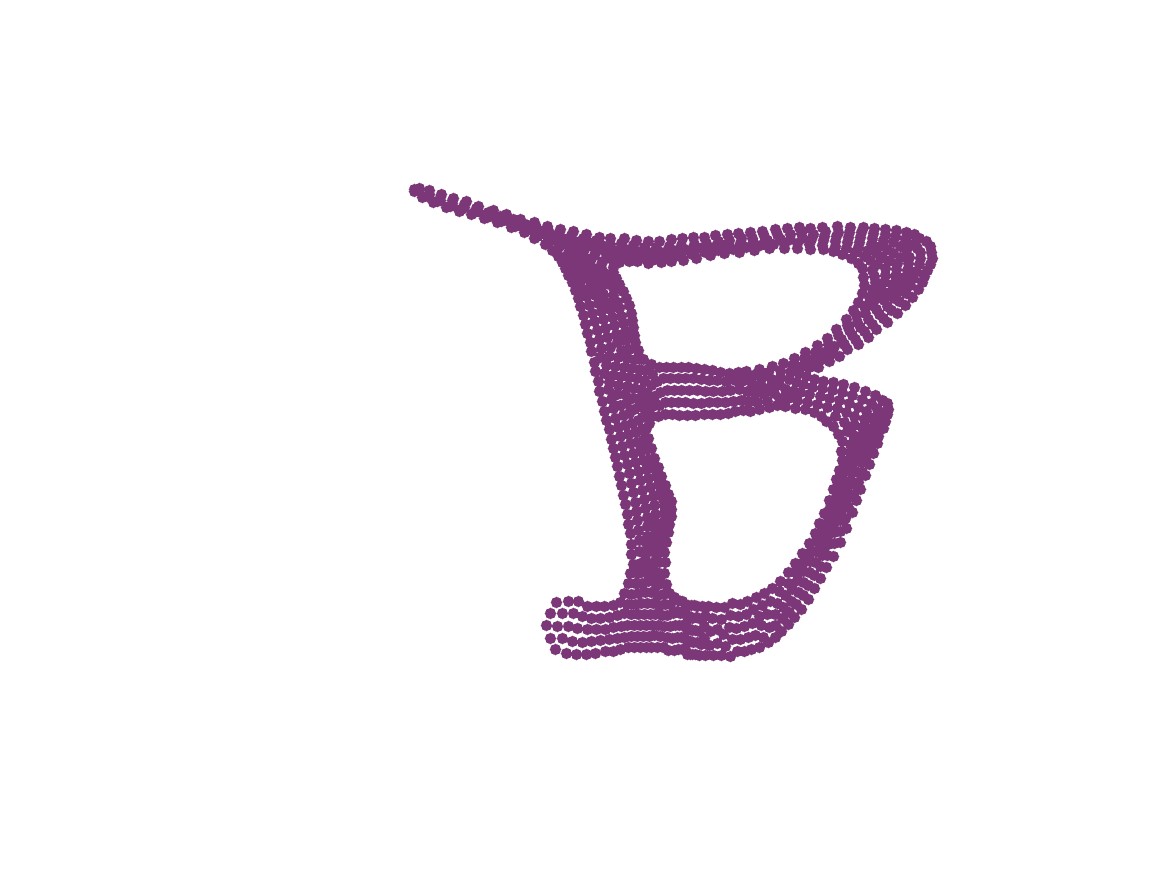} &
        \includegraphics[trim={3cm 1cm 3cm 1cm},clip,width=0.1\textwidth]{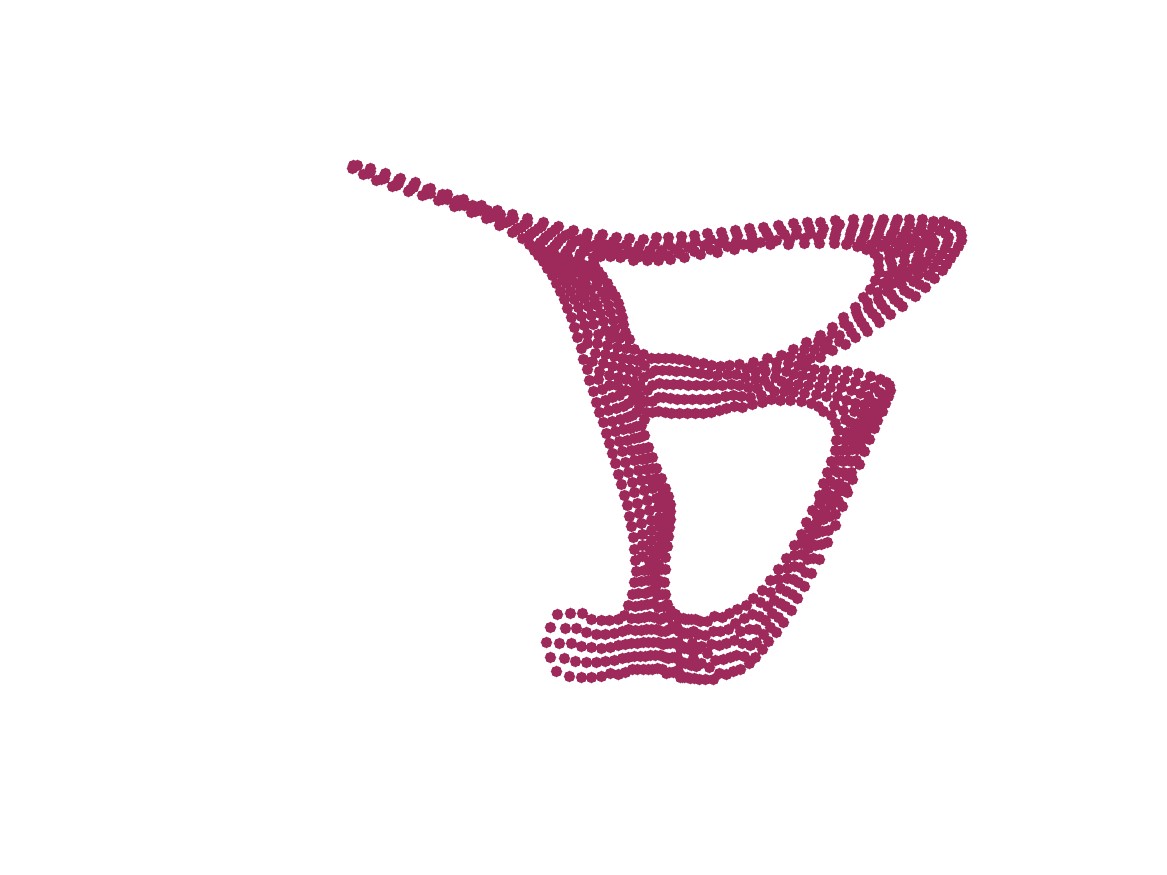} &
        \includegraphics[trim={3cm 1cm 3cm 1cm},clip,width=0.1\textwidth]{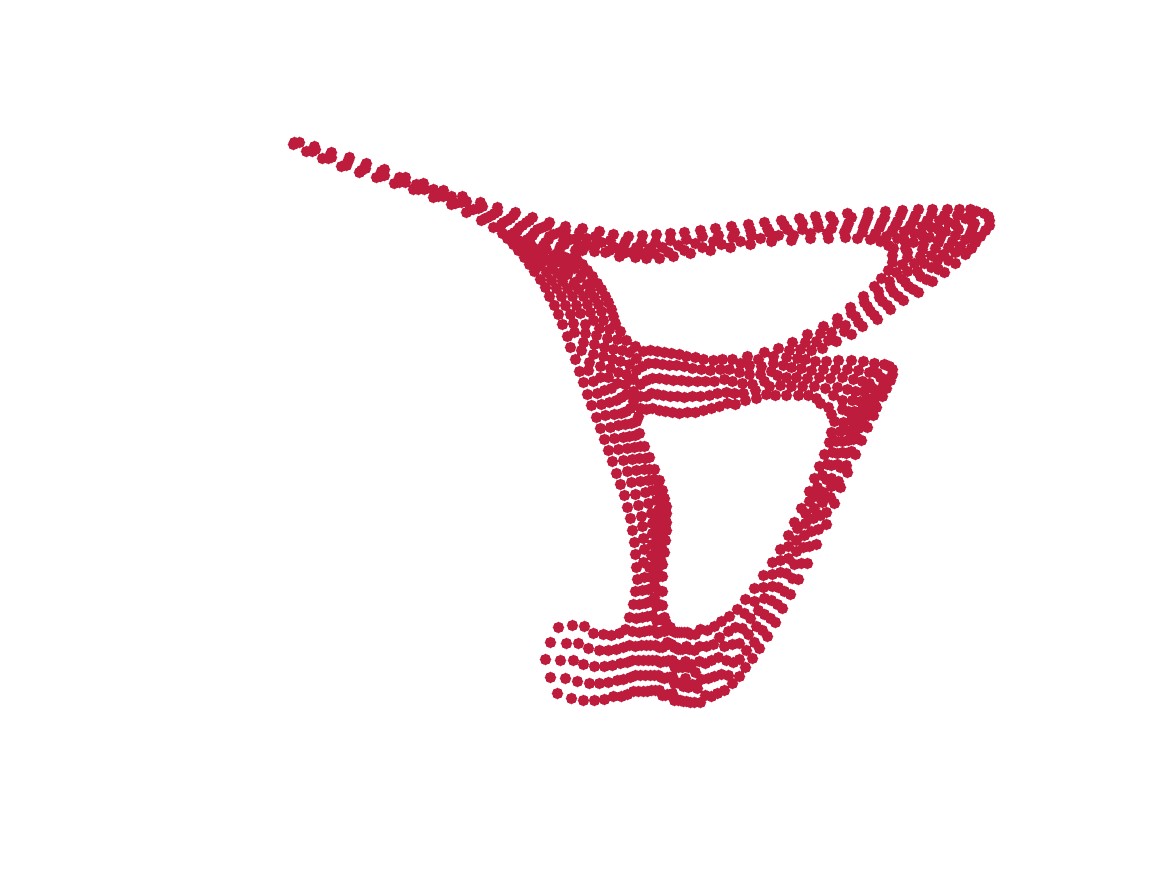} &
        \includegraphics[trim={3cm 1cm 3cm 1cm},clip,width=0.1\textwidth]{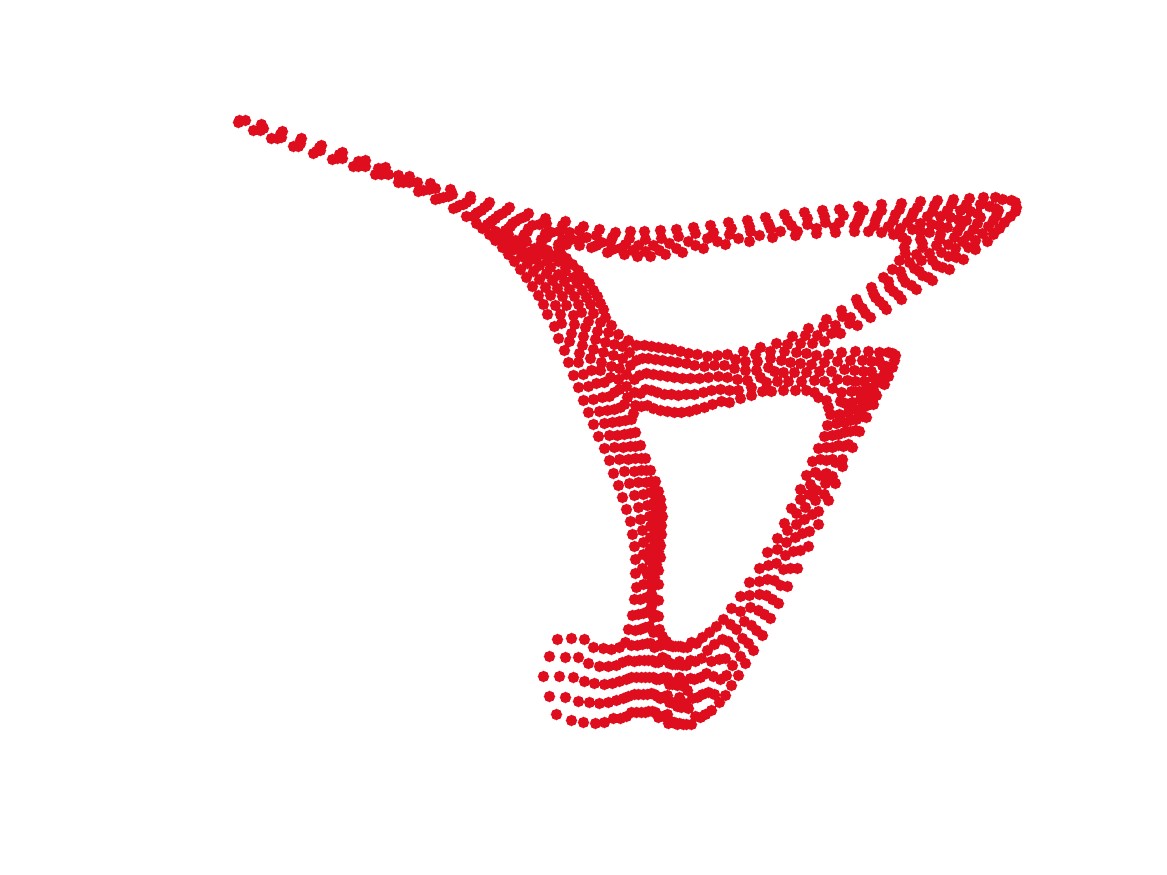} &
        \includegraphics[trim={3cm 1cm 3cm 1cm},clip,width=0.1\textwidth]{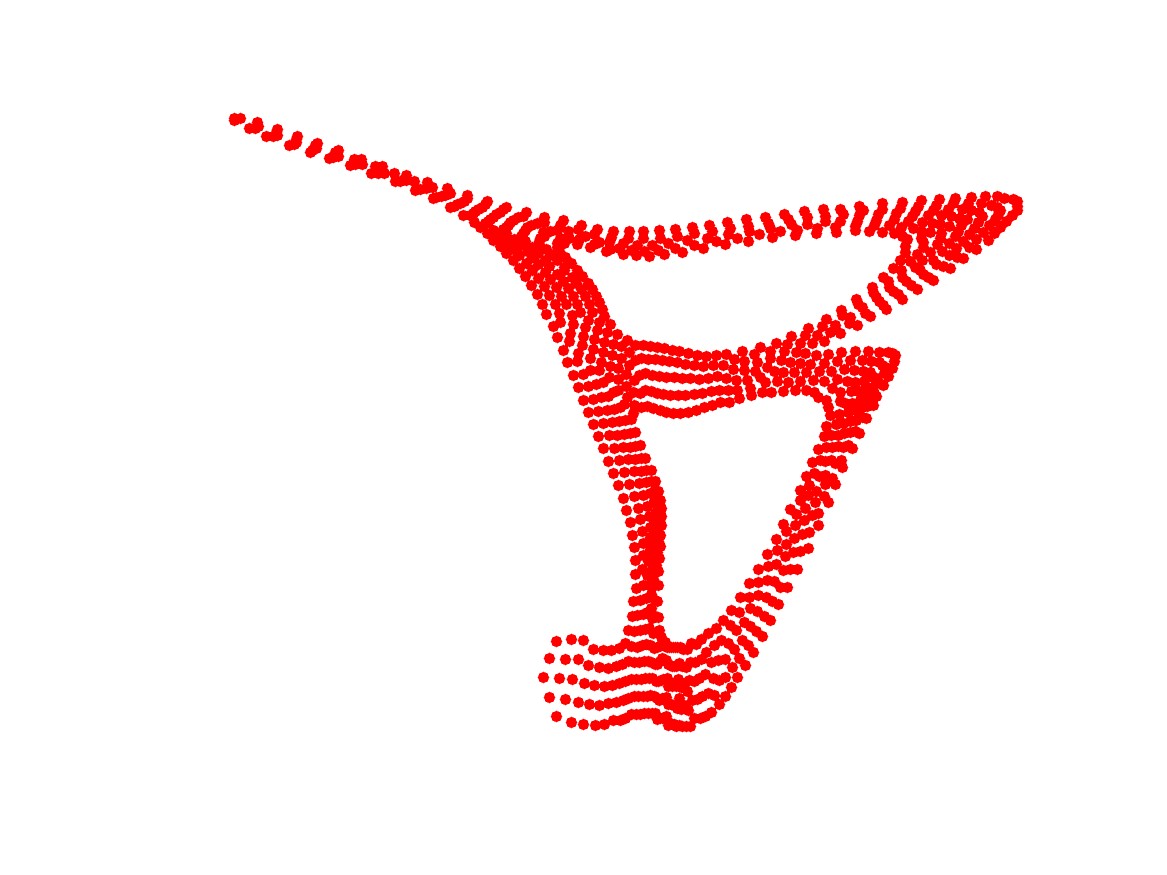} \\
    \end{tabularx}
    \caption{Example of gradient flow of the opposite Wasserstein distance (top row) from $\nu_0$ (first column from the left) with initial condition $\nu_1$ (second column from the left) and comparison with the metric extrapolation (bottom row). Time grows from left to right, from $t=1$ (blue) to $t=4$ (red).}
    \label{fig:flow_distance}
\end{figure}

\section*{Acknowledgements}

This work  was partly supported by the Labex CEMPI (ANR-11-LABX-0007-01).
The work of GT is supported by the Bézout Labex (New Monge Problems), funded by ANR, reference ANR-10-LABX-58. The authors would like to thank  Yann Brenier, Guillaume Carlier and Quentin M\'erigot,  for fruitful discussions and suggestions on the topic.

\bibliographystyle{plain}
\bibliography{refs}

\end{document}